\begin{document}

\newcommand{\titletext}{Precise Asymptotics of Bagging Regularized M-estimators}

\title{\titletext}

\author{
    \setcounter{footnote}{-1}
    \blfootnote{$^\ast$Corresponding authors.}
    \setcounter{footnote}{1}
    Takuya Koriyama$^\ast$\footremember{chicagobooth}{Booth School of Business, University of Chicago, Chicago, IL 60637, USA.}
    \\ {\small \url{{tkoriyam@uchicago.edu}}}
    \and
    Pratik Patil$^\ast$\footremember{utstats}{Department of Statistics and Data Science, University of Texas, Austin, TX 78712, USA.}
    \\ {\small \url{{pratikpatil@utexas.edu}}}
    \and
    Jin-Hong Du\footremember{hkuids}{Institute of Data Science, University of Hong Kong, Pokfulam, Hong Kong, China.}
    \\ {\small \url{{jinhongd@hku.hk}}}
    \and
    Kai Tan\footremember{stanfordstats}{Department of Statistics, Stanford University, Stanford, CA 94305, USA.}
    \\ {\small \url{{kaitan9@stanford.edu}}}
    \and 
    Pierre {C.} Bellec\footremember{rutgersstats}{Department of Statistics, Rutgers University, New Brunswick, NJ 08854, USA.}
    \\ {\small \url{{pierre.bellec@rutgers.edu}}}
}

\date{\vspace{-15pt}}

\maketitle

\begin{abstract}
We characterize the squared prediction risk of ensemble estimators obtained through subagging (subsample bootstrap aggregating) regularized M-estimators and construct a consistent estimator for the risk. Specifically, we consider a heterogeneous collection of $M \ge 1$ regularized M-estimators, each trained with (possibly different) subsample sizes, convex differentiable losses, and convex regularizers. We operate under the proportional asymptotics regime, where the sample size $n$, feature size $p$, and subsample sizes $k_m$ for $m \in [M]$ all diverge with fixed limiting ratios $n/p$ and $k_m/n$. Key to our analysis is a new result on the joint asymptotic behavior of correlations between the estimator and residual errors on overlapping subsamples, governed through a (provably) contractive nonlinear system of equations. Of independent interest, we also establish convergence of trace functionals related to degrees of freedom in the non-ensemble setting (with $M = 1$) along the way, extending previously known cases for squared loss with ridge and lasso regularizers. 

When specialized to homogeneous ensembles trained with a common loss, regularizer, and subsample size, the risk characterization sheds some light on the implicit regularization effect due to the ensemble and subsample sizes $(M,k)$. For any ensemble size $M$, optimally tuning subsample size yields sample-wise monotonic risk. For the full-ensemble estimator (when $M \to \infty$), the optimal subsample size $k^\star$ tends to be in the overparameterized regime $(k^\star \le \min\{n,p\})$, when explicit regularization is vanishing. Finally, joint optimization of subsample size, ensemble size, and regularization can significantly outperform regularizer optimization alone on the full data (without any subagging).
\end{abstract}

\section{Introduction}

Ensemble methods combine predictions of multiple models to improve predictive accuracy \cite{hastie2009elements}.
Among these methods, bagging (bootstrap aggregating) trains individual models on bootstrapped samples of the dataset and averages their predictions to reduce variance and mitigate overfitting \cite{breiman_1996}. 
A popular variant of bagging, known as subagging (subsample bootstrap aggregating), trains models on random subsamples rather than full bootstrapped samples \cite{buhlmann2002analyzing}.
Apart from the computational advantages, subagging can substantially improve predictive performance, especially in the overparameterized regimes and near model interpolation thresholds \cite{patil2022mitigating}.
In this paper, we analyze the squared prediction risk of subagging of regularized M-estimators trained with convex loss and regularizer.

There is growing interest in understanding the prediction risk asymptotics of ensemble methods, particularly subagging, across different types of predictors and under various data assumptions.
For example, \cite{lejeune2020implicit} study subagging in the context of ordinary least squares regression without any explicit regularization in the underparameterized regime (where the number of subsamples is higher than the number of features).
This is extended to ridge and ridgeless regression by \cite{patil2022bagging} for both the underparameterized and overparameterized regimes (where the number of subsamples is lower than the number of features).
\cite{patil2023generalized} further generalizes these results and identifies explicit equivalence paths between subsampled estimators and ridge regularized estimators.
Beyond linear and ridge regression, \cite{bellec2024asymptotics} examines the behavior of subagging in logistic and Huber regression models without regularization.
In addition to subagging, there has also been considerable work on feature sketching and other ensemble methods. 
For instance, \cite{loureiro2022fluctuations} and \cite{patil2023asymptotically} study feature sketching and ensembling to optimize predictive performance in high-dimensional settings.
For other recent developments in the analysis of ensemble methods and related work details, see \Cref{sec:related_work}.

\begin{table*}[!t]
    \centering
    \caption{
    \textbf{Literature on subagging risk analysis in high dimensions.}
    We summarize the various settings (estimator structure and data structure) of some of the recent works that characterize the prediction risk of subagging regularized M-estimators trained with loss function $\loss$ and regularization function $\reg$ (defined in \Cref{sec:setup}).
    Convex$^{\mysolidcircle}$ denotes that in addition to being convex, we require the loss function to be differentiable and have Lipschitz continuous derivatives.
    By RMT features, we refer to features of the form $\bx = \bSigma^{1/2} \bv$ where $\bv$ contains independent entries of bounded moments (of order $4^+$) that are common in the random matrix theory literature.
    Arbitrary$^{\blacktriangle}$ response $y$ refers to no additional modeling assumption on the response other than bounded moments (of order $4^+$).
    Signal and noise refer to $\btheta$ and $\eps$ in the linear data model: $y = \bx^\top \btheta + \eps$.
    Arbitrary$^{\mysolidsquare}$ signal and noise distributions $F_\theta$ and $F_z$ refer to general marginal distribution on the coordinates of signal $\btheta$ and noise $\eps$, respectively, subject to mild regularity conditions (\Cref{asm:regularity-conditions}).
    }
    \footnotesize
    \begin{tabularx}{\textwidth}
    {C{1cm}C{1cm}C{1.2cm}C{1.6cm}C{1.4cm}C{1.75cm}C{1.6cm}C{1.7cm}C{1.7cm}}
    \toprule
    \textbf{Loss} & \textbf{Penalty} & \textbf{Features} & \textbf{Covariance} & \textbf{Response} & \textbf{Signal} & \textbf{Signal~Dist.} & \textbf{Noise~Dist.} & \multirow{2}{*}{\textbf{Reference}} \\
    ($\loss$) & ($\reg$) & ($\bx$) & ($\bSigma$) & ($\by$) & ($\btheta$) & ($F_\theta$) & ($F_\eps$) & \\
    \midrule
    Square & None & Gaussian & Isotropic & Linear & Random & Gaussian & Gaussian & \cite{lejeune2020implicit} \\
    Square & Ridge & RMT & Anisotropic & Linear & Deterministic & \cellcolor{lightgray!25} & Bnd.~Mom.\ & \cite{patil2022bagging} \\
    Square & Ridge & RMT & Anisotropic & Arbitrary$^{\blacktriangle}$ & \cellcolor{lightgray!25} & \cellcolor{lightgray!25} & \cellcolor{lightgray!25} & \cite{patil2023generalized} \\
    Huber & None & Gaussian & Isotropic & Linear & Deterministic & \cellcolor{lightgray!25} & Arbitrary$^{\mysolidsquare}$ & \cite{bellec2024asymptotics} \\
    Logistic & None & Gaussian & Isotropic & Logistic & Deterministic & \cellcolor{lightgray!25} & \cellcolor{lightgray!25} & \cite{bellec2024asymptotics} \\
    Convex & Ridge & Gaussian & Isotropic & GLM & Random & Gaussian & Arbitrary$^{\mysolidsquare}$ & \cite{clarte2024analysis} \\
    \arrayrulecolor{black!25}\midrule
    \colorhighlight Convex$^{\mysolidcircle}$ & \colorhighlight Convex & \colorhighlight Gaussian & \colorhighlight Isotropic & \colorhighlight Linear & \colorhighlight Random & \colorhighlight Arbitrary$^{\mysolidsquare}$ & \colorhighlight Arbitrary$^{\mysolidsquare}$ & \colorhighlight \Cref{th:nonlinear} \\
    \arrayrulecolor{black}\bottomrule
    \end{tabularx}
    \label{tab:risk-landscape}
\end{table*}

We generalize these previous works by characterizing the prediction risk of subagging a collection of $M$ regularized M-estimators and constructing a consistent estimator for this risk.
We allow the collection to be heterogeneous, consisting of estimators trained with convex differentiable loss function $\loss$ with Lipschitz continuous derivative and convex regularization function $\reg$, which can be non-differentiable and non-strongly convex (that includes $\ell_1$-regularized Huber regression, for instance).
We operate under the proportional asymptotic regime, where the sample size $n$, feature size $p$, and subsample sizes $k_m$ for $m \in [M]$ diverge while maintaining fixed limiting ratios $n/p \to \delta \in (0, \infty)$ and $k_m/n \to c_m \in (0, 1]$.
{Throughout the paper, we refer to $n/p$ as inverse data aspect ratio and $p/n$ as data aspect ratio of the design $\bX \in \RR^{n \times p}$, viewing $n$ as the height and $p$ as the width of the rectangular design matrix $\bX$.}
Our results assume a linear response model $y = \bx^\top \btheta + z$ with isotropic Gaussian features $\bx$, a random signal $\btheta$ with independent coordinates drawn from an arbitrary marginal distribution $F_\theta$, and noise variable $z$ with an arbitrary distribution $F_z$, both subject to a mild regularity condition. 
We summarize our main results below and situate them within the context of recent related work in \Cref{tab:risk-landscape}.

\subsection{Summary of results and paper outline}

A summary of our main results along with an outline for the paper is as follows.

\begin{enumerate}[(a),leftmargin=7mm]
    \item
    \textbf{Risk characterization and estimation.} 
    In \Cref{sec:general-risk-characterization}, we obtain a precise characterization of the squared risk of subagging of regularized M-estimators under proportional asymptotics (\Cref{thm:corr-sigerror-reserror} and \Cref{th:nonlinear}).
    The asymptotic risk is governed by two nonlinear systems of equations (\Cref{sys:general_ensemble-M=1,sys:general_ensemble-M=infty}) that depend on the loss and regularization functions $\loss$, $\reg$, and the limiting subsample ratios $c_m$ for $m \in [M]$ for the component estimators and the limiting inverse data aspect ratio $\delta$. 
    \Cref{sys:general_ensemble-M=1} is a known system that characterizes the asymptotic risk of regularized M-estimators in non-ensemble settings \cite{thrampoulidis2018precise} (see \Cref{sec:related_work} for more details), while \Cref{sys:general_ensemble-M=infty} is a new contribution of this paper.
    Each scalar unknown in the 2-dimensional \Cref{sys:general_ensemble-M=infty} is shown to be the fixed-point equation of a contraction (\Cref{th:existence-uniqueness-sys:general_ensemble-M=infty}-(2)). 
    This property plays a crucial role in proving the existence and uniqueness of the solution to \Cref{sys:general_ensemble-M=infty}.
    This contraction property is also crucial in establishing
    the asymptotic behavior of the inner products between estimator errors and their residuals for estimators trained on overlapping subsamples (\Cref{thm:corr-sigerror-reserror}), leading to the subagged risk asymptotics (\Cref{th:nonlinear}).
    The contractivity, along with a ridge smoothing technique, also allows us to maintain weak assumptions on the regularizer, specifically allowing it to be non-strongly convex.
    Moreover, we also construct a consistent estimator of the ensemble risk (\Cref{thm:risk-estimator-general-subagging} and \Cref{cor:general-ensemble-risk-estimator}), which can be employed for data-adaptive tuning of hyperparameters such as loss functions, regularizers, and subsample sizes.

    \item
    \textbf{Homogeneous ensembles.}
    In \Cref{sec:specific-examples}, we consider homogeneous ensembles (where components are trained on the same $\loss$ and $\reg$ functions and a common subsample size $k$).
    In \Cref{sec:homogeneous-risk-properties}, we analyze (oracle) optimal ensemble optimal risk with optimal ensemble size $\Mstar$ and subsample size $\kstar$.
    We first establish the monotonicity of the risk with respect to the ensemble size $M$ (\Cref{prop:monotonicity-ensemble-size}), illustrating the benefits of ensembling, which leads to $\Mstar = \infty$.
    We then prove that the risk at the optimal subsample size $k^*$ decreases as the limiting data aspect ratio $p/n$ decreases (\Cref{prop:monotonicity-risk}). 
    In particular, this implies that the optimally subsampled risk avoids the typical ``double (or multiple) descents'' observed in regularized M-estimators without subagging.
    In \Cref{sec:examples-square-loss-and-square-penalty}, we specialize our main result to convex regularized least squares (including $\ell_q$-regularized least squares for $q \ge 1$, such as ridge and lasso estimators) and to general M-estimators (including regularized  Huber regression).
    These recover and generalize various known results in the literature (see \Cref{sec:examples-square-loss-and-square-penalty} for more details).
    
    \item
    \textbf{Subagging and overparameterization.}    
    In \Cref{sec:ensembles-interpolators}, we investigate subagging of estimators with vanishing regularization ($\lambda \to 0^{+}$) and also contrast with estimators with optimal explicit regularization (over $\lambda \ge 0$).
    Our first insight is that when subagging estimators without any explicit regularization, the optimal subsample size $k^*$ is in the overparameterized regime, regardless of whether the original data aspect ratio $p/n$ is overparameterized.
    In other words, the optimal subsample size $\kstar$ satisfies $k^* \le \min\{n, p\}$ in such cases.
    We verify this for the lassoless (minimum $\ell_1$-norm interpolator) ensemble (\Cref{fig:optimum_phis_overparameterized-2-optsubsample}).
    This highlights the advantages of overparameterization in subagging in that full-ensemble subsampled lassoless can outperform the optimal lasso on the full data (without any subagging).
    Our second insight is that the joint optimization of the subsample size and explicit regularization parameter can outperform optimizing explicit regularization alone on the full data. 
    We verify this property for ensembles of the lasso, unregularized Huber, and $\ell_1$-regularizer Huber (see \Cref{fig:advantage_subsampling_underparameterized}, \ref{fig:fig_lasso_risk_opt}, and \Cref{sec:optimal-subagging-versus-optimal-regularization}). 
    This highlights the benefits of subagging on top of optimal explicit regularization.
\end{enumerate}

\textbf{Independent results in the non-ensemble setting.}
In the process of characterizing the risk of the subagging ensemble, we also establish the convergence of certain trace functionals (in particular, see \eqref{eq:nu-kappa-convergence} below or the last three rows of \Cref{tab:interpretations_squared_loss}).
This implies that the observable adjustments developed in
\cite{bellec2022observable} for inference using a single estimator 
converge to their deterministic counterparts defined as solutions
to the \Cref{sys:general_ensemble-M=1}, unifying the mean-field
asymptotics featuring \Cref{sys:general_ensemble-M=1} of \cite{thrampoulidis2018precise} and the inference results of \cite{bellec2022observable}. 
This was known only for the lasso \cite[Theorem 8]{celentano2020lasso}
and unregularized M-estimators \cite{bellec2024asymptotics}.
These convergence results are new for regularized estimators (beyond ridge and lasso) and robust loss functions (beyond squared loss) and are of independent interest even for a single estimator (that is, in the non-ensemble setting with $M = 1$).

{
\textbf{Extension to anisotropic covariance and deterministic signal.}
In \Cref{sec:discussion}, we study the setting of an anisotropic design $(\bx_i\sim \cN(\bm 0_p, p^{-1}\bm\Sigma)$ for $\bm\Sigma\ne \bI_p$) with a deterministic signal $\btheta\in\R^p$, allowing for the possibility of non-separable regularizers.
Using a heuristic argument, we derive a new nonlinear system (\Cref{sys:general_ensemble-M=infty_sigma}) that generalizes \Cref{sys:general_ensemble-M=infty} to this more general regime. 
This new system is validated through numerical simulations (\Cref{fig:bagging_solution_cov}). 
Importantly, we prove that this generalized system retains the same
crucial contraction property (\Cref{th:contraction_Sigma}) that we established for the original system under isotropic covariance and
random signal case (\Cref{th:existence-uniqueness-sys:general_ensemble-M=infty}).
This property is essential, as it not only guarantees the uniqueness of the solution but also underpins the heuristic argument linking this new system to the asymptotic overlapping correlations between estimation errors (and residual errors) under anisotropic design with deterministic signals (\Cref{conjecture} and \Cref{proof:conjecture}). 
Finally, based on the new system, we examine through numerical simulations how structured covariance and deterministic signal affect the optimal subsample size and optimal subagged risk (\Cref{fig:opt_risk_aniso,fig:optimum_phis_overparameterized-2-optsubsample-ar1_aniso,fig:advantage_subsampling_underparameterized_aniso}).
These conclusions mirror those in \Cref{sec:ensembles-interpolators} mentioned above.
}

\subsection{Other related work}
\label{sec:related_work}

Resampling methods, such as bagging and subsampling, are widely used in statistics and machine learning.
Given their broad applicability, there is a vast literature on these methods.
In this section, we provide an overview of the literature related to the risk analysis of ensemble methods, particularly in high-dimensional regimes that have received considerable recent interest.

Classical work on bagging and subagging includes the work by \cite{breiman_1996,breiman2001using,buhlmann2002analyzing}, among others.
Beyond bagging, analysis of ensemble methods of different predictors includes smooth weak predictors \cite{buja2006observations,friedman_hall_2007}, nonparametric estimators \cite{buhlmann2002analyzing,loureiro2022fluctuations}, and classifiers \cite{hall_samworth_2005,samworth2012optimal}.
Historically, there are also early works by \cite{sollich1995learning,krogh1997statistical} on risk asymptotics for ridge ensembles under Gaussian features. 
We also mention here some other early work on ensembles, including: \cite{hansen1990neural,perrone1993putting,sollich1995learning,krogh1997statistical}.
For a comprehensive overview of early work on bagging and ensemble methods in general, we refer readers to \cite{patil2022bagging}.

Substantial progress has been made in the last decade in understanding the asymptotic behavior of regularized M-estimators in high-dimensional settings, particularly under the proportional asymptotic regime where the number of features scales with the number of observations.
Frameworks of Approximate Message Passing (AMP) (developed in a series of papers \cite{donoho2009message,donoho2011noise,bayati2011dynamics}), the Convex Gaussian Min-Max Theorem (CGMT) (developed in a series of papers \cite{oymak2013squared,oymak2016sharp,thrampoulidis2015regularized,thrampoulidis2018precise}), and leave-one-out (LOO) and martingale-based analysis common in Random Matrix Theory (RMT) (used in \cite{karoui_2013,karoui2018impact}, for example) have been instrumental in deriving the limiting test risk, often as solutions to (nonlinear) systems of self-consistent equations.
More specifically, these include analyses of unregularized estimators \cite{el2013robust,karoui_2013,karoui2018impact,donoho2016high,bean2013optimal}, ridge estimator \cite{dobriban_wager_2018}, lasso \cite{bayati2011lasso,miolane2021distribution}, bridge estimators \cite{weng2018overcoming}, logistic regression \cite{sur2019modern,mai2019large,salehi2019impact}, convex regularized M-estimators \cite{thrampoulidis2015regularized,thrampoulidis2018precise}, among others.
Recently, triggered by the empirical success of neural networks that interpolate, these risk analyses have been extended to interpolating estimators with vanishing regularization (in the overparameterized regimes that allow for interpolation), such as ridgeless \cite{hastie2022surprises}, lassoless \cite{li2021minimum}, max-margin interpolators \cite{montanari2019generalization,deng2022model, liang2022precise}; see the survey papers \cite{bartlett2021deep, belkin_2021} for other related references.

Beyond individual regularized M-estimators, there has now been considerable interest over the last few years in the analysis of ensembles of estimators in high-dimensional settings, especially in the overparameterized regime just mentioned.
In particular, \cite{lejeune2020implicit} consider least squares ensembles obtained by subsampling such that the final subsampled dataset has more observations than the number of features.
The work of \cite{patil2022bagging} provides the characterization of the asymptotic risk of ensembles of ridge regression using RMT results.
Furthermore, recent extensions by \cite{du2023subsample,patil2023generalized} expand the scope of these results by establishing risk equivalences for both optimal and suboptimal risks, considering arbitrary feature covariance and signal structures.
Other follow-up works for subagging of ridge and ridgeless regression include \cite{chen2023sketched,ando2023high}. 
This paper develops tools to study ensembles of regularized estimators with general loss and regularizers, beyond ridge regularization.
For instance, our theory accommodates $\ell_1$-regularized Huber regression. 

Another line of research focuses on ensemble methods involving random features and feature sketching rather than subsampling. 
In random features models, the effect of ensembling on various components of the risk has been studied in \cite{adlam2020understanding,d2020double,loureiro2022fluctuations}.
Recently, \cite{patil2023asymptotically} analyze ensembles of ridge regression with sketched features with asymptotically free sketching.
There are also analyses of alternative resampling and averaging techniques. 
For example, in the context of distributed learning, \cite{dobriban_sheng_2020,dobriban_sheng_2021,mucke_reiss_rungenhagen_klein_2022} consider the divide-and-conquer approach, or splagging (split aggregating), and investigate their properties for ridge and ridgeless predictors.

Very recently, \cite{clarte2024analysis} analyzed the limiting equations of several resampling schemes, including bootstrap and resampling without replacement, and characterized self-consistent equations for the limiting bias and variance functionals of estimators obtained by minimization of the negative log-likelihood plus an additive ridge penalty. 
This is related to our risk characterization as \cite{clarte2024analysis} also covers sampling without replacement, but our nonlinear systems (\Cref{sys:general_ensemble-M=1,sys:general_ensemble-M=infty}) characterizing the subagging risk do not appear explicitly in their work, which instead focuses on self-consistent equations for bias and variance functionals of the specific resampling scheme.
The results of \cite{clarte2024analysis} rely on the general AMP
analysis and state evolution laid out in \cite[Lemmas B.3 and B.5]{loureiro2022fluctuations}, generalizing \cite{bayati2011lasso}. 
This analysis requires the existence of unique solutions to the corresponding limiting system of equations, which is granted under strong convexity (e.g., with a ridge penalty), but was not established until the present paper
for the case of ensembling of subsampled regularized estimators.

Finally, complementary to risk characterization, there has also been considerable interest in the cross-validation and model selection of ensemble methods.
In particular, \cite{du2023subsample} study cross-validation for bagging of ridge regression.
\cite{bellec2023corrected} examine the consistency of generalized cross-validation (GCV) for estimating the prediction risk of arbitrary ensembles of regularized least squares estimators for strongly convex penalties.
They show that GCV is inconsistent for any finite ensemble of size greater than one and identify a correction to GCV that is consistent for any finite ensemble size, termed corrected GCV (CGCV).
In this paper, we generalize one of the data-dependent estimators proposed in \cite{bellec2023corrected} for the general setting of this paper, allowing for general convex losses and heterogeneous component estimators in the ensemble.
While we do not attempt to interpret the estimator as a corrected GCV for homogeneous ensembles in the general setting, it may be possible to perform such an analysis further, which we leave for future work.

The proof strategy in this paper extends the approach of \cite{bellec2024asymptotics}, which studies the bagging of unregularized M-estimators. 
While their analysis is based on a relatively simple 1-dimensional nonlinear system, the new \Cref{sys:general_ensemble-M=infty} below is 2-dimensional, introducing additional complexity to the analysis. 
The rise in complexity is similar to that from unregularized regression \cite{karoui_2013,donoho2016high,karoui2018impact} and its 2-dimensional system to the 4-dimensional system of regularized M-estimators \cite{thrampoulidis2018precise} given in \Cref{sys:general_ensemble-M=1}. 
One challenge arises from the stochastic control of the trace terms in \eqref{eq:nu-kappa-convergence}.
In the unregularized case, these trace terms can be approximated by a straightforward product of the norms of the error vector and residuals (see \cite[Lemma 5.7]{bellec2024asymptotics}), and the stochastic behavior of these norms are well-understood in the existing literature (see, e.g., \cite{thrampoulidis2018precise, loureiro2021learning}). 
However, in the regularized case, the trace functional cannot be approximated by such a simple expression, which prevents the direct application of these existing results based on the CGMT. 
We overcome this by showing that, with high probability, the trace term is a stationary point of a certain (random) strongly convex function. 
This allows us to control the perturbation of the trace term through the behavior of the convex function (see \Cref{subsec:proof_convergence_trace}).

\subsection{Notation}
\label{sec:notation} 
For a natural number $n$, the shorthand notation $[n]$ denotes the set $\{ 1, \dots, n \}$.
For two real numbers $x$ and $y$, we use $x \wedge y$ to denote $\min\{x,y\}$. 
For a univariate function $f$ and a vector $\ba \in \RR^{n}$, with a slight overload of notation, we use $f(\ba) \in \RR^{n}$ to denote the component-wise application of $f$ to $\ba$. 
For any proper, closed, convex function $f \colon \RR \to \R\cup\{+\infty\}$, the proximal operator and Moreau envelope of $f$ with a parameter $\tau > 0$ at a point $x \in \RR$ are, respectively, denoted as:
$$
    \textstyle
    \prox_{f}(x; \tau) \coloneq \argmin_{y \in \RR} f(y) + \frac{1}{2\tau} (x - y)^2 \ \text{ and } \ 
    \env_{f}(x; \tau) \coloneq \min_{y \in \RR} f(y) + \frac{1}{2\tau} (x - y)^2.
$$
For a proper, closed, convex $f$, the $\argmin$ exists and is unique, and consequently $x \mapsto \prox_f(x; \tau)$ is a well-defined function.
Let $\partial f$ denote the subdifferential of $f$, which is the set of all subgradients of $f$.
We note two key relationships between the proximal operator, subdifferential, and Moreau envelopes of $f$ below for the reader's convenience  (see, e.g., \cite[Section 3]{parikh2014proximal}).
{By Danskin's theorem \cite[Proposition B.22]{bertsekas2016nonlinear} and the KKT conditions, it holds that:}
\begin{equation}
    \textstyle
    \label{eq:prox_subdiff_relation}
\frac{\partial}{\partial x} \env_{f}(x;\tau) =  \frac{1}{\tau}\bigl(x- \prox_f(x; \tau) \bigr) \in  \partial f(\prox_f(x; \tau)). 
\end{equation}
For simplicity, we often use $\env_f'(x;\tau)$ to denote the partial derivative $\frac{\partial}{\partial x} \env_{f}(x;\tau)$.
Finally, we use $\Op$ and $\op$ to denote probabilistic big-O and little-o notation, respectively, while the convergences in probability are denoted by $\pto$. For the reader's convenience, we also give a quick overview of the specific notation used in this paper in \Cref{tab:notation}.

\section{Setup and assumptions}
\label{sec:setup}

We consider the standard supervised regression setting, in which we observe $n$ data points $(\bx_i, y_i)$ for $i \in [n]$.
The feature matrix $\bX \in \RR^{n\times p}$ contains $\bx_{i}^\top$ in its $i$-th row and the response vector $\by \in \RR^n$ contains $y_{i}$ in its $i$-th entry. 
We assume the following distribution on the dataset $(\bX, \by)$:
\begin{assumption}
    [Data distribution]
    \label{asm:linear_model}
    The distribution of ($\bX$, $\by$) is specified by:
    \begin{enumerate}[leftmargin=7mm]
        \item 
        The design matrix $\bX \in \RR^{n \times p}$ has i.i.d.\ entries drawn from $\cN(0,1/p)$.
        \item
        The response vector $\by = \bX \btheta + \bm{\eps}$, where $\btheta \in \RR^{p}$ is a random signal vector and $\bm{\eps} \in \RR^{n}$ is a random noise vector, both independent of each other and of $\bX$, with:\newline
        ~a) The signal vector $\btheta \in \RR^{p}$ has i.i.d.\ entries drawn from distribution $F_{\theta}$. 
        \newline
        ~b) The noise vector $\bm{\eps} \in \RR^{n}$ has i.i.d.\ entries drawn from distribution $F_\eps$.
    \end{enumerate}
\end{assumption}

The above assumption implies that the covariates $\bx_i = \bX^\top\be_i\in\R^p$ are sampled from the standard Gaussian distribution $\cN(\bm{0}_p, p^{-1}\bI_p)$, and the ground truth signal $\btheta$ has i.i.d. entries. 
While our main theorems are formally established under these conditions, we provide strong evidence of their broader applicability.
Numerical simulations in \Cref{fig:effect-of-M-cov} suggest that our results extend beyond Gaussian design and exhibit universality. 
Furthermore, in \Cref{sec:discussion}, we develop an extension of our framework to the important case of anisotropic design ($\bx_i \sim \cN(\bm{0}_p, p^{-1}\bm{\Sigma})$ with $\bm{\Sigma} \neq \bI_p$) with deterministic signal $\btheta$, which is numerically validated in \Cref{fig:bagging_solution_cov}.

We subsample the dataset $(\bX, \by)$ to create $M$ subsampled datasets. 
Towards that end, define $M$ subsample index subsets $I_m \subset[n]$ of cardinality $k_m = |I_m|$ for $m \in [M]$.
The feature matrix and response vector associated with the subsampled dataset $(\bx_i, y_i)$ for $i \in I_m$ are denoted as $(\bX_{\setm}, \by_{\setm})$.
We assume the following sampling strategy for the subsample index sets $\{ I_m \}_{m \in [M]}$:
\begin{assumption}
    [Subsampling strategy]
    \label{assu:sampling}
    Given deterministic integers $\{ k_m \ge 1\}_{m \in [M]}$, the $M$ subsample index sets $\{I_m\}_{m\in[M]}$ are independent of $(\bX, \by)$ and are independently sampled from the uniform distribution over subsets of $[n]$ with cardinality $k_m$ for each $m\in [M]$. 
\end{assumption}
It is worth noting that if $I_m$ and $I_\ell$ (for $m \neq \ell$) are any two independent subsample sets of cardinality $k_m$ and $k_\ell$ per \Cref{assu:sampling}, then the cardinality of intersection $| I_m \cap I_\ell |$ follows a hypergeometric distribution with mean $k_m k_\ell/ n$.
Using the properties of the hypergeometric distribution, it follows that $|I_m \cap I_\ell | / n \pto c_m c_\ell$ as both $n, k_m, k_\ell, \to \infty$ with the subsample ratios $k_m/n \to c_m$ and $k_\ell/n \to c_\ell$ for some $c_m, c_\ell \in (0, 1]$ (this follows from Chebyshev's inequality and the variance formula of the hypergeometric distribution, see Section S.8.1 of \cite{patil2022bagging} for more details).
Intuitively, each sample lands in a subsample $I_m$ with probability $c_m$ (the limiting ratio $|I_m|/n$) and in the overlap of two subsamples with probability $c_m c_\ell$ (the limiting ratio $| I_m \cap I_\ell | / n$), as the subsamples are drawn independently.
The overlap between any two subsample sets $I_m$ and $I_\ell$ is thus of order $n$ with high probability.
The randomness in subsampling in \Cref{assu:sampling} is not important.
Our results can accommodate deterministic sampling where the subsample sets $\{ I_m \}_{m \in [M]}$ are selected deterministically, provided that the ratios $|I_m|/n$, $I_\ell/n$, and $|I_m \cap I_\ell|/n$ converge to non-zero constants.

For each subsampled dataset $(\bX_{\setm}, \by_{\setm})$ for $m \in [M]$, we define the regularized M-estimator 
\begin{equation}
    \label{eq:def-hbeta}
    \hat \bbeta_m(I_m) 
    \in
    \argmin_{\signal \in \R^p} \sum_{i\in I_m} \loss_m (y_i - \bx_i^\top \signal) + \sum_{j\in[p]} \reg_m(\signal_j).
\end{equation}
When defining \eqref{eq:def-hbeta}, we allow the $\argmin$ operator to return any one of the minimizers (as emphasized by the element notation in \eqref{eq:def-hbeta}).
Here $\loss_m$ and $\reg_m$ are the loss and regularization functions that satisfy the following assumption for all $m \in [M]$:
\begin{assumption}
    \label{assu:loss_penalty}
    The loss function $\loss \colon \RR \to \R$ is proper, closed, convex, and differentiable with derivative $\loss'$ Lipschitz continuous, $\min_x\loss(x)$ $ = \loss(0)$. 
    The function $\reg \colon \RR \to \R\cup\{+\infty\}$ is proper, closed, convex and $\min_x\reg(x) = \reg(0)$. 
\end{assumption}

The final ensemble estimator, constructed using the component estimators \eqref{eq:def-hbeta}, is defined as:
\begin{equation}
    \textstyle
    \label{eq:def-M-ensemble}
    \tbeta_{M} \big(\{I_m\}_{m\in[M]}\big)
    \coloneq \frac{1}{M} \sum_{m\in[M]} \hbeta_m(I_m).
\end{equation}
For brevity, we omit the dependency of the component and ensemble estimators on $I_m$ and $\{ I_{m} \}_{m\in[M]}$ and simply write $\hat\bbeta_m$ and $\tbeta_M$, respectively, when it is clear from the context.
We evaluate the performance of the ensemble estimator $\tbeta_{M}$ via:
\begin{equation*}
    \textstyle
    R_{M}
    \coloneq
    \frac{1}{p} \| \tbeta_{M} - \btheta \|_2^2.
\end{equation*}
Note that $R_{M}$ is the (excess) out-of-sample squared error in our setup because of isotropic features:
For an independently sampled test feature $\bx_0 \in \RR^p$ with i.i.d.\ entries drawn from $\cN(0, 1/p)$, we have $R_M = \EE[(\bx_0^\top \tilde\btheta_M - \bx_0^\top \btheta)^2 \mid \by, \bX, \{ I_m \}_{m \in [M]}]$. 
We will refer to $R_M$ as the risk of the ensemble.
Observe that $R_{M}$ is a scalar random variable that depends on both the dataset $(\bX, \by)$ and the random samples $I_{m}$ for $m\in [M]$.
The goal of the paper is to characterize the asymptotic behavior of this random variable $R_M$ under the proportional asymptotic regime.
In this regime, the sample size $n$, feature size $p$, and subsample size $k_m$ all diverge while keeping the appropriate ratios fixed:
we will assume the inverse data aspect ratio $n/p \to \delta \in (0, \infty)$ and for each $m\in[M]$, the subsample ratio $k_m/n \to c_m \in (0,1]$ as $n, p, k_m \to \infty$.

\section{Risk characterization and estimation}
\label{sec:general-risk-characterization}

In this section, we will first describe a general technical result on the correlations of the error and residual vectors for regularized M-estimator in \Cref{sec:main-result}.
We then state our general result on the risk characterization of the ensemble estimator in \Cref{sec:ensemble-risk-asymptotics} and construct a consistent risk estimator for this risk in \Cref{sec:risk-estimation}.

\subsection{Asymptotics of correlations of estimator and residual errors}
\label{sec:main-result}

To state the risk characterization of the ensemble estimator, we first introduce two important nonlinear systems of equations: \Cref{sys:general_ensemble-M=1,sys:general_ensemble-M=infty}.
Intuitively, these systems correspond to the corner cases where the ensemble size $M = 1$ and $M = \infty$, respectively.
As we shall see in \Cref{sec:ensemble-risk-asymptotics}, these systems completely determine the risk asymptotics of the ensemble estimator \eqref{eq:def-M-ensemble}.

\addtocounter{system}{-1}
\renewcommand{\thesystem}{1a}
\begin{system}
    [Error norms of individual regularized M-estimators]
    \label{sys:general_ensemble-M=1}
    Given a triple $(\loss,\reg,c \delta)$ where $c \delta \in (0, \infty)$ and  $\loss$, $\reg$ are convex functions, define the following 4-scalar system of equations in variables $(\alpha, \beta, \kappa, \nu) \in \R_{>0}^4$:
    \begin{subequations}
    \label{eq:CGMT-1}
    \begin{alignat}{1}
        \alpha^2 &= 
        \E \big[
          \big(
            \tfrac{1}{\nu} \env_{\reg}' ( \Theta + \tfrac{\beta}{\nu} H; \tfrac{1}{\nu} ) - \tfrac{\beta}{\nu} H
          \big)^2
        \big] 
        \label{eq:CGMT-1a}\\
        \beta^2 &= 
        \E\big[\env_{\loss}'(Z + \alpha G; \kappa)^2\big] \cdot c \delta
        \label{eq:CGMT-1b} \\
        \kappa\beta &= 
         \E\big[
         \big(
          \tfrac{1}{\nu} \env_{\reg}' (\Theta + \tfrac{\beta}{\nu} H ; \tfrac{1}{\nu} ) 
          -
          \tfrac{\beta}{\nu} H
          \big)
          \cdot (-H)
        \big] 
        \label{eq:CGMT-1c}\\
        \nu\alpha &= 
        \E\big[\env'_{\loss}(Z + \alpha G; \kappa)\cdot G\big] \cdot c \delta
        \label{eq:CGMT-1d}
    \end{alignat}
    \end{subequations}
    where $H\sim \cN(0,1)$, $G\sim \cN(0,1)$, $\Theta \sim F_\theta$, $Z \sim F_\eps$, all mutually independent.
\end{system}

\Cref{sys:general_ensemble-M=1} can be found in the literature, specifically in \cite[Equation 15]{thrampoulidis2018precise}.
To be precise, we are applying the result of \cite{thrampoulidis2018precise} on the subsample estimator \eqref{eq:def-hbeta} using $k=|I_m|$ observations with $k/n\to c$, so that the limiting inverse aspect ratio $k / p = k / n \cdot n / p \to c \delta$.
\Cref{sys:general_ensemble-M=1} is known to characterize the limit in probability of the risk of \eqref{eq:def-hbeta} when $|I_m|/p \to c\delta$: if $(\alpha,\beta,\kappa,\nu)$ is a solution to \Cref{sys:general_ensemble-M=1}, then $p^{-1} \|\hbbeta_m - \bbeta^*\|^2 \pto \alpha^2$.
The existence and uniqueness of the fixed-point parameters in this system are central to applying results from the Convex Gaussian Min-Max Theorem (CGMT) to derive precise risk characterizations for regularized M-estimators (under proportional asymptotics).
This is guaranteed under conditions where both $\loss$ and $\reg$ are Lipschitz and the problem parameters are such that the perfect signal recovery is not possible, leading to non-zero asymptotic risk.
For a detailed discussion on these conditions, see \cite{bellec2023existence}.
Next, we describe our second system for risk characterization of the ensemble estimators.

\renewcommand{\thesystem}{1b}
\begin{system}
    [Error correlations of overlapped regularized M-estimators]
    \label{sys:general_ensemble-M=infty}
    Given $c, \tilde c \in (0, 1]$ and convex pairs of functions $(\loss, \reg)$, $(\tilde\loss, \tilde\reg)$, let $(\alpha, \beta, \kappa, \nu)$ and $(\tilde\alpha, \tilde\beta, \tilde\kappa, \tilde\nu)$ be parameters that satisfy \Cref{sys:general_ensemble-M=1} with $(\loss, \reg, c\delta)$ and $(\tilde\loss, \tilde\reg, \tilde c\delta)$, respectively. 
    Define the following 2-scalar system of equations in variable $(\etaG, \etaH) \in [-1,1]^2$:
    \begin{equation}
    \label{eq:contraction-map}
    {
        \etaG = F_{\reg}(\etaH) ,
        \qquad
        \etaH = F_\loss(\etaG)
    }
    \end{equation}
    where $F_\loss,F_\reg \colon [-1,1] \to \RR$ are functions defined as:
    \small
   \begin{subequations} \label{eq:CGMT-2}
       \begin{empheq}{align}
           F_{\loss}(\etaG) & \coloneq
          {\tfrac{c\tilde c \delta}{\beta\tilde\beta}} {\E \big[
          \env_{\loss}'(Z + \alpha G; \kappa) \cdot \env_{\tilde \loss}'(Z + \tilde \alpha \tilde G; \tilde \kappa)
        \big]} \label{eq:eta_def} \\ 
        \textstyle
           F_{\reg}(\etaH) & \coloneq
           {\tfrac{1}{\alpha\tilde\alpha}}
       {\E \bigl[
          \bigl(
            \tfrac{1}{\nu} \cdot \env_{\reg}' \bigl(\Theta + \tfrac{\beta}{\nu} H; \tfrac{1}{\nu}\bigr) - \tfrac{\beta}{\nu} H
          \bigr)
          \cdot 
        \bigl(
            \tfrac{1}{\tilde \nu} \cdot \env_{\tilde \reg}' \bigl(\Theta + \tfrac{\tilde \beta}{\tilde \nu} \tilde H ; \tfrac{1}{\tilde \nu}\bigr) - \tfrac{\tilde \beta}{\tilde \nu}\tilde  H
          \bigr)
        \bigr]} \label{eq:etaH_def}
       \end{empheq}
   \end{subequations}
   \normalsize
    {
    where the four random vectors $(G,\tilde G)$, $(H,\tilde H)$, $\Theta$ and
    $Z$ are all mutually independent with
    }
    \begin{equation}
    \begin{pmatrix}
    G\\
    \tilde{G}
    \end{pmatrix} \sim \cN \biggl(\begin{bmatrix}
    0 \\
    0
    \end{bmatrix},
    \begin{bmatrix}
    1 & \etaG\\
    \etaG & 1
    \end{bmatrix}\biggr), 
    \begin{pmatrix}
    H\\
    \tilde{H}
    \end{pmatrix} \sim \cN \biggl(\begin{bmatrix}
    0 \\
    0
    \end{bmatrix}, \begin{bmatrix}
    1 & \etaH\\
    \etaH & 1
    \end{bmatrix}\biggr),
    \quad\Theta \sim F_\theta,
    Z \sim F_\eps.
    \label{distrib}
    \end{equation}
\end{system}

\Cref{sys:general_ensemble-M=infty} is new and one of the main contributions of this paper. 
Note that the parameters $(\etaG,\etaH)$ in the system are correlation parameters (up to scaling factors) of the two random variables visible inside expectations in \eqref{eq:eta_def} and of the two random variables in \eqref{eq:etaH_def}, respectively.
By the Cauchy--Schwarz inequality, the function $F_\loss$ and $F_\reg$ are uniformly bounded as $|F_\loss(\eta)| \le \sqrt{c\tilde c}$ and $|F_\reg(\eta)| \le 1$ so that any solution $(\etaG,\etaH)$ to the system \eqref{eq:contraction-map} lies in the set $[-1,1] \times [-\sqrt{c\tilde c}, \sqrt{c\tilde{c}}]$.
As stated, it is not immediately clear if \Cref{sys:general_ensemble-M=infty} admits any solution and whether it is unique.
Our first result establishes that this is indeed the case:

\begin{theorem}
    [Existence and uniqueness of solutions to \Cref{sys:general_ensemble-M=infty}]
    \label{th:existence-uniqueness-sys:general_ensemble-M=infty}
    Let $c,\tilde c\in (0,1]$.
    The functions $F_{\loss}$ and $F_{\reg}$ defined in \eqref{eq:CGMT-2} satisfy the following properties:
\begin{enumerate}[leftmargin=7mm]
    \item
    $|F_{\loss}(\etaG)| \le \sqrt{c\tilde c}$ and $|F_{\reg}(\etaH)| \le 1$ for all $\etaG\in [-1,1]$ and $\etaH\in [-1,1]$. 
    \item
    $F_\loss$ and $F_\reg$ are non-decreasing, differentiable, and the compositions $F_\loss \circ F_\reg$ and $F_\reg\circ F_\loss$ are $\min\{c, \tilde c\}$-Lipschitz.
    \item
    \Cref{sys:general_ensemble-M=infty} admits a unique solution $(\etaGstar,\etaHstar)
    \in [-1, 1] \times [-\sqrt{c\tilde c},\sqrt{c\tilde c}]$.
\end{enumerate}
\end{theorem}

Some remarks on \Cref{th:existence-uniqueness-sys:general_ensemble-M=infty} are in order.
Among the properties listed in \Cref{th:existence-uniqueness-sys:general_ensemble-M=infty}, the most interesting is the second property: the two maps ($F_\loss\circ F_\reg$, $F_\reg\circ F_\loss$) are strict contractions if {$\min\{c, \tilde c\} < 1$}.
Given this property, the third property (the uniqueness and existence of the solution) easily follows.  
We briefly explain this next.
Indeed, if $(\etaHstar, \etaGstar)$ is a solution to \Cref{sys:general_ensemble-M=infty}, then $\etaHstar$ automatically satisfies the following 1-dimensional fixed-point equation:
$$
    \etaHstar = F_\loss(\etaGstar) = F_\loss\circ F_\reg(\etaHstar).
$$
The other direction is also true in the following sense: if $\etaHstar$ is a solution to the fixed-point equation $\etaH = F_\loss\circ F_\reg(\etaH)$, then letting $\etaGstar = F_\reg(\etaHstar)$, we observe that the pair $(\etaHstar, \etaGstar)$ satisfies \Cref{sys:general_ensemble-M=infty}.
Since $F_\loss\circ F_\reg$ is continuous and maps $[-1,1]$ to itself, Brouwer's fixed-point theorem implies that such $\etaHstar$ exists.
Uniqueness of the solution is straightforward if $\min(c,\tilde c)<1$ as the
Lipschitz constant of $F_\loss\circ F_\reg$ is strictly less than 1;
uniqueness in the case $c=\tilde c = 1$ still holds but requires a more subtle
argument.
See \Cref{subsec:proof-exisntence-uniqueness} for the full proof details.
This contraction property also certifies that the fixed-point iteration algorithm $\etaH^{(k+1)} = F_\loss\circ F_\reg(\etaH^{(k)})$, which we use in our experiments to solve \Cref{sys:general_ensemble-M=infty}, numerically converges to the correct solution $\etaHstar$ exponentially fast {if $\min(c,\tilde c)<1$}. {\Cref{sec:correlation_signs} discusses the signs of $\etaGstar,\etaHstar$.}

We next show that the correlation parameters $(\etaG,\etaH)$ from \Cref{sys:general_ensemble-M=infty} are the limiting correlations between the estimator and residual errors of estimators trained on overlapped samples.
To do so, besides \Crefrange{asm:linear_model}{assu:loss_penalty}, we will need mild regularity conditions that the $\loss$ and $\reg$ in \Cref{assu:loss_penalty} need to satisfy in relation to the distribution $F_\theta$ and $F_\eps$ of the signal and noise coordinates in \Cref{asm:linear_model}.

\begin{assumption}
    [Regularity conditions]
    \label{asm:regularity-conditions}
    Let $\Theta \sim F_\theta$ and $Z \sim F_\eps$ be the signal and noise random variables as in \Cref{asm:linear_model}, and $G \sim \cN(0, 1)$, $H \sim \cN(0,1)$, all mutually independent. In addition to \Cref{assu:loss_penalty}, the functions $\loss : {\RR} \to \R$ and $\reg : {\RR} \to \R\cup\{+\infty\}$ satisfy the following:
    \begin{enumerate}[leftmargin=7mm]
        \item 
        For all $c\in \R$, we have
        $
            \E[\loss_{+}'(cG+Z)^2]<+\infty$ and $\E[\reg_{+}'(cH+\Theta)^2] <+\infty
        $
        where we define $f_{+}'(x) := \sup_{s\in \partial f(x)}|s|$ for any convex function $f$.
        \item $\PP(\Theta\ne 0) > 0$.
        \item 
        \Cref{sys:general_ensemble-M=1} admits a unique positive solution $(\alpha, \beta, \kappa, \nu)\in \R_{>0}^4$. 
        \item
        There exists an interval $\mathcal{I}\subset \R$ where $\loss'$ is strictly increasing. 
        {The support of $F_z$ is $\R$.}
    \end{enumerate}
\end{assumption}

The conditions in \Cref{asm:regularity-conditions} are similar to those assumed in \cite[Theorem 4.1]{thrampoulidis2018precise} when characterizing the asymptotics for the non-ensemble case.
The main difference is that we do not require the second moment of $\Theta$ to be finite. 
These conditions ensure that the individual estimator and residual error norms converge, i.e., $\|\hat\btheta - \btheta\|_2^2 / p \pto \alpha^2$ and $\|\loss'(\by - \bX \hat\btheta)\|_2^2/p \pto \beta^2$ hold, where $\alpha$ and $\beta$ are solutions to \Cref{sys:general_ensemble-M=1}. 
(See \Cref{subsec:cgmt_assumption,subsec:convergence_psi} for proofs of convergences of error vector and loss gradient norm squared under the relaxation of the conditions.) 

While \Cref{asm:regularity-conditions}-(3) posits the existence and uniqueness of a solution to \Cref{sys:general_ensemble-M=1}, this has been formally established under slightly stronger assumptions in \cite{bellec2023existence}, specifically assuming Lipschitz loss and penalty.
We conjecture that the rest of the assumptions, namely \Cref{asm:regularity-conditions}-(1,2,4), are sufficient for the existence and uniqueness of a solution to \Cref{sys:general_ensemble-M=1} without requiring a Lipschitz condition on the loss and penalty.
Extending the analysis in \cite{bellec2023existence} to allow a pseudo-Lipschitz condition is a promising starting point to confirm this conjecture.

For the upcoming statement, recall that when used on a vector, the $\loss$ and $\reg$ functions are assumed to be operated component-wise.
In addition, we denote the feature matrix and response vector associated with the ``overlapped'' dataset $(\bx_i,y_i)$ for $i \in I \cap \tilde I$ using $(\bX_{I \cap \tilde I}, \by_{I \cap \tilde I})$.

\begin{theorem}
    [Estimator and residual errors correlations characterization]
    \label{thm:corr-sigerror-reserror}
    Let $\hat{\btheta}_{I}$ and $\hat{\btheta}_{\tilde I}$ be component estimators \eqref{eq:def-hbeta} trained on subsamples $(\bX_I,\by_I)$ and $(\bX_{\tilde I}, \by_{\tilde I})$ corresponding to index sets $I$ and $\tilde I$ with parameters $(\loss,\reg)$ and $(\tilde \loss, \tilde \reg)$.
    Under \Crefrange{asm:linear_model}{asm:regularity-conditions}, as $n, p, k, \tilde{k} \to \infty$ with $n/p \to \delta \in (0, \infty)$, $k/n \to c \in {(0, 1]}$ and $\tilde{k}/n \to \tilde{c} \in {(0,1]}$,
    \begin{equation}
    \begin{split}
       p^{-1} (\hat{\btheta}_{I}-\btheta)^\top (\hat{\btheta}_{\tilde I}-\btheta) &\pto \etaG \alpha \tilde\alpha, \\
        p^{-1} \loss'(\by_{I \cap \tilde I} - \bX_{I \cap \tilde I} \, \hat{\btheta}_{I})^\top \, \tilde \loss'(\by_{I \cap \tilde I} - \bX_{I \cap \tilde I} \, \hat{\btheta}_{\tilde I}) &\pto \etaH \beta\tilde\beta, 
    \end{split}
       \label{eq:correlation-limits}
    \end{equation}
    where $(\etaG, \etaH)$ is the solution to \Cref{sys:general_ensemble-M=infty}. 
    Furthermore, for any $i\in {[n]}$ and $j\in [p]$, there exists a jointly normal $(G_i, \tilde G_i)$ with mean $0$, variance $1$ and correlation $\etaG$ and $(H_j, \tilde{H}_j)$ with mean $0$, variance $1$ and correlation $\etaH$ (as in \Cref{sys:general_ensemble-M=infty}) such that the residuals and estimators are jointly approximated as follows:
    {\begin{subequations}
    \begin{alignat}{1}
        \max_{j\in [p]} 
        \E 
        \biggl[
            1 ~\wedge~ 
            \Bigl\|
            \begin{pmatrix}
                \be_j^\top \hat\btheta_I \\
                \be_j^\top  \hat\btheta_{\tilde I}
            \end{pmatrix}-
            \begin{pmatrix}
                \prox_{\reg}(\theta_j+ \tfrac{\beta}{\nu} H_j; \tfrac{1}{\nu})\\
                \prox_{\tilde \reg}(\theta_j+ \tfrac{\tilde\beta}{\tilde \nu} \tilde{H}_j; \tfrac{1}{\tilde{\nu}})
            \end{pmatrix}
            \Bigr\|_2^2 
        \biggr] 
        &= o(1), \label{eq:joint-distribution-estimators} \\
        \max_{i\in [n]} 
        \E
        \biggl[
            \ind_{i\in I\cap \tilde I}
             ~\wedge~ \Bigl\|
            \begin{pmatrix}
                y_i - \bx_i^\top\hat{\btheta}_{I}\\
                y_i - \bx_i^\top\hat{\btheta}_{\tilde I}
            \end{pmatrix}
            - 
            \begin{pmatrix}
                \prox_{\loss}(z_i +\alpha G_i; \kappa)\\
                \prox_{\tilde \loss}(z_i + \tilde\alpha \tilde G_i; \tilde\kappa)
            \end{pmatrix}
            \Bigr\|_2^2
        \biggr] &= o(1) \label{eq:joint-distribution-residuals}.
    \end{alignat}
    \end{subequations}}
\end{theorem}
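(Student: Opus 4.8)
The plan is to run a two-estimator Convex Gaussian Min-Max Theorem (CGMT) argument on the concatenated optimization problem for $(\hat\btheta_I, \hat\btheta_{\tilde I})$, reducing the joint behavior of the two estimators trained on overlapping subsamples to a low-dimensional Gaussian auxiliary problem whose saddle point is governed by \Cref{sys:general_ensemble-M=infty}. First I would reduce to the case $I\subseteq \tilde I$ or, more conveniently, decompose $[n]$ into the three disjoint blocks $I\setminus\tilde I$, $\tilde I\setminus I$, and $I\cap\tilde I$; by \Cref{assu:sampling} each block has size concentrating at $n$ times $c(1-\tilde c)$, $(1-c)\tilde c$, and $c\tilde c$ respectively. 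Writing the two stationarity conditions for $\hat\btheta_I$ and $\hat\btheta_{\tilde I}$ and subtracting out $\btheta$, I would set $\bv = \hat\btheta_I - \btheta$, $\tilde\bv = \hat\btheta_{\tilde I}-\btheta$, and note that the Gaussian design rows appearing in the shared block $I\cap\tilde I$ are common to both problems while the rows in the private blocks are independent. Conditioning on the design in the private blocks and applying CGMT to the shared Gaussian block (a single matrix $\bX_{I\cap\tilde I}\in\RR^{|I\cap\tilde I|\times p}$ acting on the pair $(\bv,\tilde\bv)$) converts the bilinear term into two Gaussian vectors whose correlation structure is exactly captured by the overlap fraction $c\tilde c$; the private blocks contribute independent Gaussian terms. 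After the Gaussian comparison, the auxiliary problem separates coordinate-wise over $j\in[p]$ in the regularizer direction and sample-wise over $i$ in the loss direction, producing Moreau-envelope expressions $\prox_{\reg}(\theta_j + \tfrac{\beta}{\nu}H_j;\tfrac1\nu)$ and $\prox_{\loss}(z_i+\alpha G_i;\kappa)$, with the off-diagonal correlations $(\etaH,\etaG)$ of the Gaussians $(H_j,\tilde H_j)$ and $(G_i,\tilde G_i)$ pinned down by the coupled fixed-point relations $\etaH = F_\loss(\etaG)$ and $\etaG = F_\reg(\etaH)$ — this is where \Cref{th:existence-uniqueness-sys:general_ensemble-M=infty} enters to guarantee the auxiliary problem has a well-defined unique saddle point.

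Concretely, the individual-problem scalars $(\alpha,\beta,\kappa,\nu)$ and $(\tilde\alpha,\tilde\beta,\tilde\kappa,\tilde\nu)$ are already fixed by \Cref{sys:general_ensemble-M=1} (whose convergence is guaranteed under \Cref{asm:regularity-conditions}), so the only new unknowns in the joint problem are the two cross-correlation parameters. I would write the cross moment $p^{-1}\bv^\top\tilde\bv$ in the auxiliary problem, which becomes $\alpha\tilde\alpha\,\EE[\text{(centered prox)}_j\cdot(\text{centered prox})_{\tilde j}]$ over the correlated Gaussians, and similarly express $p^{-1}\loss'(\by_{I\cap\tilde I}-\bX_{I\cap\tilde I}\hat\btheta_I)^\top\tilde\loss'(\by_{I\cap\tilde I}-\bX_{I\cap\tilde I}\hat\btheta_{\tilde I})$ as $\beta\tilde\beta\,\EE[\env_\loss'\cdot\env_{\tilde\loss}']$ with correlation $\etaG$ (the factor $\sqrt{c\tilde c}$ coming from restricting to the overlap block of relative size $c\tilde c$). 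Matching these against the definitions \eqref{eq:eta_def}--\eqref{eq:etaH_def} gives exactly the claimed limits $\etaG\alpha\tilde\alpha$ and $\etaH\beta\tilde\beta$, and the local (coordinatewise/samplewise) approximations \eqref{eq:joint-distribution-estimators}--\eqref{eq:joint-distribution-residuals} follow from the uniform-over-coordinates convergence that CGMT provides for the separable auxiliary problem, combined with a truncation at $1$ to handle heavy-tailed $F_\theta$ (since we only assume $\PP(\Theta\neq 0)>0$, not a second moment). The non-strong-convexity of $\reg$ is handled, as the paper indicates, by a ridge-smoothing step: perturb $\reg$ to $\reg + \tfrac{\rho}{2}(\cdot)^2$, run the argument, then send $\rho\to 0^+$, using the contractility in \Cref{th:existence-uniqueness-sys:general_ensemble-M=infty}-(2) to show the smoothed solutions converge.

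The main obstacle I anticipate is the stochastic control of the trace/correlation functionals in the shared block — specifically, showing that $p^{-1}\loss'(\cdot)^\top\tilde\loss'(\cdot)$ actually concentrates at its CGMT-predicted value. Unlike the unregularized setting of \cite{bellec2024asymptotics}, where this inner product factorizes into a product of residual norms that are directly controlled by known CGMT norm estimates, here the regularizer couples all coordinates and the trace functional cannot be read off from a single scalar; one must instead show (as the introduction foreshadows) that with high probability this quantity is the stationary point of an auxiliary random strongly convex function, and then transfer perturbation bounds through that function — this is the content I would isolate into a separate lemma (the analogue of \Cref{subsec:proof_convergence_trace}). A secondary technical point is ensuring the CGMT "pointwise-to-uniform" upgrade survives the two-estimator setting and the non-compact parameter space induced by heavy-tailed signals; I would handle this by working on the high-probability event where both individual estimators lie in a bounded ball (guaranteed by the $M=1$ theory), intersecting with the overlap-size concentration event, and applying CGMT on that event.
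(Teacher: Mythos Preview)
Your approach differs substantially from the paper's. The paper does \emph{not} run a joint CGMT on the pair $(\hat\btheta_I,\hat\btheta_{\tilde I})$. After the ridge-smoothing and norm-truncation steps (\Cref{sec:ridge_smoothing,sec:bound_norm}), it uses the Gaussian Poincar\'e inequality to show that $\hat\bh^\top\tilde\bh$ and $\bpsi^\top\tilde\bpsi$ concentrate around their conditional expectations given $(\btheta,\bz,I,\tilde I)$ (\Cref{lm:gaussian_Poincare_hpsi}); it then applies a multivariate second-order Stein formula (\Cref{lm:approx_multi_normal}, taken from \cite{bellec2024asymptotics}) column-by-column and row-by-row to the design $\bG$ to construct jointly Gaussian $(H_j,\tilde H_j)$ and $(G_i,\tilde G_i)$ whose correlations are the \emph{random} truncated conditional expectations $(\hetaH,\hetaG)$, and obtains the proximal representations with these random correlations. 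Substituting back into the inner products gives the approximate fixed-point relations $\hetaG = F_\reg(\hetaH)+o_p(1)$ and $\hetaH = F_\loss(\hetaG)+o_p(1)$, and the contractility from \Cref{th:existence-uniqueness-sys:general_ensemble-M=infty} then forces $(\hetaG,\hetaH)\pto(\etaG,\etaH)$. The trace convergences $\tr[\bA]\pto\kappa$, $\tr[\bV]/p\pto\nu$---needed to replace random coefficients by deterministic limits inside the proximal approximation---are proved separately in \Cref{subsec:proof_convergence_trace} and are themselves a new contribution. So the paper never builds a joint auxiliary optimization; the contraction does the identification work that a CGMT saddle-point analysis would otherwise have to do.

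Your CGMT route is explicitly acknowledged in the paper as a possible alternative (via the conditional CGMT of \cite{celentano2021cad}), but your sketch has a real gap at the key step. The shared block $\bX_{I\cap\tilde I}$ enters \emph{both} optimization problems, so you face two bilinear forms $\bu^\top\bX_{I\cap\tilde I}\bv$ and $\tilde\bu^\top\bX_{I\cap\tilde I}\tilde\bv$ sharing the same Gaussian matrix; standard CGMT handles one bilinear form, and stacking produces an auxiliary problem whose $2\times2$ block structure does not separate without already knowing the cross-correlations you are trying to find. Your line ``converts the bilinear term into two Gaussian vectors whose correlation structure is exactly captured by the overlap fraction $c\tilde c$'' is not right: the overlap fraction supplies only the scaling $\sqrt{c\tilde c}$ in $F_\loss$, while $(\etaG,\etaH)$ must emerge as \emph{saddle-point parameters} of the auxiliary problem (and their existence/uniqueness as such is precisely what \Cref{th:existence-uniqueness-sys:general_ensemble-M=infty} would be needed for). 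You would also still owe a separate argument that the empirical cross-inner-products converge to the auxiliary saddle-point values---CGMT gives you optimal objective values, not automatically the inner products of optimizers---which is where the paper's Poincar\'e/Stein machinery is doing real work that your plan does not replace.
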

The proof is given in \Cref{sec:proof-thm:corr-sigerror-reserror}.
Put in words, $\etaG$ and $\etaH$ from \Cref{sys:general_ensemble-M=infty} encode the cosines of the angles between the estimator errors and loss gradient residual errors of estimators $\hat{\btheta}_{I}$ and $\hat{\btheta}_{\tilde I}$ respectively.
{
Furthermore, \eqref{eq:joint-distribution-estimators} and \eqref{eq:joint-distribution-residuals} imply that the pair of distinct estimators (or their corresponding residuals) can be jointly approximated by the proximal operators applied to the signal (or noise) plus correlated Gaussian vectors, with the correlation structure determined by $\etaH$ (or $\etaG$).
}

The main difficulty in showing \Cref{thm:corr-sigerror-reserror} is the non-trivial dependence between the two estimators $\hat{\bbeta}_{I}$ and $\hat{\bbeta}_{\tilde I}$ as they share the samples $\bX_{I\cap \tilde I}$.
In the case of squared loss and ridge regularizer, the estimators $\hat{\bbeta}_{I}$ and $\hat{\bbeta}_{\tilde I}$ have closed-form expressions, and prior work in \cite{patil2022bagging} explicitly analyze the overlapped resolvents by developing conditional calculus of resolvents.
However, for general loss and regularizers, the overlapped terms are more challenging to analyze due to the lack of closed-form expressions.
Our strategy in this paper is to exploit the recently developed technique in \cite{bellec2024asymptotics} to analyze the overlapped terms.

To prove \Cref{thm:corr-sigerror-reserror}, we show that the quantities on the left-hand side
of \eqref{eq:correlation-limits} concentrate around scalars
independent of $\bm X$, and that these two scalars are approximate
fixed-points 
of $F_\loss \circ F_\reg$ and $F_{\reg} \circ F_\loss$, respectively.
This strategy of first proving the concentration of certain quantities
and then obtaining approximate fixed-point equations is reminiscent
of the leave-one-out analysis of \cite{el2013robust,karoui2018impact}
and was previously used in \cite{bellec2024asymptotics} to characterize
the ensemble risk in unregularized M-estimators (with no explicit penalty).
The setting studied here is significantly more complicated than these works
due to the presence of robust loss functions, penalty functions, and shared
samples between $\hat{\bbeta}_I$ and $\hat{\bbeta}_{\tilde I}$.

We believe that once the contractions of $F_\loss\circ F_\reg$ and
$F_\reg \circ F_\loss$ have been found and the existence and uniqueness
of the solution to \Cref{sys:general_ensemble-M=infty} have been established,
different techniques than those used here and discussed in the previous
paragraph could also be used to derive asymptotic results similar to
\Cref{thm:corr-sigerror-reserror}.
For instance, after existence and uniqueness
of the solution to \Cref{sys:general_ensemble-M=infty} is established,
there is hope to carry out an AMP analysis for matrix-valued parameters
(see for instance \cite[Lemmas B.3 and B.5]{loureiro2022fluctuations} or \cite{javanmard2013state,gerbelot2023graph}),
or by using the conditional CGMT technique of
\cite[Appendix F]{celentano2021cad}.
However, we emphasize that these alternate techniques would also first require us to establish the structure of \Cref{sys:general_ensemble-M=infty}
(as done in \Cref{th:existence-uniqueness-sys:general_ensemble-M=infty}) in order to guarantee the existence and uniqueness of the solution to \Cref{sys:general_ensemble-M=infty}. This is a prerequisite for both applying the CGMT results and ensuring 
the convergence of AMP to the regularized M-estimator.

{
The proximal representations in \eqref{eq:joint-distribution-estimators} and \eqref{eq:joint-distribution-residuals} allow one to provide limiting behavior of other functionals of the estimator and residual errors by assuming further moments on the signal and error distributions $F_\theta$ and $F_z$; for example, we can characterize the correlation between the raw residuals (rather than after applying the loss derivative) assuming finite second moment of $F_z$ or consider pseudo-Lipschitz functionals other than squared error.
Towards that end, our next result provides the convergence of the empirical measure of $(\be_j^\top \hat\btheta_I, \be_j^\top \hat\btheta_{\tilde I},\theta_j)_{j\in[p]}$ and residuals $(y_i-\bx_i^\top\hat\btheta_I, y_i-\bx_i^\top\hat\btheta_{\tilde I}, z_i)_{i\in I\cap \tilde I}$ for pseudo-Lipschitz test functions. 

\begin{theorem}
\label{theorem:average_prox_rep}
Let $\Phi:\R^3\to\R$ be a pseudo-Lipschitz function of order $2$. 
Suppose the assumptions in \Cref{thm:corr-sigerror-reserror} hold.
Further assume that the marginal distribution $F_\theta$ of the signal has a bounded second moment. 
Then it holds that: 
\begin{align*} 
\frac{1}{p} \sum_{j\in[p]}  \Phi    \begin{pmatrix}
                \be_j^\top \hat\btheta_I \\
                \be_j^\top  \hat\btheta_{\tilde I}\\
                \theta_j
            \end{pmatrix}
           \pto {\mathbb{E}} \Phi            \begin{pmatrix}
                \prox_{\reg}(\Theta+ \tfrac{\beta}{\nu} H; \tfrac{1}{\nu})\\
                \prox_{\tilde \reg}(\Theta+ \tfrac{\tilde\beta}{\tilde \nu} \tilde{H}; \tfrac{1}{\tilde{\nu}})\\
                \Theta 
            \end{pmatrix}.
\end{align*}
Similarly, if the marginal distribution $F_z$ of noise has a finite second moment, we have:
$$
 \frac{1}{|I\cap \tilde I|} \sum_{i\in I\cap\tilde I}  \Phi    \begin{pmatrix}
y_i - \bx_i^\top\hat\btheta_I\\
y_i - \bx_i^\top \hat\btheta_{\tilde I}\\
y_i - \bx_i^\top\btheta
\end{pmatrix}
       \pto {\mathbb{E}} \Phi            \begin{pmatrix}
               \prox_{\loss}(Z + \alpha G;\kappa)\\
               \prox_{\tilde\loss}(Z + \alpha\tilde{G};\tilde \kappa)\\
                Z
            \end{pmatrix}.
$$
\end{theorem}
Note that the finite second moments of $F_\theta$ and $F_z$ in \Cref{theorem:average_prox_rep} are required to ensure the probabilistic limits above are finite for any pseudo-Lipschitz function of order $2$. 
}

\begin{figure}[!t]
    \centering
    \includegraphics[width=0.99\textwidth]{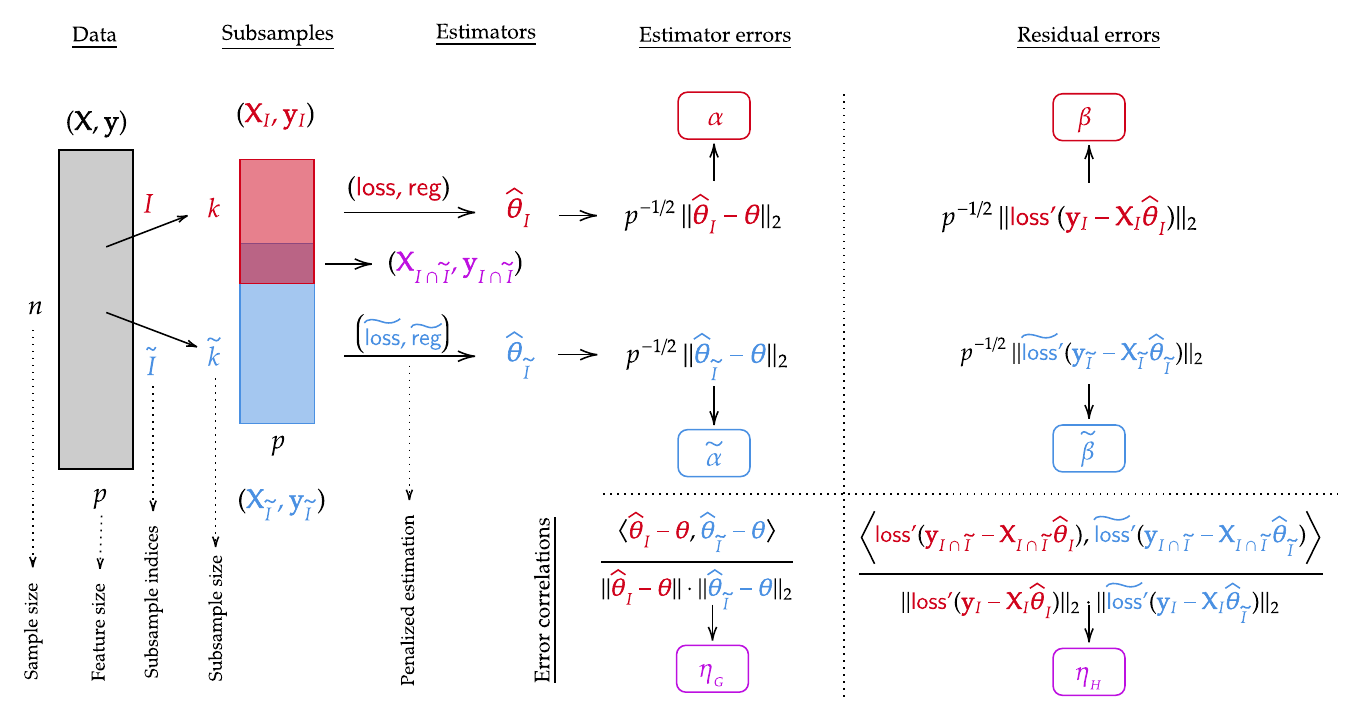}
    \caption{
        Illustration of the asymptotics of the estimator and residual error norms and correlations of overlapped regularized M-estimator.
    }
    \label{fig:risk-asymptotics-illustration}
\end{figure}

\subsection{Interpretation of the parameters in \Cref{sys:general_ensemble-M=1,sys:general_ensemble-M=infty}}
\label{sec:interpretation-general-ensemble}

As mentioned earlier, the six parameters $(\alpha, \beta, \kappa, \nu)$ and $(\etaG, \etaH)$ in \Cref{sys:general_ensemble-M=1,sys:general_ensemble-M=infty} essentially characterize the asymptotic risk of the ensemble estimator.
These deterministic parameters are limits of various stochastic (observable) quantities that we now describe (see also \Cref{fig:risk-asymptotics-illustration} for a visual illustration).

Here $\hat{\btheta}_{I}$ and $\hat{\btheta}_{\tilde I}$ are the component estimators \eqref{eq:def-hbeta} trained on subsamples $(\bX_I,\by_I)$ and $(\bX_{\tilde I},\by_{\tilde I})$ corresponding to index sets $I$ and $\tilde I$ and parameters $(\loss,\reg)$ and $(\tilde \loss, \tilde \reg)$, respectively.
We further define the scalar $\df_I$ and the matrix $\bV_I$ by
\begin{equation}
\df_{I} \coloneq \tr[(\partial/\partial \by_{I}) \bX_{I} \hat{\bbeta}_{I}],
\qquad
\bV_I \coloneqq 
(\partial/\partial \by_{I}) \loss'(\by_I - \bX_I\hat{\btheta}_I)
\in \R^{{|I|\times |I|}}
\label{eq:def-df-V_I}
\end{equation}
and similarly for $\tilde I$.
Two scalars of interest that relate the behavior of $\hat{\btheta}_I$ to the scalars $(\kappa,\nu)$ in \Cref{sys:general_ensemble-M=1}, are $\df_I$ and $\tr[\bV_I]$.

Assuming squared loss, $\loss'(\by_I - \bX_I\hat{\btheta}_I)=\by_I - \bX_I\hat{\btheta}_I$ is simply the residual vector, and the matrix
$\bV_{I}$ simplifies to
$
    \bV_{I} = \bI - (\partial/\partial \by_{I}) \bX_{I} \hat{\bbeta}_{I},
$
so that $\tr[\bV_I] = n - \df_{I}$.
That is, for the squared loss case these quantities can all be related to the usual notion of effective degrees of freedom \cite{stein1981estimation}.
The matrix $(\partial/\partial \by_{I}) \bX_{I} \hat{\bbeta}_{I}$ is usually referred to as the ``hat'' or ``smoothing'' matrix (for linear smoothers), whose trace is the effective degrees of freedom.

If $\loss$ is not the squared loss, but
$\loss'$ is 1-Lipschitz (as in the Huber loss or several robust
regression losses), the quantity
$\loss'(\by_I - \bX_I\hat{\btheta_I})$ 
is still related to a notion of residual vector, and
$\trace[\bV_I]$ is still related to a notion of degrees of freedom.
By \cite[Lemma 9.1]{bellec2020out}, the estimator
$\hat{\btheta}_I$ is the first part of a solution
$(\hat{\btheta}_I,\hat\bu)$ to the convex optimization problem
$$
\min_{\bb\in\R^p,
\bu\in\R^{|I|}}
\|\by_I - \bX_I\bb - \bu\|_2^2
+ \sum_{j=1}^p \reg(b_j)
+ \sum_{i\in I} h(u_i),
$$
where $h:\R\to\R$ is a deterministic convex function related to $\loss$. 
The interpretation of this new optimization problem is that
in the presence of heavy-tailed errors or outliers in some components
of $\by_I$,
we add additional variables $(u_i)_{i\in I}$ to fit those outliers.
As an example, for the Huber loss, $h(\cdot)$ is proportional to the absolute value.
The solution satisfies
$\loss'(\by_I - \bX_I\hat{\btheta_I})
=\by_I - \bX_I\hat{\btheta}_I - \hat\bu
= \by_{{I}} - [\bX\mid \bI_I](\hat{\btheta}_I^\top \mid \hat\bu^\top)^\top$.
That is,
$\loss'(\by_I - \bX_I\hat{\btheta_I})$ is the residual vector
of the optimization problem with enlarged design matrix $[\bX_I \mid \bI_I]\in\R^{|I|\times(|I|+p)]}$.
Consequently, $\tr[\bV_I]$ equals $|I|$ minus the effective degrees-of-freedom of the estimate $(\hat{\btheta}_I^\top,\hat\bu^\top)$ 
fitted using this enlarged design matrix.
With this in mind, we refer in \Cref{tab:interpretations_squared_loss}
to $\tr[\bV_I]$ as residual degrees of freedom in general, and robust residual degrees of freedom for the special case of the Huber loss.
    
Another interpretation of the matrix $\bV_I$ in \eqref{eq:def-df-V_I}
is the Hessian,  with respect to $\by_I$, of the objective value
\eqref{eq:def-hbeta} at $\hat\btheta_I$.
More precisely, with
$$F(\by_I) = 
\sum_{i\in I} \loss (y_i - \bx_i^\top \hat\btheta_I) + \sum_{j\in[p]} \reg((\hat\btheta_I)_j)$$ being the objective value at the minimizer,
{Danskin's theorem \cite[Proposition B.22]{bertsekas2016nonlinear} gives}
$(\partial/\partial y_i) F(\by_{I}) = \loss'(y_i - \bx_i^\top \hat\btheta_I)$.
Differentiating once more reveals that $\bV_I$ in \eqref{eq:def-df-V_I}
is the Hessian of $F(\cdot)$ at $\by_I$
and $\tr[\bV_I]$ is the Laplacian.
Since partial minimization preserves convexity
and the objective function in \eqref{eq:def-hbeta}
is jointly convex in $(\signal,\by)$,
the function $F(\cdot)$ is convex. This interpretation
explains why $\bV_I$ is a positive semi-definite matrix
in cases where closed-form expressions for $\bV_I$
are available (see \Cref{tab:df-V-examples}).

\begin{table}[!t]
    \caption{
        \textbf{Interpretations of various limiting quantities appearing in \Cref{sys:general_ensemble-M=1,sys:general_ensemble-M=infty}.}
        See \Cref{sec:interpretation-general-ensemble}
        for definitions and notations.
    }
    \centering
    \small
    \begin{tabular}{p{4.7cm} p{9.3cm} p{1cm}}
        \toprule
        \textbf{Interpretation} & \textbf{Stochastic quantity} & \textbf{Limit} \\
        \midrule
        Error vector norm squared
        & $\|\hat{\btheta}_{I} - \btheta\|_2^2/p$ & $\alpha^2$ \\
        \addlinespace[1ex]
        Loss gradient norm squared
        & $\|\loss'(\by_{I} - \bX_{I} \hat{\btheta}_{I})\|_2^2/p$
        & $\beta^2$ \\
        \addlinespace[1ex]
        \arrayrulecolor{black!25} \midrule \arrayrulecolor{black} 
        Inner product of error vectors
        & $(\hat{\btheta}_{I} - \btheta)^\top (\hat{\btheta}_{\tilde I} - \btheta) / p$ & $\etaG \alpha \tilde \alpha$ \\
        \addlinespace[1ex]
        Inner product of loss gradients
        & 
        $\loss'(\by_{I \cap \tilde I} - \bX_{I \cap \tilde I} \hat{\btheta}_{I})^\top \tilde\loss'(\by_{I \cap \tilde I} - \bX_{I \cap \tilde I} \hat{\btheta}_{\tilde I}) / p$ & $\etaH \beta \tilde\beta$ \\
        \addlinespace[1ex]
        \arrayrulecolor{black!25} \midrule \arrayrulecolor{black}
        Degrees of freedom
        & 
        $\df_I/p$ where
        $\df_{I} \coloneq \tr[(\partial/\partial \by_{I}) \bX_{I} \hat{\bbeta}_{I}]$
        & $\nu\kappa$ \\
        \addlinespace[1ex]
        Residual degrees of freedom
        & $\tr[\bV_{I}] / p$ where $\bV_{I} \coloneq (\partial/\partial \by_{I}) \loss'(\by_{I}-\bX_{I} \hat \btheta)$ 
        & $\nu$ \\
        \addlinespace[1ex]
        Generalized resolvent trace
        & 
        $\trace[(\bX_{I}^\top   \diag[\loss'' (\by_{I} - \bX_{I} \hat{\bbeta}_{I})] \bX_{I} + \diag[\reg''(\hat{\bbeta}_{I})])^{-1}]$
        if $\reg$ is twice differentiable and
        $\df_{I} / \tr[\bV_{I}]$ if $\reg$ is non-smooth
        & $\kappa$ \\
        \arrayrulecolor{black}
        \bottomrule
    \end{tabular}
    \label{tab:interpretations_squared_loss}
\end{table}

The convergence of the estimator and residual error norms
(first two rows of \Cref{tab:interpretations_squared_loss})
is proved in \cite{thrampoulidis2018precise, celentano2020lasso, loureiro2021learning} using the CGMT. 
Convergence of the corresponding inner products
(third, fourth row of \Cref{tab:interpretations_squared_loss})
is novel and established in \Cref{thm:corr-sigerror-reserror}.
The convergence
\begin{equation}
    \label{eq:nu-kappa-convergence}
    \tr[\bV_{I}]/p \pto \nu,
    \qquad
    \df_I/p \pto \nu \kappa,
    \qquad
    \df_I / \tr[\bV_I]
    \pto \kappa,
\end{equation}
was so
far only known for the lasso 
\cite[Theorem 8]{celentano2020lasso} or the squared loss
\cite[Corollary 3.2]{bellec2020out}.
To our knowledge, the present paper
is the first to establish the above convergence in probability
for regularized estimators and
robust loss functions beyond the squared loss.
The proof is given in \Cref{subsec:proof_convergence_trace}.

Assuming twice differentiable $\loss$ and $\reg$ functions, the parameter $\kappa$ is also the limiting trace of a resolvent-like matrix $\bA_{I} \coloneq (\bX_{I}^\top   \diag[\loss'' (\by_{I} - \bX_{I} \hat{\bbeta}_{I})] \bX_{I} + \diag[\reg''(\hat{\bbeta}_{I})])^{-1}$, which in the further special case of squared loss and squared regularizer (with regularization level $\lambda$) simplifies to the standard ridge resolvent: $\bA_{I} = (\bX_{I}^\top \bX_{I} + \lambda \bI)^{-1}$.
We refer to $\bA_I$ as the generalized resolvent for convenience in \Cref{tab:interpretations_squared_loss}.

\subsection{Asymptotics of ensemble risk}
\label{sec:ensemble-risk-asymptotics}

Using the parameters in \Cref{sys:general_ensemble-M=1,sys:general_ensemble-M=infty}, we are now ready for our main result on the squared risk asymptotics of the ensemble estimator.
The squared risk of the ensemble estimator $\tilde\btheta_M = \frac{1}{M}\sum_{m\in [M]} \hat\btheta_m$ can be decomposed as
\begin{align*}
    \frac{1}{p} \big\| \tilde\btheta_M - \btheta \big\|_2^2 
    = 
    \frac{1}{M^2} \sum_{m\in[M]} \frac{1}{p} \|\hat\btheta_m-\btheta\|_2^2
    +  \frac{1}{M^2} \sum_{{m,\ell \in [M]:m \neq \ell}} \frac{1}{p} (\hat{\bbeta}_m- \bbeta)^\top (\hat{\bbeta}_\ell- \bbeta). 
\end{align*}
Noting $\|\hat\btheta_m-\btheta\|_2^2/p \pto \alpha_m^2$ and applying \Cref{thm:corr-sigerror-reserror} to the cross term $(\hat{\bbeta}_m- \bbeta)^\top (\hat{\bbeta}_\ell- \bbeta) / p$ for each $m\ne\ell$, we arrive at the following result:
\begin{corollary}
    [General ensemble risk characterization]
    \label{th:nonlinear} 
    Suppose the assumptions of \Cref{thm:corr-sigerror-reserror} hold.
    For $m \in [M]$, let $\alpha_m$ be the parameter satisfying \Cref{sys:general_ensemble-M=1}.
    For $m, \ell \in [M]$, let $\etaG(m,\ell)$ be the parameter satisfying \Cref{sys:general_ensemble-M=infty}. 
   Then, as $n, p, k \to \infty$ with $n/p \to \delta \in (0, \infty)$ and $k_m / n \to c_m \in (0, 1]$, we have 
   \begin{equation}
       \label{eq:ensemble-risk-limit-heterogeneous}
       \frac{1}{p} \big\| \tilde\btheta_M - \btheta \big\|_2^2
       \pto 
       \cR_M
       \coloneq
       \frac{1}{M^2} \sum_{m\in[M]} \alpha_m^2 + 
       \frac{1}{M^2} \sum_{{m,\ell \in [M]: m \neq \ell}} \etaG(m,\ell) \cdot \alpha_{m} \alpha_{\ell}.
   \end{equation}
\end{corollary}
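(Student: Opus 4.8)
The plan is to combine the bias--variance-type decomposition of the ensemble error displayed just above the statement,
\[
\frac{1}{p}\bigl\|\tilde\btheta_M - \btheta\bigr\|_2^2
= \frac{1}{M^2}\sum_{m\in[M]} \frac{1}{p}\|\hat\btheta_m - \btheta\|_2^2
+ \frac{1}{M^2}\sum_{\substack{m,\ell\in[M]\\ m\neq\ell}} \frac{1}{p}(\hat\btheta_m - \btheta)^\top(\hat\btheta_\ell - \btheta),
\]
with termwise convergence of each of the $M^2$ summands, and then to assemble the pieces using the elementary fact that for a \emph{fixed, finite} collection of real random sequences, convergence in probability of each to a constant forces convergence in probability of any linear combination (since $|X_n+Y_n-(a+b)|\le|X_n-a|+|Y_n-b|$, and each term on the right is $\op(1)$). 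No joint convergence over $(m,\ell)$ is needed, only that $M$ is fixed so that no uniformity is required.

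For the diagonal terms I would invoke the single-estimator characterization underlying System~\ref{sys:general_ensemble-M=1}: under Assumptions~\ref{asm:linear_model}--\ref{asm:regularity-conditions}, the component estimator $\hat\btheta_m$, trained on $k_m$ samples with limiting inverse aspect ratio $k_m/p\to c_m\delta$, satisfies $p^{-1}\|\hat\btheta_m - \btheta\|_2^2 \pto \alpha_m^2$, where $\alpha_m$ is the first coordinate of the (assumed unique) solution of System~\ref{sys:general_ensemble-M=1} with data $(\loss_m,\reg_m,c_m\delta)$; this is precisely the first row of Table~\ref{tab:interpretations_squared_loss}, established under the present relaxed moment conditions in the referenced appendices. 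Averaging the finitely many such terms yields $\frac{1}{M^2}\sum_{m\in[M]} p^{-1}\|\hat\btheta_m - \btheta\|_2^2 \pto \frac{1}{M^2}\sum_{m\in[M]}\alpha_m^2$.

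For the off-diagonal terms I would fix a pair $m\neq\ell$ and apply Theorem~\ref{thm:corr-sigerror-reserror} to $\hat\btheta_m$ and $\hat\btheta_\ell$. These are trained on the index sets $I_m$ and $I_\ell$ which, by Assumption~\ref{assu:sampling}, are independent uniform subsamples of sizes $k_m,k_\ell$ with overlap ratio $|I_m\cap I_\ell|/n \pto c_m c_\ell>0$ (the hypergeometric concentration recalled after that assumption); the remaining subsamples $I_j$, $j\notin\{m,\ell\}$, are irrelevant because $\hat\btheta_m$ depends only on $I_m$ and $\hat\btheta_\ell$ only on $I_\ell$. Theorem~\ref{thm:corr-sigerror-reserror} then gives $p^{-1}(\hat\btheta_m-\btheta)^\top(\hat\btheta_\ell-\btheta)\pto \etaG(m,\ell)\,\alpha_m\alpha_\ell$, where $\etaG(m,\ell)$ is the unique solution of System~\ref{sys:general_ensemble-M=infty} attached to $(\loss_m,\reg_m,c_m\delta)$ and $(\loss_\ell,\reg_\ell,c_\ell\delta)$ --- existing and unique by Theorem~\ref{th:existence-uniqueness-sys:general_ensemble-M=infty}-(3), whose hypothesis $\min\{c_m,c_\ell\}<1$ is inherited from the hypotheses of Theorem~\ref{thm:corr-sigerror-reserror}. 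Summing over the finitely many ordered pairs and dividing by $M^2$, then adding the diagonal contribution, produces the claimed limit $\cR_M$.

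Since all the analytic content sits in Theorem~\ref{thm:corr-sigerror-reserror} and in the single-estimator norm convergence, the corollary itself is essentially bookkeeping and there is no genuine obstacle; the only points requiring mild care are (i) checking that every pair $(I_m,I_\ell)$ meets the hypotheses of Theorem~\ref{thm:corr-sigerror-reserror} --- in particular that the overlap ratio converges to the nonzero constant $c_m c_\ell$ and that $\min\{c_m,c_\ell\}<1$ --- and (ii) using the finiteness of $M$ explicitly so that termwise convergence in probability can be upgraded to convergence of the $M^2$-term average without any uniform control.
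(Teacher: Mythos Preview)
Your proposal is correct and follows essentially the same approach as the paper: decompose the squared ensemble error into diagonal and off-diagonal terms, apply the single-estimator convergence $p^{-1}\|\hat\btheta_m-\btheta\|_2^2\pto\alpha_m^2$ to each diagonal term, apply Theorem~\ref{thm:corr-sigerror-reserror} to each cross term, and sum the finitely many limits. The paper's argument is just as brief (the full proof is the short paragraph preceding the corollary), and your additional remarks about verifying the overlap hypothesis and the role of finite $M$ are appropriate care rather than a departure from the paper's route.
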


Since the parameters $\alpha_m$ and $\etaG(m, \ell)$ implicitly depend on $\delta$ and $c_m$, the asymptotic risk $\cR_M$ also implicitly depends on these parameters.
For brevity, we will simply write $\sR_M$ unless we wish to explicitly point out this dependence.
The factor $\etaG$ captures the benefit of ensembling.
It is worth noting that a negative $\etaG$ (which intuitively corresponds to a component that does better in a different direction) will improve the ensemble risk if the components themselves also have small risks.
This aligns with the higher level intuition in ensembling that one should ensemble predictors that each perform well, preferably on different parts of the input space.

\subsection{Risk estimation}
\label{sec:risk-estimation}

The risk characterization in \Cref{th:nonlinear} depends on the population-level characteristics (such as the signal and noise distributions $F_\theta$ and $F_\eps$) and provides useful theoretical insights into the risk behavior of the ensemble estimator in terms of these quantities.
In practical applications, however, the statistician needs to estimate the risk accurately to tune ensemble hyperparameters effectively using the observed data $(\bX,\by)$.
These hyperparameters include the choice of component estimators (through $\loss$ and $\reg$), their level of regularization (regularization level for $\reg$), the subsample sizes ($k$), and the ensemble size ($M$).
For this purpose, we next construct a data-dependent proxy for the squared risk, which one can then tune with respect to various hyperparameters.

\begin{definition}
    [Risk estimator component]
    \label{def:overlapped-risk-estimator-term}
    Let $\hat{\btheta}_{I}$ and $\hat{\btheta}_{\tilde I}$ be the component estimators \eqref{eq:def-hbeta} trained on subsamples $(\bX_I,\by_I)$ and $(\bX_{\tilde I},\by_{\tilde I})$ corresponding to index sets $I$ and $\tilde{I}$ with parameters $(\loss, \reg)$ and $(\tilde\loss, \tilde\reg)$.
    Corresponding to estimators $\hat{\btheta}_{I}$ and $\hat{\btheta}_{\tilde I}$, define:
    \begin{enumerate}[leftmargin=5mm]
        \item
        Degrees of freedom: $\df_I = \tr[(\partial/\partial \by_{I}) \bX_{I} \hat{\btheta}_{I}]$ and $\df_{\tilde I}= \tr[(\partial/\partial \by_{\tilde I}) \bX_{\tilde I} \hat{\btheta}_{\tilde I}]$.
        \item
        Residual errors: $\br = \by_{I} - \bX_{I} \hat{\btheta}_{I}$ and $\tilde{\br} = \by_{\tilde I} - \bX_{\tilde I} \hat{\btheta}_{\tilde I}$.
        \item
        Residual degrees of freedom: traces of
        \smash{$\bV_{I} = (\partial/\partial \by_I) \loss'(\by_I - \bX_I \hat{\btheta}_{I})$} \newline and \smash{${\bV}_{\tilde I} = (\partial/\partial \by_{\tilde I}) \tilde \loss'(\by_{\tilde I} - \bX_{\tilde I} \hat{\btheta}_{\tilde I})$}.
    \end{enumerate}
    Using these quantities, define an observable quantity $\EST_{I, \tilde I}$ as follows:
    \begin{equation}
        \label{eq:risk-estimator-general-subagging}
        \EST_{I, \tilde I} :=  \frac{1}{n} \sum_{i\in [n]} \Bigl(r_i + \ind_{\{i\in I\}} \frac{\df_I}{\tr[\bV_I]} \loss'(r_i) \Bigr) \Bigl(\tilde{r}_i + \ind_{\{i\in \tilde I\}} \frac{\df_{\tilde I}}{\tr[\bV_{\tilde I}]} \tilde\loss'(\tilde r_i)\Bigr)
    \end{equation}
    where $\ind_\Omega$ denotes the indicator function associated with an event $\Omega$. 
\end{definition}

The quantities $\tr[\bV_{{I}}]$ and $\df_{{I}}$
have explicit closed-form expressions for special choices of $\loss$ and $\reg$.
Some of these are summarized in \Cref{tab:df-V-examples}.
We show next that $\EST_{I, \tilde I}$ approximates well the component of prediction risk corresponding to the inner product of estimator errors of $\hat \btheta_I$ and $\hat{\btheta}_{\tilde I}$.
We then naturally construct a risk estimator for the prediction risk of the ensemble estimator.

\begin{theorem}
    [Consistency of risk estimator component]
    \label{thm:risk-estimator-general-subagging}
    In addition to \Crefrange{asm:linear_model}{asm:regularity-conditions}, assume that $\reg$ and $\tilde\reg$ are strongly convex. Then we have
    \begin{equation*}
        \textstyle
        \frac{1}{p} (\hat{\btheta}_{I}-\btheta)^\top (\hat{\btheta}_{\tilde I}-\btheta) + \frac{\|\bz\|_2^2}{n} =  \EST_{I, \tilde I} + \Op(n^{-1/2}) \bigl(1+\frac{\|\bz\|_2}{\sqrt{n}} \bigr).
    \end{equation*}
\end{theorem}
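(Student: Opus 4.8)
The plan is to reduce the claim to a bilinear identity in the estimator errors and to prove that identity by Gaussian integration by parts (Stein's identity) in $\bX$; the factors $\df_I/\tr[\bV_I]$ and $\df_{\tilde I}/\tr[\bV_{\tilde I}]$ in \eqref{eq:risk-estimator-general-subagging} will emerge exactly as the terms needed to cancel the bias created by the dependence of $\hat\btheta_I$ on $\bX_I$ (and of $\hat\btheta_{\tilde I}$ on $\bX_{\tilde I}$). Write $\bm{h} := \hat\btheta_I - \btheta$ and $\tilde{\bm{h}} := \hat\btheta_{\tilde I} - \btheta$, and extend the residuals to all of $[n]$ via $r_i := y_i - \bx_i^\top\hat\btheta_I$, $\tilde r_i := y_i - \bx_i^\top\hat\btheta_{\tilde I}$, so that $\br = \bz - \bX\bm{h}$ and $\tilde{\br} = \bz - \bX\tilde{\bm{h}}$ as vectors in $\RR^n$. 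Expanding the product in \eqref{eq:risk-estimator-general-subagging} and cancelling the common $n^{-1}\|\bz\|_2^2$ term, the statement becomes
\begin{align*}
  \tfrac1p\,\bm{h}^\top\tilde{\bm{h}}
  &=
  \tfrac1n\,\bm{h}^\top\bX^\top\bX\,\tilde{\bm{h}}
  - \tfrac1n\,\bz^\top\bX(\bm{h} + \tilde{\bm{h}})
  + \tfrac{\df_I}{n\,\tr[\bV_I]}\sum_{i\in I}\loss'(r_i)\,\tilde r_i \\
  &\quad
  + \tfrac{\df_{\tilde I}}{n\,\tr[\bV_{\tilde I}]}\sum_{i\in\tilde I}\tilde\loss'(\tilde r_i)\,r_i
  + \tfrac{\df_I\,\df_{\tilde I}}{n\,\tr[\bV_I]\,\tr[\bV_{\tilde I}]}\sum_{i\in I\cap\tilde I}\loss'(r_i)\,\tilde\loss'(\tilde r_i)
  + \Op(n^{-1/2})\bigl(1 + \tfrac{\|\bz\|_2}{\sqrt n}\bigr),
\end{align*}
and it remains to evaluate the two genuinely bilinear terms on the right-hand side.

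Two ingredients drive the evaluation. First, since $\reg$ and $\tilde\reg$ are strongly convex, the objectives in \eqref{eq:def-hbeta} are strongly convex, so $\hat\btheta_I$ and $\hat\btheta_{\tilde I}$ are globally Lipschitz functions of their data; Gaussian Lipschitz concentration then gives that pseudo-Lipschitz functionals of the estimators concentrate about their conditional means at rate $n^{-1/2}$, and the bounds $\|\bX_I\|_{\mathrm{op}} = \Op(1)$, $\|\bm{h}\|_2 = \Op(\sqrt p)$, $\|\loss'(\br_I)\|_2 = \Op(\sqrt p)$ and $\tr[\bV_I] \gtrsim n$ (with high probability) follow from \Cref{asm:regularity-conditions} together with the solvability of \Cref{sys:general_ensemble-M=1} (the latter using $\nu > 0$). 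Second, implicit differentiation of the stationarity condition $\bX_I^\top\loss'(\by_I - \bX_I\hat\btheta_I) \in \partial\reg(\hat\btheta_I)$ --- legitimate because the generalized Hessian $\bX_I^\top\diag[\loss''(\br_I)]\bX_I + \diag[\reg''(\hat\btheta_I)]$ is invertible under strong convexity --- produces explicit formulas for $\partial\hat\btheta_I/\partial\by_I$ and for the per-row sensitivities $\partial\hat\btheta_I/\partial\bx_i$ with $i \in I$; tracing these recovers $\df_I = \tr[\bX_I\,\partial\hat\btheta_I/\partial\by_I]$ and $\tr[\bV_I]$ (\Cref{tab:interpretations_squared_loss}), and, crucially, the trace of $\partial\hat\btheta_I/\partial\bx_i$ equals $\loss'(r_i)$ times the generalized-resolvent trace, which by \eqref{eq:nu-kappa-convergence} is $p\cdot\df_I/\tr[\bV_I]$ up to lower order --- precisely the normalization appearing in \eqref{eq:risk-estimator-general-subagging}. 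In the special case $I = \tilde I$ this mechanism reproduces the observable-adjustment identity of \cite{bellec2022observable,bellec2020out}, now for a general convex differentiable loss.

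With these in hand, I would condition on $(\bz,\btheta)$, treat $\bX$ as the sole randomness, and integrate by parts row by row in $\tfrac1n\bm{h}^\top\bX^\top\bX\tilde{\bm{h}} = \tfrac1n\sum_{i\in[n]}(\bx_i^\top\bm{h})(\bx_i^\top\tilde{\bm{h}})$ and in $\tfrac1n\bz^\top\bX\bm{h} = \tfrac1n\sum_{i\in[n]}z_i(\bx_i^\top\bm{h})$ (and symmetrically for $\tilde{\bm{h}}$). The isotropic part of Stein's identity contributes $\tfrac1p\bm{h}^\top\tilde{\bm{h}}$ once per row, summing to the target left-hand side; the remaining derivative contributions vanish identically for $i \notin I$ (resp.\ $i \notin \tilde I$) because $\bm{h}$ (resp.\ $\tilde{\bm{h}}$) is then independent of $\bx_i$ and $z_i$. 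For $i \in I \setminus \tilde I$ and $i \in \tilde I \setminus I$, substituting the sensitivity formulas and using $r_i = z_i - \bx_i^\top\bm{h}$ to combine the $\bX^\top\bX$- and $\bz$-derivative terms turns them into exactly the one-sided corrections $\tfrac{\df_I}{n\,\tr[\bV_I]}\loss'(r_i)\tilde r_i$ and $\tfrac{\df_{\tilde I}}{n\,\tr[\bV_{\tilde I}]}\tilde\loss'(\tilde r_i)r_i$, which cancel against the matching sums in $\EST_{I,\tilde I}$. The second-order Gaussian--Poincaré remainders produced along the way are bounded by the operator- and vector-norm estimates above, and the explicit $\|\bz\|_2$ dependence in the error is tracked through inequalities such as $\|\br_I\|_2 \le \|\bz\|_2 + \|\bX_I\|_{\mathrm{op}}\|\bm{h}\|_2$ --- a book-keeping step that is needed precisely because no moment assumption is placed on $Z$.

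The main obstacle is the overlap $i \in I \cap \tilde I$: such a row $\bx_i$ enters both $\hat\btheta_I$ and $\hat\btheta_{\tilde I}$, so integration by parts in $\bx_i$ yields simultaneously a term in $\partial\hat\btheta_I/\partial\bx_i$ contracted with $\tilde{\bm{h}}$ and a term in $\partial\hat\btheta_{\tilde I}/\partial\bx_i$ contracted with $\bm{h}$, and --- since each sensitivity formula, once substituted, re-exposes a dependence on $\bx_i$ through the other estimator --- a genuinely two-sided, second-order contribution. Showing that this contribution matches $\tfrac{\df_I\,\df_{\tilde I}}{n\,\tr[\bV_I]\,\tr[\bV_{\tilde I}]}\loss'(r_i)\tilde\loss'(\tilde r_i)$ up to negligible error calls for a conditional sensitivity calculus --- differentiating one estimator while freezing the other, then symmetrizing --- in the spirit of, but more delicate than, the conditional-resolvent calculus used for square loss and ridge in \cite{patil2022bagging,bellec2023corrected}; it is here that the generalization beyond ridge genuinely bites, and where the trace convergence $\df_I/\tr[\bV_I]\pto\kappa$ from \eqref{eq:nu-kappa-convergence} is used to keep the overlap term bounded. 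Finally, \Cref{cor:general-ensemble-risk-estimator} follows by applying the identity to every pair $(I_m,I_\ell)$ and summing according to the decomposition preceding \Cref{th:nonlinear}.
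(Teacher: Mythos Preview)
Your high-level strategy---expand the product, use the derivative formulas for $\hat\btheta_I$, and recognize that the Stein correction to $\bx_i^\top(\hat\btheta_I-\btheta)$ is $\tr[\bA]\,\loss'(r_i)\,\ind_{\{i\in I\}}$---is sound, and in the case $I=\tilde I$ does indeed reproduce the observable-adjustment identity of \cite{bellec2022observable}. Two points where your plan diverges from the paper's proof, the first of which is a genuine gap at the claimed rate.

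\textbf{Rate for the resolvent substitution.} You invoke \eqref{eq:nu-kappa-convergence} to identify the generalized-resolvent trace with $\df_I/\tr[\bV_I]$, but \eqref{eq:nu-kappa-convergence} only gives convergence in probability, with no rate. Propagated through the cross terms this would yield $\op(1)$, not $\Op(n^{-1/2})$. The paper opens instead with the sharper $\tr[\bA]\tr[\bV_I]-\df_I=\Op(\sqrt n)$ from \cite[Theorem~5.1]{bellec2022derivatives}; under $\tr[\bV_I]/p\pto\nu>0$ this gives $\tr[\bA]-\df_I/\tr[\bV_I]=\Op(n^{-1/2})$, and only then does the replacement of $\df_I/\tr[\bV_I]$ by $\tr[\bA]$ cost at most $\Op(n^{-1/2})(1+\|\bz\|_2/\sqrt n)$.

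\textbf{The overlap is not the obstacle.} After that substitution, the paper does not integrate by parts row by row with a separate ``conditional sensitivity calculus'' for $i\in I\cap\tilde I$. It decomposes the remainder as $\xi_1+\tilde\xi_1+\xi_2$ (in the scaling $\bG=\sqrt p\,\bX$, $\bh=(\hat\btheta_I-\btheta)/\sqrt p$, $\psi_i=\loss'(r_i)$), where the noise-cross terms $\xi_1=\sum_{i\in[n]} z_i(\bg_i^\top\bh-\tr[\bA]\ind_{\{i\in I\}}\psi_i)$ are bounded via \cite[Proposition~18]{bellec2022derivatives} and contribute the $\Op(\|\bz\|_2)$ part, while
\[
\xi_2=n\,\bh^\top\tilde\bh-\sum_{i\in[n]}\bigl(\bg_i^\top\bh-\tr[\bA]\ind_{\{i\in I\}}\psi_i\bigr)\bigl(\bg_i^\top\tilde\bh-\tr[\tilde\bA]\ind_{\{i\in\tilde I\}}\tilde\psi_i\bigr)
\]
is controlled in one stroke by a self-normalized bilinear moment inequality (\Cref{lm:chi_square_general}, proved by polarizing \cite[Theorem~7.2]{bellec2020out}). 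That lemma bounds $K\brho^\top\tilde\brho-\sum_k(\bz_k^\top\brho-\sum_q\partial_{kq}\rho_q)(\bz_k^\top\tilde\brho-\sum_q\partial_{kq}\tilde\rho_q)$ for arbitrary locally Lipschitz $\brho,\tilde\brho$ of a Gaussian matrix in terms of Jacobian norms, and the derivative formula \eqref{eq:derivative_formula} shows $\sum_j\partial h_j/\partial g_{ij}=\tr[\bA]\psi_i\ind_{\{i\in I\}}$ up to a term of total size $\Op(1)$. The inequality treats all rows uniformly---it never needs to know whether $\bh$, $\tilde\bh$, or both depend on $\bg_i$---so the ``main obstacle'' you flag simply does not arise once the argument is organized around the moment inequality rather than iterated row-wise integration by parts.
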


The risk estimator $\EST_{I, \tilde I}$ is a generalization of the criterion originally proposed by \cite{bellec2022derivatives} for non-ensemble regularized M-estimator. 
Although in this paper we focus on the isotropic Gaussian design $\bSigma = \bI_p$, the same argument in the proof of \Cref{thm:risk-estimator-general-subagging} works in the anisotropic design $\bSigma \ne \bI_p$. 
As a result, we can show that the $\EST_{I, \tilde I}$ (without any modification) approximates $p^{-1} (\hat{\btheta}_{I}-\btheta)^\top \bSigma (\hat{\btheta}_{\tilde I}-\btheta) + n^{-1} \|\bz\|_2^2$ under the event that $(\tr[\bV_I], \tr[\bV_{\tilde I}])$ are bounded from below by a positive constant as in \cite[Theorem 5.3]{bellec2022derivatives}. 

We believe that the strong convexity assumption on $\reg$ in \Cref{thm:risk-estimator-general-subagging} is an artifact of our proof (see \Cref{fig:effect-of-M-huber} for an illustration where $\reg$ is not strongly convex). 
Note that this type of assumption of strong convexity has already been assumed in \cite{bellec2022derivatives}. 
This assumption guarantees for free that the coefficient $\df/\tr[\bV]$ does not blow up. 
However, we emphasize that \Cref{thm:corr-sigerror-reserror} and \Cref{th:nonlinear} for risk characterization do not require the strong convexity assumption.
The approximation argument (see \Cref{sec:ridge_smoothing}) used to
prove \Cref{thm:corr-sigerror-reserror} and \Cref{th:nonlinear} in the
non-strongly convex case is again applicable in the context 
of \Cref{thm:risk-estimator-general-subagging}, although it is not currently
sufficient to conclude a version \Cref{thm:risk-estimator-general-subagging}
for non-strongly convex regularizers due to the difficulty of establishing
the continuity of $\df_I$ and $\tr[\bV_I]$ with respect to the perturbation
parameter $\mu$ as $\mu \to 0$ in \eqref{eq:def_h_smoothed}.

Equipped with the component risk estimator \eqref{eq:risk-estimator-general-subagging}, we can now construct a consistent risk estimator for the ensemble estimator \eqref{eq:def-M-ensemble}:

\begin{corollary}
    [General ensemble risk estimation]
    \label{cor:general-ensemble-risk-estimator}
    Fix $M \ge 1$ and consider the ensemble estimator $\tilde\btheta_M = \tfrac{1}{M} \sum_{m\in [M]} \hat \btheta_m$ where the component estimator $\hat \btheta_m$ is trained with $(\loss_m,\reg_m)$ on subsample $I_m$ for $m \in [M]$ as in \eqref{eq:def-hbeta}. Define an estimator $\EST$ for the squared prediction risk:
    \begin{equation*}
        \textstyle
        \EST := \frac{1}{M^2} \sum_{m,\ell \in [M]}\EST_{m,\ell}
    \end{equation*}
    where $\EST_{m,\ell}$ is $\EST_{I, \tilde I}$ as defined in \eqref{eq:risk-estimator-general-subagging} with $(\loss, \reg, I) = (\loss_m, \reg_m, I_m)$ and $(\tilde\loss, \tilde\reg, \tilde I) = (\loss_\ell, \reg_\ell, I_\ell)$. Then we have
    \begin{equation}
        \textstyle
        \label{eq:risk-estimator-general-subagging-gaurantee}
        \frac{1}{p} \bigl\|\tilde\btheta_M - \btheta\bigr\|_2^2 
        + \frac{\|\bz\|_2^2}{n}
        =\EST + \Op(n^{-1/2}) \bigl(1 + \frac{\|\bz\|_2}{\sqrt{n}} \bigr).
    \end{equation}
\end{corollary}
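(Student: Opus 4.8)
The plan is to reduce the statement to a term-by-term application of \Cref{thm:risk-estimator-general-subagging}. First I would record the exact algebraic identity
\begin{equation*}
    \frac{1}{p} \bigl\| \tilde\btheta_M - \btheta \bigr\|_2^2
    = \frac{1}{M^2} \sum_{m,\ell \in [M]} \frac{1}{p} (\hat\btheta_m - \btheta)^\top (\hat\btheta_\ell - \btheta),
\end{equation*}
obtained by expanding $\tilde\btheta_M = \tfrac1M \sum_{m\in[M]} \hat\btheta_m$; this sum includes the $M$ diagonal terms $m=\ell$, each equal to $p^{-1}\|\hat\btheta_m - \btheta\|_2^2$. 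Adding $\|\bz\|_2^2/n$ to both sides and using the trivial identity $\tfrac{1}{M^2}\sum_{m,\ell\in[M]} \tfrac{\|\bz\|_2^2}{n} = \tfrac{\|\bz\|_2^2}{n}$, the left-hand side of \eqref{eq:risk-estimator-general-subagging-gaurantee} is rewritten as $\tfrac{1}{M^2}\sum_{m,\ell\in[M]}\bigl( p^{-1}(\hat\btheta_m-\btheta)^\top(\hat\btheta_\ell - \btheta) + \|\bz\|_2^2/n\bigr)$, i.e.\ an average of exactly the quantities controlled by \Cref{thm:risk-estimator-general-subagging}.

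Next I would invoke \Cref{thm:risk-estimator-general-subagging} once for each ordered pair $(m,\ell)\in[M]^2$, applied with $(\loss,\reg,I) = (\loss_m,\reg_m,I_m)$ and $(\tilde\loss,\tilde\reg,\tilde I) = (\loss_\ell,\reg_\ell,I_\ell)$: the strong convexity of each $\reg_m$ assumed in the corollary is precisely the hypothesis needed, and the diagonal case $m=\ell$ is covered as well, being the single-estimator special case $I=\tilde I$ of that theorem (equivalently, the criterion of \cite{bellec2022derivatives}). This yields, for every pair,
\begin{equation*}
    \frac{1}{p}(\hat\btheta_m-\btheta)^\top(\hat\btheta_\ell - \btheta) + \frac{\|\bz\|_2^2}{n}
    = \EST_{m,\ell} + X_n^{(m,\ell)},
    \qquad
    X_n^{(m,\ell)} = \Op(n^{-1/2})\Bigl(1 + \tfrac{\|\bz\|_2}{\sqrt n}\Bigr).
\end{equation*}
Averaging over the $M^2$ pairs and recalling $\EST = \tfrac{1}{M^2}\sum_{m,\ell}\EST_{m,\ell}$ gives \eqref{eq:risk-estimator-general-subagging-gaurantee} with remainder $\tfrac1{M^2}\sum_{m,\ell} X_n^{(m,\ell)}$; since $M$ is fixed, this is a finite sum, the common factor $1+\|\bz\|_2/\sqrt n$ factors out, and a finite sum of $\Op(n^{-1/2})$ terms is $\Op(n^{-1/2})$, so the remainder is $\Op(n^{-1/2})(1+\|\bz\|_2/\sqrt n)$ as claimed.

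The only point needing a little care — the closest thing to an obstacle — is that \Cref{thm:risk-estimator-general-subagging} is stated for a single (random) pair of index sets, so one must be sure the $\Op(n^{-1/2})$ remainders hold jointly across all $M^2$ pairs $(I_m,I_\ell)$. Because $M$ is held fixed as $n,p\to\infty$, this is immediate: finitely many sequences that are individually $\Op(n^{-1/2})$ have a sum that is $\Op(n^{-1/2})$, with no uniformity in $m,\ell$ needed beyond finiteness. (Were $M$ allowed to grow with $n$, one would instead need remainder bounds in \Cref{thm:risk-estimator-general-subagging} that are uniform over subsample pairs, together with explicit control of how constants depend on the ratios $c_m,c_\ell$; this is outside the scope of the present corollary.) Everything else is the bookkeeping above.
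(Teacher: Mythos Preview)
Your proposal is correct and matches the paper's intended derivation: the corollary is presented as an immediate consequence of \Cref{thm:risk-estimator-general-subagging}, obtained by expanding $\|\tilde\btheta_M-\btheta\|_2^2/p$ as the double sum $\tfrac{1}{M^2}\sum_{m,\ell}(\hat\btheta_m-\btheta)^\top(\hat\btheta_\ell-\btheta)/p$, applying the theorem pairwise, and averaging (with $M$ fixed so the finitely many $\Op(n^{-1/2})$ remainders sum to $\Op(n^{-1/2})$). One minor note: the strong convexity of each $\reg_m$ is not restated in the corollary but is inherited from the hypotheses of \Cref{thm:risk-estimator-general-subagging}, as you correctly point out.
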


If the noise distribution has enough moments, the guarantee \eqref{eq:risk-estimator-general-subagging-gaurantee} implies that $\EST$ approximates the (full) prediction risk (that includes the irreducible error) of the ensemble estimator $\tilde\btheta_M$ under \Cref{asm:linear_model}:
$$
    \E\bigl[\bigl(y_0 - \bx_0^\top \tilde\btheta_M \bigr)^2 \mid \by, \bX, \{ I_m \}_{m\in [M]} \bigr] 
    = \tfrac{1}{p} \|\tilde\btheta_M-\btheta\|_2^2 + \E[Z^2],
$$
where $(y_0, \bx_0)\in\R\times \R^{p}$ is an independent test point sampled from the same distribution as the training data $(\by,\bX)$ .
More precisely, if the noise has a finite second moment, then $\EST$ is consistent for the prediction risk. 
Furthermore, if the noise distribution has a finite fourth moment, by the central limit theorem (on the terms involving noise averages), $\EST$ is $\sqrt{n}$-consistent for the prediction risk.
This rate is not improvable because, for the single ordinary least squares (OLS) estimator (with $\loss(x) = x^2/2$, $\reg=0$, $|I|=|\tilde I| = n$) and $F_z = \cN(0,1)$, the risk estimator gives 
$
\EST = \frac{1}{n} \frac{\|\by-\bX\hat\btheta_{\text{ols}}\|_2^2}{(1-p/n)^2} \overset{\text{d}}{=} \frac{\chi_{n-p}^2}{n(1-p/n)^2}
$
and the standard deviation of the $\chi^2_{n-p}$ incurs an unavoidable term of order $n^{-1/2}$.

If the noise distribution $F_\eps$ does not have a finite second moment but has a finite $(1+\epsilon)$-moment for $\epsilon \in [0, 1)$, then even if the estimator $\EST$ may not track the prediction risk (because the prediction risk does not necessarily converge), minimizing $\EST$ is approximately equivalent to minimizing the excess squared risk $p^{-1} \|\tilde\btheta_M-\btheta\|_2^2$ (which does converge). 
This is because the moment assumption implies $\sum_{i\in [n]} z_i^2 = \op(n^{2/(1+\epsilon)})$ 
so that \eqref{eq:risk-estimator-general-subagging-gaurantee} yields 
$$
    \textstyle
   \frac{1}{p} \|\tilde\btheta_M-\btheta\|_2^2
    = \EST 
    - \frac{\|\bz\|_2^2}{n}
    + \op(n^{-\frac{\epsilon}{1+\epsilon}}),
$$
where the subtraction term $\frac{\|\bz\|_2^2}{n}$ is independent of hyperparameters $(\loss_m, \reg_m, I_m)_{m\in [M]}$. 
We illustrate this in \Cref{fig:effect-of-M-huber} with noise following Student's $t_2$ distribution (that does not have a finite second moment).

{
A natural question arising from the above discussion is whether hyperparameters (such as the subsample ratio $c$ and the regularization level $\lambda$ in $\reg(x)=\lambda|x|^q$) selected by tuning $\EST$ are close to the oracle parameters that minimize the true excess risk $p^{-1}\|\tilde\btheta_M -\btheta\|_2^2$ (or its deterministic limit $\mathcal{R}_M$ in \eqref{eq:ensemble-risk-limit-heterogeneous}). 
Establishing such a result theoretically requires proving a suitable form of smoothness (such as \Holder~or Lipschitz continuity) of the excess risk (or its deterministic limit) as a function of the hyperparameters. 
This has been achieved in specific settings, including for the lasso in \cite{celentano2020lasso}, for the unregularized robust M-estimators in \cite{bellec2023error}, and for the ridge estimator in \cite{patil2021uniform,han2023distribution}, among others. 
However, these analyses are highly tailored to the specific estimators and leverage their unique structures, typically involving substantial technical machinery.
A general investigation of this type for the broad class of estimators studied in this paper is an important direction for future work.
}
\begin{figure*}[!t]
    \centering
    \includegraphics[width=0.8\textwidth]{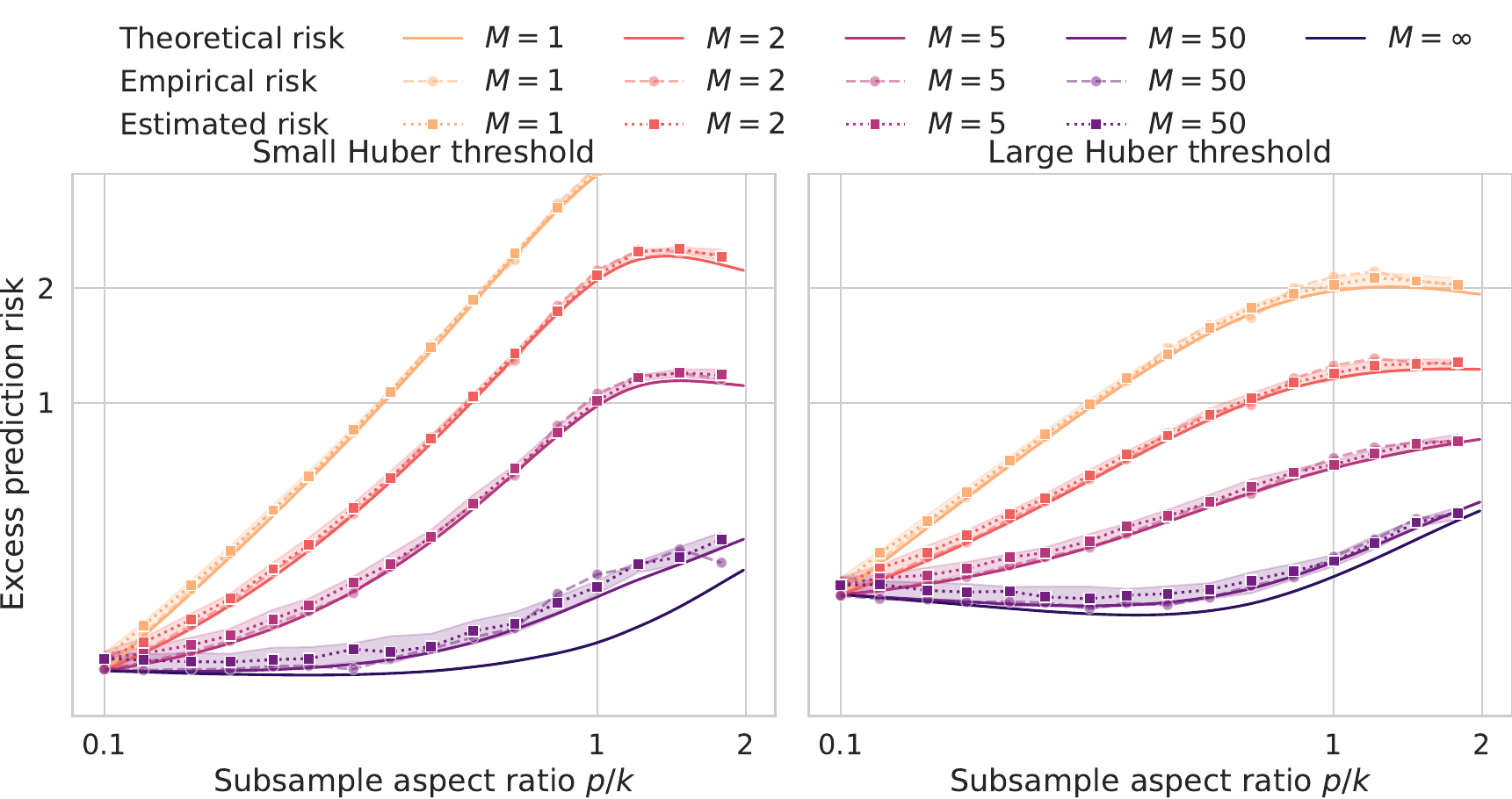}
  \caption{
      Risk of $\ell_1$-regularized Huber ensemble at different subsample aspect ratios $p/k$ with $\ell_1$-regularization parameter $\lambda=0.2$ and varying ensemble size $M$ in the underparameterized regime when $p/n=0.1$ and $n = 5000$.
    The solid lines represent the theoretical risks, the dashed lines represent the empirical risks averaged over $50$ simulations, and the shaded regions represent the empirical standard errors.
    The data model is given by \Cref{subsec:Huber-ex} where the noise follows Student's $t$ distribution $t_{2}$.
    \emph{Left}: Huber threshold parameter $1$.
    \emph{Right}: Huber threshold parameter $5$. 
  }
  \label{fig:effect-of-M-huber}
\end{figure*}

\section{Homogeneous ensembles}
\label{sec:specific-examples}

While \Cref{th:nonlinear} applies for a generic heterogeneous ensemble
(where $\reg_m,\loss_m,|I_m|$ are allowed to differ for distinct $m$),
concrete theoretical insights can be obtained for the homogeneous case
(where $\reg_m,\loss_m,|I_m|$ are the same for every $m$).
In the general heterogeneous case, the effect of increasing ensemble size $M$ is not straightforward, as in general we only have:
\begin{equation*}
    \min_{m \in [M]}\{ \alpha_m^2 \}
    \overset{?}{\lesseqgtr}
   \frac{1}{M^2} \sum_{m\in[M]} \alpha_m^2 + 
   \frac{1}{M^2} \sum_{\substack{m,\ell \in [M] \\ m \neq \ell}} \etaG(m,\ell) \cdot \alpha_{m} \alpha_{\ell}
   \le
   \max_{m \in [M]}\{ \alpha_m^2 \}.
\end{equation*}
In other words, we will do no worse than the worst component but may not do better than the best component.
More ensembles may or may not improve performance depending on the risks of individual estimators.
In contrast, for homogeneous ensembles, we show in the next subsection that increasing ensemble size does indeed help reduce the risk. 
This uniformity allows for more concrete analytical results and practical insights.

\subsection{Risk properties}
\label{sec:homogeneous-risk-properties}

For the homogeneous ensemble of component estimators trained with the same $\loss$, $\reg$, and subsample size $k$, as $n, p, k \to \infty$ with $n/p \to \delta \in (0, \infty)$ and $k / n \to c \in (0, 1]$, the limiting risk \eqref{eq:ensemble-risk-limit-heterogeneous} is given by:
\begin{equation}
   \label{eq:ensemble-risk-limit-generic}
   \mathcal{R}_M = \frac{1}{M}  \mathcal{R}_1 + \Bigl(1-\frac{1}{M}\Bigr)  \mathcal{R}_\infty \quad \text{where} \quad \begin{dcases}
        \mathcal{R}_1 \coloneq \alpha^2 \quad \text{(non-ensemble risk)}, \\
        \mathcal{R}_\infty \coloneq \etaG\alpha^2 \quad \text{(full-ensemble risk)}.
   \end{dcases}
\end{equation}
Observe that the limit $\cR_M$ is simply a convex combination of the asymptotic risk of the single estimator $\cR_1$ and of the full-ensemble estimator $\cR_\infty$.
The reason we call this the ``full'' ensemble is that the ensemble estimator $\hat{\btheta}_M$ when $M \to \infty$ is almost surely equal (coordinate-wise and conditioned on the data) to an ensemble estimator fitted on all possible (and distinct) $\binom{n}{k}$ subsamples of size $k$; see Lemma A.1 of \cite{du2023subsample} for a precise statement and proof.
(Note that when $M \to \infty$, $\cR_M$ does indeed converge to $\cR_\infty$, justifying the notation for the limit for the case when $m \neq \ell$.)
As a sanity check, note that in the special case when $c=1$, the above setting corresponds to the non-ensemble case discussed in \cite[Section 4]{thrampoulidis2018precise}.

The following result shows the advantage of ensembling.
In the classical bagging and subagging literature, it is well known that the risk of the ensemble estimator decreases as the ensemble size increases, due to the reduction in variance that comes with having more predictors in the ensemble.
In the proportional asymptotic regime that we study in this paper, since subagging also introduces bias, this is not immediate.
A general result along these lines that verifies the monotonicity of the risk of the ensemble itself (not the asymptotic limit) follows from Proposition 3.1 of \cite{patil2022bagging}; see Equation (10) of \cite{patil2022bagging}.
Below we verify that the asymptotic risk is strictly monotonic in $M$ by showing that {$\etaG \in [0,1)$} in general.
This implies that the asymptotic risk is strictly decreasing in $M$.

\begin{proposition}
    [Improvement due to ensembling]
    \label{prop:monotonicity-ensemble-size}
    Fix the subsample ratio $c = k/n \in (0, 1)$ and let $\cR_M$ be the limiting risk as defined in \eqref{eq:ensemble-risk-limit-generic}. 
    Then $\cR_M$ is strictly decreasing in the number of ensembles $M$, i.e., $\cR_{M+1} < \cR_{M}$ for all $M\in \mathbb{N}$. 
\end{proposition}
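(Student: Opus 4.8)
The plan is to reduce the strict monotonicity entirely to the single inequality $\etaG < 1$, which is already available from \Cref{th:existence-uniqueness-sys:general_ensemble-M=infty}. First I would rewrite the limiting risk \eqref{eq:ensemble-risk-limit-generic} as $\cR_M = \cR_\infty + \tfrac{1}{M}\left(\cR_1 - \cR_\infty\right)$, so that for every $M \in \mathbb{N}$,
\begin{equation*}
    \cR_{M+1} - \cR_M = \left(\frac{1}{M+1} - \frac{1}{M}\right)\left(\cR_1 - \cR_\infty\right) = -\frac{\cR_1 - \cR_\infty}{M(M+1)} = -\frac{\alpha^2\left(1 - \etaG\right)}{M(M+1)},
\end{equation*}
using $\cR_1 = \alpha^2$ and $\cR_\infty = \etaG\,\alpha^2$ from \eqref{eq:ensemble-risk-limit-generic}. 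Since $M(M+1) > 0$, it then remains only to show $\alpha^2\left(1 - \etaG\right) > 0$, i.e., to argue that both factors are strictly positive.

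For the first factor, I would invoke \Cref{asm:regularity-conditions}(3), which posits that \Cref{sys:general_ensemble-M=1} admits a unique solution $(\alpha,\beta,\kappa,\nu) \in \RR_{>0}^4$; in particular $\alpha > 0$, hence $\alpha^2 > 0$. For the second factor, recall that in the homogeneous setting $\etaG = \etaGstar$ is the first coordinate of the solution to \Cref{sys:general_ensemble-M=infty} taken with $\loss = \tilde\loss$, $\reg = \tilde\reg$, and $c = \tilde c \in (0,1)$. Because $\min\{c,\tilde c\} = c < 1$, \Cref{th:existence-uniqueness-sys:general_ensemble-M=infty}(3) applies and guarantees that this solution is unique and lies in $(-1,1) \times (-\sqrt{c\tilde c},\sqrt{c\tilde c})$; in particular $\etaG \in (-1,1)$, so $1 - \etaG > 0$. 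Combining the two factors yields $\cR_{M+1} - \cR_M < 0$, which is exactly the claimed strict monotonicity $\cR_{M+1} < \cR_M$.

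The only substantive ingredient is the bound $\etaG < 1$, and there is no real obstacle here since the work behind it is already done upstream in \Cref{th:existence-uniqueness-sys:general_ensemble-M=infty}. If one prefers not to cite the uniqueness statement, I would instead argue directly: by the Cauchy--Schwarz remark following \Cref{sys:general_ensemble-M=infty}, any solution satisfies $\etaH = F_\loss(\etaG) \in [-\sqrt{c\tilde c},\sqrt{c\tilde c}]$, and since $\sqrt{c\tilde c} = c < 1$ we have $|\etaH| < 1$, whence part (1) of \Cref{th:existence-uniqueness-sys:general_ensemble-M=infty} gives $|\etaG| = |F_\reg(\etaH)| < 1$. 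Either route makes clear that, once the structure of the two nonlinear systems is in hand, the proposition is essentially a one-line consequence; the substance lies entirely in the previously established contraction and existence/uniqueness results.
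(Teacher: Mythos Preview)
Your proof is correct and matches the paper's own argument essentially verbatim: the paper also notes that the claim follows immediately from the convex-combination form \eqref{eq:ensemble-risk-limit-generic} together with $\etaG < 1$ when $c < 1$, the latter being a consequence of \Cref{th:existence-uniqueness-sys:general_ensemble-M=infty} with $c = \tilde c$. You have simply spelled out the one-line computation and the invocation of $\alpha > 0$ explicitly, which is fine.
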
 

This monotonicity in the ensemble size $M$ is illustrated by \Cref{fig:effect-of-M-huber} for the ensemble of $\ell_1$-regularized Huber regression.

Because the risk decreases in $M$, the optimal ensemble size is $M = \infty$.
(In practice, setting $M = \binom{n}{k}$ suffices by only averaging over estimators trained on distinct subsamples (see Appendix A.1 of \cite{du2023subsample} for more details), but this still can be quite large.)
However, it may not be feasible to use an ensemble size of $M = \infty$.
In practice, it suffices to use a large enough $M$ that gives a suboptimal risk close to the full-ensemble risk.
For this purpose, a natural idea is to estimate the risk of non-ensemble estimator $M = 1$ and the full estimator $M = \infty$, and obtain an estimate for the risk of $M$-ensemble using the relationship in \eqref{eq:ensemble-risk-limit-generic}.
This is very similar to the extrapolated cross-validation estimator (ECV) of \cite{du2023extrapolated}, which estimates the risk of $M = 1$ and $M = 2$. 

We also show that for any ensemble size $M$, when the subsample ratio $c$ is optimized, the resulting risk decreases in the inverse data aspect ratio $\delta=\lim n/p$. 
To prepare for the forthcoming statement, let us write the limiting risk $\cR_M$ in \eqref{eq:ensemble-risk-limit-generic} by $\cR_M(\delta, c)$ to make the dependence on the limit $\delta = \lim n/p$ and subsample ratio $c=\lim k/n$ clear. 
With this notation, we can say the following about the optimally subsampled risk.
\begin{proposition}
    [Monotonicity of optimally subsampled risk]
    \label{prop:monotonicity-risk} 
    The map $\delta \mapsto \inf_{c\in (0,1)} \cR_M(\delta, c)$ is non-increasing over $\delta \in (0, \infty)$ for all $M \in \NN {~\cup \{+\infty\}}$.
\end{proposition}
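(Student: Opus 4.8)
The plan is to exploit the scaling structure of the limiting quantities. Write $\phi \coloneq c\delta = \lim k/p$ for the limiting inverse aspect ratio of the subsample problem. Inspecting \Cref{sys:general_ensemble-M=1}, the solution $(\alpha,\beta,\kappa,\nu)$ depends on $(\delta,c)$ only through $\phi$ (the triple fed to the system is $(\loss,\reg,c\delta)$, with $\loss,\reg$ fixed), so $\cR_1(\delta,c)=\alpha(\phi)^2$ depends on $(\delta,c)$ only through $\phi$. In the homogeneous case ($\tilde\loss=\loss$, $\tilde\reg=\reg$, $\tilde c=c$, hence $\tilde\alpha=\alpha$, etc.), $F_\reg$ in \eqref{eq:etaH_def} is a function of $\phi$ alone, call it $F_\reg^\phi$, while $F_\loss$ in \eqref{eq:eta_def} factors as $F_\loss^{c,\phi}(\cdot)=c\,G_\phi(\cdot)$, where $G_\phi\coloneq F_\loss^{1,\phi}$ carries only the $\phi$-dependence and the prefactor $\sqrt{c\tilde c}=c$ carries the $c$-dependence. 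By the discussion following \Cref{th:existence-uniqueness-sys:general_ensemble-M=infty}, $\etaHstar=\etaHstar(c,\phi)$ is the unique fixed point of $\Phi_{c,\phi}\coloneq F_\loss^{c,\phi}\circ F_\reg^\phi$, which by part 2 of that theorem (with $\tilde c=c$) is non-decreasing and a strict $c$-contraction, and $\etaGstar(c,\phi)=F_\reg^\phi(\etaHstar(c,\phi))$; moreover, by the sign result following the theorem, both $\etaHstar,\etaGstar$ are nonnegative in this homogeneous setting. Consequently $\cR_\infty(\delta,c)=\etaGstar(c,\phi)\,\alpha(\phi)^2$.

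The key lemma is that, for fixed $\phi$, the map $c\mapsto\etaHstar(c,\phi)$ (and hence, since $F_\reg^\phi$ is non-decreasing, also $c\mapsto\etaGstar(c,\phi)$) is non-decreasing on $(0,1)$. To prove it, take $0<c_2<c_1<1$ and let $\eta_1\coloneq\etaHstar(c_1,\phi)$. From $\eta_1=\Phi_{c_1,\phi}(\eta_1)=c_1\,G_\phi(F_\reg^\phi(\eta_1))$ together with $\eta_1\ge0$ and $c_1>0$, we get $G_\phi(F_\reg^\phi(\eta_1))=\eta_1/c_1\ge0$, and therefore $\Phi_{c_2,\phi}(\eta_1)=c_2\,G_\phi(F_\reg^\phi(\eta_1))=(c_2/c_1)\eta_1\le\eta_1$. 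Since $\Phi_{c_2,\phi}$ is non-decreasing, the iterates $\eta_1\ge\Phi_{c_2,\phi}(\eta_1)\ge\Phi_{c_2,\phi}(\Phi_{c_2,\phi}(\eta_1))\ge\cdots$ form a non-increasing sequence, and by contractivity they converge to the unique fixed point $\etaHstar(c_2,\phi)$; hence $\etaHstar(c_2,\phi)\le\eta_1=\etaHstar(c_1,\phi)$.

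With the lemma in hand, fix $\delta_1<\delta_2$ and an arbitrary $c_1\in(0,1]$, and set $c_2\coloneq c_1\delta_1/\delta_2$, so that $c_2\in(0,1)$ and $c_2\delta_2=c_1\delta_1=\phi$. Then $\cR_1(\delta_2,c_2)=\alpha(\phi)^2=\cR_1(\delta_1,c_1)$. For the full-ensemble terms: if $c_1<1$, the lemma gives $\etaGstar(c_2,\phi)\le\etaGstar(c_1,\phi)$, and since $\etaGstar(c_1,\phi)\ge0$ and $\alpha(\phi)^2\ge0$ this yields $\cR_\infty(\delta_2,c_2)\le\cR_\infty(\delta_1,c_1)$; if $c_1=1$, the ensemble is degenerate with $\cR_M(\delta_1,1)=\alpha(\delta_1)^2$ (all components coincide, equivalently $\etaGstar(1,\cdot)=1$), and since $c_2<1$ gives $\etaGstar(c_2,\delta_1)<1$ by part 3 of \Cref{th:existence-uniqueness-sys:general_ensemble-M=infty}, we still get $\cR_M(\delta_2,c_2)\le\alpha(\delta_1)^2=\cR_M(\delta_1,1)$. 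In either case, combining through \eqref{eq:ensemble-risk-limit-generic} and using $1-1/M\ge0$ gives $\cR_M(\delta_2,c_2)\le\cR_M(\delta_1,c_1)$. Since $c_1\in(0,1]$ was arbitrary and $c_2\in(0,1]$, this shows $\inf_{c\in(0,1]}\cR_M(\delta_2,c)\le\inf_{c\in(0,1]}\cR_M(\delta_1,c)$, i.e., $\delta\mapsto\inf_{c\in(0,1]}\cR_M(\delta,c)$ is non-increasing.

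The main obstacle is the key lemma, and within it the crucial input is nonnegativity of $\etaHstar$ in the homogeneous case: it is exactly what makes $G_\phi(F_\reg^\phi(\eta_1))=\eta_1/c_1\ge0$, so that rescaling $F_\loss$ by the factor $c_2/c_1<1$ genuinely lowers $\Phi_{c,\phi}$ at the point $\eta_1$, after which the monotonicity and strict-contraction properties from \Cref{th:existence-uniqueness-sys:general_ensemble-M=infty} transfer this inequality from that single point to the fixed points. The remaining work — verifying that $\cR_1$ depends on $(\delta,c)$ only through $c\delta$ and that $F_\loss$ carries a clean multiplicative $c$-dependence — is routine inspection of \Cref{sys:general_ensemble-M=1,sys:general_ensemble-M=infty}.
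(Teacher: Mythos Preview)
Your proof is correct and follows the same overall strategy as the paper: reparametrize by $(\phi,c)$ with $\phi=c\delta$ held fixed, note that $\cR_1$ and $F_\reg$ depend only on $\phi$ while $F_\loss$ carries a clean multiplicative factor of $c$, and conclude by showing $\etaG$ is monotone in $c$ at fixed $\phi$. The paper executes that last step via calculus (implicit function theorem applied to $\etaG=F_\reg\circ F_\loss(\etaG;\phi)$, yielding $\partial\etaG/\partial\phi=\frac{F_\reg'(\etaH)}{\phi}\cdot\frac{\etaH}{1-(F_\reg\circ F_\loss)'(\etaG)}\ge 0$), whereas you prove the same monotonicity by a direct fixed-point comparison: start the iteration of the smaller-$c$ contraction at the larger-$c$ fixed point, use nonnegativity of $\etaHstar$ to see the first iterate drops, and then monotonicity plus contractivity propagate the inequality to the limit. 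Both arguments rest on exactly the same structural inputs from \Cref{th:existence-uniqueness-sys:general_ensemble-M=infty} (nonnegativity of $\etaHstar$ in the homogeneous case, monotonicity of $F_\loss,F_\reg$, and the $c$-Lipschitz bound on the composition); your version is slightly more elementary in that it avoids the implicit function theorem and differentiability claims, while the paper's derivative formula additionally gives quantitative information about the rate of change.
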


{
The proof of this proposition is based on an explicit formula for the partial derivative of $\mathcal{R}_M$ with respect to the data aspect ratio $\delta^{-1} = \lim p/n$ while holding the subsample-to-feature-size ratio $c \delta = \lim k/p$ fixed; see \Cref{proof:monotonicity-risk}. 
}

A consequence of this proposition is that the risk of the optimal ensemble estimator is decreasing in the sample size $n$ for a fixed (large enough) feature size $p$.
Moreover, combined with \Cref{prop:monotonicity-ensemble-size}, we also have that this monotonic risk profile lies below the function $\cR_1(\delta, 1)$, the risk profile of the original predictor trained once on the full dataset $(\bX, \by)$ with no ensembling (the risk of which, as we discussed above, can be non-monotonic). 
Such monotonicity in the inverse data aspect ratio is important because it ensures that increasing the amount of data relative to features consistently improves the estimator's performance. 
In a sense, a monotonic decrease in risk with the optimal subsample ratio certifies that the model effectively utilizes all the additional data, leading to better performance as the data size grows.
This result is illustrated in \Cref{fig:optimum_phis_overparameterized-2-optrisk-huber-lasso} for the $\ell_1$-regularized Huber regression.

\begin{figure}[!t]
    \centering
    \includegraphics[width=0.99\textwidth]{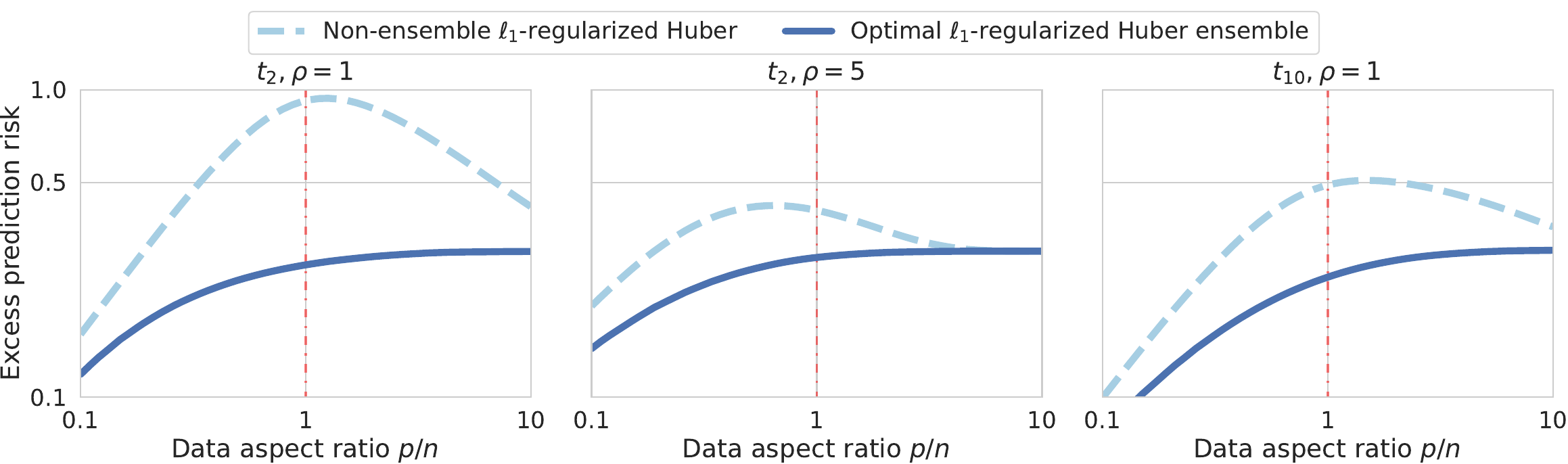} 
    \caption{
        \textbf{Optimal subsample risk of the Huber lasso ensemble is monotonic in the data aspect ratio.}
        {Comparison of the the limiting risks of non-ensemble ($M = 1$) $\ell_1$-regularized Huber regression fitted on full data $(c=1)$ and optimally subsampled (over the subsample ratio $c=\lim (k/n)\in (0,1]$) full ensemble ($M=\infty$) $\ell_1$-regularized Huber regression}, for fixed $\ell_1$-regularization parameter $\lambda=0.5$ and varying Huber parameter $\rho$, at different data aspect ratios $p/n$ ranging from $0.1$ to $10$.
        The data model is given in \Cref{subsec:Huber-ex}.
        \emph{Left}: noise follows Student's $t$ distribution $t_{2}$ and Huber parameter $\rho=1$.
        \emph{Middle}: noise follows Student's $t$ distribution $t_{2}$ and Huber parameter $\rho=5$.
        \emph{Right}: noise follows Student's $t$ distribution $t_{10}$ and Huber parameter $\rho=1$.
    }
    \label{fig:optimum_phis_overparameterized-2-optrisk-huber-lasso}
\end{figure}

\subsection{Examples and connections to literature}
\label{sec:examples-square-loss-and-square-penalty}

In this section, we specialize the general risk characterization in \Cref{th:nonlinear} in several examples of interest.
Throughout this section, we will consider ensembles of component predictors trained on the same $\loss$, $\reg$ (with a tuning parameter $\lambda$), and subsample size $k$.
We begin by considering convex regularized least squares, with further specialization to bridge, ridge, and lasso.
We then consider general ridge regularized estimators allowing for non-squared loss in \Cref{sec:general-train-loss} with further specialization to the Huber loss.
For the reader's convenience, the proximal operators, their derivatives, Moreau envelopes, and their derivatives for the ridge and lasso regularization and Huber loss functions are recalled in \Cref{tab:prox_and_derivatives_ridge_lasso}.

\subsubsection[Ensemble of regularized least squares]{Ensemble of regularized least squares}
\label{sec:ensembles-bridge-estimators}

In this section, we consider subagging regularized least squares. 
Given a common regularizer $\reg$ and a regularization parameter $\lambda > 0$, the component estimators are given by:
\begin{equation*}
    \hat \bbeta_m
    \coloneq \argmin_{\signal \in \R^p} \frac{1}{2} \| \by_{\setm} - \bX_{\setm} \signal \|_2^2 + \lambda \sum_{j\in [p]} \reg(b_j).
\end{equation*}
Now \Cref{asm:regularity-conditions}-(1) translates to having a bounded second moment of the noise distribution $F_\eps$, which we denote by $\sigma^2$. 
Using the explicit formula $\prox_{\loss}(x;\tau) = x/(1+\tau)$ for $\loss(x)=x^2/2$ and performing some change of variables, the risk convergence in \eqref{eq:ensemble-risk-limit-generic} holds with $(\cR_1, \cR_\infty)$ given by: 
$$
    \cR_1 = \tau^2 - \sigma^2
    \quad
    \text{and}
    \quad
    \cR_\infty = \xi^2- \sigma^2,
$$
where $\tau$ and $\xi$ are the solutions to the following systems:

\renewcommand{\thesystem}{\arabic{system}}
\begin{system}
    [Ensembles of regularized least squares]
    \label{sys:ensembles-penalized-least-squares}
    Given $\lambda \in (0, \infty)$, $\delta \in (0, \infty)$, $c \in (0, 1]$, $\sigma^2 \in [0, \infty)$, define the following 2-scalar system of equations in variables $(\tau, a)$:
    \begin{subequations}
    \label{eq:amp-bridge-M=1}
    \begin{empheq}{align}
    \tau^2 
    &=  \mathbb{E}\big[\big(\prox_{\reg}\big(\Theta + \tfrac{\tau}{\sqrt{c\delta}} H; \tfrac{a\tau}{\sqrt{c\delta}}\big) - \Theta\big)^2\big] + \sigma^2 \label{eq:amp-bridge-tau}, \\
    \tfrac{\lambda}{\sqrt{c\delta}} &= a\tau \big( 1 -  \tfrac{1}{c\delta}\E\big[
          \prox_{\reg}' \big(\Theta + \tfrac{\tau}{\sqrt{c\delta}} H ; \tfrac{a\tau}{\sqrt{c\delta}}\big) \big] \big), \label{eq:amp-bridge-a} 
    \end{empheq}
    \end{subequations}
    where $H \sim \cN(0,1)$ and $\Theta \sim F_\theta$ are independent.
    Given $(\tau,a) \in \RR^{2}_{> 0}$ that satisfy the above systems, define the following 1-scalar system of equations in variable $\xi$:
    \small
    \begin{subequations}
    \label{eq:amp-bridge-M=infty}
    \begin{empheq}{align}
    \hspace{-1em}
    \xi^2 
    = \mathbb{E}\big[\big(\prox_{\reg}\big(\Theta + \tfrac{\tau}{\sqrt{c\delta}} H; \tfrac{a\tau}{\sqrt{c\delta}}\big) - \Theta\big) \cdot \big(\prox_{\reg}\big(\Theta + \tfrac{\tau}{\sqrt{c\delta}} \tilde{H}; \tfrac{a\tau}{\sqrt{c\delta}}\big) - \Theta\big)\big] + \sigma^2
    \label{eq:amp-bridge-M=infty-a}
    \end{empheq}
    \end{subequations}
    \small
    where $(H,\tilde H)$ {are as in \eqref{distrib}}
    with
    $\etaH = c \frac{\xi^2}{\tau^2}$,
    and $\Theta \sim F_\theta$ independent.
\end{system}

Compared to \Cref{sys:general_ensemble-M=1,sys:general_ensemble-M=infty}, the parameterization in \Cref{sys:ensembles-penalized-least-squares} is slightly different.
This is done to match the result with some of the existing results for regularized least squares (for $M = 1$).
The invertible transformations are given by: $a = \tfrac{\lambda}{\beta}$, $\tau = \sqrt{c \delta} \tfrac{\beta}{\nu}$, and $\xi^2 = \etaG \alpha^2 + \sigma^2$.
These parameters also have interpretations as in \Cref{sec:interpretation-general-ensemble}, summarized below.

\begin{remark}
    [Interpretation of parameters in \Cref{sys:ensembles-penalized-least-squares}]
    \label{rem:interpretation-squaredloss-ensemble}
    The parameter $\tau^2$ is simply the (full) prediction risk of the non-ensemble estimator in the limit, which is $\alpha^2 + \sigma^2$.
    Note that $\tau^2 \ge \sigma^2$ and is sometimes referred to as effective ``inflated'' noise variance due to the high dimensionality \cite{donoho2009message,bayati2011lasso}.
    Moreover, the fact that $\tau^2 = \tfrac{(c \delta) \beta^2}{\nu^2} = \tfrac{\beta^2/(c \delta)}{\nu^2 / (c \delta)^2}$ is also at the core of consistency of generalized cross-validation (discussed in \Cref{sec:risk-estimation}) for the non-ensemble estimator. 
    Here, the numerator is the asymptotic training error and the denominator is the asymptotic degrees of freedom correction.
    (Observe that the factors of $c \delta$ arise because both the asymptotic training error $\beta^2$ and the asymptotic degrees of freedom correction $\nu$ are defined with normalization of $p$ in \Cref{tab:interpretations_squared_loss}.)
    The parameter $a \tau$ is the effective threshold parameter at which one applies the proximal operator to the noise-inflated effective observation and appears in approximate message passing (AMP) formulations \cite{donoho2009message}.
    The parameter $a$ serves as a proportionality constant between the effective threshold $a \tau$ and standard deviation $\tau$ of the inflated effective noise.
    Finally, the parameter $\xi^2$, which is the main contribution of this paper, is the full-ensemble predictor risk (when $M \to \infty$), which is also $\etaG \alpha^2 + \sigma^2$.
\end{remark}

In the following, we isolate some special cases of regularized M-estimators to compare with existing work.

\begin{remark}
    [Bridge ensembles]
    \label{rem:ensembles-bridge}
    Bridge estimators are also known as $\ell_q$-regularized least squares and are a popular class of regularized M-estimator \cite{frank1993statistical,fu1998penalized}.
    For $\ell_q$ regularizer $\reg_q(x)=|x|^q$, the risk of the bridge for $M = 1$ is derived in \cite{weng2018overcoming,wang2020bridge} for general $q \in [1,2]$. 
    For instance, we recover \cite[Theorem 2.1]{weng2018overcoming} by the change of variables $(\lambda',\Theta')=(\lambda/\sqrt{c\delta},\sqrt{c\delta}\Theta)$ such that the limiting prediction risks match $\tau' = \tau$.
     \Cref{eq:amp-bridge-M=infty-a} generalizes it for any $M \ge 1$.
    Further special boundary cases of $q = 2$ (ridge) and $q = 1$ (lasso) are further isolated in the next two remarks.
\end{remark}

\begin{remark}
    [Ridge ensembles]
    \label{rem:ensembles-ridge}
    For ridge regression when $q=2$, \Cref{sys:ensembles-penalized-least-squares} recovers Theorem 4.1 of \cite{patil2022bagging} under isotropic features, using a slight change of variables $(\lambda', \Theta')= (\lambda/\sqrt{c\delta}, \sqrt{c\delta}\Theta)$.
    The solution $v$ to the (limiting) Stieltjes transform of the spectrum of the sample Gram matrix therein satisfies $v=(a\tau)^{-1}$.
    For ridge ensembles, there is a deeper connection between $\cR_{1}$ and $\cR_{\infty}$.
    It turns out that $\cR_{\infty}(\lambda,  c)$ is exactly equal to $\cR_1(\mu, 1)$ for a new level of regularization $\mu = v^{-1} \ge \lambda$ that depends on $c$ (and properties of the data distribution).
    The lower the subsampling proportion $c$, the higher the value of this implicit regularization $\mu$.
    There are entire paths of equivalences in the $(\lambda, c)$ plane where not only are the asymptotic squared risks the same, but also the estimators themselves are equivalent.
    In a sense, one can think of the combined effect of subsampling and ensembling as an additional (implicit) ridge regularization.
    The prediction risk equivalences are first proved in \cite{du2023subsample} and later generalized to other risks and estimator equivalences in \cite{patil2023generalized}. 
\end{remark}

\begin{remark}
    [Lasso ensembles]
    \label{rem:ensembles-lasso}
    For the lasso predictor when $q=1$, the first set of equations \eqref{eq:amp-bridge-M=1} for $M=1$ in \Cref{sys:ensembles-penalized-least-squares} recovers \cite[Theorem 1.5]{bayati2011lasso} with a change of variables $\lambda'=\lambda/\sqrt{c\delta}$ and $\Theta'=\sqrt{c\delta}\Theta$.
    On the other hand, the second set of equations in \Cref{sys:ensembles-penalized-least-squares} for $M=\infty$ is new to the literature.
    We show empirically in the next section that the optimal full-ensemble subsampled lassoless is not the same as the optimal non-ensemble lasso (on full data).
    Thus, the subsampling and ensembling of the lasso are qualitatively different from the subsampling and ensembling of the ridge regression, as pointed out in \Cref{rem:ensembles-ridge}.
    In particular, the effect of subsampling for lasso is not merely additional lasso regularization.
\end{remark}

\subsubsection{Ensembles of general ridge regularized estimators}
\label{sec:general-train-loss}

In this section, we specialize the results of \Cref{th:nonlinear} for ridge regularized ensembles allowing for general loss functions.
Specifically, given $\lambda > 0$, consider component estimators of the form:
\begin{equation*}
    \hat \bbeta_m(I_m) 
    \coloneq \argmin_{\signal \in \RR^p} \sum_{i\in I_m} \loss(y_i - \bx_i^\top \signal) + \lambda \| \signal \|_2^2.
\end{equation*}
Recall that the risk convergence in \eqref{eq:ensemble-risk-limit-generic} holds with 
$\cR_1 = \alpha^2$ and $\cR_\infty = \etaG\alpha^2$ where $\alpha$ and $\etaG$ are solutions to \Cref{sys:general_ensemble-M=1} and \Cref{sys:general_ensemble-M=infty} respectively. 
Here, using the explicit formula $\prox_{\reg}(x;\tau) = x/(1+\lambda \tau)$ for $\reg(x) = \lambda |x|^2 / 2$, these systems can be simplified as follows:
\begin{system}
  [Nonlinear system for general ridge regularized ensembles]
  \label{sys:general_loss_ridge_ensembles}
  Given $\lambda \in (0, \infty)$, $\delta \in (0,\infty)$, and $c \in (0, 1]$, define the following 2-scalar system of equations in $(\alpha, \kappa)$:
  \begin{equation}
    \alpha^2 = 
    c \delta \cdot \kappa^2 \E[{\env}'_{\loss}(Z + \alpha G; \kappa)^2] + \lambda^2 \kappa^2 \EE[\Theta^2] 
    \text{ and }
    \alpha = 
    c \delta \cdot \tfrac{\kappa}{1-\lambda\kappa} \E[{\env}'_{\loss}(Z + \alpha G; \kappa) \cdot G], \label{eq:general-ridge-alpha-kappa}
    \end{equation}
  where $G \sim \cN(0,1)$, $\Theta \sim F_\theta$, $Z \sim F_\eps$, all mutually independent.
  Let $(\beta, \nu)$ be parameters expressed in terms of $(\alpha, \kappa)$ as:
  \begin{equation}
    \beta^2 = \tfrac{1}{\kappa^2} \alpha^2 - \lambda^2 \EE[\Theta^2] \quad \text{and} \quad 
    \nu = \tfrac{1}{\kappa} - \lambda. 
    \label{eq:general-ridge-beta-nu}
  \end{equation}
  Given parameters $(\alpha,\beta,\kappa,\nu)$ that satisfy \eqref{eq:general-ridge-alpha-kappa} and \eqref{eq:general-ridge-beta-nu}, define the following 2-scalar system of equations in variables $(\etaH, \etaG)$:
  \begin{equation}
    \etaH 
    = \tfrac{c^2\delta}{\beta^2} \E [
      \env_{\loss}'(Z + \alpha G; \kappa) \cdot \env_{\loss}'(Z + \alpha \tilde G; \kappa)]
    \quad
    \text{and}
    \quad
    \etaG = \tfrac{\etaH \beta^2 + \lambda^2 \EE[\Theta^2] }{\beta^2 + \lambda^2 \EE[\Theta^2]}, \label{eq:etaG-etaH-general-ridge-regularized}
    \end{equation}
  where $(G, \tilde{G})$ are jointly normal with $\E[G\tilde{G}] = \etaG$.
\end{system}

When $M = 1$, \Cref{sys:general_loss_ridge_ensembles} recovers \cite[Theorem 2.1]{karoui_2013}.
When $c\delta>1$ (underparameterized regime) and $\lambda=0$ (unregularized case),  we have $\nu = \kappa^{-1}$, $\beta=\kappa^{-1} \alpha$, $\etaH=\etaG$. {Substituting these into \eqref{eq:etaG-etaH-general-ridge-regularized}}, we recover Theorem 2.3 in the recent work of \cite{bellec2024asymptotics}.

\section{Subagging and overparameterization}
\label{sec:ensembles-interpolators}

In \Cref{sec:specific-examples}, we discussed subagging of regularized estimators with an explicit regularization level $\lambda > 0$. 
Triggered by the success of overparameterized neural networks that can (nearly) interpolate, there has been a surge of recent work analyzing the risk behavior of estimators with vanishing regularization, such as the minimum $\ell_2$-norm interpolator (ridgeless), minimum $\ell_1$-norm interpolator (lassoless), and max-margin interpolators, among others. 
In this section, we discuss subagging of minimum $\ell_q$-norm interpolators for $q \in \{1, 2\}$. 
We will demonstrate some interesting risk properties in \Cref{sec:subagging-interpolators}, showcase the benefits of subagging in overparameterized regimes in \Cref{sec:optimal-subsample-size}, and contrast with optimal explicit regularization in \Cref{sec:optimal-regularization-penalty}.

{
A central theme in the recent literature on interpolators is understanding the \emph{implicit bias} of optimization algorithms, especially in overparameterized regimes (e.g., \cite{gunasekar2018characterizing,gunasekar2018implicit,du2018gradient,chizat2018global,lee2019wide,allen2019convergence}).
For instance, gradient descent in an overparameterized linear model converges to the minimum-norm solution (see, e.g., \cite{belkin2018understand,hastie2022surprises}). 
Our analysis of subagging takes a complementary perspective. 
Subagging is not intended to model the trajectory of a single optimization run; rather, it is treated as an explicit ensembling procedure in its own right, designed to improve stability and predictive accuracy by averaging models trained on different subsamples of the data. 

Since minimum-$\ell_q$ interpolators serve as canonical models for modern predictors, it is natural and important to ask how their statistical properties change under bagging.
Our results in \Cref{sec:subagging-interpolators}-\ref{sec:optimal-regularization-penalty} show that the combination of interpolation with subsampling and averaging creates a distinctive form of \emph{algorithmic regularization}, often yielding better generalization compared to either a single interpolator or classical regularized estimators such as ridge or lasso.
This situates our contribution as an analysis of explicit ensembling method applied to modern base learners, complementing prior work on bagging least squares and ridge predictors in the overparameterized regimes (e.g., \cite{lejeune2020implicit,patil2022bagging,patil2023generalized}).
}

\subsection[Subagging of minimum ellq-norm interpolators]{Subagging of minimum $\ell_q$-norm interpolators}
\label{sec:subagging-interpolators}

We will focus in this section on subagging of bridgeless estimators, that is $\ell_q$-norm regularized least squares with $\reg(\bb) = \| \bb \|_q^q$.
{Our main cases of interest are the ``ridgeless'' estimator, which corresponds to $q = 2$ \cite{hastie2022surprises}, and the ``lassoless'' estimator, which corresponds to $q = 1$ \cite{li2021minimum}.}
The terminology ``less'' is motivated by the fact that these estimators can be defined in a limiting sense as $\lambda \to 0^{+}$ for bridge estimators with regularization level $\lambda$.
We consider the ensemble of predictors $(\hat\btheta_m)_{m\in [M]}$ of the form:
\[
    \textstyle
    \hat\btheta_{m} \coloneq \lim_{\lambda\to 0+} \hat\btheta_m(\lambda) \quad \text{where} \quad \hat\btheta_m(\lambda) \in \argmin_{\signal \in \R^p} \frac{1}{2} \| \by_{\setm} - \bX_{\setm} \signal \|_2^2 + \lambda \|\bb\|_q^q. 
\]
(We refer readers to \cite{tibshirani2013lasso} for details on how the sequence of estimators is defined when the estimators for $\lambda > 0$ are not unique, as in the case of the lasso.)

In the underparameterized regime ($p < n$), these are simply the least squares estimators: $\hat{\bbeta}_m = (\bX_{\setm}^\top \bX_{\setm})^{-1} \bX_{\setm}^\top \by_{\setm}$.
In the overparameterized regime ($p > n$), these correspond to the minimum $\ell_q$-norm interpolators: $\argmin_{{\bbeta} \in\R^p}\{ \| {\bbeta} \|_q \colon \by_{\setm} = \bX_{\setm} {\bbeta} \}$, when $\bX_{\setm}$ has independent rows to allow for interpolation.
For $q = 2$, when $\reg$ is the ridge penalty, this also has a closed-form expression given by: $\hat{\bbeta}_m = (\bX_{\setm}^\top \bX_{\setm})^{\dagger} \bX_{\setm}^\top \by_{\setm}$, where $\bA^{\dagger}$ denotes the Moore-Penrose pseudoinverse of a matrix $\bA$.
In other cases, we do not have a closed-form expression for the minimum $\ell_q$-norm interpolator.
The next system specializes \Cref{sys:ensembles-penalized-least-squares} to convex regularized least squares with vanishing regularization, by taking the limit as $\lambda \to 0^{+}$.

\begin{system}
    [Ensembles of minimum $\ell_q$-norm interpolators]
    \label{sys:interpolators}
    Given $\delta \in (0, \infty)$, $c \in (0, 1]$ such that $c\delta <1$, $\sigma^2 \in [0, \infty)$, define the following system of equations in variables $(\tau,a)$:
    \begin{subequations}
    \label{eq:ellq-interpolators}
    \begin{empheq}{align}
    \tau^2 
    &= \mathbb{E}\big[\big(\prox_{|\cdot|^q}\big(\Theta + \tfrac{\tau}{\sqrt{c\delta}} H; \tfrac{a\tau}{\sqrt{c\delta}}\big) - \Theta\big)^2\big] + \sigma^2 \label{eq:interpolators-tau}\\
        0 &= 1-\tfrac{1}{c\delta}\E\big[
          \prox_{|\cdot|^q}' \big(\Theta + \tfrac{\tau}{\sqrt{c\delta}} H ; \tfrac{a\tau}{\sqrt{c\delta}}\big) \big]
        \label{eq:interpolators-a}
    \end{empheq}
    \end{subequations}
    where $H \sim \cN(0,1)$ and $\Theta \sim F_\theta$ are independent.
    Given $(\tau,a)$ that satisfy \eqref{eq:interpolators-tau} and \eqref{eq:interpolators-a}, define the following 1-scalar system of equations in variable $\xi$:
    \small
    \begin{subequations}
    \begin{empheq}{align}
        \hspace{-1em}
        \xi^2 
        = \mathbb{E}\big[\big(\prox_{|\cdot|^q}\big(\Theta + \tfrac{\tau}{\sqrt{c\delta}} H; \tfrac{a\tau}{\sqrt{c\delta}}\big) - \Theta\big) \cdot \big(\prox_{|\cdot|^q}\big(\Theta + \tfrac{\tau}{\sqrt{c\delta}} \tilde{H}; \tfrac{a\tau}{\sqrt{c\delta}}\big) - \Theta\big)\big] + \sigma^2
          \label{eq:full-ensemble-risk-ellq-interpolators}
    \end{empheq}
    \end{subequations}
    where
    $(H,\tilde H)$ {are as in \eqref{distrib}}
    with
    $\etaH = c \frac{\xi^2}{\tau^2}$,
    and $\Theta \sim F_\theta$ is independent of $(H, \tilde H)$. 
\end{system}

To the best of our knowledge, the existence and uniqueness of the solution $(\tau, a)$ to \eqref{eq:ellq-interpolators} are not fully established in the literature, except for the special cases of $q = 2$ (ridgeless) and $q = 1$ (lassoless). 
Assuming this is the case, the existence and uniqueness of the solution $\xi$ to \eqref{eq:full-ensemble-risk-ellq-interpolators} follow from \Cref{th:existence-uniqueness-sys:general_ensemble-M=infty}.

Observe that the equations \eqref{eq:interpolators-tau} and \eqref{eq:full-ensemble-risk-ellq-interpolators} in \Cref{sys:interpolators} are special cases of \eqref{eq:amp-bridge-tau} and \eqref{eq:amp-bridge-M=infty-a} in \Cref{sys:ensembles-penalized-least-squares} for $\ell_q$ penalties.
Equation \eqref{eq:interpolators-a} is the limit of \eqref{eq:amp-bridge-a} as $\lambda \to 0^+$.
Indeed, for $q \in \{1,2\}$, the solution to \Cref{sys:ensembles-penalized-least-squares} with $\lambda > 0$ converges to the solution to \Cref{sys:interpolators} as $\lambda \to 0^+$ (see \Cref{subsec:derivation_system_interpolator} for a proof).
This means that
\begin{align*}
    \lim_{\lambda \to 0^{+}} {\underset{n \to +\infty}{\mathrm{p\!\!-\!\!lim}}} \Bigl\|\frac{1}{M} \sum_{m \in [M]} \hat\btheta_m(\lambda) - \btheta \Bigr\|_2^2 = \cR_M := M^{-1} \cR_1 + (1-M^{-1}) \cR_\infty
\end{align*}
where $\cR_1 = \tau^2 - \sigma^2$ and $\cR_\infty = \xi^2 - \sigma^2$, and by the same argument, the limiting risk $\cR_M$ satisfies \Cref{prop:monotonicity-ensemble-size} and \ref{prop:monotonicity-risk} (see also \Cref{fig:effect-of-M} and \ref{fig:optimum_phis_overparameterized-2-optrisk}).
However, it is challenging to show the above display with the order of $\lim_\lambda$ and $\mathrm{p\!\!-\!\!lim}_n$ swapped.
For the ridgeless estimator ($q = 2$) and any $M$, this is proved in \cite{patil2022bagging} using a uniform convergence argument.
For the lassoless estimator ($q = 1$ and $M = 1$), this is done in \cite{li2021minimum}, where the authors directly analyze the interpolator by constructing a suitable AMP algorithm.
We conjecture that this is, in general, true at least for bridgeless estimators for any $q \in [1,2]$ and $M \ge 1$. 
Since this is not the main focus of our paper and is only intended as an illustrative case, we will not work towards this goal in the current paper.
We will instead investigate properties and consequences of \Cref{sys:interpolators}.

Further special cases of $q = 2$ (ridgeless) and $q = 1$ (lassoless) are isolated in the next two remarks.
These will serve as our two main running examples in this section.
\begin{remark}[Ridgeless ensembles]
    \label{rem:ridgeless_ensembles}
    For squared loss and ridge regularizer ($q=2$), when $c\delta < 1$ and $\lambda \rightarrow 0^+$, we get
    \begin{align*}
    \cR_1(\delta, c) + \sigma^2 = \EE[\Theta^2](1 - c\delta) + \frac{{\sigma^2}}{1 - c\delta}, ~~
    \cR_{\infty}(\delta, c) + \sigma^2 = \EE[\Theta^2] \frac{(1 - c\delta)^2}{\delta (\delta - (c\delta)^2)} + \frac{{\sigma^2} \delta}{\delta - (c\delta)^2}.
    \end{align*}
    The result above aligns with the risk ensemble of ridgeless estimators presented in Corollary 6.1 of \cite{patil2022bagging}.
    This can be seen by substituting $\delta = 1 / \phi$ and $c = \phi / \psi$, or equivalently $c \delta = 1 / \psi$.
\end{remark}

\begin{remark}
    [Lassoless ensembles]
    \label{rem:lassoless_ensembles}
    For squared loss and lasso regularizer ($q=1$), when $c\delta < 1$ and $\lambda \rightarrow 0^+$, $\tau^2=\cR_1(\delta, c)+\sigma^2$ is the solution to the following equations:
    \begin{equation}
    \begin{aligned}
        \tau^2 = \sigma^2 + \mathbb{E}\big[\big(\soft\big(\tfrac{\tau}{\sqrt{c\delta}} H + \Theta ; \tfrac{a\tau}{\sqrt{c\delta}}\big) - \Theta\big)^2\big], 
        \quad 1 = \tfrac{1}{c\delta}\PP\big(|\tfrac{\tau }{\sqrt{c\delta}} H + \Theta| > \tfrac{a\tau}{\sqrt{c\delta}} \big),
    \end{aligned}
    \label{eq:lassoless_system_remark7}
    \end{equation}
    and $\xi^2 = \cR_\infty(\delta, c) + \sigma^2$ is the solution to the following equations:
    \begin{align*}
        \xi^2 &= \sigma^2 + \mathbb{E}\big[\big(\soft\big(\tfrac{\tau}{\sqrt{c\delta}} H + \Theta ; \tfrac{a\tau}{\sqrt{c\delta}} \big) - \Theta\big) \cdot \big(\soft\big(\tfrac{\tau}{\sqrt{c\delta}} \tilde H + \Theta ; \tfrac{a\tau}{\sqrt{c\delta}} \big) - \Theta\big)\big],
    \end{align*}
    with $\EE[H\tilde{H}] = c \frac{\xi^2}{\tau^2}$.
    Note that $\soft$ is the soft threshold function defined by $\soft(x; \tau) = (|x| - \tau)_+ \sign(x)$.
    In the ensemble setting, the case $M = 1$ corresponds to \cite[Theorem 2]{li2021minimum} with a slight change of variables $\Theta' = \sqrt{c\delta} \Theta$.
    The full-ensemble case when $M = \infty$ is new.
\end{remark}

\begin{figure}[!t]n
    \centering
    \includegraphics[width=0.99\textwidth]{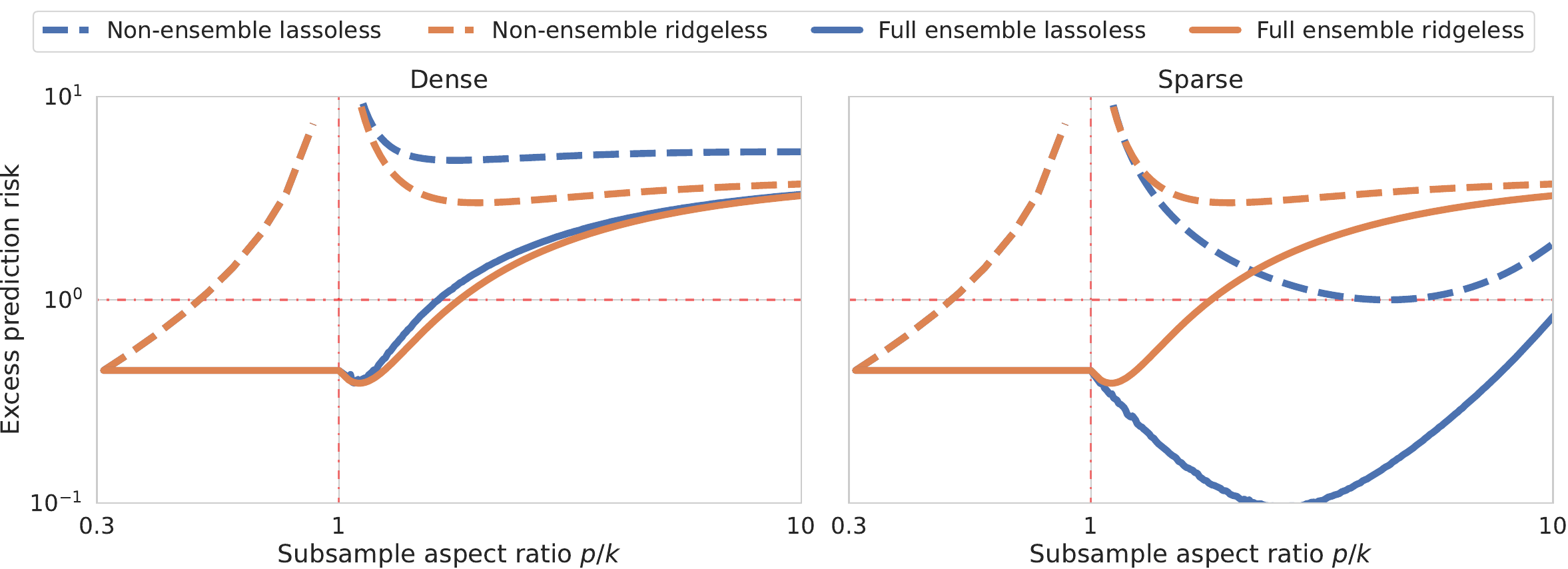}
    \caption{
        Prediction risks of the full-ensemble lassoless and ridgeless at different subsample aspect ratios $p/k={(c\delta)^{-1}}$, where $\delta^{-1} = p/n$ is fixed to $0.3$ while $c=k/n\in (0,1]$ varies. 
        The data model is given by \eqref{model:sparse} with signal strength $\rho=2$, noise level $\sigma=1$. 
        The support proportion $s$ varies.
        \emph{Left}: dense regime with $s=0.9$.
        \emph{Right}: sparse regime with $s=0.01$.
        We observe that the full-ensemble risk is continuous at the interpolation threshold $(p/k=1)$.
        Also, the full-ensemble risk has a negative derivative to the right of the interpolation threshold.
        This implies that the optimal subsample size is in the overparameterized regime.
    }
    \label{fig:optimum_phis_overparameterized-continuity-derivative}
\end{figure}

\begin{remark}
    [Avoiding risk divergence with ensembling]
    \label{rem:lassoless-fullensemble-continuity-psi=1}
    Note that in the underparameterized regime when $c \delta > 1$ is fixed, the estimator of interest is simply the ensemble of least squares.
    Thus, a standard Stieltjes transform argument or the explicit formula for the expectation of inverse Wishart matrices gives
    \[
        \sR_1(\delta,c) + \sigma^2 = \sigma^2 \frac{c \delta}{c \delta - 1}
        \quad
        \text{and}
        \quad
        \sR_\infty(\delta,c) + \sigma^2 = \sigma^2 \frac{\delta}{\delta - 1} \quad \text{for all $c > \delta^{-1}$}.
    \]
    It just so happens that for the full-ensemble least squares estimators, only the inverse aspect ratio $\delta$ of the original data matters!
    In particular, as $c \to (\delta^{-1})^{+}$, $\sR_1$  diverges, while the full-ensemble risk $\sR_\infty$ is still bounded. 
    Now let us consider the overparameterized regime $c\delta < 1$.
    By simple algebra, for any regularizer $\reg$, the solution $\tau$ to the sub-system \eqref{eq:interpolators-tau}-\eqref{eq:interpolators-a} in \Cref{sys:interpolators} is uniformly bounded from below as:
\begin{equation}\label{eq:tau_lower_bound}
        \tau^2 \ge (1-c\delta)^{-1} \sigma^2.
    \end{equation}
    (See \Cref{sec:proof_tau_lowerbound} for the proof.)
    Recalling $\cR_1 = \tau^2 - \sigma^2$, this means that the risk of the non-ensemble interpolators blows up as $c \to (\delta^{-1})^{-}$.
    For the minimum $\ell_2$- and $\ell_1$-norm interpolators, this is shown in \cite{hastie2022surprises,li2021minimum}.
    For the full-ensemble cases, we experimentally observe from \Cref{fig:optimum_phis_overparameterized-continuity-derivative} that the risk $\cR_\infty$ is continuous in $c$ for the full ridgeless and lassoless ensembles.
    In particular, it does not blow up around $c=\delta^{-1}$.
    For ridgeless, this claim is easy to verify (and holds more generally, as shown in \cite{patil2022bagging}).
    For lassoless, given the solution $(a,\tau)$ to \eqref{eq:lassoless_system_remark7}, in \Cref{sec:proof-continuity-Rinfty}, we identify that the condition $\lim_{c\to(\delta^{-1})^-} (a\tau) = 0$ is sufficient to obtain the conclusion $\lim_{c\to(\delta^{-1})^-}(\xi^2) =\frac{\delta}{\delta-1}\sigma^2$, and we observe experimentally that $a\tau\to0$ holds (\Cref{fig:atau-limit-to-zero}), however we are currently not able to provably establish that $\lim_{c\to(\delta^{-1})^-} (a\tau) = 0$.
\end{remark}

\subsection{Optimal subsample size}
\label{sec:optimal-subsample-size}

An intriguing observation from \Cref{fig:optimum_phis_overparameterized-continuity-derivative} concerns the optimal subsample size $k^*$.
When the sample size $n$ and the number of features $p$ are fixed with $n > p$, the optimal subsample size $k^*$ that minimizes the full risk $\cR_\infty$ falls below $p$.
This suggests that even when the original sample lies in the underparameterized regime, the optimal subsample size shifts into the overparameterized regime.
This phenomenon is proved for ridgeless ensembles ($\reg(x) = x^2$) by \cite{patil2022bagging}.
Expanding on this, and utilizing \Cref{sys:interpolators}, we empirically show that this behavior extends beyond ridgeless ensembles to lassoless ensembles ($\reg(x) = |x|$) as well (see \Cref{fig:optimum_phis_overparameterized-2-optsubsample}).

\begin{figure}[!t]
    \centering
    \includegraphics[width=0.99\textwidth]{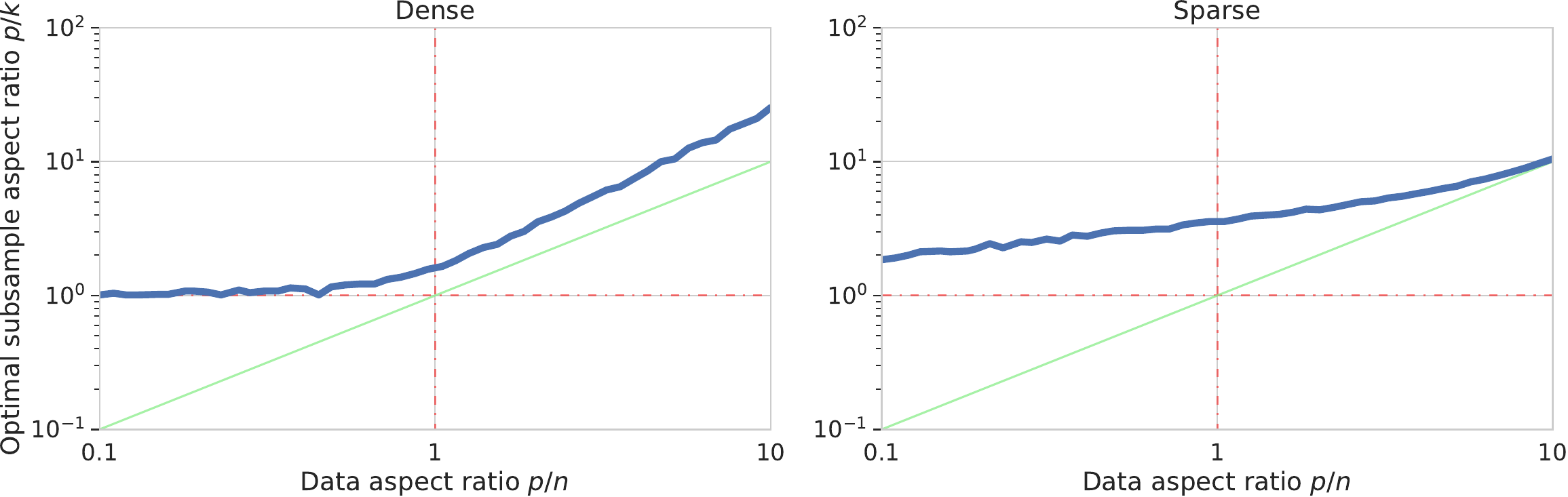}
    \caption{
        \textbf{Optimal subsample size for the lassoless ensemble is always in the overparameterized regime.}
        Optimal subsample aspect ratio $p/k$ that achieves the optimal risk for the lassoless ensemble at different data aspect ratios $p/n$ ranging from $0.1$ to $10$.
        The data model is as in \eqref{model:sparse} with signal strength $\rho=2$, noise level $\sigma=1$, data aspect ratio $p/n=0.1$, feature size $p=500$, and varying support proportion $s$.
        \emph{Left}: dense regime with $s=0.9$.
        \emph{Right}: sparse regime with $s=0.01$.
    }
    \label{fig:optimum_phis_overparameterized-2-optsubsample}
\end{figure}

{
The phenomenon of the optimal subsample size $k^*$  being in the overparameterized regime ($k^*<p$) arises from a combination of multiple effects.
First, subagging leverages correlation reduction.
As the subsample $k$ decreases, the overlap between any two subsamples shrinks, making the resulting estimators more diverse and less correlated.
This reduces the covariance term in the ensemble risk formula, incentivizing smaller values of $k$.
Second, the choice of $k$ also interacts with the ``double descent'' risk profile of the individual interpolators.
The risk of a single interpolator typically diverges at the threshold $k = p$.
By choosing $k < p$, we steer the base learners away from this high-risk region.
The optimal subsample size $k^*$ is therefore a ``sweet spot'' that balances these benefits against the eventual cost of training on smaller datasets, a balance that is surprisingly found in the overparameterized regime!
}

\subsection{Optimal subagging versus optimal (explicit) regularization}
\label{sec:optimal-regularization-penalty}

There are three parameters one can tune to optimize the asymptotic risk $\cR_M(\lambda, c)$ of the ensemble estimator, as in \Cref{rem:ensembles-ridge,rem:ensembles-lasso}: the regularization level $\lambda\in [0, \infty)$, the subsample ratio $c\in(0,1]$, and the ensemble size $M\in\mathbb{N}$. {Here, $\delta = \lim_{n,p\to\infty} (n/p) $ is not a tuning parameter, as it is determined by the dimensions of the full sample matrix $\bX\in\R^{n\times p}$, which has $n$ samples and $p$ features.
Furthermore, since $\mathcal{R}_M(\lambda, c)$ does not depend on $M$ when $c=1$, combined with \Cref{prop:monotonicity-ensemble-size} (which shows that the risk is monotonic in $M$ when $c<1$), we have $\inf_{M\in \mathbb{N}}\mathcal{R}_M(\lambda, c) = \mathcal{R}_\infty(\lambda, c)$ for all $(\lambda, c)$. Thus, it suffices to consider the following minimization: $\inf_{\lambda\in [0, \infty), \ c\in (0,1]}\mathcal{R}_\infty(\lambda, c)$. 
}

This hyperparameter optimization is simplified for ridge regression, as shown in Theorem 2.3 of \cite{du2023subsample}.
Minimization with respect to $(\lambda, c)$ is equal to the minimization over $\lambda$ when $M = 1$ and $c = 1$ (non-ensemble setting), which is the same as minimization $c$ when $\lambda = 0$ and $M=\infty$ (full ensemble of ridgeless predictors):
\begin{equation*}
    \underbrace{{\inf_{\lambda\in [0, \infty), \ c\in (0,1]}
    \cR_\infty(\lambda, c)}}_{{ \text{opt regularization and opt ensemble ($\color{pyred}\star$)}}}
    \qquad =  
    \underbrace{{\inf_{\lambda\in[0, \infty)} \cR_1(\lambda,c=1)}}_{{\text{opt regularization but no ensemble ($\color{pyblue}\mysolidcircle$)}}}
    =
    \underbrace{{\inf_{c\in (0,1]} \cR_\infty(\lambda=0, c)}}_{{\text{opt ensemble but no regularization ($\color{pygreen}\mysolidcircle$)}}}.
\end{equation*}
{(Here, the colored markers are as in \Cref{fig:advantage_subsampling_underparameterized,fig:advantage_subsampling_underparameterized_aniso,fig:advantage_subsampling_overparameterized,fig:huber,fig:huber-l1-rho,fig:huber-l1-lam}.)
In some situations, however, such an equivalence does not hold:} 
\begin{equation*}
    \underbrace{{\inf_{\lambda\in [0, \infty), \ c\in (0,1]}
    \cR_\infty(\lambda, c)}}_{{ \text{opt regularization and opt ensemble ($\color{pyred}\star$)}}}
    \qquad \ne  
    \underbrace{{\inf_{\lambda\in[0, \infty)} \cR_1(\lambda,c=1)}}_{{\text{opt regularization but no ensemble ($\color{pyblue}\mysolidcircle$)}}}
    \ne
    \underbrace{{\inf_{c\in (0,1]} \cR_\infty(\lambda=0, c)}}_{{\text{opt ensemble but no regularization ($\color{pygreen}\mysolidcircle$)}}}.
\end{equation*}
{
and the risk minimization over {$(\lambda, c)$} can be  strictly better}:
\begin{equation*}
    \underbrace{{\inf_{\lambda\in [0, \infty), \ c\in (0,1]}
    \cR_\infty(\lambda, c)}}_{{ \text{opt regularization and opt ensemble ($\color{pyred}\star$)}}}
    \qquad < 
    \underbrace{{\inf_{\lambda\in[0, \infty)} \cR_1(\lambda,c=1)}}_{{\text{opt regularization but no ensemble ($\color{pyblue}\mysolidcircle$)}}}
    \wedge
    \underbrace{{\inf_{c\in (0,1]} \cR_\infty(\lambda=0, c)}}_{{\text{opt ensemble but no regularization ($\color{pygreen}\mysolidcircle$)}}}.
\end{equation*}
\begin{figure}[!t]
    \centering
\includegraphics[width=0.99\textwidth]{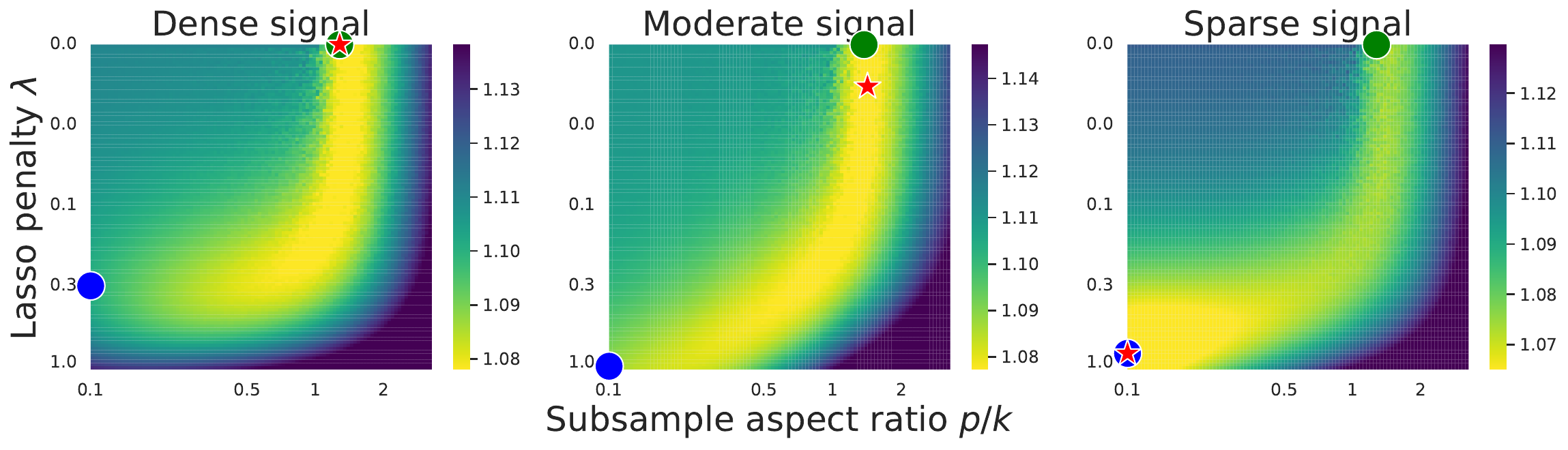}
    \caption{
        \textbf{Optimally subsampled lassoless regression can outperform optimal lasso regression.}
        Heatmaps of theoretical prediction risk in $\lambda$ and {$p/k=(p/n)\cdot c^{-1}$ with $c = k/n\in(0,1]$} of full lasso ensemble $(M=\infty)$ in the underparameterized regime ($p/n=0.1$).
        The data model is given by \eqref{model:sparse} with signal strength $\rho=0.5$ and noise level $\sigma=1$ at different sparsity levels $s$.
        {
        Blue dots ($\color{pyblue}\mysolidcircle$): the optimal lasso {($\lambda$ is optimized with $c$ fixed to $1$)}. 
        Green dots ($\color{pygreen}\mysolidcircle$): the optimally subsampled lassoless {($c$ is optimized with $\lambda$ fixed to $0$)}. 
        Red stars ($\color{pyred}\star$):  the optimal lasso ensemble {($c$ and $\lambda$ are jointly optimized)}.
        \emph{Left}: Dense regime with support proportion $s=0.9$. The optimally subsampled lassoless ($\color{pygreen}\mysolidcircle$) coincides with the optimal lasso ensemble ($\color{pyred}\star$) and outperforms the optimal lasso ($\color{pyblue}\mysolidcircle$).
        \emph{Middle}: Moderate sparse regime with support proportion $s=0.5$. The optimal lasso ensemble ($\color{pyred}\star$) is better than both the optimal lasso ($\color{pyblue}\mysolidcircle$) and the optimally subsampled lassoless ($\color{pygreen}\mysolidcircle$). 
        \emph{Right}: Sparse regime with support proportion $s=0.2$. The optimal lasso ($\color{pyblue}\mysolidcircle$) coincides with the optimal lasso ensemble ($\color{pyred}\star$) and outperforms the optimal subsample lassoless ($\color{pygreen}\mysolidcircle$).}
     }
    \label{fig:advantage_subsampling_underparameterized}
\end{figure}
We illustrate this through a numerical experiment with lasso.
We show that the optimal full-ensemble subsampled lassoless is not the same as the optimal non-ensemble lasso (on full data).
In \Cref{fig:advantage_subsampling_underparameterized}, we contrast the sparse and dense data settings.
In each case, we show the full-ensemble risk heatmap in $\lambda$ and {$p/k =(p/n) \cdot c^{-1}$ with $p/n$ held fixed while $c\in (0,1]$ varies.}
In the right panel (the sparse setting), we see that the optimal lasso is better than the optimal subsample lassoless. This is expected because the lasso is known to perform well in the sparse setting. In the left panel (the dense setting), we see that the optimally subsampled lassoless is better than the optimal lasso. This is interesting because it shows that the subsample and ensemble induce an implicit regularization effect.
In short, the optimally subsampled lassoless can be better or worse than the optimal lasso.
In other words, the full-ensemble lassoless when $c$ is optimized is not the same as the optimized lasso when $\lambda$ is optimized on the full data.

A similar conclusion holds for overparameterized settings, as shown in \Cref{fig:advantage_subsampling_overparameterized}.
In particular, depending on the SNR and $\delta$, the subsample optimized risk may be smaller or larger than the optimized lasso risk.
The conclusion is that, in general, it helps to optimize $\lambda$, but also to optimize the subsample size.
The subsample and ensemble induce an implicit regularization effect.
Once optimized, this implicit regularization can improve on the explicit regularization provided by the regularizer. {For further investigation on the interplay between subsampling and explicit regularization, see \Cref{sec:optimal-subagging-versus-optimal-regularization}.}

\begin{figure}[!t]
    \centering
    \includegraphics[width=0.99\textwidth]{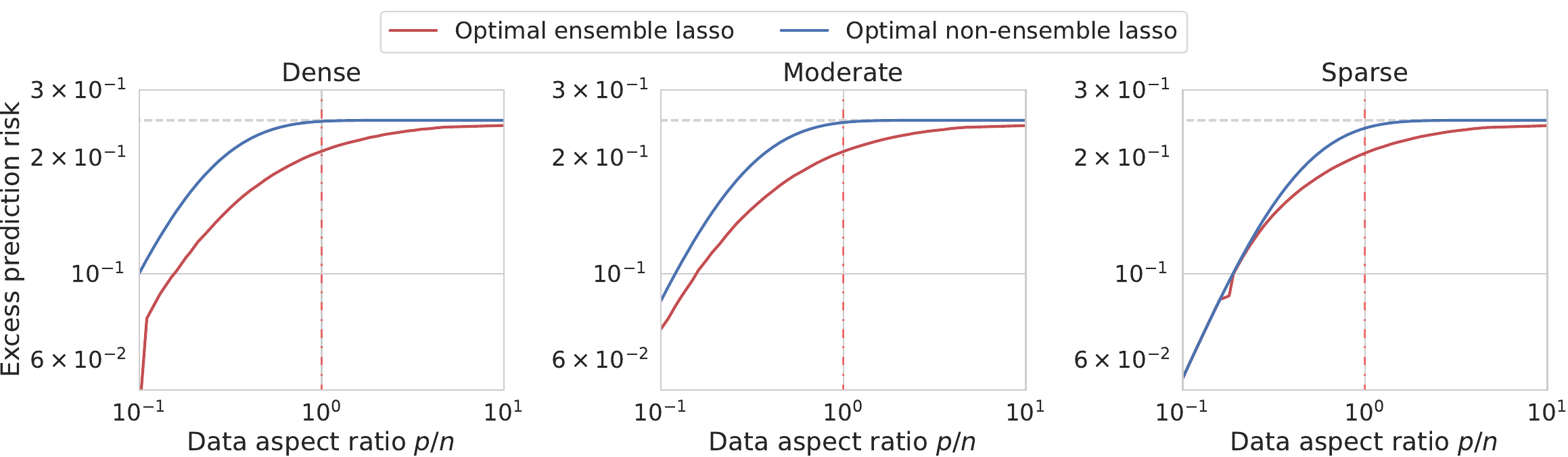}
    \caption{
        \textbf{Optimally subsampled ensemble lasso can uniformly beat optimally tuned non-ensemble lasso across different data aspect ratios.}
        The theoretical prediction risk of optimal ensemble and non-ensemble lasso at different data aspect ratios $p/n$ is shown.
        The data model is given by \eqref{model:sparse} with signal strength $\rho=0.5$ and noise level $\sigma=1$ at different sparsity levels $s$.
        \emph{Left}: Dense regime with support proportion $s=0.9$.
        \emph{Middle}: Moderate sparse regime with support proportion $s=0.5$.
        \emph{Right}: Sparse regime with support proportion $s=0.2$.
    }
    \label{fig:fig_lasso_risk_opt}
\end{figure}

Finally, \Cref{fig:fig_lasso_risk_opt} shows that the joint optimization of subsampling along with lasso penalty consistently outperforms the optimization of lasso penalty on the full data (without any subsampling) uniformly across varying data aspect ratios.
Particularly, in the dense regime ($s=0.9$) and moderate sparsity ($s=0.5$) regimes, the ensemble approach leverages implicit regularization to achieve lower prediction risk. 
In highly sparse scenarios ($s=0.2$), this effect reduces as one would expect.
Overall, we see complementary benefits of optimizing both subsampling and explicit regularization parameters to improve the predictive performance, with joint optimization being the overall clear winner.

\section{Extensions and open directions}
\label{sec:discussion}
In this paper, we provide a general risk characterization for the ensemble of regularized M-estimators.
The characterization depends on the inverse data aspect ratio $\delta = \lim n/p$, subsample ratio $c$, and loss and regularizer pair $(\loss, \reg)$.
We also specialize the results for specific cases of interest, such as the lasso and ridge regression, and analyze various properties related to optimal subsampling and ensembling.
Our goal in performing such analysis is to shed light on how subsampling and ensembling influence the risk of the ensemble estimator and the optimal choices of ensemble size and subsample size.
The key takeaway is that subsampling and ensembling can be beneficial in terms of reducing the risk of the estimator, and when the subsample ratio $c$ is optimized, the resulting risk is monotonic in $\delta$ for any ensemble size $M$.

There are several ``axes'' along which our results on the asymptotics of subagging can be extended.
These are all apparent by inspecting the last row of \Cref{tab:risk-landscape} (our current results) and contrasting it against the rows above (the known results in specific cases).
We briefly mention some of these open directions next to make them explicit.

\begin{itemize}
    \item
    \emph{Assumptions on the $\loss$ and $\reg$ functions}:
    Our current analysis can handle non-differentiable and non-strongly convex regularizers, but we require a differentiable loss. 
    Extending the analysis to non-differentiable losses through techniques like Moreau smoothing \cite[Section B.7]{celentano2020lasso} is a promising future direction. 
    In addition to relaxing conditions assumed for risk characterization, we are also interested in relaxing the assumptions for the risk estimation (\Cref{thm:risk-estimator-general-subagging}), particularly the strong convexity assumption on $\reg$. 
    A promising approach in this direction is to apply the Gaussian smoothing technique recently proposed by \cite{bellec2023error}.
    {
    Additionally, this paper's main results hold for separable regularizers, but as we show below in \Cref{sec:anisotropic_deterministic}, the framework can be extended to handle the non-separable case.
    }
    \item
    \emph{Assumptions on the response}:
    Another potential extension is relaxing the assumption of linear response models.
    Partial progress in this direction includes the recent work by \cite{bellec2024asymptotics} (for logistic models), \cite{clarte2024analysis} (for generalized linear models), and \cite{patil2023generalized} (for arbitrary response with bounded moments).
    \item
    \emph{Assumptions on the features}:
    While we assume isotropic Gaussian features, it is also of interest to extend the results to general feature distributions and anisotropic covariances.
    Note that this has been studied in special cases of ridge regression (\cite{patil2022bagging}, any $M$), ridgeless regression (\cite{han2023distribution}, $M=1$), lasso regression (\cite{celentano2020lasso}, $M = 1$), among others; see also, e.g., \cite{pesce2023gaussian,han2023universality}, for some progress towards establishing Gaussian universality (for $M = 1$).
    {
    We provide some numerical evidence for universality beyond Gaussian distributions in \Cref{fig:effect-of-M-cov}.
    Furthermore, we provide a concrete conjecture for the anisotropic covariance case in \Cref{sec:anisotropic_deterministic} that also handles deterministic signals, along with a heuristic derivation and numerical evidence for its validity.
    }
    \item
    \emph{Assumptions on the sampling strategies}:
    Broadening the scope of resampling strategies beyond subsampling without replacement to include more general schemes like sampling with replacement or according to a specific distribution, as recently studied in \cite{clarte2024analysis,du2024implicit}, is another promising future direction.
\end{itemize}

{
To illustrate one of these future directions, we extend below our analysis to the setting of anisotropic designs, deterministic signals, and (potentially) non-separable regularizers.

\subsection{Extension to anisotropic design and deterministic signal}\label{sec:anisotropic_deterministic}
We assume that the data $(y_i, \bx_i)_{i\in [n]}$ are generated according to the following linear model:
$$
\forall i\in[n], \quad 
y_i = \bx_i^\top \btheta + z_i, \quad (\bx_i)_{i\in [n]} \overset{\text{i.i.d.}}{\sim} \cN(\bm{0}_p, p^{-1}\bm{\Sigma}), \quad (z_i)_{i\in[n]} \overset{\text{i.i.d.}}{\sim} F_z, \quad \bx_i\indep z_i,
$$
where $\btheta\in \R^p$ is a fixed deterministic signal vector and $\bSigma \in \R^{p \times p}$ is a symmetric positive definite covariance matrix.
We estimate the signal vector $\btheta$ using regularized M-estimators $\hat\btheta$.
Following the subsampling setup in \Cref{sec:setup} and generalizing \eqref{eq:def-hbeta}, we now also allow for a general, potentially non-separable, proper, lower semicontinuous, convex regularizers $\vreg_m: \RR^p \to \RR \cup \{+ \infty\}$ for $m\in[M]$. 
For each subsampled dataset $(\bX_{\setm}, \by_{\setm})$ with $m \in [M]$, we define the regularized M-estimator as:
\begin{equation}
\label{eq:def-hbeta-nonseparable}
\hat \bbeta_m(I_m) \in \argmin_{\signal \in \R^p} \sum_{i\in I_m} \loss_m (y_i - \bx_i^\top \signal) + \vreg_m(\signal).
\end{equation}
To handle this setting, we first introduce a generalized Moreau envelope for vectors.
For any $\bv\in \R^p$ and symmetric positive definite matrix $\bm{\Lambda}\succ \bm{0}_{p\times p}$, we define:
\begin{align}
\label{eq:def_moreau_aniso}
\env_{\vreg}(\bv; \bm\Lambda) := \min_{\bx\in \R^p} \tfrac{1}{2} (\bv-\bx)^\top\bm\Lambda^{-1} (\bv-\bx) + \vreg(\bx).
\end{align}
The unique minimizer is denoted by $\bm\prox_\vreg(\bv; \bm\Lambda)\in\R^p$. Note that when $\bm\Lambda$ is a diagonal matrix and $\vreg$ is separable such that $\vreg(\bx) = \sum_{j\in[p]} \reg^{(j)}(x_j)$, the minimization problem \eqref{eq:def_moreau_aniso} becomes separable and the vector-valued proximal operator $\bm\prox_{\vreg}(\bv; \bm\Lambda)$ reduces to the component-wise scalar operator $(\prox_{\reg^{(j)}}(v_j; \Lambda_{jj}))_{j\in[p]}$, where $\prox_{\reg^{(j)}}(v_j; \Lambda_{jj})$ is the scalar proximal operator that we have been using in the previous sections.
For any $\bv\in \R^p$ and $\bm{\Lambda}\succ \bm{0}_{p\times p}$, we denote the gradient of $\env_{\vreg}(\bv; \bm\Lambda)$ with respect to $\bv$ by $\nabla \env_{\vreg}(\bv; \bm\Lambda)\in \R^p$. By Danskin's theorem \cite[Proposition B.22]{bertsekas2016nonlinear}, the gradient $\nabla \env(\bv; \bm\Lambda)$ and $\bm\prox_\vreg(\bv; \bm\Lambda)$ satisfy the relation:
$$
\nabla \env_{\vreg}(\bv; \bm\Lambda) = \bm\Lambda^{-1} (\bv-\bm\prox_\vreg(\bv; \bm\Lambda)),
$$
which is the vector analogue of \eqref{eq:prox_subdiff_relation}.
Furthermore, $\bv\mapsto \nabla\env_{\vreg}(\bv; \bm\Lambda)$ is $\lambda_{\min}(\bm\Lambda)^{-1}$-Lipschitz as a map from $\R^p\to\R^p$ (see \Cref{lemma:generalized_prox}). 
Thus, combined with Rademacher's theorem, we see that the Hessian $\nabla^2\env_{\vreg} (\bv; \bm\Lambda)\in\R^{p\times p}$ exists almost everywhere for $\bv\in\R^p$. 

With these new notations, we first introduce a nonlinear system of equations that characterizes the asymptotics of the single estimator $\hat\btheta_I$, generalizing \Cref{sys:general_ensemble-M=1} to the setting of anisotropic designs, deterministic signals, and (potentially) non-separable regularizers:
\renewcommand{\thesystem}{5a}
\begin{system}[Error norms of individual M-estimators]
    \label{sys:general_ensemble-M=1_sigma}
    Given $(\loss,\vreg,c \delta, \btheta, \bm\Sigma)$, 
    define the following 4-scalar system of equations in variables $(\alpha, \beta, \kappa, \nu) \in \R_{>0}^4$:
    \begin{subequations}\label{eq:anisotropic}
    \begin{alignat}{1}
        \alpha^2 &= \tfrac{1}{p}
        \E \Bigl[
          \bigl\|
            \tfrac{\bm{\Sigma}^{-\frac{1}{2}}}{\nu} \nabla \env_{\vreg} (\btheta+ \tfrac{\beta}{\nu} \bm{\Sigma}^{-\frac{1}{2}} \bh; \tfrac{\bm{\Sigma}^{-1}}{\nu} ) - \tfrac{\beta}{\nu} \bh
          \bigr\|_2^2
        \Bigr] \label{eq:anisotropic_1}\\
        \beta^2 &= c\delta \cdot \E\bigl[\env_\loss'(Z + \alpha G; \kappa)^2\bigr] \label{eq:anisotropic_2}
        \\
        \kappa\beta &= \tfrac{1}{p}
         \E\Big[
         \big(
          \tfrac{\bm{\Sigma}^{-\frac{1}{2}}}{\nu} \nabla \env_{\vreg} (\btheta + \tfrac{\beta}{\nu} \bm{\Sigma}^{-\frac{1}{2}} \bh ; \tfrac{\bm{\Sigma}^{-1}}{\nu} ) 
          -
          \tfrac{\beta}{\nu} \bh
          \big)^\top (-\bh)
        \Big] \label{eq:anisotropic_3}
        \\
        \nu\alpha &= c\delta\cdot
        \E\big[\env'_{\loss}(Z + \alpha G; \kappa)\cdot G\big]\label{eq:anisotropic_4}
    \end{alignat}
\end{subequations}
where the expectation is taken with respect to $\bh\sim \cN(\bm{0}_p,\bm{I}_p)$, $G\sim \cN(0,1)$, and $ Z\sim F_z$, which are all mutually independent.
\end{system}

Now we assume that there exists a solution $(\alpha, \beta, \kappa, \nu)$ to \Cref{sys:general_ensemble-M=1_sigma} such that the key statistics of the M-estimator $\hat\btheta(I)=\hat\btheta_I$ in \eqref{eq:def-hbeta-nonseparable} fitted by $(\loss, \vreg)$ are characterized by the solution $(\alpha, \beta, \kappa, \nu)$, as in \Cref{tab:interpretations_squared_loss}:
\begin{subequations}\label{eq:solution_concentrate}
    \begin{alignat}{1}
               p^{-1/2} \|\bm{\Sigma}^{\frac{1}{2}}(\hat\btheta_I-\btheta)\|_2 &=\alpha + \op(1) \label{eq:solution_concentrate_1} \\
    p^{-1/2} \|\by_I-\bX_I \hat\btheta_I\|_2 &= \beta + \op(1) \label{eq:solution_concentrate_2} \\
    \df_I/\tr[\bV_I] &= \kappa + \op(1) \label{eq:solution_concentrate_3} \\
        p^{-1} \tr[\bV_I]&= \nu + \op(1) \label{eq:solution_concentrate_4}
    \end{alignat}
\end{subequations}
where recall $\df_I = \tr[(\partial/\partial \by_I) \bX_I\hat\btheta_I]$ and $\bV_I = (\partial/\partial \by_I) \loss'(\by_I-\bX_I\hat\btheta_I)\in \R^{|I|\times |I|}$. 

Note that \Cref{sys:general_ensemble-M=1_sigma} and the concentration of the norms (\eqref{eq:solution_concentrate_1} and \eqref{eq:solution_concentrate_2}) are already established in the literature for general $(\loss, \vreg)$ (see, e.g., \cite{loureiro2021learning}).
In contrast, the concentration of the trace functionals,  \eqref{eq:solution_concentrate_3} and \eqref{eq:solution_concentrate_4},  is more delicate and has so far been proven only for specific estimators in separate works. 
For instance, in the case of lasso with $\loss(x) = x^2/2$ and $\vreg(\bx) = \lambda \|\bx\|_1$, the convergences in \eqref{eq:solution_concentrate} are provided by \cite[Theorem 5, 8]{celentano2020lasso}, while the ridge case ($\loss(x)=x^2/2, \vreg(\bx)=\lambda \|\bx\|_2^2$) is studied by \cite[Theorems 6, 7]{patil2024revisiting}, \cite[Theorem 2.3, 2.4]{han2023distribution}, among others. 
However, to the best of our knowledge, \eqref{eq:solution_concentrate_3} and \eqref{eq:solution_concentrate_4} have not been rigorously established for general $(\loss,\vreg)$ that are convex but not strongly convex. 
Since \Cref{conjecture} below is stated for such general $(\loss,\vreg)$, and its proof strategy requires the full set of concentration results in \eqref{eq:solution_concentrate}, we provide empirical evidence by verifying \eqref{eq:solution_concentrate} for $\ell_1$-regularized Huber regression ($\loss=\text{Huber}$, $\vreg(\bx) = \|\bx\|_1$), through numerical simulations (see the left and middle panels of \Cref{fig:bagging_solution_cov}).

Next, we present our main extension: a new system that characterizes the correlations between two estimators $\hat\btheta(I)$ and $\hat\btheta(\tilde I)$ trained on overlapping subsamples, extending \Cref{sys:general_ensemble-M=infty} to anisotropic designs with deterministic signals:
\renewcommand{\thesystem}{5b}
\begin{system}[Error correlations of overlapped M-estimators]
    \label{sys:general_ensemble-M=infty_sigma}
   Let $(\alpha, \beta, \kappa, \nu)$ and $(\tilde\alpha, \tilde\beta, \tilde\kappa, \tilde\nu)$ be parameters that satisfy \Cref{sys:general_ensemble-M=1_sigma} with $(\loss, \vreg, c\delta, \btheta, \bm\Sigma)$ and $(\tilde\loss, \tilde \vreg, \tilde c\delta, \btheta, \bm\Sigma)$, respectively. 
    Define the following 2-scalar system of equations in variables $(\etaG, \etaH) \in [-1,1]^2$:
    \begin{equation*}
    {\etaG = F_{\vreg}(\etaH) ,\qquad \etaH = F_\loss(\etaG), 
    }
    \end{equation*}
    where $F_\loss,F_{\vreg}\colon [-1,1] \to \RR$ are functions defined as:
    \small
    \begin{align*}
            F_\loss(\etaG) &= \tfrac{\delta c \tilde c}{\beta\tilde\beta} \E\bigl[
    \env_\loss'(Z+\alpha G; \kappa) \cdot \env_{\tilde\loss}'(Z +\tilde\alpha\tilde G; \tilde\kappa) 
    \bigr]  \\
    F_{\vreg}(\etaH) &= \tfrac{1}{\alpha\tilde\alpha} \tfrac{1}{p}\E\Bigl[\bigl(\tfrac{\bm{\Sigma}^{-\frac{1}{2}}}{\nu} \nabla \env_{\vreg} (\btheta+ \tfrac{\beta}{\nu} \bm{\Sigma}^{-\frac{1}{2}} \bh; \tfrac{\bm{\Sigma}^{-1}}{\nu} ) - \tfrac{\beta}{\nu} \bh\bigr)^\top\bigl(
    \tfrac{\bm{\Sigma}^{-\frac{1}{2}}}{\tilde \nu} \nabla \env_{\tilde \vreg} (\btheta+ \tfrac{\tilde \beta}{\tilde \nu} \bm{\Sigma}^{-\frac{1}{2}} \tilde \bh; \tfrac{\bm{\Sigma}^{-1}}{\tilde \nu} ) - \tfrac{\tilde \beta}{\tilde \nu} \tilde \bh
    \bigr)\Bigr],
    \end{align*}
   \normalsize and the expectation $\E$ is taken with respect to $(\bh, \tilde\bh)\in\R^{p\times 2}$, $(G, \tilde G) \in\R^2$, $Z\in\R$ such that 
$$
\begin{pmatrix}
   h_j\\
 \tilde h_j
\end{pmatrix}_{j\in[p]} \overset{\text{i.i.d.}}{\sim} \cN\biggl(\begin{bmatrix}
    0\\
    0
\end{bmatrix}, \begin{bmatrix}
    1 & \etaH\\
    \etaH & 1
\end{bmatrix}\biggr), \quad \begin{pmatrix}
G\\
  \tilde G
\end{pmatrix}\sim \cN\biggl(\begin{bmatrix}
    0\\
    0
\end{bmatrix}, \begin{bmatrix}
    1 & \etaG\\
    \etaG &  1
\end{bmatrix}\biggr), \quad Z \sim F_z,
$$
which are all mutually independent. 
\end{system}

Crucially, this generalized system retains one essential structure of its isotropic counterpart.
We claim that this new nonlinear system admits the same contraction property as stated in \Cref{th:existence-uniqueness-sys:general_ensemble-M=infty}. 
\begin{theorem}\label{th:contraction_Sigma}
The results of \Cref{th:existence-uniqueness-sys:general_ensemble-M=infty} also hold for $F_\loss, F_\vreg$ in \Cref{sys:general_ensemble-M=infty_sigma}.
Specifically:
\begin{enumerate}[leftmargin=7mm]
    \item
    $|F_{\loss}(\etaG)| \le \sqrt{c\tilde c}$ and $|F_{\vreg}(\etaH)| \le 1$ for all $\etaG\in [-1,1]$ and $\etaH\in [-1,1]$. 
    \item
    $F_\loss$ and $F_{\vreg}$ are non-decreasing, differentiable, and the compositions $F_\loss \circ F_{\vreg}$ and $F_{\vreg}\circ F_\loss$ are $\min\{c, \tilde c\}$-Lipschitz.
    \item \Cref{sys:general_ensemble-M=infty_sigma} admits a unique solution $(\etaGstar,\etaHstar)
    \in [-1, 1] \times [-\sqrt{c\tilde c},\sqrt{c\tilde c}]$.
\end{enumerate}
\end{theorem}

We prove \Cref{th:contraction_Sigma} in \Cref{proof:contraction_Sigma}. 
This brings us to our central conjecture for this setting, which generalizes \Cref{thm:corr-sigerror-reserror} to anisotropic designs and deterministic signals:

\begin{conjecture}\label{conjecture}
Let $\hat{\btheta}_{I}$ and $\hat{\btheta}_{\tilde I}$ be the component M-estimators in \eqref{eq:def-hbeta-nonseparable} trained on subsamples $(\bX_I,\by_I)$ and $(\bX_{\tilde I}, \by_{\tilde I})$ with $(\loss,\vreg)$ and $(\tilde \loss, \tilde \vreg)$, respectively. 
Under suitable assumptions on $(\loss, \tilde{\loss}, \vreg, \tilde{\vreg}, \btheta, \bSigma, F_z)$, as $n, p, k, \tilde{k} \to \infty$ with $n/p \to \delta \in (0, \infty)$, $k/n \to c \in {(0, 1]}$ and $\tilde{k}/n \to \tilde{c} \in {(0,1]}$, it holds that: 
\begin{align}\label{eq:anisotropic_correlation}
\begin{split}
p^{-1} (\hat\btheta_I-\btheta)^\top \bm{\Sigma} (\hat\btheta_{\tilde I}-\btheta) &= \alpha\tilde\alpha  \etaG + \op(1), \\
p^{-1}  \loss'(\by_{I\cap\tilde I}-\bX_{I\cap\tilde I}\hat\btheta_I)^\top \tilde{\loss}'(\by_{I\cap\tilde I}-\bX_{I\cap\tilde I}\hat\btheta_{\tilde I}) &= \beta\tilde\beta\etaH+ \op(1). 
\end{split}
\end{align}
\normalsize
where $(\etaG, \etaH)$ is the solution to \Cref{sys:general_ensemble-M=infty_sigma}. 
Furthermore, there exist some Gaussian vectors $\bh, \tilde\bh\in\R^p$ and $\bg, \tilde\bg\in \R^n$ such that:
\small
\begin{align*}
\frac{1}{p}\biggl\|\begin{pmatrix}
   \hat\btheta_I \\
   \hat\btheta_{\tilde I}
  \end{pmatrix}  - \begin{pmatrix}
     \bm\prox_\vreg (\tfrac{\beta}{\nu} \bm{\Sigma}^{-\frac{1}{2}} \bh +  \btheta; \tfrac{\bm{\Sigma}^{-1}}{\nu}) \\
      \bm\prox_{\tilde \vreg} (\tfrac{\tilde\beta}{\tilde \nu} \bm{\Sigma}^{-\frac{1}{2}} \tilde\bh + \btheta; \tfrac{\bm{\Sigma}^{-1}}{\tilde\nu})
  \end{pmatrix}
  \biggr\|_2^2 &\pto 0,\\
      \frac{1}{|I\cap \tilde I|} \sum_{i\in I\cap \tilde I} \biggl\|
    \begin{pmatrix}
            y_i - \bx_i^\top \hat\btheta_I \\
            y_i - \bx_i^\top \hat\btheta_{\tilde I}
    \end{pmatrix} - \begin{pmatrix}
        \prox_\loss (z_i + \alpha g_i ; \kappa)\\
        \prox_{\tilde\loss}(z_i + \tilde \alpha \tilde g_i;\tilde\kappa)
    \end{pmatrix}
    \biggr\|_2^2 &\pto 0,
\end{align*}
\normalsize
where the conditional distributions of $(\bh, \tilde\bh)$ and $(\bg, \tilde\bg)$ given $(\bz, \btheta, \bm\Sigma)$ are characterized by
$$
\begin{pmatrix}
   h_j\\
 \tilde h_j
\end{pmatrix}_{j\in[p]} \overset{\text{i.i.d.}}{\sim} \cN\biggl(\begin{bmatrix}
    0\\
    0
\end{bmatrix}, \begin{bmatrix}
    1 & \etaH\\
    \etaH & 1
\end{bmatrix}\biggr), \quad \begin{pmatrix}
   g_i\\
 \tilde g_i
\end{pmatrix}_{i\in[n]} \overset{\text{i.i.d.}}{\sim} \cN\biggl(\begin{bmatrix}
    0\\
    0
\end{bmatrix}, \begin{bmatrix}
    1 & \etaG\\
    \etaG & 1
\end{bmatrix}\biggr). 
$$
\end{conjecture}

\begin{figure}[!t]
    \centering
    \includegraphics[width=\linewidth]{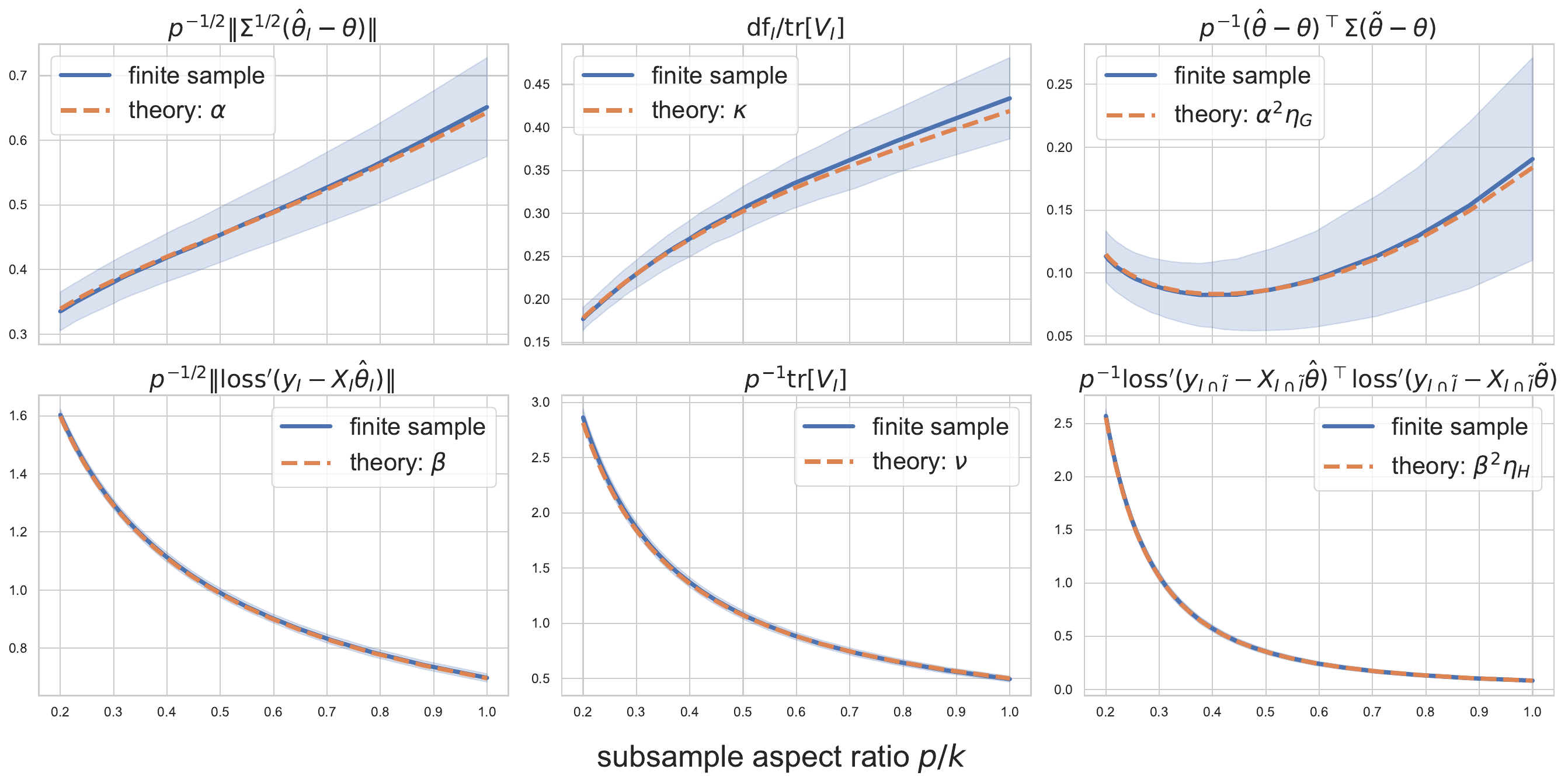}
    \caption{
    {
    \textbf{Verification of \eqref{eq:solution_concentrate} and \eqref{eq:anisotropic_correlation} for 
     $\ell_1$-regularized Huber regression under anisotropic design with deterministic signal.}
    We fix the $\ell_1$-regularization level and the Huber threshold parameter at $1$. The sample size is set to $n=1000$ and the feature dimension to $p=200$, while the subsample size $|I|=k$ is varied.  The noise distribution is $F_z = \text{t-dist}(\text{df}=4)$. The deterministic signal $\btheta \in \R^p$ is chosen such that $\theta_j = 0$ for $1 \le j \le 0.9p$ and $\theta_j = 2/\sqrt{0.1}$ for $0.9p \le j \le p$. The covariance matrix $\bm{\Sigma}$ follows the AR(1) structure, $\Sigma_{ij} = 0.5^{|i-j|}$. Error bars represent the standard deviation across $1000$ simulation runs.
    }
    }
    \label{fig:bagging_solution_cov}
\end{figure}

While \Cref{conjecture} has been proven for ridge estimators in \cite{patil2022bagging} and various generalizations of ridge estimators (that include non-uniform weighting of observations and features) in \cite{patil2023asymptotically,du2024implicit}, a proof for general M-estimators is challenging.
We outline a heuristic proof strategy in \Cref{proof:conjecture} for general convex $(\loss, \vreg)$.
As in the case of the isotropic design $\bm{\Sigma}=\bI_p$ and a random signal, the contraction property in \Cref{th:contraction_Sigma} plays a key role in this proof strategy. 
A full rigorous proof, however, requires a certain non-trivial extension of a normal approximation theorem (\Cref{lm:approx_multi_normal}) owing to the non-separability of the Moreau envelope \eqref{eq:def_moreau_aniso} that arises from the anisotropic covariance and non-separability of $\vreg$ (see \Cref{proof:conjecture} for details). 
We leave this as an important direction for future work.

We have verified \eqref{eq:anisotropic_correlation} of \Cref{conjecture} through a numerical simulation in the setting of $\ell_1$-regularized Huber regression, where $\btheta$ is a fixed sparse vector and $\bm{\Sigma}$ is taken as $\bm{\Sigma}_{ij} = 0.5^{|i-j|}$ (the AR(1) covariance model). 
The right panel of \Cref{fig:bagging_solution_cov} demonstrates that the solution of \Cref{sys:general_ensemble-M=infty_sigma} captures the asymptotic correlations as in \eqref{eq:anisotropic_correlation}.

Finally, analogous to \Cref{sec:ensembles-interpolators}, we derive the nonlinear system for the minimum $\ell_q$-norm interpolators under the anisotropic covariance and deterministic signal in the overparameterized regime $c\delta < 1$. 
Let us take $\vreg(\bx) = \tilde{\vreg}(\bx) = \lambda \|\bx\|_q^q$ for $q\in\{1, 2\}$ and $\loss(x) = \tilde{\loss}(x) = x^2/2$ in \Cref{sys:general_ensemble-M=1_sigma} and \Cref{sys:general_ensemble-M=infty_sigma}, and denote by $(\alpha(\lambda), \etaG(\lambda))$ the corresponding solutions.  
Assuming suitable stability (as $\lambda \to 0^+)$ of \Cref{sys:general_ensemble-M=1_sigma} under the overparameterized regime $c\delta < 1$, the same argument as in the isotropic case (see  \Cref{subsec:derivation_system_interpolator}) yields
$$
\alpha(\lambda)^2 \to \tau^2 - \E[Z^2] \quad \text{and} \quad \alpha^2(\lambda) \etaG(\lambda)\to\xi^2 -\E[Z^2] \quad \text{as $\lambda \to 0^+$}
$$
almost surely, where $(\tau, \xi)$ is a solution to \Cref{sys:interpolators_sigma} below. 
This generalizes \Cref{sys:interpolators} to the case of anisotropic design and deterministic signal.

\begin{figure}[!t]
    \centering
    \includegraphics[width=\linewidth]{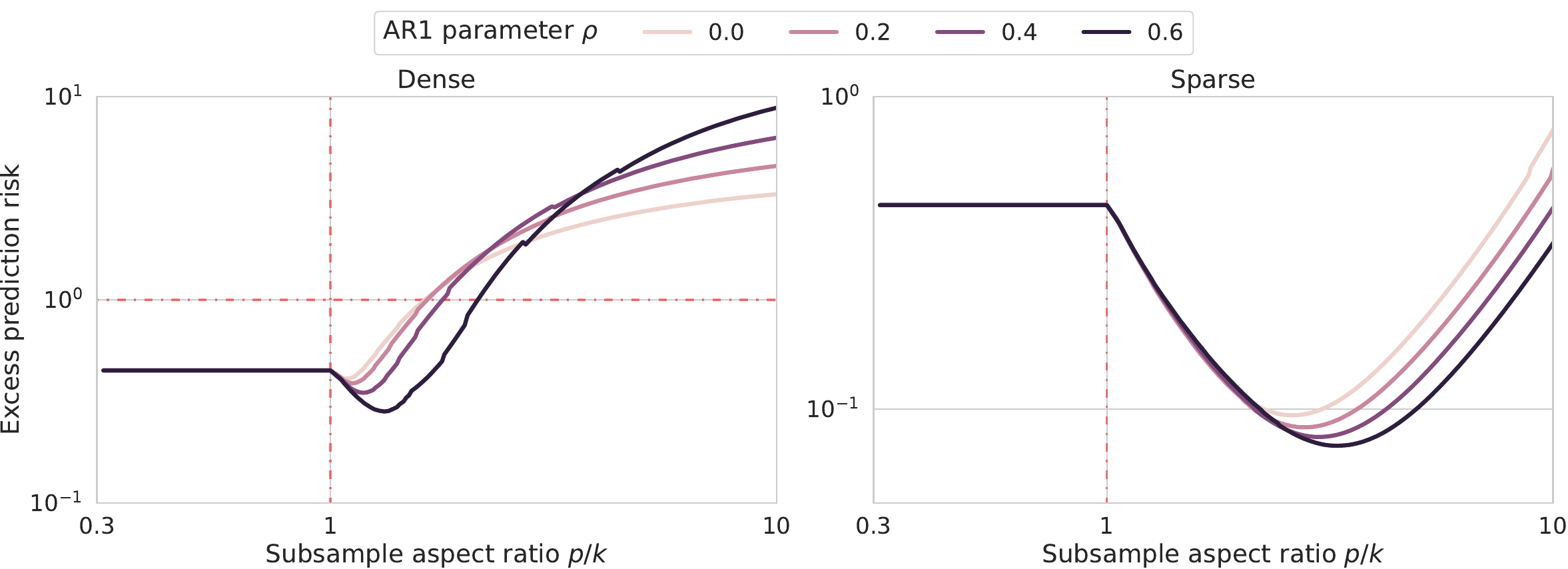}
    \caption{
    {
    Limiting excess prediction risks $\xi^2 - \sigma^2$ of the full-ensemble lassoless estimator under anisotropic covariance and deterministic signal across subsample aspect ratios $p/k$ ranging from $0.3$ to $10$ with the feature dimension $p$ fixed to $200$. The experimental setup follows \Cref{fig:optimum_phis_overparameterized-continuity-derivative}, except that the covariance matrix is replaced by the $\text{AR}(1)$ model, $\bm{\Sigma}_{ij} = \rho_{\mathrm{ar1}}^{|i-j|}$ with varying values of $\rho_{\mathrm{ar1}}$, and the random signal is replaced by a deterministic vector $\btheta \in \R^p$ defined by $\theta_j = 0$ for all $1\le j \le (1-s)p$ and $\theta_j = 2/\sqrt{s}$ for $(1-s)p \le j \le p$, for varying support proportions $s$.
    \emph{Left}: dense regime with $s=0.9$.
    \emph{Right}: sparse regime with $s=0.01$.
    }
    }
    \label{fig:opt_risk_aniso}
\end{figure}

\renewcommand{\thesystem}{6}
\begin{system}
    [Ensembles of minimum $\ell_q$-norm interpolators]
    \label{sys:interpolators_sigma}
    Given $(\btheta, \bSigma, \delta, c)$ such that $c\delta (=\lim k/p) <1$, assuming $\sigma^2 := \E[Z^2]$ is finite, define the following system of equations in variables $(\tau,a)\in \R_{>0}^2$:
    \begin{subequations}
    \label{eq:ellq-interpolators_sigma}
    \begin{empheq}{align}
     \tau^2 &=  \tfrac{1}{p} \E[\|
    \bm{\Sigma}^{\frac{1}{2}} (\bm\prox_{\|\cdot\|_q^q}(\btheta + \tfrac{\tau}{\sqrt{c\delta}}\bm{\Sigma}^{-\frac{1}{2}}\bh; \tfrac{a\tau}{\sqrt{c\delta}}\bm{\Sigma}^{-1})-\btheta)\|_2^2] + \sigma^2, \label{eq:interpolators-tau_sigma} \\
        c\delta &= \tfrac{\sqrt{c\delta}}{\tau}\tfrac{1}{p} \E\Bigl[\bh^\top 
     \bm{\Sigma}^{\frac{1}{2}} \bigl(\bm\prox_{\|\cdot\|_q^q}(\btheta + \tfrac{\tau}{\sqrt{c\delta}}\bm{\Sigma}^{-\frac{1}{2}}\bh; \tfrac{a\tau}{\sqrt{c\delta}}\bm{\Sigma}^{-1}) -\btheta
     \bigr)\Bigr],
        \label{eq:interpolators-a_sigma}
    \end{empheq}
    \end{subequations}
    \normalsize
    where $\bh \sim \cN(\bm{0}_p,\bI_p)$. 
    Given $(\tau,a)$ that satisfy \eqref{eq:interpolators-tau_sigma}-\eqref{eq:interpolators-a_sigma}, define the following 1-scalar system of equations in variable $\xi\in\R_{>0}$:
    \footnotesize
    \begin{align*}
        \xi^2 
    = \tfrac{1}{p}\mathbb{E}\Big[
    \bigl(\bm\prox_{\|\cdot\|_q^q}(\btheta + \tfrac{\tau}{\sqrt{c\delta}}\bm{\Sigma}^{-\frac{1}{2}}\bh; \tfrac{a\tau}{\sqrt{c\delta}}\bm{\Sigma}^{-1})-\btheta\bigr)^\top \bm\Sigma \bigl(\bm\prox_{\|\cdot\|_q^q}(\btheta + \tfrac{\tau}{\sqrt{c\delta}}\bm{\Sigma}^{-\frac{1}{2}}\tilde \bh; \tfrac{a\tau}{\sqrt{c\delta}}\bm{\Sigma}^{-1})-\btheta\bigr)
    \Big] + \sigma^2,
    \end{align*}
    \normalsize
    where
    $(\bh,\tilde \bh)$ follows a bivariate Gaussian distribution as in \Cref{sys:general_ensemble-M=infty_sigma}
    with
    $\etaH = c \frac{\xi^2}{\tau^2}$.
      \label{eq:full-ensemble-risk-ellq-interpolators_sigma}
\end{system}

\begin{figure}[!t]
    \centering
    \includegraphics[width=0.99\textwidth]{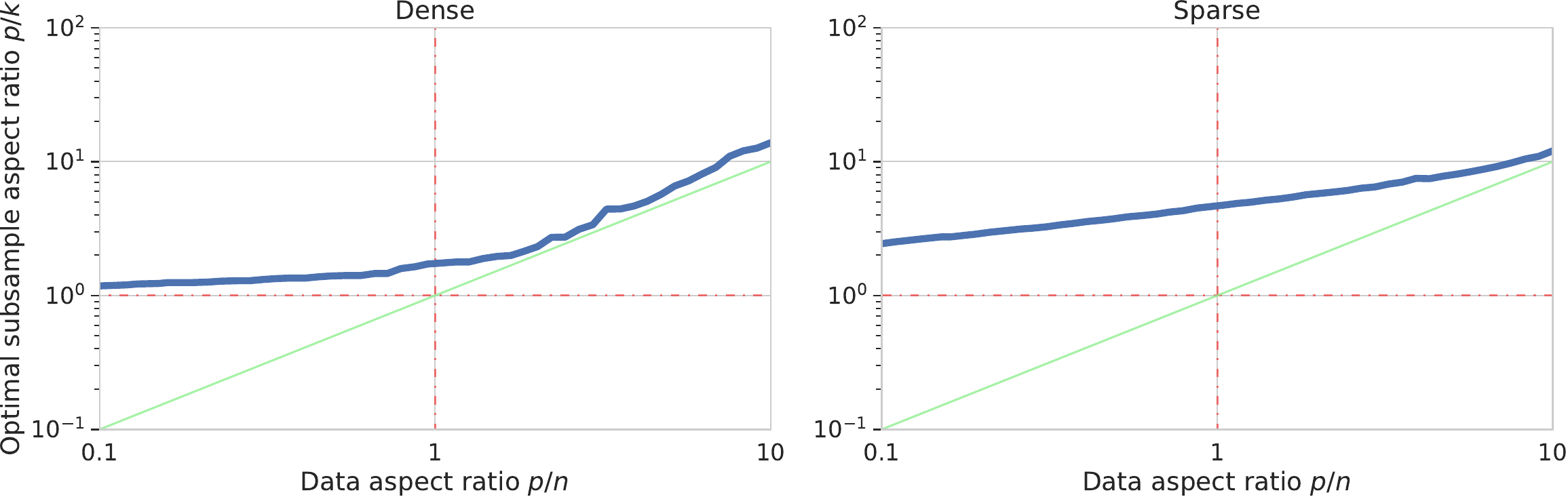}
    \caption{
        {
        \textbf{Optimal subsample size for the lassoless ensemble with anisotropic covariance is always in the overparameterized regime.}
        Optimal subsample aspect ratio $p/k$ that achieves the optimal risk for the lassoless ensemble at different data aspect ratios $p/n$ ranging from $0.1$ to $10$.
        The data model is as in \Cref{fig:opt_risk_aniso} with $\rho_{\textrm{ar1}}=0.6$, data aspect ratio $p/n=0.1$, feature size $p=200$, and varying support proportion $s$.
        \emph{Left}: dense regime with $s=0.9$.
        \emph{Right}: sparse regime with $s=0.01$.
        }
    }
    \label{fig:optimum_phis_overparameterized-2-optsubsample-ar1_aniso}
\end{figure}

\begin{figure}[!t]
    \centering
\includegraphics[width=0.99\textwidth]{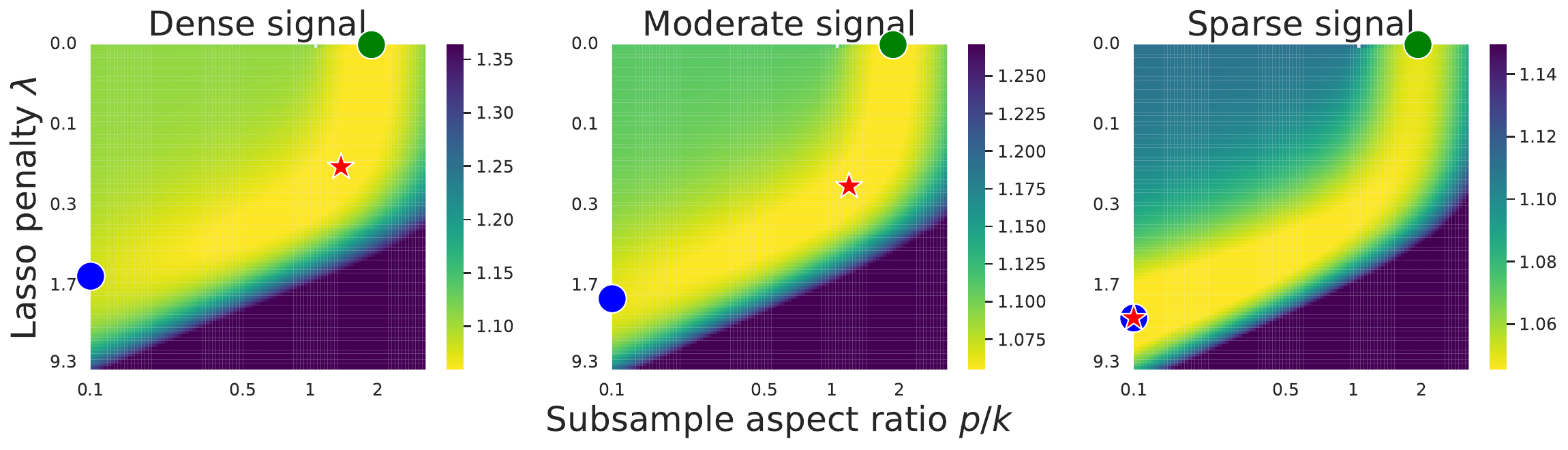}
    \caption{
        {
        Heatmaps of theoretical prediction risk $\alpha^2\etaG$ in $\lambda$ and $p/k$ with $k\in (0, n]$ of the full lasso ensemble with anisotropic covariance and deterministic signal in the underparameterized regime ($p/n=0.1$). 
        The data model is as in \Cref{fig:opt_risk_aniso} with $\rho_{\textrm{ar1}}=0.6$ at different sparsity levels $s$.
        Blue dots ($\color{pyblue}\mysolidcircle$): the optimal lasso ($\lambda$ is optimized with $k$ fixed to $n$). 
        Green dots ($\color{pygreen}\mysolidcircle$): the optimally subsampled lassoless ($k$ is optimized with $\lambda$ fixed to $0$). 
        Red stars ($\color{pyred}\star$):  the optimal lasso ensemble ($\lambda$ and $k$ are jointly optimized).
        \emph{Left}: Dense regime with support proportion $s=0.9$. \emph{Middle}: Moderate sparse regime with support proportion $s=0.5$. In these regimes, 
        the optimal lasso ensemble ($\color{pyred}\star$) outperforms both the optimal lasso ($\color{pyblue}\mysolidcircle$) and the optimally subsampled lassoless ($\color{pygreen}\mysolidcircle$). 
        \emph{Right}: Sparse regime with support proportion $s=0.2$. In this regime, the optimal lasso ($\color{pyblue}\mysolidcircle$) coincides with the optimal lasso ensemble ($\color{pyred}\star$) and outperforms the optimal subsample lassoless ($\color{pygreen}\mysolidcircle$). 
        }
    }
    \label{fig:advantage_subsampling_underparameterized_aniso}
\end{figure}

We note that the nonlinear system \eqref{eq:ellq-interpolators_sigma} with $q=2$ has appeared in prior works (see, e.g., \cite{han2023distribution}). 

Based on the solution $\xi$ from \Cref{sys:interpolators_sigma}, we conducted an experiment analogous to \Cref{fig:optimum_phis_overparameterized-continuity-derivative}. In \Cref{fig:opt_risk_aniso}, we plot the limiting excess risk $\xi^2-\sigma^2$ of the full-ensemble $(M=\infty)$ lassoless estimator under the AR(1) covariance model with a deterministic signal. The results show that the optimal subsample size $k^*$ once again lies in the overparameterized regime ($k^* < p$). Moreover, both the optimal subsample size $k^*$ and the corresponding prediction risk decrease as the AR(1) correlation parameter $\rho_{\mathrm{ar1}}$ increases. 

We also conducted experiments analogous to \Cref{fig:optimum_phis_overparameterized-2-optsubsample} and \Cref{fig:advantage_subsampling_underparameterized}. 
\Cref{fig:optimum_phis_overparameterized-2-optsubsample-ar1_aniso} shows that for the full lassoless ensemble, the optimal subsample size $k^*$ always lies in the overparameterized regime for varying data aspect ratios $p/n$. 
Moreover, \Cref{fig:advantage_subsampling_underparameterized_aniso} demonstrates that jointly optimizing the subsample size $k$ and the lasso penalty $\lambda$ can yield strictly smaller risk than both the non-ensemble lasso with optimally tuned penalty and optimally subsampled lassoless ensemble. These results suggest that the conclusions drawn in \Cref{sec:optimal-subsample-size}–\ref{sec:optimal-regularization-penalty} also extend to the case of anisotropic designs with deterministic signals.
}

\section*{Acknowledgments}

We thank Michael Celentano and Ryan Tibshirani for useful discussions.
We also thank Lucas Clarte and Bruno Loureiro for useful comments on an earlier draft of the paper.
PP and JHD acknowledge the computing support provided by the grant MTH230020 for experiments run on the Bridge-2 system at the Pittsburgh Supercomputing Center.
PCB acknowledges partial support from the NSF Grant DMS-1945428.

\bibliographystyle{alpha}
\bibliography{references}

\clearpage

\appendix

\newgeometry{left=0.25in,top=0.25in,right=0.25in,bottom=0.25in,head=.1in,foot=0.1in}

\begin{center}
\Large
{\bf \framebox{Supplement}}
\end{center}

\bigskip

This serves as a supplement to the paper ``\titletext.''
Below, we provide an outline of the supplement along with a summary of the notation used in the main paper and the supplement.

\section*{Organization}

\begin{table}[!ht]
\centering
\begin{tabularx}{\textwidth}{L{2.25cm}L{15cm}}
\toprule
\textbf{Section} & \textbf{Content} \\
\midrule
\Cref{sec:correlation_signs}
& 
{Sign of solutions $(\etaH, \etaG)$ to \Cref{sys:general_ensemble-M=infty}} \\
\arrayrulecolor{black!25}
\addlinespace[0.5ex]
\midrule
\Cref{sec:proofs-sec:general-risk-characterization}
& 
Proofs of \Cref{th:existence-uniqueness-sys:general_ensemble-M=infty,thm:corr-sigerror-reserror,thm:risk-estimator-general-subagging} from \Cref{sec:general-risk-characterization} \\
\arrayrulecolor{black!25}
\addlinespace[0.5ex]
\midrule
\Cref{sec:proofs-sec:specific_examples}
& Proof of \Cref{prop:monotonicity-ensemble-size}, \ref{prop:monotonicity-risk} and other miscellaneous details from \Cref{sec:specific-examples} \\
\addlinespace[0.5ex]
\midrule
\Cref{sec:proofs-sec:general-properties}
& Details of arguments in \Cref{rem:lassoless-fullensemble-continuity-psi=1} from \Cref{sec:ensembles-interpolators} \\
\addlinespace[0.5ex]
\midrule
\Cref{sec:appendix:anisotropic_deterministic}
& Proof of \Cref{th:contraction_Sigma} and \Cref{conjecture} from \Cref{sec:discussion} \\
\addlinespace[0.5ex]
\midrule
\Cref{sec:additional_numerical_illustrations}
& Additional numerical illustrations and experimental details \\
\arrayrulecolor{black}\bottomrule
\end{tabularx}
\caption{Outline of the supplement.}
\label{tab:outline-supplement}
\end{table}

\section*{Specific notation}

\begin{table}[!ht]
\centering
\begin{tabularx}{\textwidth}{L{3.3cm}L{16cm}}
\toprule
\textbf{Notation} & \textbf{Description} \\
\midrule
$(\bx_i,y_i)$, $i \in [n]$ & Observation vector with the feature vector in $\RR^{p}$ and the response variable in $\RR$ \\
$\bX,\by$ & Feature matrix in $\RR^{n\times p}$ and the response vector in $\RR^n$ \\
$\btheta$ & Signal vector in the linear model in $\RR^{p}$ with entries drawn i.i.d.\ from the distribution $F_\theta$ \\
$\bm{\eps}$ & Noise vector in the linear model in $\RR^{n}$ with entries drawn i.i.d.\ from the distribution $F_\eps$ \\
\arrayrulecolor{black!25}
\addlinespace[0.5ex]
\midrule
$k$, $M$ & Size of each subsample and the total number of subsamples (bags) used in the ensemble \\
$I_m$, $m \in [M]$ & Index set (subset of $[n]$) of the $m$-th subsample, with $|I_m| = k$ \\
$(\bX_{\setm},\by_{\setm})$, $m \in [M]$ & Feature matrix in $\RR^{k \times p}$ and response vector in $\RR^{k}$ of the subsampled dataset indexed by $I_m$ \\ 
$(\bX_{I_m \cap I_\ell}, \by_{I_m \cap I_\ell})$, $m \neq \ell \in [M]$ & Feature matrix in $\RR^{|I_m \cap I_\ell| \times p}$ and response vector in $\RR^{|I_m \cap I_\ell|}$ of the overlapped dataset corresponding to the overlap between the subsampled datasets indexed by $I_m$ and $I_\ell$ \\
\arrayrulecolor{black!25}
\addlinespace[0.5ex]
\midrule
$\loss$, $\reg$, $\lambda$ & Loss function, regularization function, and regularization level used for the regularized M-estimator \\
$\hat{\bbeta}_{m}$, $\tilde{\bbeta}_M$ & Component regularized M-estimator fitted on the $m$-th subsample and the ensemble estimator \\
$R_M$, $\cR_M$ & Squared prediction risk of the ensemble estimator and its asymptotic limit \\
$\cR_1$, $\cR_\infty$ & Asymptotic risk of the non-ensemble estimator ($M = 1$) and the full-ensemble estimator ($M = \infty$) \\
\arrayrulecolor{black!25}
\addlinespace[0.5ex]
\midrule
$\delta$, $c$ & Inverse data aspect ratio $n/p$ and subsample ratio $k/n$ \\
\arrayrulecolor{black!25}
\addlinespace[0.5ex]
\midrule
$(\alpha, \beta, \kappa, \nu)$ & Parameters characterizing the ensemble risk asymptotics with $M = 1$ (\Cref{sys:general_ensemble-M=1}) \\
$G$, $H$ & Standard normal random variables in the ensemble risk asymptotics with $M = 1$ (\Cref{sys:general_ensemble-M=1}) \\
$\Theta$, $Z$ & Random variables drawn according to distributions $F_\theta$ and $F_\eps$, respectively (\Cref{sys:general_ensemble-M=1}) \\
\arrayrulecolor{black!25}
\addlinespace[0.5ex]
\midrule
$(\etaG, \etaH)$ & Correlation parameters in the ensemble risk asymptotics with $M = \infty$ (\Cref{sys:general_ensemble-M=infty}) \\
$G, \tilde{G}$, $H, \tilde{H}$ & Random variables appearing in the ensemble risk asymptotics with $M = \infty$ (\Cref{sys:general_ensemble-M=infty}) \\
\arrayrulecolor{black!25}
\addlinespace[0.5ex]
\midrule
${(a, \tau)}$ & Parameters in alternate formulation of the ensemble risk asymptotics with $M = 1$ (\Cref{sys:ensembles-penalized-least-squares}) \\
$\xi$ & Parameter in alternate formulation of the ensemble risk asymptotics with $M = \infty$ (\Cref{sys:ensembles-penalized-least-squares}) \\
\arrayrulecolor{black}
\bottomrule
\end{tabularx}
\caption{Summary of some of the specific notation used in the paper.}
\label{tab:notation}
\end{table}

\restoregeometry

\newpage

\section{Correlation signs}
\label{sec:correlation_signs}

\begin{proposition}
\label{prop:sign_pattern}
    The signs of the solution $(\etaGstar,\etaHstar)$ to \Cref{sys:general_ensemble-M=infty} are characterized as
    $$
    \sign(\etaGstar)=\sign(F_{\reg} \circ F_\loss(0)), \qquad \sign(\etaHstar)
            =\sign(F_{\loss} \circ F_\reg(0))
    $$
    where $\sign(x) := \ind_{\{x>0\}} - \ind_{\{x<0\}}$. 
\end{proposition}
\begin{proof}
The claim immediately follows from the fact that $\etaGstar$ and $\etaHstar$ satisfies the fixed-point equations $\etaG - F_\reg\circ F_\loss(\etaG) =0$ and $\etaH - F_\loss \circ F_\reg(\etaH) =0$, respectively, and the maps $\etaG \mapsto \etaG-F_\reg\circ F_\loss(\etaG)$ and $\etaH\mapsto \etaH - F_\loss \circ F_\reg(\etaH)$ are nondecreasing since $F_\reg\circ F_\loss$ and $F_\loss \circ F_\reg$ are $(c\wedge\tilde c)$-Lipschitz with $c\wedge\tilde c\le 1$ (\Cref{th:existence-uniqueness-sys:general_ensemble-M=infty}). 
\end{proof}
Recall the property of $F_\loss$ and $F_\reg$ in \Cref{th:existence-uniqueness-sys:general_ensemble-M=infty}. 
Since $F_\loss$ and $F_\reg$ in \Cref{sys:general_ensemble-M=infty} are non-decreasing, combined with \Cref{prop:sign_pattern}, we get the following simple sufficient condition which determines the sign of $(\etaHstar, \etaGstar)$:
\begin{align*}
    \text{$F_\loss(0)$ and $F_\reg(0)$ are non-negative}  &\Rightarrow \text{$\etaHstar$ and $\etaGstar$ are non-negative}.
\end{align*}
Here $F_\loss(0)$ and $F_\reg(0)$ can be written as follows:
\begin{subequations}
\begin{align*}
    F_\loss(0) 
    &= \frac{c\tilde c \delta}{\beta\tilde\beta}\cdot \E\Bigl[\E \big[
              \env_{\loss}'(Z + \alpha G; \kappa) \mid Z\big] \cdot \E\big[\env_{\tilde \loss}'(Z + \tilde \alpha G; \tilde \kappa) \mid Z\big]  \Bigr],  \\
    F_\reg(0) 
    &= \frac{1}{\nu \tilde\nu \alpha \tilde\alpha} \cdot \E \Bigl[
              \E \bigl[\env_{\reg}' \bigl(\Theta + \tfrac{\beta}{\nu} H; \tfrac{1}{\nu}\bigr) \mid \Theta
             \bigr]
              \cdot 
            \E \bigl[
            \env_{\tilde \reg}' \bigl(\Theta + \tfrac{\tilde \beta}{\tilde \nu} H; \tfrac{1}{\tilde \nu}\bigr) \mid \Theta
              \bigr]
            \Bigr]. 
\end{align*}
\end{subequations}

In particular, if the same loss and regularizer are used, i.e., $\loss=\tilde\loss$ and $\reg=\tilde \reg$, and the subsample sizes are the same, i.e., $k=\tilde k$, then the solutions satisfying \Cref{sys:general_ensemble-M=infty} are same, i.e., $(\alpha, \beta, \kappa, \nu) = (\tilde\alpha, \tilde\beta, \tilde\kappa, \tilde\nu)$, so that it is easy to see from the above formula that $F_\loss(0)\ge 0$ and $F_\reg(0)\ge 0$. 
This means that the solutions $(\etaGstar, \etaHstar)$ are non-negative when the same loss, regularizer, and subsample size are used. 

Another case such that the solutions $(\etaGstar,\etaHstar)$ are positive is when $\loss$ and $\tilde\loss$ are least squares and $\reg$ and $\tilde{\reg}$ are ridge (but possibly different regularization parameters). 
This is because $\env'_{f};(x;\tau)$ is linear in $x$ for any squared loss (and regularizer) of the form $f(\cdot)=\lambda (\cdot)^2$ so that $F_\loss(0) = \C \E[Z^2] \ge 0$ and $F_\reg(0) = \C \E[\Theta^2] \ge 0$ for some positive constants $C_1, C_2$.

\begin{remark}
    [Negative estimator error correlation]
    Let us take $\reg, \tilde \reg$ as the indicator functions
    \begin{align*}
        \reg(x) = \Pi_{(-\infty,-t]} (x) \coloneq \begin{cases}
            +\infty  &x > - t \\
            0 & x\le -t
        \end{cases} 
        \quad \text{and} \quad
        \tilde \reg(x) = \Pi_{[\tilde t, +\infty)} (x) \coloneq \begin{cases}
            +\infty  &x < \tilde{t} \\
            0 & x \ge \tilde{t}
        \end{cases}
    \end{align*}
    where $t, \tilde t\ge 0$ are non-negative constants.
    Note that the two sets, $(-\infty, -t]$ and $[\tilde{t}, +\infty)$, are disjoint.
    Noting $\prox_\reg(x) =  \min\{x, -t\}$ and $\prox_{\tilde \reg}(x) = \max\{x, \tilde{t}\}$, we have 
    $$
        F_\reg(\etaH) = \frac{1}{\nu\tilde\nu \alpha\tilde\alpha}\E\big[\big(\min\big\{\Theta + \tfrac{\beta}{\nu}H, -t\big\} - \Theta\big) \cdot \big(\max\big\{\Theta + \tfrac{\tilde\beta}{\tilde \nu}\tilde H, \tilde{t}\big\} - \Theta\big)\big]. 
    $$
    Thus, if $\Theta$ is included in the closed set $[-t, \tilde t]$ with probability $1$, then we have $F_\reg(\etaH) \le 0$ for all $\etaH$ so that $\etaGstar = F_\reg(\etaH^\star)$ is non-positive. 
    This is intuitive as we will see in \Cref{thm:corr-sigerror-reserror} that $\etaG$ characterizes the limiting behavior of the correlations between two estimators trained on $\reg$ and $\tilde\reg$.
\end{remark}

\section{Proofs for results  in Section~{\ref{sec:general-risk-characterization}}}
\label{sec:proofs-sec:general-risk-characterization}

\subsection{Proof of \Cref{th:existence-uniqueness-sys:general_ensemble-M=infty}}\label{subsec:proof-exisntence-uniqueness}
\paragraph*{Part 1 }
We redefine $F_{\loss}$ and $F_{\reg}$ for convenience:
\begin{align*}
    F_\loss(\etaG) &= \tfrac{c\tilde c \delta}{\beta\tilde\beta}\cdot \E [
          \env_{\loss}'(\alpha G + Z; \kappa) \cdot \env_{\tilde \loss}'(\tilde \alpha \tilde G + Z; \tilde \kappa)], \\
    F_\reg(\etaH)  & = \tfrac{1}{\alpha \tilde\alpha} \cdot \E \Bigl[
          \Bigl(
            \tfrac{1}{\nu} \cdot \env_{
\reg}' \Bigl(\tfrac{\beta}{\nu} H + \Theta ; \tfrac{1}{\nu}\Bigr) - \tfrac{\beta}{\nu} H
          \Bigr)
          \cdot 
        \Bigl(
            \tfrac{1}{\tilde \nu} \cdot \env_{\tilde 
\reg}' \Bigl(\tfrac{\tilde \beta}{\tilde \nu} \tilde H + \Theta ; \tfrac{1}{\tilde \nu}\Bigr) - \tfrac{\tilde \beta}{\tilde \nu}\tilde  H
          \Bigr)
        \Bigr].
\end{align*}
By the Cauchy--Schwarz inequality, using \eqref{eq:CGMT-1b} in \Cref{sys:general_ensemble-M=1}, we have
\begin{align*}
    |F_\loss(\etaG)| &\le \tfrac{c\tilde c\delta}{\beta\tilde\beta} \cdot \E[\env_{\loss}'(\alpha G + Z; \kappa)^2]^{1/2} \cdot \E[\env_{\tilde \loss}'(\tilde \alpha \tilde G + Z; \tilde \kappa)^2]^{1/2}\\
    &= \tfrac{c\tilde c\delta}{\beta\tilde\beta} \cdot \sqrt{\tfrac{\beta^2}{c\delta}} \cdot \sqrt{\tfrac{\tilde\beta^2}{\tilde c\delta}}\\
    &=\sqrt{c\tilde c}.
\end{align*}
for all $\etaG\in [-1,1]$. By the same argument,  the Cauchy--Schwarz inequality and \eqref{eq:CGMT-1a} in \Cref{sys:general_ensemble-M=1} yield $|F_\loss(\etaH)| \le 1$ for all $\etaH\in [-1,1]$. 

\paragraph*{Part 2 }
Let us prove the differentiability and contraction of compositions.
We will use the following lemma to argue the differentiability of $F_\loss$ and $F_\reg$.

\begin{lemma}
    \label{lm:varphi_derivative}
    Let $G$ and $Z$ be independent $\cN(0,1)$ random variables. 
    Then, for any Lipschitz functions $(f, \tilde f)$ with bounded second moment $\E[f(G)^2], \E[\tilde f(G)^2]<+\infty$, the map 
    $$
    \varphi:[-1,1]\to\R, \quad  \eta \mapsto \E[f(G)\tilde f(\eta G + \sqrt{1-\eta^2}Z)]
    $$ 
    has the derivative
    $$
    \varphi'(\eta) = \E[f'(G)\tilde f'(\eta G + \sqrt{1-\eta^2}Z)].
    $$
\end{lemma}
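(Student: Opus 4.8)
The plan is to reduce the derivative computation to a classical Gaussian integration-by-parts (Stein-type) identity, being careful about the differentiability of Lipschitz functions. First, I would rewrite the argument of $\tilde f$ in a way that makes the $\eta$-dependence transparent: for each fixed $\eta \in (-1,1)$, write $Y_\eta = \eta G + \sqrt{1-\eta^2}\, Z$, so that $(G, Y_\eta)$ is jointly Gaussian with mean zero, unit variances, and correlation $\eta$. Thus $\varphi(\eta) = \EE[f(G)\tilde f(Y_\eta)]$ is exactly the bivariate Gaussian expectation $\iint f(u)\tilde f(v)\, \phi_\eta(u,v)\, du\, dv$, where $\phi_\eta$ is the standard bivariate normal density with correlation $\eta$. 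The density $\phi_\eta(u,v)$ is smooth in $\eta$ on $(-1,1)$, and one has the classical heat-type identity $\partial_\eta \phi_\eta(u,v) = \partial^2_{uv}\phi_\eta(u,v)$.

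Next I would justify differentiating under the integral sign: since $f$ and $\tilde f$ are Lipschitz, they have at most linear growth, and on compact $\eta$-intervals $[-1+\epsilon, 1-\epsilon]$ the quantities $\partial_\eta \phi_\eta(u,v)$ and its relevant derivatives are dominated by an integrable envelope (a polynomial times a Gaussian), so dominated convergence applies and $\varphi'(\eta) = \iint f(u)\tilde f(v)\, \partial^2_{uv}\phi_\eta(u,v)\, du\, dv$. Then integrate by parts twice, once in $u$ and once in $v$. Here is where Lipschitzness is used in an essential way: $f$ and $\tilde f$ are absolutely continuous with bounded (hence locally integrable) a.e. derivatives $f', \tilde f'$, and the boundary terms vanish because $\phi_\eta$ and its first derivatives decay to zero at infinity while $f,\tilde f$ grow at most linearly. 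This yields $\varphi'(\eta) = \iint f'(u)\tilde f'(v)\, \phi_\eta(u,v)\, du\, dv = \EE[f'(G)\tilde f'(Y_\eta)] = \EE[f'(G)\tilde f'(\eta G + \sqrt{1-\eta^2}\, Z)]$, which is the claimed formula. Finally, I would handle the endpoints $\eta = \pm 1$ (if needed for the statement's claim on the closed interval $[-1,1]$) by a one-sided limiting argument, using continuity of $\eta \mapsto \EE[f'(G)\tilde f'(Y_\eta)]$, which follows again from dominated convergence together with the fact that $f', \tilde f'$ are bounded; alternatively, one notes that in the application only $\eta \in (-1,1)$ is used.

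The main obstacle I anticipate is the measure-theoretic care required in the integration-by-parts step: $f'$ and $\tilde f'$ exist only almost everywhere (Rademacher/absolute continuity), so one must phrase the integration by parts for absolutely continuous functions against a smooth rapidly-decaying kernel rather than invoking the fundamental theorem of calculus for $C^1$ functions. A clean way around this is to first establish the identity for smooth compactly supported (or Schwartz) $f, \tilde f$ where everything is classical, and then pass to general Lipschitz $f, \tilde f$ by mollification: approximate $f$ by $f_\epsilon = f * \rho_\epsilon$ and $\tilde f$ by $\tilde f_\epsilon = \tilde f * \rho_\epsilon$, note that $f_\epsilon \to f$, $\tilde f_\epsilon \to \tilde f$ locally uniformly with uniformly bounded Lipschitz constants and $f_\epsilon' \to f'$, $\tilde f_\epsilon' \to \tilde f'$ in $L^1_{\mathrm{loc}}$ (and pointwise a.e.), and then take $\epsilon \to 0$ in both sides of the smooth identity using dominated convergence (the uniform Lipschitz bounds supply the domination). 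A secondary, more minor point is verifying the dominated-convergence hypotheses uniformly in a neighborhood of the target $\eta$; this is routine given the explicit Gaussian form of $\phi_\eta$ but should be stated.
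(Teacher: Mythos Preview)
Your approach is correct but follows a genuinely different route from the paper. You work analytically with the bivariate Gaussian density, invoking the classical identity $\partial_\eta \phi_\eta = \partial^2_{uv}\phi_\eta$ and then integrating by parts twice in $(u,v)$, with a mollification step to accommodate Lipschitz (rather than $C^1$) functions. The paper instead stays entirely probabilistic: it differentiates $\tilde f(\eta G+\sqrt{1-\eta^2}\,Z)$ directly in $\eta$ under the expectation (justified by dominated convergence and Rademacher), then performs the orthogonal rotation $A=\eta G+\sqrt{1-\eta^2}\,Z$, $B=\sqrt{1-\eta^2}\,G-\eta Z$ so that $(A,B)$ are independent $\cN(0,1)$, rewrites $G-\tfrac{\eta}{\sqrt{1-\eta^2}}Z = (1-\eta^2)^{-1/2}B$, and applies Stein's lemma once in $B$ conditionally on $A$. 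Your density/PDE argument is more classical and self-contained but carries some overhead (verifying the heat identity, two integrations by parts, the mollification passage); the paper's rotation-plus-Stein argument is shorter and avoids ever writing down $\phi_\eta$, at the cost of requiring one to spot the change of variables. Both reach the same formula and both implicitly restrict to $\eta\in(-1,1)$, which is all that is needed downstream.
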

\begin{proof}
    Since $\tilde f$ is Lipschitz and $\cN(0,1)$ has no point mass, $\tilde f$ is differentiable at $G\sim \cN(0,1)$ with probability $1$. 
    By the dominated convergence theorem, we have 
    \begin{align*}
    \varphi'(\eta) &= \E\bigl[
        f(G) \tilde f'\bigl(\eta G +\sqrt{1-\eta^2}Z\bigr) 
        \bigl(G-\frac{\eta}{\sqrt{1-\eta^2}} Z\bigr)
    \bigr].
    \end{align*}
    Let us define $A=\eta G + \sqrt{1-\eta^2}Z$ and $B=\sqrt{1-\eta^2}G-\eta Z$ so that $(A, B)$ are independent Gaussian $\cN(0,1)$ and $\varphi'(\eta) =  (1-\eta^2)^{-1/2} \E[f(\eta A + \sqrt{1-\eta^2}B) \tilde f'(A) B]$. 
    Using Stein's formula for $B$ conditionally on $A$, we are left with
    \begin{align*}
        \varphi'(\eta)
        = \E[\tilde f'(A) f'(\eta A + \sqrt{1-\eta^2}B)]= \E[\tilde f'(\eta G+ \sqrt{1-\eta^2} Z) f'(G)],
    \end{align*}
    where we have used $(A, \eta A +  \sqrt{1-\eta^2} B) \deq (\eta G + \sqrt{1-\eta^2} Z, G)$. 
    (Here and throughout $\deq$ refers to equality in distribution.)
\end{proof}

Notice that $\env_{\loss}'(\alpha G + Z;\kappa)$ and $\frac{1}{\nu} \cdot \env_{\reg}' \Bigl(\frac{\beta}{\nu} H + \Theta; \frac{1}{\nu}\Bigr) - \frac{\beta}{\nu} H$ have finite second moments, thanks to the existence of the solution to \Cref{sys:general_ensemble-M=1}. Thus, applying \Cref{lm:varphi_derivative} with $(f(x), \tilde f(x)) = (\env_{\loss}'(\alpha x + Z; \kappa), \env_{\loss}'(\tilde\alpha x+Z; \tilde\kappa))$ and $(f(x), \tilde f(x)) = (\frac{\beta}{\nu} x - \frac{1}{\nu} \env_{\reg}(\frac{\beta}{\nu} x+ \Theta; \frac{1}{\nu}), \frac{\tilde\beta}{\tilde\nu} x - \frac{1}{\tilde\nu} \env_{\reg}(\frac{\tilde\beta}{\tilde\nu} x+ \Theta; \frac{1}{\tilde\nu}))$, we find that $F_\loss$ and $F_{\reg}$ are differentiable and the derivatives are given by:
\begin{align*}
    F_\loss'(\etaG) &= \frac{c \tilde c \delta}{\beta\tilde\beta}\cdot \E[\alpha \env_{\loss}''(\alpha G + Z;\kappa ) \cdot \tilde\alpha \env_{\tilde\loss}''(\tilde\alpha \tilde G + Z;\tilde\kappa )] \\
    F_{\reg}'(\etaH) &= \frac{1}{\alpha\tilde\alpha} \E \Bigl[
        \frac{\beta}{\nu}
          \Bigl(1-
            \frac{1}{\nu} \cdot \env_{\reg}'' \Bigl(\frac{\beta}{\nu} H + \Theta ; \frac{1}{\nu}\Bigr) 
          \Bigr)
          \cdot 
          \frac{\tilde\beta}{\tilde\nu}
        \Bigl(
        1-
            \frac{1}{\tilde \nu} \cdot \env_{\tilde \reg}'' \Bigl(\frac{\tilde \beta}{\tilde \nu} \tilde H + \Theta ; \frac{1}{\tilde \nu}\Bigr) 
          \Bigr)
        \Bigr]
\end{align*}
for all $\etaG, \etaH \in [-1,1]$. 
Note that the non-expansiveness of the proximal operator implies that the map $x\mapsto \env_{f}'(x;\tau) = \tau^{-1}(x-\prox_f(x;\tau))$ is $\tau^{-1}$-Lipschitz and non-decreasing for any convex function $f$. Thus,  $F_\loss'(\etaG)$ and $F_{\reg}'(\etaH)$ are non-negative and uniformly bounded from above as follows:
    \begin{align*}
       0 \le  F_\loss'(\etaG)
       &\le \frac{c\tilde c \delta}{\beta\tilde\beta}\frac{\alpha}{\kappa} \cdot \E[\tilde\alpha \env_{\tilde\loss}''(\tilde\alpha \tilde G + Z; \tilde\kappa)] && (0\le \env_{\loss}''(\alpha G +Z;\kappa) \le \kappa^{-1}) \\
       &=  \frac{c \tilde c\delta}{\beta\tilde\beta}\frac{\alpha}{\kappa} \cdot \E[\tilde G \cdot \env_{\tilde\loss}'(\tilde\alpha \tilde G + Z; \tilde\kappa)] && (\text{by Stein's lemma}) \\
       &=  \frac{c\tilde c\delta}{\beta\tilde\beta}\frac{\alpha}{\kappa}  \cdot \frac{\tilde\nu \tilde\alpha}{\tilde c \delta} = c \cdot \frac{\alpha\tilde\alpha \tilde\nu}{\beta\tilde\beta\kappa} && (\text{using \eqref{eq:CGMT-1d} in \Cref{sys:general_ensemble-M=1}}); \\
               0 \le  F_\reg'(\etaH)
        &\le \frac{1}{\alpha\tilde\alpha}  \E \Bigl[
         \frac{\beta}{\nu}
          \Bigl(1-
            \frac{1}{\nu} \cdot \env_{\reg}'' \Bigl(\frac{\beta}{\nu} H + \Theta ; \frac{1}{\nu}\Bigr) 
          \Bigr)
        \Bigr] \cdot \frac{\tilde\beta}{\tilde\nu} && (0 \le \env_{\reg}'' (\frac{\tilde \beta}{\tilde \nu} H + \Theta; \frac{1}{\tilde \nu})\le \tilde \nu) \\
        &= \frac{1}{\alpha\tilde\alpha} \frac{\tilde\beta}{\tilde\nu} \E \Bigl[H
          \Bigl(\frac{\beta}{\nu} H -
            \frac{1}{\nu} \cdot \env_{\reg}' \Bigl(\frac{\beta}{\nu} H + \Theta ; \frac{1}{\nu}\Bigr) 
          \Bigr)
        \Bigr]  && (\text{by Stein's lemma}) \\
        &= \frac{1}{\alpha\tilde\alpha} \frac{\tilde\beta}{\tilde\nu}  \cdot \beta\kappa  = \frac{\beta\tilde\beta\kappa}{\alpha\tilde\alpha\tilde\nu} && (\text{using \eqref{eq:CGMT-1c} in \Cref{sys:general_ensemble-M=1}}).
    \end{align*}
Thus, by the chain rule, noting that $(\beta\tilde\beta\kappa)/(\alpha\tilde\alpha\tilde\nu)$ is cancelled out, we have
$$
0 \le (F_\loss\circ F_\reg)'(\etaH) \le c, \quad  0 \le (F_\reg\circ F_\loss)'(\etaG) \le c.
$$
By switching the role of $(\alpha, \beta, \kappa, \nu, c)$ and $(\tilde \alpha, \tilde \beta, \tilde \kappa, \tilde \nu, \tilde c)$, it also holds that 
$$
0 \le (F_\loss\circ F_\reg)'(\etaH) \le \tilde c, \quad 0 \le (F_\reg\circ F_\loss)'(\etaG) \le \tilde c.
$$
Thus, by taking the minimum of $(c, \tilde{c})$, we find that the compositions 
$$
\etaH \mapsto F_\loss \circ F_\reg(\etaH) , \quad \etaG \mapsto F_\reg\circ F_\loss(\etaG)
$$
are $(c \wedge \tilde c)$-Lipschitz. 

\paragraph*{Part 3}
Let us show the uniqueness and existence of the solution to the nonlinear system:
$$
\etaH = F_{\loss} (\etaG), \quad \etaG = F_{\reg}(\etaH).
$$
We have shown in the previous paragraph that the map $F_\reg\circ F_\loss(\etaH):[-1,1] \to [-1,1]$ are non-decreasing, differentiable, and $1$-Lipschitz. Thus, by Brouwer’s fixed-point theorem, there exists some $\etaG^\star \in [-1,1]$ such that 
$$
\etaG^\star  = F_\reg \circ F_\loss(\etaG^\star) 
$$
Letting $\etaH^\star = F_\loss(\etaH^\star)\in[-\sqrt{c\tilde c}, \sqrt{c\tilde c}]$, we find that the pair $(\etaG^\star, \etaH^\star)$ satisfies the nonlinear system. Next, we show the uniqueness. Suppose $(\etaG, \etaH)$ and $(\tetaG, \tetaH)$ satisfy the system. 
Then, $\etaG$ and $\tetaG$ are the solution to the fixed-point equation $\eta = F_\reg\circ F_\loss(\eta)$:
$$
\etaG = F_\reg(\etaH) = F_\reg\circ F_\loss(\etaG), \quad \tetaG = F_\reg(\tetaH) = F_\reg\circ F_\loss(\tetaG).
$$

Suppose $\etaG\ne\tetaG$. This implies that the map $\eta\mapsto \eta-F_\reg\circ F_\loss(\eta)$ is constant over a nonempty closed interval of $[-1,1]$, and hence we can find some $\etaG^\star\in (-1,1)$ such that $1-(F_\reg\circ F_\loss)'(\etaG^\star) = 0$. Recalling the argument in the previous paragraph, we must have $F_\loss'(\etaG^\star) = c \frac{\alpha\tilde\alpha\tilde\nu}{\beta\tilde\beta\kappa}$, and this equality case gives 
$$
\E\Bigl[\env_{\tilde\loss}''\bigl(\tilde\alpha \etaG^\star G + \sqrt{1-(\etaG^\star)^2} G_0 + Z;\tilde\kappa\bigr) \bigl(\kappa^{-1} -  \env_{\loss}''(\alpha G+Z; \kappa) \bigr) \Bigr] = 0. 
$$
for independent standard normals $(G, G_0)$. Since $\env_f''(\cdot; \kappa) \in [0,\kappa^{-1}]$ for any convex function $f$, the integrand is non-negative with probability $1$. Thus, we must have 
$$
\iint_{-\infty}^{\infty} \env_{\tilde\loss}''\bigl(\tilde\alpha \etaG^\star x + \tilde\alpha \sqrt{1-(\etaG^\star)^2} y + Z;\tilde\kappa\bigr) \bigl( \kappa^{-1} - \env_\loss''(\alpha x + Z; \kappa) \bigr) \phi(x) \phi(y) dxdy = 0
$$
with probability $1$ with respect to $Z$, where $\phi$ is the pdf of $\cN(0,1)$. 
By the change of variable $(x, y) \mapsto (u, v) = (ax+Z, \tilde\alpha \etaG^\star x +\tilde\alpha \sqrt{1-(\etaG^\star)^2} y + Z)$, since $(u, v)$ also spans $\R\times \R$ thanks to $|\etaG^\star| < 1$, the previous display reads to
$$
\iint_{-\infty}^{\infty} \env_{\tilde\loss}''(v;\tilde\kappa) \bigl(\kappa^{-1} - \env_\loss''(u; \kappa) \bigr) \phi\Bigl(\frac{u-Z}{\alpha}\Bigr) \phi\Bigl(\frac{v-Z -\etaG^\star (u-Z)}{\alpha \sqrt{1-|\etaG^\star|^2}}\Bigr) du dv = 0. 
$$
Let us decouple the integration $\iint (\cdot) dudv$ with a lower bound of the form $\int(\cdot) du\cdot \int(\cdot) dv$.  Using the inequality
$
\exp(-(x+y)^2) \ge \exp(-2x^2 - 2y^2) 
$
for the density $\phi$ of $\cN(0,1)$, we find that there exists some positive (deterministic) constants $(c, C)$ that depend on $(\alpha, \etaG^\star)$ only such that 
$$
 \phi\Bigl(\frac{u-Z}{\alpha}\Bigr) \phi\Bigl(\frac{v-Z -\etaG^\star (u-Z)}{\alpha \sqrt{1-|\etaG^\star|^2}}\Bigr) \ge C \cdot \phi(c\cdot Z) \phi(c \cdot u) \phi(c\cdot  v)
$$
for all $u, v\in\R$ and any realization of $Z$. Combining this lower estimate with the previous display, we obtain
$$
\phi(c Z) \cdot \int_{-\infty}^\infty \env_{\tilde\loss}''(v;\tilde\kappa) \phi(c v) dv  \cdot \int_{-\infty}^\infty \bigl(\kappa^{-1} - \env_\loss''(u; \kappa) \bigr) \phi(c u) du = 0. 
$$
with probability $1$ with respect to $Z$. Since $\phi(c Z)$ is always strictly positive, we must have 
$$
\int_{-\infty}^\infty \env_{\tilde\loss}''(v;\tilde\kappa) \phi(c v) dv = 0 \quad \text{or} \quad \int_{-\infty}^\infty \bigl(\kappa^{-1} - \env_\loss''(u; \kappa) \bigr)  \phi(c u) du = 0.
$$
Suppose $\int_{-\infty}^\infty \env_{\tilde\loss}''(v;\tilde\kappa) \phi(c v) dv = 0$ holds. This implies $\int_{K} \env_{\tilde\loss}''(v;\tilde\kappa)  dv = 0$ for any compact set $K\subset \R$ due to $\min_{v\in K} \phi(c v) > 0$ and the non-negativity of $\env''$. Since $x\mapsto \env_{\tilde \loss}'(x;\kappa)$ is Lipschitz, it is absolutely continuous, and hence it has the integral form $\env_{\tilde \loss}'(x;\tilde\kappa) = \env_{\tilde\loss}'(0;\tilde\kappa) + \int_{0}^x \env_{\tilde\loss}''(v;\tilde \kappa) dv$. Since we know $\int_{0}^x \env_{\tilde\loss}''(v;\tilde \kappa) dv=0$, we find that $x\mapsto \env_{\tilde\loss}'(x;\tilde\kappa)$ is constant. However, this cannot happen since \eqref{eq:CGMT-1d} in  \Cref{sys:general_ensemble-M=1} then gives $\tilde \nu\tilde \alpha=0$: a contradiction with the positiveness of $(\tilde \alpha,\tilde \beta, \tilde \kappa, \tilde \nu)$. Therefore, the other case $\int_{-\infty}^\infty \bigl(\kappa^{-1} - \env_\loss''(u; \kappa) \bigr) \phi(c u) du = 0$ must hold. However, by the same argument integrating the constant derivative over $[0,x]$,
this in turn gives 
$
\env_\loss'(x;\kappa) = \env_\loss'(0;\kappa) + \kappa^{-1} x.
$
Noting $\env_\loss'(x;\kappa) = (x-\prox_\loss(x;\kappa))/\kappa$, this means that the map
$
x\mapsto \prox_\loss(x;\kappa)
$ is constant. Denoting by $p_0$ the constant, we have that for all real $u$, 
$$
\kappa^{-1}(u-p_0)= \kappa^{-1} (u-\prox_\loss(u;\kappa)) \in \partial \loss(p_0), 
$$
which means $\partial \loss(p_0) =\R$.
For any $p\in\R\setminus\{p_0\}$, the definition of the subdifferential gives
$\loss(p)\ge \loss(p_0) + \sup_{u\in \partial \loss(p_0)} u(p-p_0) = +\infty$.
This contradicts the assumption that $\loss(p)<+\infty$ at two distinct $p\in\R$.

 
\subsection{Proof of \Cref{thm:corr-sigerror-reserror}}

In this section, for a vector $\bw$, the norm $\| \bw \|$ indicates the $\ell_2$ norm unless specified otherwise. By the assumption $0\in \argmin_x \loss(x) \cap \argmin_x\reg(x)$, taking $\loss_{\text{new}}(x) = \loss(x)-\loss(0)$ and $\reg_{\text{new}}(x) = \reg(x)-\reg(0)$ if necessary, we assume without loss of generality that $\loss$ and $\reg$ are non-negative and have $0$ as a minimizer. 

By the change of variable $\bb \mapsto \bh = (\bb-\btheta)/\sqrt{p}$, denoting $\bG=\sqrt{p}\bX$ so that $\bG$ has i.i.d.\ $\cN(0,1)$ entries, the regularized M-estimator $\hat\btheta$ of interest and the residual vector $\by-\bX\hat\btheta$ can be written as 
$$
\hat\btheta=\sqrt{p} \hat\bh +  \btheta, \quad \by-\bX\hat\btheta = \bm\eps-\bG\hat\bh
$$
where 
\begin{align}\label{eq:def_h_original}
\hat\bh \in \argmin_{\bh\in\R^p}
\obj(\bh) \quad \text{with} \quad \obj(\bh) := 
\sum_{i\in I}  \loss(\eps_i-\bg_i^\top\bh) + \sum_{j \in [p]} \reg(\sqrt{p} h_j + 
\theta_{j}). 
\end{align}
Throughout this section, we denote
$
\bpsi = \sum_{i\in I} \be_i \loss'(z_i-\bg_i^\top\bh) \in \R^n
$
where $\be_i$ are canonical basis of $\R^n$. 
Let 
\( \tilde \bpsi, \tilde \btheta, \tilde \bh \) be the corresponding notation for
another. 
Then, our goal is to show 
$$
\hat \bh^\top\tilde\bh \pto \alpha\tilde\alpha \etaG
\quad \text{and} \quad 
\bpsi^\top\tilde\bpsi/p \pto \beta\tilde\beta \etaH.
$$

\subsubsection{Smoothing by adding diminishing ridge penalty}\label{sec:ridge_smoothing}
For some positive and diminishing scalar $\mu=\mu_n\to 0$ to be specified later, we define the smoothed regularized M-estimator $\hat\bh_{\mu}$ as
\begin{align}\label{eq:def_h_smoothed}
    \hat\bh_{\mu} \in \argmin_{\bh\in \R^p}  \obj(\bh) + \frac{p\mu}{2} \|\bh\|^2    
\end{align}
where $\obj(\bh)$ is the objective function for the original regularized M-estimator $\hat\bh$ in \eqref{eq:def_h_original}. We denote by $\bpsi_\mu = \sum_{i\in I} \be_i \loss'(z_i -\bg_i^\top\bh_\mu)$ the smoothed version of $\bpsi$. 
This strategy of adding and additive ridge penalty for the mathematical analysis as a first step, and then arguing by approximation $(\mu\to 0)$ to study the original $\hat\bh_0 = \hat\bh$ is ubiquitous; see, for instance, \cite{karoui2018impact,celentano2022fundamental}, \cite[Appendix B.3]{loureiro2022fluctuations}, \cite{bellec2023error}.
Here, we use the following lemma to show that \eqref{eq:def_h_smoothed} approximates well the original estimator for any sequence $\mu=\mu_n$ indexed by $n$ and converging to 0.

\begin{lemma}[Ridge smoothing]\label{lm:ridge_smoothing}
    Let $\bh_\mu$ be the smoothed M-estimator and $\hat{\bh}$ be the original M-estimator. 
    \begin{enumerate}
        \item (Monotonicity) We have $\|\hat\bh-\hat\bh_\mu\|_2^2 \le \|\hat\bh\|_2^2-\|\hat\bh_\mu\|_2^2$ for any $\mu>0$. 
        \item (Convergence in $\|\cdot\|_2$) As $n,p\to+\infty$ with $n/p\to\delta$, $|I|/n\to c$, and $\mu = \mu_n$ for any $\mu_n\to 0$, we have
    \begin{align*}
    \|\hat \bh_{\mu}\|_2^2\pto \alpha^2, \quad  \frac{\|\bpsi_{\mu}\|_2^2}{p} \pto \beta^2, 
 \quad 
    \|\hat\bh_{\mu}-\hat\bh\|_2^2 = \op(1), \quad  \frac{\|\bpsi_{\mu}-\bpsi\|_2^2}{p} = \op(1),
    \end{align*}
    where $\alpha$ and $\beta$ are solutions to \Cref{sys:general_ensemble-M=1}.
    \end{enumerate}
\end{lemma}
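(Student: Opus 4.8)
This should come from convexity alone. Since $\obj$ is convex and $\hat\bh$ is a minimizer, $\zero\in\partial\obj(\hat\bh)$; since $\hat\bh_\mu$ minimizes $\obj(\cdot)+\tfrac{p\mu}{2}\|\cdot\|^2$, we have $-p\mu\,\hat\bh_\mu\in\partial\obj(\hat\bh_\mu)$. Applying monotonicity of $\partial\obj$ to the pairs $(\hat\bh_\mu,-p\mu\hat\bh_\mu)$ and $(\hat\bh,\zero)$ gives $(\hat\bh_\mu-\hat\bh)^\top(-p\mu\,\hat\bh_\mu)\ge0$, i.e.\ $\|\hat\bh_\mu\|^2\le\hat\bh^\top\hat\bh_\mu$; substituting this into $\|\hat\bh-\hat\bh_\mu\|^2=\|\hat\bh\|^2-2\hat\bh^\top\hat\bh_\mu+\|\hat\bh_\mu\|^2$ gives the stated inequality. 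Running the identical argument with base objective $\obj+\tfrac{p\mu_1}{2}\|\cdot\|^2$ and added penalty $\tfrac{p(\mu_2-\mu_1)}{2}\|\cdot\|^2$ for $0\le\mu_1<\mu_2$ yields the stronger deterministic fact $\|\hat\bh_{\mu_1}-\hat\bh_{\mu_2}\|^2\le\|\hat\bh_{\mu_1}\|^2-\|\hat\bh_{\mu_2}\|^2$ (in particular $\mu\mapsto\|\hat\bh_\mu\|$ is non-increasing), which is the engine of Part 2.

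\textbf{Part 2 --- fixed-$\mu$ asymptotics.} For each fixed $\mu>0$, adding $\tfrac{p\mu}{2}\|\bh\|^2$ to $\obj$ keeps the objective a sum of loss terms $\loss(z_i-\bg_i^\top\bh)$ plus a separable (now strongly convex) function of $\bh$, which is exactly the class covered by the single-estimator analysis developed under \Cref{asm:regularity-conditions} in \Cref{subsec:cgmt_assumption,subsec:convergence_psi} (extending \cite{thrampoulidis2018precise}). It therefore yields $\|\hat\bh_\mu\|^2\pto\alpha_\mu^2$ and $\|\bpsi_\mu\|^2/p\pto\beta_\mu^2$, where $(\alpha_\mu,\beta_\mu,\kappa_\mu,\nu_\mu)$ solves the variant of \Cref{sys:general_ensemble-M=1} in which the Moreau envelope of $\reg$ is replaced by that of $\reg+\tfrac{\mu}{2}(\cdot-\Theta)^2$ (the separable penalty induced on the $\bb=\sqrt{p}\bh+\btheta$ scale). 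Using $\prox_{\reg+\frac{\mu}{2}(\cdot-\theta)^2}(x;\tau)=\prox_{\reg}\!\big(\tfrac{x+\mu\tau\theta}{1+\mu\tau};\tfrac{\tau}{1+\mu\tau}\big)$, this variant is a continuous deformation of \Cref{sys:general_ensemble-M=1} reducing to it at $\mu=0$; combined with the assumed uniqueness of the solution (\Cref{asm:regularity-conditions}-(3)) and a routine stability argument for this finite-dimensional fixed point, we get $\alpha_\mu\to\alpha$ and $\beta_\mu\to\beta$ as $\mu\downarrow0$.

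\textbf{Part 2 --- squeeze and transfer to residuals.} Fix $\mu_0>0$. For all $n$ large enough that $\mu_n\le\mu_0$, the deterministic bounds of Part 1 give $\|\hat\bh_{\mu_0}\|^2\le\|\hat\bh_{\mu_n}\|^2\le\|\hat\bh\|^2$. Since $\|\hat\bh\|^2\pto\alpha^2$ (single-estimator result) and $\|\hat\bh_{\mu_0}\|^2\pto\alpha_{\mu_0}^2$, this sandwiches $\|\hat\bh_{\mu_n}\|^2$ between $\alpha_{\mu_0}^2-\op(1)$ and $\alpha^2+\op(1)$; letting $\mu_0\downarrow0$ and invoking $\alpha_{\mu_0}^2\to\alpha^2$ yields $\|\hat\bh_{\mu_n}\|^2\pto\alpha^2$, whence $\|\hat\bh-\hat\bh_{\mu_n}\|^2\le\|\hat\bh\|^2-\|\hat\bh_{\mu_n}\|^2=\op(1)$. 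For the residuals, Lipschitzness of $\loss'$ with constant $L$ gives $\|\bpsi-\bpsi_{\mu_n}\|^2\le L^2\|\bG_I\|_{\mathrm{op}}^2\,\|\hat\bh-\hat\bh_{\mu_n}\|^2$ with $\bG_I=(\bg_i^\top)_{i\in I}$, and since $\|\bG_I\|_{\mathrm{op}}^2=\Op(n)=\Op(p)$ for an $|I|\times p$ standard-Gaussian block, $\|\bpsi-\bpsi_{\mu_n}\|^2/p=\op(1)$; together with $\|\bpsi\|^2/p\pto\beta^2$ this gives $\|\bpsi_{\mu_n}\|^2/p\pto\beta^2$.

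\textbf{Main obstacle.} I expect the difficulty to concentrate in the fixed-$\mu$ step: confirming that the single-estimator asymptotics genuinely apply to the smoothed problem under the weak moment conditions of \Cref{asm:regularity-conditions} (no Lipschitz loss, and strong convexity only of the smoothed objective, not of $\reg$), and establishing the continuity $\alpha_\mu\to\alpha$, $\beta_\mu\to\beta$ of the fixed-point system under the regularizer perturbation. By contrast, the squeeze is soft and, crucially, works for any rate $\mu_n\to0$ precisely because the $\mu$-monotonicity of Part 1 is pathwise rather than merely in the limit.
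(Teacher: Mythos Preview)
Your Part 1 is correct and essentially equivalent to the paper's argument (they phrase it via strong convexity of the smoothed objective rather than subdifferential monotonicity, but these are two sides of the same coin). The extension to $0\le\mu_1<\mu_2$ is also fine and is indeed what powers your squeeze.

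For Part 2, your route is genuinely different from the paper's. You treat the single-estimator CGMT result as a black box at each fixed $\mu$, obtain limits $\alpha_\mu^2$, and then squeeze $\|\hat\bh_{\mu_n}\|^2$ between $\|\hat\bh_{\mu_0}\|^2\pto\alpha_{\mu_0}^2$ and $\|\hat\bh\|^2\pto\alpha^2$ using the $\mu$-monotonicity; this works provided you can establish the continuity $\alpha_\mu\to\alpha$ (and, implicitly, existence/uniqueness of the perturbed fixed point for small $\mu>0$), which you correctly flag as the residual obstacle. The paper sidesteps this obstacle entirely by sandwiching one level earlier, inside the CGMT machinery rather than at its output: it shows that the empirical Moreau-envelope input required by \cite[Theorem~3.1]{thrampoulidis2018precise} satisfies, for the $\mu_n$-smoothed penalty $\reg_j^{\mu_n}(x)=\reg(x+\theta_j)+\tfrac{\mu_n}{2}x^2$,
\[
0 \;\le\; \frac{1}{p}\sum_{j}\bigl[\env_{\reg_j^{\mu_n}}(cg_j;\tau)-\env_{\reg_j^{0}}(cg_j;\tau)\bigr]
\;\le\; \frac{\mu_n}{2}\cdot\frac{1}{p}\sum_{j}\bigl(\prox_{\reg_j^{0}}(cg_j;\tau)\bigr)^2 \;=\; \op(1),
\]
so the smoothed and unsmoothed problems feed the same limit into the CGMT, and a single application yields $\|\hat\bh_{\mu_n}\|^2\pto\alpha^2$ directly. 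The trade-off: the paper's argument is shorter and never touches the perturbed fixed-point system, but requires opening the CGMT box; yours is more modular and would transfer to any framework that delivers fixed-$\mu$ asymptotics, at the price of the continuity lemma you identified.
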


\begin{proof}
    By the strong convexity of the ridge term $p\mu^2/2 \|\bh\|^2$, the smoothed one $\hat\bh_\mu$ also minimizes the convex function:
    $
    \bh\mapsto  \obj(\bh) + \frac{p\mu }{2} \|\bh\|^2 - \frac{p\mu}{2}\|\bh-\hat\bh_\mu\|^2. 
    $
    By the optimality of $\hat\bh_\mu$ and $\hat\bh$, we have  
    $$
    \obj(\hat\bh_\mu) + \frac{p\mu}{2} \|\hat\bh_\mu\|^2 \le \obj(\hat\bh) + \frac{p\mu}{2} \|\hat\bh\|^2 - \frac{p\mu}{2}\|\hat\bh-\hat\bh_\mu\|^2 \le \obj(\hat\bh_\mu) + \frac{p\mu}{2} \|\hat\bh\|^2 - \frac{p\mu}{2}\|\hat\bh-\hat\bh_\mu\|^2
    $$
    so that subtracting $\obj(\hat\bh_\mu)$ from the both sides and dividing by $p\mu/2>0$, we obtain the first claim. 

    Recall $\|\bh\|^2\pto \alpha^2$ and $\|\bpsi\|^2/p\pto \beta^2$. 
    Then, by \Cref{lm:ridge_smoothing}-(1) and the Lipschitz condition of $\loss'$, it suffices to show the convergence $\|\hat\bh_\mu\|^2\pto \alpha^2$ for the smoothed estimator $\hat\bh_\mu$. 
    Note that $\hat\bh_\mu$ minimizes the function below
    $$
    \bh\mapsto \sum_{i\in I} \loss(\eps_i - \bg_i^\top\bh) + \sum_{j \in [p]} \reg_{j}^{\mu_n} (\sqrt{p} h_j) \quad  \text{where} \quad \reg_{j}^\mu (x) \coloneq \reg(x+\theta_j) + \mu  \frac{x^2}{2}. 
    $$
    Now we suppose that for any standard normal $\bg = (g_j)_{j=1}^p \sim  \cN(\bm {0}_p, \bI_p)$, the convergence of the Moreau envelope
    $$
    \frac{1}{p} \sum_{j \in [p]} \env_{\reg_{j}^{\mu_n}}(cg _j ; \tau) - \reg_j^{\mu_n} (0)  \pto \E[\env_{\reg}(cH + \Theta; \tau) - \reg(\Theta)]
    $$
    holds for all $c\in\R$ and $\tau>0$.
   Then, by \cite[Theorem 3.1]{thrampoulidis2018precise}, we have  $\|\hat\bh_{\mu}\|^2\pto \alpha^2$ and complete the proof.  By the weak law of large numbers, the above display holds with $\mu=0$ (see \cite[Lemma 4.1]{thrampoulidis2018precise} for details):
   $$
   \frac{1}{p} \sum_{j \in [p]} \env_{\reg_{j}^{\mu=0} }(c g_j ; \tau) - \reg_j^{\mu=0} (0) \pto \E[\env_{\reg}(cH + \Theta; \tau) - \reg(\Theta)]. 
   $$
   Then, noting $\reg_{j}^{\mu=0}(0) = \reg(\theta_j) = \reg_j^{\mu}(0)$ for any $\mu \ge 0$, it suffices to show 
   \begin{align*}
    \frac{1}{p} \sum_{j \in [p]} \env_{\reg_{j}^{\mu_n} }(cg _j ; \tau) - \frac{1}{p} \sum_{j \in [p]} \env_{\reg_{j}^{\mu=0} }(cg _j ; \tau) = \op(1). 
   \end{align*}
   For each $j$, the monotonicity $\env_{\reg_{j}^{\mu=0} }(cg _j ; \tau) \le \env_{\reg_{j}^{\mu_n} }(cg _j ; \tau)$ holds since the objective function is monotone in the sense of $\reg_j^{\mu=0} (x) \le \reg_j^{\mu}(x)$ for all $x\in\R$ and all $\mu\ge 0$. 
   On the other hand, by the optimality of $\prox_{\reg_j^{\mu=0}}(c g_j;\tau)$ and $\prox_{\reg_j^{\mu_n}}(c g_j;\tau)$, we find 
   \begin{align*}
   \env_{\reg_{j}^{\mu_n}}(cg _j ; \tau) &\le \frac{1}{2\tau} (cg_j-\prox_{\reg_j^{\mu=0}}(cg_j; \tau))^2 + \reg_j^{\mu_n} (\prox_{\reg_j^{\mu=0}}(cg_j; \tau))\\
   &= \env_{\reg_j^{\mu=0}}(cg_j; \tau) + \frac{\mu_n}{2} (\prox_{\reg_j^{\mu=0}}(cg_j; \tau))^2
   \end{align*}
   for each $j\in [p]$. 
   Thus, it holds that 
    $$
    0 \le \frac{1}{p} \sum_{j \in [p]} \env_{\reg_{j}^{\mu_n} }(cg _j ; \tau) - \frac{1}{p} \sum_{j \in [p]} \env_{\reg_{j}^{\mu=0} }(cg _j ; \tau) \le \frac{\mu_n}{2} \cdot \frac{1}{p}\sum_{j \in [p]} (\prox_{\reg_j^{\mu=0}}(cg_j; \tau))^2,
    $$
    where $\prox_{\reg_j^{\mu=0}}(cg_j; \tau) = \prox_{\reg}(c g_j + \theta_j; \tau) - \theta_j$ by the definition of $\reg_j^{\mu}$. 
    Here, under \Cref{asm:regularity-conditions}-(1), the expectation $\E[(\prox_{\reg}(c g_j + \theta_j; \tau) - \theta_j)^2]$ under $g_j\sim \cN(0,1)$ and $\theta_j\sim \Theta$ is finite for all $c\in\R$ and $\tau>9$ (cf. \cite[equation (123)]{thrampoulidis2018precise}).
    Therefore, by the weak law of large numbers, we have $\frac{1}{p}\sum_{j \in [p]} (\prox_{\reg_j^{\mu=0}}(cg_j; \tau))^2 \pto \E[(\prox_{\reg}(c H + \Theta; \tau) - \Theta)^2] <+\infty$. 
    This means that for any $\mu=\mu_n\to 0$, the RHS of the previous display is $\op(1)$. 
\end{proof}

\subsubsection{Bounding norm of regularized M-estimators}\label{sec:bound_norm}
For some positive scalar $K$ to be specified later, we add another regularization term; we define $\hat\bh_{\mu, K}$ as
\begin{align*}
    \hat\bh_{\mu, K} &\in \argmin_{\bh\in \R^p}  \obj(\bh) + \frac{p\mu}{2} \|\bh\|^2 + \underbrace{\frac{\hat{\lambda}}{2} \mathsf{F} \Bigl(\frac{\|\bh\|^2 - K}{2}\Bigr)}_{\text{additional term}} \quad \text{with} \quad \quad \hat\lambda := \obj(\bm{0})
\end{align*}
where $\mathsf{F}:\R\to\R$ is convex, non-negative, and non-decreasing with $\lim_{x\to+\infty} \mathsf{F}(x) = +\infty$, as well as differentiable with
$\mathsf{F}'(u) = 0$ if $u\le 0$ and $\mathsf{F}'(u) = 1$ if $u\ge 1$. 
For instance, we may take $\mathsf{F}$ as an integral of the smoothed step function as follows:
$$
\mathsf{F} (x):= \int_{-\infty}^x f(u) \, \mathrm{d}u \quad \text{with} \quad f(u) := \begin{cases}
    1 & u\ge 1\\
    3u^2 - 2u &  u \in (0,1)\\
    0 & u \le 0
\end{cases}
$$
Note in passing that the regularization parameter $\hat{\lambda} = \obj(\bm 0) = \sum_{i\in I} \loss(\eps_i) + \sum_{j \in [p]} \reg (\theta_j)$ is independent of the design matrix $\bG$ and non-negative since $\loss$ and $\reg$ are non-negative. 

Now we claim that for sufficiently large $K>0$ this modified estimator $\hat{\bh}_{\mu, K}$ coincides with the smoothed estimator $\hat\bh_\mu$. 
Furthermore, thanks to the additional regularizer $\frac{\hat{\lambda}}{2} \mathsf{F} \Bigl(\frac{\|\bh\|^2 - K}{2}\Bigr)$, the norm of $\bh_{\mu, K}$ and $\bpsi_{\mu, K}$ are suitably bounded as follows:
\begin{lemma}
\label{lm:bound_h_psi}
The following convergences hold.
\begin{enumerate}
    \item If we set $K=2\alpha^2$ where $\alpha>0$ is the solution to \Cref{sys:general_ensemble-M=1}, we have 
    $$
    \PP(\hat{\bh}_{\mu, K}= \hat{\bh}_{\mu}) \pto 1.
    $$
    \item For any $K\ge 0$, there exists a positive constant $C_K$ that only depends on $K$ such that 
    \begin{align*}
    \|\hat\bh_{\mu, K} \|^2 &\le C_K, \quad \|\bpsi_{\mu,K}\|^2 \le C_K (1+\|\loss'\|_{\lip}^2) (\|\loss'(\bm\eps)\|^2 + \|\bG\|_{\oper}^2).
    \end{align*}
    Thus, the constant
    $C^*=
    1.1 C_K (1+\|\loss'\|_{\lip}^2) (\E[\loss'(Z)^2] + (1+\delta^{-1/2}))^2$
    satisfies
    $$
    \PP\Bigl(
    \|\hat\bh_{\mu, K}\|^{2} \vee (n^{-1}\|\bpsi_{\mu, K} \|^{2})
    \vee \sup_{m\ge 1}\E\Bigl[\|\hat\bh_{\mu, K}\|^{2m} \vee 
    (n^{-1}\|\bpsi_{\mu, K} \|^{2})^m
    \mid \bz, \btheta\Bigr]^{1/m} \le C^*\Bigr) \to 1.
    $$
\end{enumerate}
\end{lemma}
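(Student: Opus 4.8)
The plan is to treat the two parts separately, both hinging on the structure of the added barrier $\tfrac{\hat\lambda}{2}\mathsf F\bigl(\tfrac{\|\bh\|^2-K}{2}\bigr)$: it is non-negative, vanishes on the ball $\{\|\bh\|^2\le K\}$ (since $\mathsf F\equiv 0$ on $(-\infty,0]$), diverges as $\|\bh\|\to\infty$, and carries a coefficient $\hat\lambda=\obj(\bm 0)$ that is measurable with respect to $(\bm\eps,\btheta)$ and free of $\bG$. I may assume $\hat\lambda>0$, since on the complementary event $\loss(\eps_i)=0$ and $\reg(\theta_j)=0$ for all relevant indices (as $\loss,\reg\ge0$ with minimum $0$), which forces $\hat\bh_{\mu,K}=\bm 0$ and $\bpsi_{\mu,K}=\bm 0$ and makes the statement trivial. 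For Part 1, I would compare the two strictly convex (hence uniquely minimized, thanks to the ridge term $\tfrac{p\mu}{2}\|\bh\|^2$) objectives defining $\hat\bh_\mu$ and $\hat\bh_{\mu,K}$, which differ exactly by the barrier: on the event $\{\|\hat\bh_\mu\|^2<K\}$ the barrier vanishes at $\hat\bh_\mu$, so $\hat\bh_\mu$ also attains the minimum of the larger objective, and uniqueness of minimizers forces $\hat\bh_{\mu,K}=\hat\bh_\mu$. With $K=2\alpha^2$ and $\alpha>0$, \Cref{lm:ridge_smoothing}-(2) gives $\|\hat\bh_\mu\|^2\pto\alpha^2<K$, so this event has probability tending to $1$.

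For the deterministic bounds in Part 2, I would evaluate the objective of $\hat\bh_{\mu,K}$ at its minimizer and at $\bm 0$. Since $\mathsf F(-K/2)=0$ and the value at $\bm 0$ is $\obj(\bm 0)=\hat\lambda$, while $\obj\ge 0$ and $\tfrac{p\mu}{2}\|\cdot\|^2\ge 0$, this yields $\tfrac{\hat\lambda}{2}\mathsf F\bigl(\tfrac{\|\hat\bh_{\mu,K}\|^2-K}{2}\bigr)\le\hat\lambda$, i.e.\ $\mathsf F\bigl(\tfrac{\|\hat\bh_{\mu,K}\|^2-K}{2}\bigr)\le 2$. As $\mathsf F$ is non-decreasing and diverges, $\{u:\mathsf F(u)\le 2\}$ has a finite supremum depending only on $\mathsf F$, whence $\|\hat\bh_{\mu,K}\|^2\le C_K$ for a constant depending on $K$ alone — and, crucially, not on $\mu$ or $n$. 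For the residual-type vector, writing $\bpsi_{\mu,K}=\sum_{i\in I}\be_i\loss'(\eps_i-\bg_i^\top\hat\bh_{\mu,K})$ and using $|\loss'(\eps_i-\bg_i^\top\hat\bh_{\mu,K})|\le|\loss'(\eps_i)|+\|\loss'\|_{\lip}\,|\bg_i^\top\hat\bh_{\mu,K}|$ together with $\sum_{i\in I}(\bg_i^\top\hat\bh_{\mu,K})^2\le\|\bG\|_{\oper}^2\,\|\hat\bh_{\mu,K}\|^2$ gives the claimed bound $\|\bpsi_{\mu,K}\|^2\le C_K(1+\|\loss'\|_{\lip}^2)\bigl(\|\loss'(\bm\eps)\|^2+\|\bG\|_{\oper}^2\bigr)$ after enlarging $C_K$ by an absolute factor.

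For the probabilistic statement, the bound $\|\hat\bh_{\mu,K}\|^2\le C_K\le C^*$ holds surely. For the other two quantities, the weak law of large numbers gives $n^{-1}\|\loss'(\bm\eps)\|^2\pto\E[\loss'(Z)^2]$ (finite by \Cref{asm:regularity-conditions}-(1)) and the Bai--Yin theorem gives $n^{-1/2}\|\bG\|_{\oper}\to 1+\delta^{-1/2}$, so on an event of probability tending to $1$ the deterministic bound forces $n^{-1}\|\bpsi_{\mu,K}\|^2\le C_K(1+\|\loss'\|_{\lip}^2)\bigl(\E[\loss'(Z)^2]+(1+\delta^{-1/2})^2+o(1)\bigr)\le C^*$, the factor $1.1$ in the definition of $C^*$ being exactly the slack needed to absorb the $o(1)$ and the elementary inequality $a+b^2\le 1.1\,(a+b)^2$ for $b\ge 1$. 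Conditioning on $(\bm\eps,\btheta)$ then freezes $n^{-1}\|\loss'(\bm\eps)\|^2$ and leaves only the Gaussian matrix $\bG$ random; the conditional moments of $n^{-1}\|\bpsi_{\mu,K}\|^2$ are controlled via the Gaussian concentration estimate $\E[(n^{-1}\|\bG\|_{\oper}^2)^m]^{1/m}\le(1+\sqrt{p/n})^2+O(m/n)$ on the same event, giving $\sup_m\E[\,\cdot\mid\bm\eps,\btheta]^{1/m}\le C^*$.

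The conceptual content is essentially all in Part 1 and the first deterministic estimate of Part 2: the barrier is engineered precisely so that $\|\hat\bh_{\mu,K}\|$ is dominated by a constant that is simultaneously uniform in $\mu$ and independent of the design $\bG$, which is exactly what the later $\mu\to 0$ approximation and the moment bounds need. The point demanding care is therefore this uniformity — $C_K$ must not degrade as $\mu\to 0$ or $n\to\infty$ — together with the fact that it is the Gaussian (not merely polynomial) tail of $\|\bG\|_{\oper}$ that underlies the conditional-moment control; everything else is routine bookkeeping.
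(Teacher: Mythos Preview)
Your proposal is correct and follows essentially the same approach as the paper: both parts hinge on the barrier's vanishing on $\{\|\bh\|^2\le K\}$, the comparison of the augmented objective at its minimizer and at $\bm 0$, and the Lipschitz bound $\|\bpsi_{\mu,K}\|\le\|\loss'(\bm\eps)\|+\|\loss'\|_{\lip}\|\bG\|_{\oper}\|\hat\bh_{\mu,K}\|$. The only cosmetic difference is in Part~1, where the paper checks that $\hat\bh_\mu$ satisfies the KKT conditions of the augmented problem (via $\mathsf F'(u)=0$ for $u\le 0$) on the event $\{\|\hat\bh\|^2\le K\}$ and reaches that event through the monotonicity $\|\hat\bh_\mu\|^2\le\|\hat\bh\|^2$ of \Cref{lm:ridge_smoothing}-(1), while you compare the two strictly convex objectives directly on $\{\|\hat\bh_\mu\|^2<K\}$ using \Cref{lm:ridge_smoothing}-(2); the paper's proof of the final probabilistic display is likewise terse and does not spell out the $\sup_m$ conditional-moment control any more than you do.
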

\begin{proof}
    Let us consider the event $\Omega \coloneq \{\|\bh\|^2 \le K\}$ with $K=2\alpha^2$,  
   which holds with high probability, since $\|\bh\|^2\pto \alpha^2>0$. 
    Combined with the monotonicity $\|\bh_\mu\|^2 \le \|\bh\|^2$ by \Cref{lm:ridge_smoothing}-(1), we have $\|\hat\bh_\mu\|^2 \le K$ under the event $\Omega$. 
    Since $\mathsf{F}'(u) = 0$ for all $u\le 0$, combined with the Karush-Kuhn-Tucker (KKT) condition $-p \mu\hat\bh_{\mu} \in \partial\obj(\hat\bh_{\mu})$ for the smoothed estimator $\hat\bh_{\mu}$, we observe that $\hat\bh_\mu$ satisfies the KKT conditions for $\hat\bh_{\mu, K}$ under the event $\Omega$:
    $$
    -p \mu\hat\bh_\mu  - \hat\lambda \mathsf{F}'\Bigl(\frac{\|{\hat\bh_\mu}\|^2 - K}{2}\Bigr) \hat\bh_\mu =  -p \mu\hat\bh_\mu  - 0\cdot \hat\bh_\mu  \in \partial \mathsf{obj}(\hat\bh_\mu).
    $$
    This implies $\PP(\hat\bh_{\mu}=\hat\bh_{\mu, K}) \ge \PP(\Omega) \to 1$. 
    
    By the non-negativity of $(\obj(\cdot), p\mu\|\cdot\|^2, \frac{\hat{\lambda}}{2} \mathsf{F}(\cdot))$ and the optimality of $\hat\bh_{\mu, K}$, it holds that 
    \begin{align*}
    0 \le \frac{\hat{\lambda}}{2} \mathsf{F} \Bigl(\frac{\|\hat\bh_{\mu, K}\|^2 - K}{2}\Bigr) &\le \obj(\hat{\bh}_{\mu, K}) + \frac{p\mu}{2} \|\bh_{\mu, K}\|^2 + \frac{\hat{\lambda}}{2} \mathsf{F} \Bigl(\frac{\|\bh_{\mu, K}\|^2 - K}{2}\Bigr)\\
    &\le \obj(\bm{0}) + \frac{p\mu}{2} \|\bm 0\|^2 + \frac{\hat{\lambda}}{2} \mathsf{F} \Bigl(\frac{\|\bm 0\|^2 - K}{2}\Bigr)\\
    &= \hat{\lambda} + 0 + 0.         
    \end{align*}
    When $\hat\lambda=0$ then all inequalities above holds with equality, which means that $\bm{0}$ minimizes the objective function $\obj(\bh) + \frac{p\mu}{2} \|\bh\|^2 +\frac{\hat{\lambda}}{2} \mathsf{F} \Bigl(\frac{\|\bh\|^2 - K}{2}\Bigr)$ for $\hat\bh_{\mu, K}$. Since this objective function is strongly convex due to the ridge term, the minimizer is unique. This means  $\hat{\bh}_{\mu, K}=\bm{0}$ when $\hat\lambda=0$. On the other hand, if $\hat\lambda>0$ then dividing the above display by $\hat\lambda>0$ we are left with 
    $\mathsf{F} \Bigl(\frac{\|\hat\bh_{\mu, K}\|^2 - K}{2}\Bigr)\le  2$. Since $\mathsf{F}$ is non-decreasing and coercive on the positive side, i.e., $\lim_{x\to +\infty} \mathsf{F}(x) \to + \infty$, this gives $\|\hat{\bh}_{\mu, K}\|^2 \le C(K)$ for a constant $C(K)>0$ depending on $K$ only. 
    Combined with the Lipschitz condition of $\loss'$, the norm of $\bpsi_{\mu, K}= \sum_{i\in I} \be_i \loss'(z_i-\bg_i^\top\bh_{\mu, K})$ is bounded as
    $$
    \|\bpsi_{\mu, K}\| \le \|\loss'(\bm\eps)\| + \|\loss'(\bm\eps - \bG\hat\bh_{\mu, K}) - \loss'(\bm\eps)\| \le \|\loss'(\bm\eps)\| + \|\loss'\|_{\lip} \|\bG\|_{\oper} \|\hat\bh_{\mu, K}\|.
    $$
    Since $\loss'(z_i)^2$ has a finite second moment by \Cref{asm:regularity-conditions}-(1), the weak law of large numbers gives 
    $n^{-1} \|\loss'(\bz)\|^2 \pto \E[\loss'(Z)^2] <+\infty$.
    Since $\bG\in\R^{n\times p}$ is a Gaussian matrix with i.i.d.\ $\cN(0,1)$ entries, we also have $\|\bG\|_{op}^2/n \pto (1+\delta^{-1/2})^2$ by standard results of the maximal singular value of a Gaussian matrix,
    e.g., \cite[Theorem II.13]{davidson2001local}.
    This completes the proof.
\end{proof}

\subsubsection{Derivative formulae for strongly convex regularizer}

\bigskip

\begin{lemma}[\cite{bellec2022derivatives}]\label{lm:derivative}
Let $\hat\bh\in\R^p$ be the regularized M-estimator of the form
$$
\hat \bh \in \argmin_{\bh\in\R^p} \sum_{i\in I} \loss(z_i-\bg_i^\top\bh) + \mathsf{R}(\bh),
$$
where $I\subset [n]$ is a subset independent of $(\bg_i)_{i\in[n]}$, 
$\loss:\R\to\R$ is a convex and differentiable function with Lipschitz derivative, 
$\mathsf{R}:\R^{p}\to\R\cup\{+\infty\}$ is a $\tau$-strongly convex regularizer for some $\tau>0$, and $\mathsf{R}$ is independent of $(\bg_i)$. Then, there exists a matrix $\bA\in\R^{p\times p}$ satisfying $\|\bA\|_{\oper} \le \tau^{-1}$ and $\tr[\bA] \ge 0$
such that $\hat\bh\in\R^p$ and $\bpsi=\sum_{i\in I} \be_i \loss'(z_i-\bg_i^\top \hat \bh)\in\R^n$ are both differentiable as functions of the design $(\bG) = (g_{ij})$ with the derivative given by 
\begin{equation}\label{eq:derivative_formula}
   \forall i\in [n], \forall j\in[p], \quad  \frac{\partial \hat\bh}{\partial g_{ij}} = \bA \Bigl(\be_j\be_i^\top \bpsi  - \bG^{\top}\bD\be_i \be_j^\top \hat\bh \Bigr), \quad \frac{\partial \bpsi}{\partial g_{ij}} = - \bD\bG \bA \be_j\be_i^\top \bpsi - \bV \be_i \be_j^\top \hat\bh.
\end{equation}
where $\bD$ and $\bV$ are $n\times n$ matrices defined by $\bD = \sum_{i\in I} \be_i\be_i^\top \loss''(z_i-\bg_i^\top \hat\bh)$ and $\bV = \bD - \bD\bG\bA\bG^\top \bD$.
Furthermore, the matrix $\bV$ defined above is positive semidefinite with its operator norm bounded as $\|\bV\|_{\oper}\le \|\loss'\|_{\lip}$. 
\end{lemma}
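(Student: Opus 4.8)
The plan is to differentiate the first-order optimality (stationarity) condition of this strongly convex program with respect to each design entry $g_{ij}$, and to read off $\bA$ as the inverse of the ``generalized Hessian'' $\bG^\top \bD \bG + \nabla^2 \mathsf{R}(\hat\bh)$. I would first carry out the computation under the simplifying assumption that $\loss$ and $\mathsf{R}$ are twice continuously differentiable (so that the implicit function theorem applies verbatim), and then remove this assumption by a Moreau-smoothing/approximation argument, which is where the only real difficulty lies. Note that $\hat{\bpsi}$ is well defined since $\loss$ is differentiable, and $\hat\bh$ is the \emph{unique} minimizer by $\tau$-strong convexity.

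For the smooth case, $\hat\bh$ is characterized by $\bG^\top \bpsi = \nabla \mathsf{R}(\hat\bh)$ with $\bpsi = \sum_{i \in I} \be_i \loss'(z_i - \bg_i^\top \hat\bh)$. Since the objective is jointly smooth in $(\bh,\bG)$ and $\tau$-strongly convex in $\bh$, the implicit function theorem shows $\bG \mapsto \hat\bh$ (and hence $\bG \mapsto \bpsi$) is $\mathcal{C}^1$. Differentiating the stationarity condition in $g_{ij}$, using that only the $i$-th row of $\bG$ depends on $g_{ij}$, one obtains after rearrangement
\begin{equation*}
  \be_j \be_i^\top \bpsi - \bG^\top \bD \be_i \be_j^\top \hat\bh \;=\; \bigl( \bG^\top \bD \bG + \nabla^2 \mathsf{R}(\hat\bh) \bigr)\, \frac{\partial \hat\bh}{\partial g_{ij}},
\end{equation*}
with $\bD = \sum_{i\in I} \be_i \be_i^\top \loss''(z_i - \bg_i^\top \hat\bh)$. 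Setting $\bA := \bigl( \bG^\top \bD \bG + \nabla^2 \mathsf{R}(\hat\bh) \bigr)^{-1}$ gives the stated formula for $\partial \hat\bh / \partial g_{ij}$; substituting this back into $\partial \bpsi / \partial g_{ij} = -\bD \be_i \be_j^\top \hat\bh - \bD \bG\, \partial\hat\bh/\partial g_{ij}$ and collecting terms yields $\partial \bpsi / \partial g_{ij} = -\bD \bG \bA \be_j \be_i^\top \bpsi - \bV \be_i \be_j^\top \hat\bh$ with $\bV = \bD - \bD \bG \bA \bG^\top \bD$. For the bounds: convexity of $\loss$ gives $\bD \succeq 0$, hence $\bG^\top \bD \bG \succeq 0$, and $\nabla^2 \mathsf{R}(\hat\bh) \succeq \tau \bI$ by $\tau$-strong convexity, so $\bA \succ 0$ with $\|\bA\|_{\oper} \le \tau^{-1}$ and in particular $\tr[\bA] \ge 0$. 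For $\bV$, write $\bV = \bD^{1/2}\bigl( \bI - \bD^{1/2}\bG \bA \bG^\top \bD^{1/2} \bigr)\bD^{1/2}$ and note that for any vector $\bv$,
\begin{equation*}
  \bv^\top \bigl( \bI - \bD^{1/2}\bG \bA \bG^\top \bD^{1/2} \bigr)\bv \;=\; \min_{\bu}\Bigl\{ \|\bv - \bD^{1/2}\bG \bu\|^2 + \bu^\top \nabla^2 \mathsf{R}(\hat\bh)\, \bu \Bigr\} \;\ge\; 0 ,
\end{equation*}
so the middle matrix is positive semidefinite and $\bV \succeq 0$. Since $\bD \bG \bA \bG^\top \bD \succeq 0$ we also have $\bV \preceq \bD$, and the Lipschitz bound on $\loss'$ gives $\bD \preceq \|\loss'\|_{\lip}\bI$; hence $0 \preceq \bV \preceq \|\loss'\|_{\lip}\bI$, i.e.\ $\|\bV\|_{\oper} \le \|\loss'\|_{\lip}$.

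The main obstacle is removing the $\mathcal{C}^2$ hypothesis on $\mathsf{R}$ — precisely the case needed for $\ell_1$-type penalties. Here I would replace $\mathsf{R}$ by its Moreau envelopes $\mathsf{R}_\varepsilon$: these are $\mathcal{C}^{1,1}$ (hence twice differentiable Lebesgue-a.e., which is enough to run the differentiation above for a.e.\ $\bG$) and, as a short completion-of-squares computation shows, remain $\tfrac{\tau}{1+\varepsilon\tau}$-strongly convex, so the smooth case applies with a matrix $\bA_\varepsilon$ obeying $\|\bA_\varepsilon\|_{\oper} \le \tfrac{1+\varepsilon\tau}{\tau}$ uniformly. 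Letting $\varepsilon \to 0$, strong convexity forces $\hat\bh_\varepsilon \to \hat\bh$, the uniform operator-norm bound lets one pass to the limit in the derivative identities, and the limiting $\bA$ inherits all the claimed properties. This interchange of limit and differentiation (together with the a.e.\ regularity bookkeeping in $\loss''$) is the genuinely technical step; it is carried out in detail in \cite{bellec2022derivatives}, whose argument I would follow.
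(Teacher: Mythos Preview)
The paper does not give its own proof of this lemma: it is stated with the citation \cite{bellec2022derivatives} and then immediately used, so there is nothing in the present paper to compare your argument against. Your sketch is correct and is essentially the approach of the cited reference --- differentiate the stationarity condition via the implicit function theorem in the smooth case to obtain $\bA = (\bG^\top\bD\bG + \nabla^2\mathsf{R}(\hat\bh))^{-1}$, read off the spectral bounds from $\tau$-strong convexity and the Lipschitz property of $\loss'$, and remove the $\mathcal{C}^2$ assumption by a smoothing/limiting argument. Your verification that $\bV\succeq 0$ via the quadratic minimization identity is clean, and your remark that the delicate step is justifying the interchange of limit and differentiation (handled in \cite{bellec2022derivatives}) is accurate.
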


Recall that the modified estimator $\hat\bh_{\mu, K}$ in \Cref{sec:bound_norm} minimizes the objective function $\sum_{i\in I} \loss(z_i-\bg_i^\top\bh) + \mathsf{R}(\bh)$ where $\mathsf{R}$ is the regularizer of the form
$$
\mathsf{R}(\bh) := \sum_{j \in [p]} \reg (\sqrt{p} h_j + \theta_j ) + \frac{p\mu}{2} \|\bh\|^2 + \frac{\hat{\lambda}}{2} \mathsf{F} \Bigl(\frac{\|\bh\|^2 - K}{2}\Bigr),
$$
which is $(p\mu)$-strongly convex. Thus, we can apply \Cref{lm:derivative}; there exists a matrix $\bA_{\mu, K}\in\R^{p\times p}$ such that
\begin{equation}
\|\bA_{\mu, K}\|_{\oper} \le (p\mu)^{-1}, \quad \tr[\bA_{\mu, K}]\ge 0,
\label{bound-A}
\end{equation}
and $\hat \bh_{\mu, K}$ and $\bpsi_{\mu, K} = \sum_{i\in I} \be_i \loss'(z_i -\bg_i^\top\hat\bh_{\mu, K}) $ are differentiable with respect to the design matrix $\bG=(g_{ij})$ as in \eqref{eq:derivative_formula} with 
$$
\bD_{\mu, K} = \sum_{i\in I} \be_i\be_i^\top \loss''(z_i-\bg_i^\top\hat\bh_{\mu,K}) \qquad \bV_{\mu, K}=\bD_{\mu, K} - \bD_{\mu, K} \bG\bA_{\mu, K} \bG^\top \bD_{\mu, K}.
$$
Now we claim that the trace of $\bV_{\mu, K}$ and $\bA_{\mu, K}$ are empirical quantities that converge to the remaining solution $\nu$ and $\kappa$:
\begin{lemma}\label{lm:convergence_trace}
For any $\mu\to 0$ such that $\mu^{-1} = O(n^{1/8})$, we have 
    $$
    \tr[\bV_{\mu, K}]/p \pto \nu 
    \quad \text{and} \quad
    \tr[\bA_{\mu, K}]\pto \kappa. 
    $$
\end{lemma}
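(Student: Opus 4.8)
The plan is to prove the two convergences jointly with the already-established facts $\|\hat\bh_{\mu,K}\|^2 \pto \alpha^2$ and $\|\bpsi_{\mu,K}\|^2/p \pto \beta^2$ (\Cref{lm:ridge_smoothing,lm:bound_h_psi}), by showing that the vector of empirical functionals
$\bigl(\|\hat\bh_{\mu,K}\|^2,\ \|\bpsi_{\mu,K}\|^2/p,\ \tr[\bV_{\mu,K}]/p,\ \tr[\bA_{\mu,K}],\ p^{-1}\hat\bh_{\mu,K}^\top\bG^\top\bpsi_{\mu,K}\bigr)$
concentrates and satisfies, up to a vanishing error, a system of scalar equations whose unique solution forces the last two coordinates to be $\nu$ and $\kappa$.

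\emph{Step 1 (concentration).} First I would show that each of these functionals, viewed as a function of the Gaussian design $\bG$, is within $\op(1)$ of its conditional expectation given $(\bz,\btheta)$. This is the Gaussian Poincaré inequality, once one bounds the Lipschitz constant of each functional in $\bG$ using the a priori bounds $\|\bA_{\mu,K}\|_{\oper}\le(p\mu)^{-1}$, $\|\bV_{\mu,K}\|_{\oper}\le\|\loss'\|_{\lip}$, $\|\hat\bh_{\mu,K}\|^2\le C_K$, $\|\bpsi_{\mu,K}\|^2\le C^*n$ from \Cref{lm:bound_h_psi} together with the derivative formulae \eqref{eq:derivative_formula}. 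These Lipschitz constants carry powers of $(p\mu)^{-1}$, and the hypothesis $\mu^{-1}=O(n^{1/8})$ is precisely what makes the resulting variance bounds $o(1)$; the uniform moment control in \Cref{lm:bound_h_psi}-(2) upgrades the $L^2$ statements to convergence in probability.

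\emph{Step 2 (approximate fixed-point system).} Next I would apply Gaussian integration by parts (Stein's formula) to the scalar functionals $\E[\hat\bh_{\mu,K}^\top\bG^\top\bpsi_{\mu,K}]$, $\E[\|\hat\bh_{\mu,K}\|^2]$, $\E[\bpsi_{\mu,K}^\top\bz]$ (and their conditional versions), substitute the derivatives \eqref{eq:derivative_formula}, and collect leading terms; this yields a closed system relating the deterministic limits of the concentrated functionals — an approximate form of \Cref{sys:general_ensemble-M=1} with $c\delta=\lim|I|/p$. The delicate point, and the main obstacle, is controlling the trace terms $\tr[\bV_{\mu,K}]$ and $\tr[\bA_{\mu,K}]$: unlike the unregularized case of \cite{bellec2024asymptotics}, they do not reduce to a product of $\|\hat\bh_{\mu,K}\|$ and $\|\bpsi_{\mu,K}\|$, and the operator-norm bound on $\bA_{\mu,K}$ degenerates as $\mu\to0$. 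To handle this I would represent (a normalization of) the relevant trace as the stationary point of an auxiliary \emph{random strongly convex} function built from $\bG$, $\bD_{\mu,K}$ and the regularizer; strong convexity makes this stationary point stable under the $\op(1)$ perturbations from Steps~1–2, so it converges to the stationary point of the limiting (again strongly convex) function, which by construction is the corresponding entry of the \Cref{sys:general_ensemble-M=1} solution.

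\emph{Step 3 (conclude).} Since $\|\hat\bh_{\mu,K}\|^2\pto\alpha^2$ and $\|\bpsi_{\mu,K}\|^2/p\pto\beta^2$ are known, feeding these into the approximate system of Step~2 and invoking uniqueness of the positive solution $(\alpha,\beta,\kappa,\nu)$ to \Cref{sys:general_ensemble-M=1} (\Cref{asm:regularity-conditions}-(3)) leaves $\nu$ and $\kappa$ as the only possible limits of $\tr[\bV_{\mu,K}]/p$ and $\tr[\bA_{\mu,K}]$. A subsequence argument — every subsequence of these bounded quantities has a further subsequence along which they converge, necessarily to $(\nu,\kappa)$ — then gives convergence in probability, completing the proof.
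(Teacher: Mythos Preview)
Your Step~2 correctly gestures at the paper's key device---pinning down $\kappa$ as the unique zero of a strictly monotone function built from $\tau\mapsto\E[\env_{\loss}'(Z+\alpha G;\tau)^2]$---but Step~1 contains a gap that blocks the whole argument. You propose to concentrate $\tr[\bA_{\mu,K}]$ and $\tr[\bV_{\mu,K}]/p$ directly by Gaussian Poincar\'e, citing \eqref{eq:derivative_formula}. That formula, however, supplies only $\partial\hat\bh/\partial g_{ij}$ and $\partial\bpsi/\partial g_{ij}$, not $\partial\bA/\partial g_{ij}$ or $\partial\bV/\partial g_{ij}$. Differentiating the traces would require differentiating $\bD=\diag[\loss''(\cdot)]$ and the penalty Hessian entering $\bA$, i.e.\ a third derivative of $\loss$ and second derivatives of $\reg$, neither of which is assumed; and even when those exist the resulting bounds pick up extra factors of $(p\mu)^{-1}$ that the budget $\mu^{-1}=O(n^{1/8})$ does not absorb. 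So the ``Lipschitz constant via \eqref{eq:derivative_formula}'' route is simply not available for the two trace functionals on your list.

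The paper's route is indirect: it first obtains \emph{algebraic representations} of the traces in terms of quantities whose derivatives \emph{are} controlled by \eqref{eq:derivative_formula}. A SURE-type moment identity (\Cref{lm:sure}) applied to $\bpsi^\top\bG\hat\bh$, after expanding $\sum_{ij}\partial_{g_{ij}}(\psi_i h_j)$ via \eqref{eq:derivative_formula}, yields $\tr[\bV]=\|\hat\bh\|^{-2}\bigl(\|\bpsi\|^2\tr[\bA]-\bpsi^\top\bG\hat\bh\bigr)+O_p(n^{1/2}\mu^{-1})$; a companion identity (\Cref{lm:chi_square_general}) applied to $\|\bG^\top\bpsi\|^2$ then expresses $\tr[\bA]^2$ purely in terms of $\bpsi^\top\bG\hat\bh$, $\|\hat\bh\|^2$, $\|\bpsi\|^2$, and $\|\bG^\top\bpsi\|^2$. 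Poincar\'e now legitimately applies to these four (their gradients involve only $\partial\hat\bh,\partial\bpsi$), yielding $\tr[\bA]=\hat\kappa+o_p(1)$ for some bounded $\hat\kappa$ independent of $\bG$. Only \emph{after} this does the normal-approximation step give the proximal representation $\psi_i\approx\env_{\loss}'(z_i+\alpha\hat u_i;\hat\kappa)$, so that the known $\|\bpsi\|^2/p\to\beta^2$ reads $\beta^2=\delta\,\E[\env_{\loss}'(Z+\alpha G;\hat\kappa)^2]+o_p(1)$; the strict monotonicity you anticipate in Step~2 (a function of $\tau$ through $\loss$, not through $\bD_{\mu,K}$ or the regularizer) then forces $\hat\kappa\to\kappa$, and $\tr[\bV]/p\to\nu$ follows from its representation once the limit of $p^{-1}\bpsi^\top\bG\hat\bh$ is computed via the same proximal approximation. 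In short, the strongly-convex-function argument identifies the \emph{limit}; it does not, and cannot, replace the missing concentration step for the traces.
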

\begin{proof}
    See \Cref{subsec:proof_convergence_trace}.
\end{proof}

\subsubsection{Proof of \Cref{thm:corr-sigerror-reserror}}
\label{sec:proof-thm:corr-sigerror-reserror}

Below we will take the diminishing constant $\mu_n\to 0$  as in \Cref{lm:convergence_trace}. 
Dropping the dependence on $(\mu, K)$ for simplicity, we denote $\bh=\hat\bh_{\mu, K}$, $\bpsi=\bpsi_{\mu, K}$, and $\bV=\bV_{\mu, K}$, $\bA = \bA_{\mu, K}$. 
In the same way, we use the notation $(\tilde\bh, \tilde\bpsi, \tilde\bV, \tilde\bA)$ for the other estimator. 
Then, by the convergence in $\|\cdot\|_2$ from \Cref{lm:ridge_smoothing} and \Cref{lm:bound_h_psi}, it suffices to show the convergence of correlation for the modified ones:
$$
    \bh^\top\tilde\bh/(\alpha\tilde\alpha) \pto \etaG 
    \quad \text{and} \quad 
    \bpsi^\top\tilde\bpsi/(p\beta\tilde\beta)\pto \etaH.
$$
First, we will argue by the Gaussian Poincar\'e inequality that $\bh^\top\tilde\bh/(\alpha\tilde\alpha)$ and $\bpsi^\top\tilde\bpsi/(p\beta\tilde\beta)$ concentrate on random quantities that are independent of the scaled design matrix $\bG=\sqrt{p}\bX$. Throughout this section, we denote by $\bar\E[\cdot]=\E[\cdot|\bz, \btheta, I, \tilde I]$ the conditional expectation with respect to the design matrix $\bG$ given signal $\btheta$, noise $\bz$ and subsample index $I, \tilde I$.
Since $\bG$ is independent of $(\btheta,\bz,I,\tilde I)$, the conditional distribution of $\bG$ given $(\btheta,\bz,I,\tilde I)$ is still that of a matrix with i.i.d.\ $\cN(0,1)$ entries.
\begin{lemma}
    \label{lm:gaussian_Poincare_hpsi}
    For any $\mu_n>0$ such that $\mu_n^{-1} = o(n^{1/2})$, we have 
    \begin{align*}
        \bar\E\bigl[(\bh^\top\tilde\bh -\bar\E[\bh^\top\tilde\bh])^2\bigr] & = \Op(n^{-1}\mu^{-2}) = \op(1). \\
        \bar\E\bigl[(\bpsi^\top\tilde\bpsi -\bar\E[\bpsi^\top\tilde\bpsi])^2\bigr] &= \Op(n\mu^{-2}) = \op(n^2).
    \end{align*}
\end{lemma}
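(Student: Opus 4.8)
The plan is to apply the Gaussian Poincar\'e inequality after conditioning on $(\bz,\btheta,I,\tilde I)$. Since $\bG=\sqrt{p}\,\bX$ is independent of $(\bz,\btheta,I,\tilde I)$, its conditional law is still that of a matrix with i.i.d.\ $\cN(0,1)$ entries, and by \Cref{lm:derivative} the maps $\bG\mapsto\bh^\top\tilde\bh$ and $\bG\mapsto\bpsi^\top\tilde\bpsi$ are differentiable, the regularizers defining the two modified estimators being strongly convex; that same lemma also furnishes the \emph{deterministic} operator-norm bounds $\|\bA\|_{\oper}\vee\|\tilde\bA\|_{\oper}\le(p\mu)^{-1}$ and $\|\bD\|_{\oper}\vee\|\bV\|_{\oper}\vee\|\tilde\bD\|_{\oper}\vee\|\tilde\bV\|_{\oper}\le\|\loss'\|_{\lip}$. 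Poincar\'e then gives, for $f\in\{\bh^\top\tilde\bh,\ \bpsi^\top\tilde\bpsi\}$,
\[
\bar\E\big[(f-\bar\E[f])^2\big]\ \le\ \bar\E\Big[\sum_{i\in[n]}\sum_{j\in[p]}(\partial f/\partial g_{ij})^2\Big],
\]
so everything reduces to bounding the conditional expectation of the squared gradient.

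First I would treat $f=\bh^\top\tilde\bh$ via the product rule, $\partial(\bh^\top\tilde\bh)/\partial g_{ij}=(\partial\bh/\partial g_{ij})^\top\tilde\bh+\bh^\top(\partial\tilde\bh/\partial g_{ij})$, where the first term vanishes unless $i\in I$ and the second unless $i\in\tilde I$. Substituting $\partial\bh/\partial g_{ij}=\bA(\be_j\be_i^\top\bpsi-\bG^\top\bD\be_i\be_j^\top\bh)$ from \Cref{lm:derivative}, using $(a-b)^2\le2a^2+2b^2$, and summing over $j\in[p]$ and then over $i$ (using that $\bpsi$ and the columns of $\bD$ are supported on $I$) yields
\[
\sum_{i,j}\big((\partial\bh/\partial g_{ij})^\top\tilde\bh\big)^2\ \le\ 2\|\bA\|_{\oper}^2\,\|\tilde\bh\|^2\big(\|\bpsi\|^2+\|\bD\|_{\oper}^2\,\|\bG\|_{\oper}^2\,\|\bh\|^2\big),
\]
with the $\tilde\bh$-contribution bounded identically after swapping tildes. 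Plugging in $\|\bA\|_{\oper}\le(p\mu)^{-1}$, $\|\bD\|_{\oper}\le\|\loss'\|_{\lip}$, together with $\|\bh\|^2\vee\|\tilde\bh\|^2=\Op(1)$ and $n^{-1}(\|\bpsi\|^2\vee\|\tilde\bpsi\|^2)=\Op(1)$ from \Cref{lm:bound_h_psi} and $n^{-1}\|\bG\|_{\oper}^2=\Op(1)$, one gets $\sum_{i,j}(\partial(\bh^\top\tilde\bh)/\partial g_{ij})^2=\Op(np^{-2}\mu^{-2})=\Op(n^{-1}\mu^{-2})$ since $n/p\to\delta\in(0,\infty)$. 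The same expansion for $f=\bpsi^\top\tilde\bpsi$, now with $\partial\bpsi/\partial g_{ij}=-\bD\bG\bA\be_j\be_i^\top\bpsi-\bV\be_i\be_j^\top\bh$, gives
\[
\sum_{i,j}\big((\partial\bpsi/\partial g_{ij})^\top\tilde\bpsi\big)^2\ \le\ 2\|\bpsi\|^2\|\bA\|_{\oper}^2\|\bG\|_{\oper}^2\|\bD\|_{\oper}^2\|\tilde\bpsi\|^2+2\|\bh\|^2\|\bV\|_{\oper}^2\|\tilde\bpsi\|^2,
\]
whose first term is $\Op(n^3p^{-2}\mu^{-2})=\Op(n\mu^{-2})$ and whose second term is $\Op(n)$, so that the total is $\Op(n\mu^{-2})$ since $\mu_n\to0$; the symmetric $\tilde\bpsi$-contribution carries the same rate. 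Combining with the Poincar\'e bound gives $\bar\E[(\bh^\top\tilde\bh-\bar\E[\bh^\top\tilde\bh])^2]=\Op(n^{-1}\mu^{-2})$ and $\bar\E[(\bpsi^\top\tilde\bpsi-\bar\E[\bpsi^\top\tilde\bpsi])^2]=\Op(n\mu^{-2})$, and the $\op(1)$ and $\op(n^2)$ forms follow from $\mu_n^{-1}=o(n^{1/2})$.

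The step I expect to require the most care is not the differentiation or the operator-norm bookkeeping — which is purely mechanical given \Cref{lm:derivative} — but the fact that Poincar\'e controls a \emph{conditional} variance by a \emph{conditional} expectation $\bar\E[\|\nabla f\|^2]$, so the $\Op$ norm estimates above must be upgraded to estimates valid under $\bar\E[\cdot]$. This is precisely the purpose of the moment part of \Cref{lm:bound_h_psi}(2): it bounds $\|\hat\bh_{\mu,K}\|^2$ and $\|\tilde\bh\|^2$ by a deterministic constant, bounds $\|\bpsi_{\mu,K}\|^2$ and $\|\tilde\bpsi\|^2$ by a deterministic constant times $1+\|\loss'(\bm\eps)\|^2+\|\bG\|_{\oper}^2$, and — crucially — gives $\sup_{m\ge1}\bar\E[\|\hat\bh_{\mu,K}\|^{2m}\vee(n^{-1}\|\bpsi_{\mu,K}\|^2)^m\mid\bz,\btheta]^{1/m}\le C^*$ on an event of probability tending to one, with $C^*$ not depending on $\mu$. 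Combined with the analogous bound for the $\tilde I$-estimator and the standard Gaussian moment bound $\bar\E[\|\bG\|_{\oper}^{2m}]=O(n^m)$ (legitimate since $\bG$ is independent of the conditioning variables), H\"older's inequality converts every product of random norms appearing in the two displays into the announced deterministic orders on that event, which yields the $\Op$ conclusions and completes the argument.
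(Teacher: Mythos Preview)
Your proposal is correct and follows essentially the same approach as the paper: apply the Gaussian Poincar\'e inequality conditionally on $(\bz,\btheta,I,\tilde I)$, expand the gradient via the product rule and the derivative formula from \Cref{lm:derivative}, bound the resulting sums using $\|\bA\|_{\oper}\le(p\mu)^{-1}$ together with the operator-norm bounds on $\bD,\bV$, and then invoke the moment bounds of \Cref{lm:bound_h_psi}(2) to pass from pointwise $\Op$ estimates to bounds under $\bar\E$. Your final paragraph making explicit why the $\Op$ bounds are valid for the \emph{conditional} expectation is, if anything, more careful than the paper's presentation, which simply cites ``the moment bound in \Cref{lm:bound_h_psi}-(2)'' at the corresponding step.
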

\begin{proof}
By the Gaussian Poincar\'e inequality, noting $\partial_{ij} (\bh^\top\tilde\bh) = \tilde\bh^\top \partial_{ij}\bh + \bh^\top\partial_{ij} \tilde \bh$ with $\partial_{ij} = \frac{\partial}{\partial g_{ij}}$, the conditional variance of $\bh^\top\tilde\bh$ is bounded from above as 
\begin{align*}
    \bar\E\Bigl[(\bh^\top\tilde\bh-\bar\E[\bh^\top\tilde\bh])^2\Bigr] \le \sum_{ij} \bar\E\Bigl[(\partial_{ij} (\bh^\top\tilde\bh))^2\Bigr] \le 2 \sum_{ij} \bar\E\Bigl[(\tilde\bh^\top \partial_{ij} \bh)^2\Bigr]  + 2\sum_{ij} \bar\E\Bigl[(\bh^\top \partial_{ij} \tilde\bh)^2\Bigr],
\end{align*} 
where we denote by $\sum_{ij}=\sum_{i=1}^n\sum_{j=1}^p$ for brevity.
By the derivative formula \eqref{eq:derivative_formula}, the first term on the RHS is bounded as
\begin{align*}
\sum_{ij} \Bigl(\tilde\bh^\top \partial_{ij} \bh\Bigr)^2  &= \sum_{i, j} \Bigl(\tilde\bh^\top \bA(\be_j \psi_i - \bG^\top\bD\be_i h_j)\Bigr)^2\\
    &\le 2 \|\bA\|_{\oper}^2 \|\tilde \bh\|^2 \|\bpsi\|^2 + 2 \|\bA\|_{\oper}^2 \|\bG\|_{\oper}^2 \|\bD\|_{\oper}^2 \|\tilde\bh\|^2 \|\bh\|^2. 
\end{align*}
Using the upper bound $\|\bA\|_{\oper}\le (p\mu_n)^{-1}$ from \eqref{bound-A} and the moment bound in \Cref{lm:bound_h_psi}-(2), the conditional expectation (with respect to $\bar\E$) of the RHS is $\Op(n^{-1} \mu_n^{-2})$.
By symmetry, we also get  $\bar\E[\sum_{ij} \bar\E[(\bh^\top \partial_{ij} \tilde\bh)^2]] = \Op(n^{-1}\mu_n^{-2})$. 

We use a similar argument for the variance of  $\bpsi^\top\tilde\bpsi$. 
The 
Gaussian Poincar\'e gives 
$\bar\E[(\bpsi^\top\tilde\bpsi-\bar\E[\bpsi^\top\tilde\bpsi])^2]\le 2 \sum_{ij} \bar\E[(\tilde\bpsi^\top \partial_{ij} \bpsi)^2 + (\bpsi^\top \partial_{ij}\tilde \bpsi)^2]$, 
where
\begin{align*}
\sum_{ij} \Bigl(\tilde\bpsi^\top \partial_{ij} \bpsi\Bigr)^2 &= \sum_{ij} \Bigl(\tilde\bpsi^\top (- \bD\bG\bA \be_j\psi_i - \bV \be_i\bw_j) \Bigr)^2\\
    &\lesssim \|\bA^\top\bG^\top\bD^\top\|_{\oper}^2 \|\tilde\bpsi\|^2\|\bpsi\|^2 + \|\bV\|_{\oper}^2 \|\tilde\bpsi\|^2 \|\bh\|^2.
\end{align*}
by the derivative formula. 
Using $\|\bA\|_{\oper} \le (p\mu_n)^{-1}$
and the moment bound in \Cref{lm:bound_h_psi}-(2), the conditional expectation $\bar\E[\cdot]$ of the RHS is $\Op(n \mu_n^{-2} + n) = \Op(n\mu_n^{-2})$. 
\end{proof}

Let us define $\hetaG$ and $\hetaH$ as the ``truncated'' values of the conditional expectation of $\bh^\top\tilde\bh$ and $\bpsi^\top\tilde\bpsi$, respectively:  
\begin{align*}
\hetaG := \Pi_{[-1,1]}\bigl({\bar\E[\bh^\top\tilde\bh]}/({\alpha\tilde\alpha})\bigr), \quad \hetaH := \Pi_{[-1,1]}\bigl({\bar\E[\bpsi^\top\tilde\bpsi]}/({p\beta\tilde\beta})\bigr) 
\end{align*}
where $\Pi_{[-1,1]}(x) := \max(\min(x, 1), -1)$ is the projection map onto $[-1,1]$. Here, we emphasize that $(\hetaH, \hetaG)$ is independent of the design matrix $\bG$ since $\bG$ is integrated out. Furthermore, the absolute values of $\hat\etaG$ and $\hat\etaH$ are less than $1$ due to the truncation, and hence the two  matrices 
$$\begin{pmatrix}
    1 & \hetaH\\
    \hetaH & 1
    \end{pmatrix} \text{ and } \begin{pmatrix}
    1 & \hetaG\\
    \hetaG & 1
\end{pmatrix}$$
are both positive semidefinite.
By the concentration from \Cref{lm:gaussian_Poincare_hpsi} and the convergence $\|\bh\|^2\pto \alpha^2>0, \|\bpsi\|^2/p\pto \beta^2>0$, noting the truncation $x\mapsto \Pi_{[-1,1]}(x)$ is continuous, we find that the random variables $(\hetaH, \hetaG)$ defined above still capture the correlations, that is,
\begin{align*}
\hetaH &= \Pi_{[-1,1]} (\frac{\bpsi^\top\tilde\bpsi}{\|\bpsi\|\|\tilde\bpsi\|}) + \op(1) = \frac{\bpsi^\top\tilde\bpsi}{\|\bpsi\|\|\tilde\bpsi\|} + \op(1).\\
\hetaG &= \Pi_{[-1,1]} (\frac{\bh^\top\tilde\bh}{\|\bh\|\|\tilde\bh\|}) + \op(1) = \frac{\bh^\top\tilde\bh}{\|\bh\|\|\tilde\bh\|} + \op(1). 
\end{align*}
where the second equation follows from the fact that the correlations are less than $1$ in absolute values by the Cauchy--Schwarz inequality.

Now, we use the multivariate normal approximation below to {motivate} \Cref{sys:general_ensemble-M=infty}. 
\begin{lemma}
    [Proposition 5.1 in \cite{bellec2024asymptotics}]
    \label{lm:approx_multi_normal}
    Let $\bz\sim \cN(\bm{0}_q, \bI_q)$ and let $\bF:\R^q\to\R^{q\times M}$ be a locally Lipschitz function with $M\le q$. 
    Then there exists a standard normal vector $\bw\sim \cN(\bm{0}_M, \bI_M)$ such that
    $$
    \E\Bigl[\bigl\|\bF(\bz)^\top \bz - \sum_{l \in [q]}  \frac{\partial \bF(\bz)^\top \be_l}{\partial z_l} - \Bigl\{\bF(\bz)^\top \bF(\bz)\Bigr\}^{1/2} \bw
    \bigr\|^2\Bigr]\le \C \sum_{l \in [q]} \E\Bigl[\bigl\| \frac{\partial \bF(z)}{\partial z_l}\bigr\|_F^2\Bigr],
    $$
    where $\{\cdot\}^{1/2}$ is the square root of the positive semidefinite matrix. 
\end{lemma}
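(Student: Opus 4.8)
\emph{Proof proposal.} This is a quantitative multivariate central limit theorem: the vector $\bW:=\bF(\bz)^\top\bz-\sum_{l\in[q]}\frac{\partial}{\partial z_l}(\bF(\bz)^\top\be_l)$ is the $\R^M$-valued divergence (Skorokhod integral) of the integrand $\bF(\bz)$, which by Stein's identity is centered, and the claim is that in a transportation sense it is close to a centered Gaussian with the \emph{random} covariance $\bF(\bz)^\top\bF(\bz)$. The only delicate point is that $\bw$ must be \emph{exactly} $\cN(\bm{0}_M,\bI_M)$: if one only wanted $\bw$ approximately standard normal, one could take $\bw=\{\bF(\bz)^\top\bF(\bz)\}^{+/2}\bW$ padded by an independent standard normal on the orthogonal complement of the column span of $\bF(\bz)^\top$, since $\bF(\bz)^\top\bz$ already lies in that span. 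As a first reduction, I would convolve $\bF$ with a Gaussian mollifier and use local Lipschitzness of $\bF$ to assume $\bF\in C^\infty$ with bounded Jacobian, the mollification perturbing $\bW$, $\bF(\bz)^\top\bF(\bz)$, and $\sum_l\E\|\partial_l\bF(\bz)\|_F^2$ only infinitesimally in $L^2$.

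The main step is a dynamical representation. Realize $\bz=\bB_1$ for a standard $q$-dimensional Brownian motion $(\bB_t)_{t\in[0,1]}$ with natural filtration $\mathcal F_t$. Then $\bW$ is the Skorokhod integral of the anticipating, time-constant integrand $s\mapsto\bF(\bB_1)$; splitting $\bF(\bB_1)=\E[\bF(\bB_1)\mid\mathcal F_s]+\big(\bF(\bB_1)-\E[\bF(\bB_1)\mid\mathcal F_s]\big)$ gives $\bW=\bN_1+\mathrm{(error)}$, where $\bN_t:=\int_0^t\E[\bF(\bB_1)\mid\mathcal F_s]^\top d\bB_s$ is an $\R^M$-valued Itô martingale with matrix bracket $\langle\bN\rangle_1=\int_0^1\E[\bF(\bB_1)\mid\mathcal F_s]^\top\E[\bF(\bB_1)\mid\mathcal F_s]\,ds$, and where the error (from the non-adapted remainder and the Skorokhod-isometry correction) has $L^2$-norm controlled, via the conditional Gaussian Poincar\'e inequality applied to the increment $\bB_1-\bB_s\sim\cN(\bm{0}_q,(1-s)\bI_q)$, by $\sum_l\E\|\partial_l\bF(\bz)\|_F^2$. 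The same computation shows that $\langle\bN\rangle_1$ equals $\bF(\bz)^\top\bF(\bz)$ up to a correction whose expectation is $-\int_0^1\E[\mathrm{Cov}(\bF(\bB_1)\mid\mathcal F_s)]\,ds$, again of order $\sum_l\E\|\partial_l\bF(\bz)\|_F^2$.

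It remains to couple $\bN_1$ with $\{\bF(\bz)^\top\bF(\bz)\}^{1/2}\bw$. Running the multivariate Dambis--Dubins--Schwarz time change on $\bN$, represent $\bN_1=\int_0^1\bSigma_s^{1/2}\,d\bX_s$ for a genuine standard $M$-dimensional Brownian motion $(\bX_t)$, with $\bSigma_s$ the instantaneous bracket rate; then $\bw:=\bX_1$ is \emph{exactly} $\cN(\bm{0}_M,\bI_M)$, and $\{\bF(\bz)^\top\bF(\bz)\}^{1/2}\bw=\int_0^1\{\bF(\bz)^\top\bF(\bz)\}^{1/2}d\bX_s$, so the Itô isometry gives $\E\|\bN_1-\{\bF(\bz)^\top\bF(\bz)\}^{1/2}\bw\|^2=\E\int_0^1\|\bSigma_s^{1/2}-\{\bF(\bz)^\top\bF(\bz)\}^{1/2}\|_F^2\,ds$, which is bounded by the conditional Poincar\'e estimates of the previous step together with operator-Lipschitzness of the matrix square root. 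Combining the two steps and passing to the mollification limit yields $\E\|\bW-\{\bF(\bz)^\top\bF(\bz)\}^{1/2}\bw\|^2\le\C\sum_{l\in[q]}\E\|\partial_l\bF(\bz)\|_F^2$.

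The main obstacle is the coupling step. Because $\bw$ must be exactly standard normal, one is forced to run a quantitative CLT whose target covariance $\bF(\bz)^\top\bF(\bz)$ is a nonlinear functional of $\bz$ correlated with $\bW$, and to control the coupling error by the first Jacobian of $\bF$ alone, with no spurious dimensional factor $q$ (a crude time change that evaluates $\bF$ along the Brownian path leaks a factor $\|\bB_t\|^2\asymp q$ into the bound). Setting up the Dambis--Dubins--Schwarz clock so that a genuine Brownian motion appears, handling the anticipating dependence of $\bF(\bz)^\top\bF(\bz)$ on the time-changed clock, and showing that the instantaneous bracket rate $\bSigma_s$ concentrates around $\bF(\bz)^\top\bF(\bz)$ with the stated accuracy is the technical heart; once this is in place, the specialization to $M=1$ and the removal of the smoothness assumption on $\bF$ are routine.
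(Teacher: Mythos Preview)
The paper does not prove this lemma; it is imported verbatim from \cite{bellec2024asymptotics} (stated there as Proposition~5.1) and used as a black box in the proof of \Cref{thm:corr-sigerror-reserror}. There is therefore no in-paper argument to compare your proposal against.

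Evaluated on its own terms, your proposal has a genuine gap at exactly the point you yourself flag as the ``technical heart''. After constructing the Brownian motion $\bX$ with $\bN_1=\int_0^1\bSigma_s^{1/2}\,d\bX_s$ and setting $\bw=\bX_1$, you invoke
\[
\E\bigl\|\bN_1-\{\bF(\bz)^\top\bF(\bz)\}^{1/2}\bw\bigr\|^2
=\E\int_0^1\bigl\|\bSigma_s^{1/2}-\{\bF(\bz)^\top\bF(\bz)\}^{1/2}\bigr\|_F^2\,ds,
\]
but this It\^o-isometry identity is false as written: the matrix $\{\bF(\bz)^\top\bF(\bz)\}^{1/2}$ depends on $\bz=\bB_1$ and is only $\mathcal F_1$-measurable, so the integrand on the right is anticipating with respect to the filtration of $\bX$. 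Worse, $\bw=\bX_1$ is built from the same Brownian path as $\bz=\bB_1$ and is generically correlated with $\bF(\bz)$ (the cross-bracket $\langle \bX,\bB\rangle_t=\int_0^t\bSigma_s^{-1/2}\bH_s^\top\,ds$ is nonzero), so one cannot even evaluate $\E\|\{\bF(\bz)^\top\bF(\bz)\}^{1/2}\bw\|^2$ as $\E\,\tr[\bF(\bz)^\top\bF(\bz)]$. You acknowledge the obstacle but supply no mechanism to resolve it, and the argument stalls here. A secondary issue is the appeal to ``operator-Lipschitzness of the matrix square root'': on the PSD cone the square root is only $\tfrac12$-H\"older, so even granting a valid isometry you would need a trace-norm (Powers--St{\o}rmer) bound on $\bSigma_s-\bF(\bz)^\top\bF(\bz)$ rather than a Frobenius one, and it is not clear this follows from the conditional Poincar\'e estimates you cite without picking up an extra factor of $M$.
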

For each $j\in [p]$, applying \Cref{lm:approx_multi_normal} with $\bF = \begin{bmatrix}\frac{\bpsi}{\sqrt{p}\beta} & \frac{\tilde\bpsi}{\sqrt{p}\tilde\beta}\end{bmatrix} \in \R^{n\times 2}$ and $\bz=\bG \be_j \in \R^{n}$, using the derivative formula \eqref{eq:derivative_formula}, 
we find that there exists a random vector $\bw_j\in\R^2 $ such that 
$$\bw_j \mid (\btheta, \bm\epsilon, \bG^{-j}, I, \tilde I)\deq \cN(\bm{0}_2, \bI_2)$$ and 
\begin{align*}
&\bar\E\Bigl[
\Bigl\|
\frac{1}{\sqrt{p}} \begin{pmatrix}(\bpsi^\top \bG\be_j^\top + \tr[\bV]h_j + \bpsi^\top\bD\bG\bA\be_j)/\beta\\
(\tilde\bpsi^\top \bG\be_j^\top + \tr[\tilde\bV]\tilde h_j + \tilde\bpsi^\top\tilde\bD\bG\tilde\bA\be_j)/
\tilde\beta
\end{pmatrix}
-
\begin{pmatrix}
    \|\bpsi\|^2/(p\beta^2) & \bpsi^\top\tilde\bpsi/(p\beta\tilde\beta)\\
    \bpsi^\top\tilde\bpsi/(p\beta\tilde\beta) & \|\tilde\bpsi\|^2/(p \tilde\beta^2)
\end{pmatrix}^{1/2} \bw_j
\Bigr\|^2 
\Bigr] \\
&\le \C \sum_{i \in [n]} \bar\E\Bigl[\frac{1}{p \beta^2}\|\frac{\partial \bpsi}{\partial g_{ij}}\|^2 + \frac{1}{p \tilde\beta^2}\|\frac{\partial \tilde\bpsi}{\partial g_{ij}}\|^2\Bigr].    
\end{align*}
Using %
$\|\bA\|_{\oper}\le (p \mu_n)^{-1}$ from \eqref{bound-A} and 
\Cref{lm:bound_h_psi}, we have 
\begin{align*}
    \bar\E\Bigl[\|\bpsi^\top\bD\bG\bA\|^2\Bigr] &= \Op(\mu^{-2}), \\
    \sum_{ij} \bar\E\Bigl[\|\frac{\partial\bpsi}{\partial g_{ij}}\|^2\Bigr] \le \bar \E\Bigl[2 \|\bD\bG\bA\|_{\frob}^2 \|\bpsi\|^2 + 2\|\bV\|_{\frob}^2\|\bh\|^2\Bigr] &= \Op(n \mu^{-2}). 
\end{align*}
Thus, summing over $j\in [p]$ in the previous display, noting $\mu^{-2} = o(n)$, we are left with 
$$
\sum_{j \in [p]} 
\Bigl\|
\frac{1}{\sqrt{p}} \begin{pmatrix}(\bpsi^\top \bG\be_j^\top + \tr[\bV]h_j)/\beta\\
(\tilde\bpsi^\top \bG\be_j^\top + \tr[\tilde\bV]h_j)/\tilde\beta
\end{pmatrix}
-
\begin{pmatrix}
    \|\bpsi\|^2/(p \beta^2) & \bpsi^\top\tilde\bpsi/(p \beta \tilde\beta)\\
    \bpsi^\top\tilde\bpsi/(p \beta\tilde\beta) & \|\tilde\bpsi\|^2 /(p \tilde\beta^2)
\end{pmatrix}^{1/2} \bw_j
\Bigr\|^2 = \op(n).
$$
By the $(1/2)$-H\"{o}lder continuity for the matrix square root: $\|\bM^{1/2}-\bN^{1/2}\|_{\oper} \le \|\bM-\bN\|_{\oper}^{1/2}$ for positive semidefinite matrices $\bM, \bN$, combined with the convergence $\|\bpsi\|^2/p\to\beta^2$, $\|\tilde \bpsi\|^2/p\to\tilde\beta^2$ and the concentration $\hetaH = \bpsi^\top\tilde\bpsi/(p \beta\tilde\beta) + \op(1)$, we have
\begin{align*}
\Bigl\|\begin{pmatrix}
    |\|\bpsi\|^2/(p\beta^2) & \bpsi^\top\tilde\bpsi/(p\beta \tilde\beta) \\
    \bpsi^\top\tilde\bpsi/(p \beta\tilde\beta) & \|\tilde\bpsi\|^2 /(p \tilde\beta^2)
\end{pmatrix}^{1/2}  - \begin{pmatrix}
   1 & \hetaH\\
   \hetaH & 1
\end{pmatrix}^{1/2}\Bigr\|_{\oper} = \op(1).
\end{align*}
Combined with the previous display, we get 
\begin{align*}
    &\sum_{j \in [p]} \Bigl\|\frac{1}{\sqrt{p}}
\begin{pmatrix}(\bpsi^\top \bG\be_j^\top + \tr[\bV] h_j)/\beta\\
(\tilde\bpsi^\top \bG\be_j^\top + \tr[\tilde\bV] \tilde{h}_j)/\tilde\beta
\end{pmatrix}
- \begin{pmatrix}
    1&\hetaH\\
   \hetaH &1
\end{pmatrix}^{1/2}
\bw_j
\Bigr\|^2 =  \op(n) + \op(1) \sum_{j \in [p]} \|\bw_j\|^2 
= \op(n),
\end{align*}
where the last equation follows from 
$\sum_{j\in [p]} \bar\E[\|\bw_j\|^2] = 2p$, which follows from the fact that the marginal distribution of $\bw_j$ given $(\bz, \btheta, I, \tilde I)$ is $\cN(0, 1)$ (note, however, that we do not establish or take for granted
that $(\bw_j)_{j\in[p]}$ are i.i.d.).
Using $\tr[\bV]/p\pto \nu$ (\Cref{lm:convergence_trace}) and $\|\bh\|^2=\Op(1)$ from \Cref{lm:bound_h_psi}, we can 
 also replace $\tr[\bV]h_j$ by $p \kappa h_j$. As a consequence, we get 
\begin{align}
\sum_{j \in [p]} \Bigl\|
\begin{pmatrix}\frac{1}{\sqrt{p}\beta} (\bpsi^\top \bG\be_j^\top + p\nu h_j) \\
\frac{1}{\sqrt{p}\tilde\beta} (\tilde\bpsi^\top \bG\be_j^\top + p\tilde \nu \tilde h_j)
\end{pmatrix}
-  \begin{pmatrix}
   \hat w_j \\
    \tilde w_j
\end{pmatrix}
\Bigr\|^2
&= \op(n) \text{ where } \begin{pmatrix}
    \hat{w}_j \\
    \tilde{w}_j
\end{pmatrix} \coloneq \begin{pmatrix}
    1 & \hetaH\\
    \hetaH & 1
\end{pmatrix}^{1/2} \bw_j.
\label{prev-display}
\end{align}
Here, we emphasize that the conditional distribution of $(\hat{w}_j, \tilde{w}_j)$ is given by
$$
    \begin{pmatrix}
        \hat{w}_j \\
        \tilde{w}_j
    \end{pmatrix}
    \mid 
    (\bm\eps, \btheta, \bG^{-j}, I, \tilde I) \deq \cN\Bigl(\bm{0}_2, \begin{pmatrix}
        1 & \hetaH\\
        \hetaH & 1
    \end{pmatrix}\Bigr)
$$
since the conditional distribution of $\bw_j$ given $(\bz, \btheta, \bG^{-j}, I, \tilde I)$ is standard normal $\cN(0_2, I_2)$ and $\hetaH$ is $\sigma(\bz, \btheta, I, \tilde I)$-measurable. For each $j\in [p]$, let us define $\Xi_j$ and $\tilde\Xi_j$ as 
\begin{align*}
    \Xi_j = 
    \frac{\bpsi^\top\bG \be_j^\top + p\nu h_j}{\sqrt{p}\beta} - \hat{w}_j , \quad \tilde\Xi_j = 
    \frac{\tilde{\bpsi}^\top\bG \be_j^\top + p\tilde\nu \tilde{h}_j}{\sqrt{p}\tilde\beta} - \tilde{w}_j
\end{align*}
so that the bound \eqref{prev-display} reads $\sum_{j \in [p]} [\Xi_j^2 + \tilde\Xi_j^2] = \op(n)$. By the KKT conditions  $\bG^\top\bpsi \in \sqrt{p}\partial \reg(\sqrt{p}\bh +  \btheta)$ and  $\bG^\top\tilde\bpsi \in \sqrt{p}\partial \tilde \reg(\sqrt{p}\tilde\bh +  \btheta)$ for $\bh$ and $\tilde\bh$, respectively, $\sqrt{p}h_j$ and $\sqrt{p}\tilde{h}_j$ can be written as 
$$
    \begin{pmatrix}
        \sqrt{p} h_j\\
        \sqrt{p} \tilde{h}_j
    \end{pmatrix} = \begin{pmatrix}
        \prox_{\reg}( \theta_j + \frac{\beta}{\nu}(\hat w_j + \Xi_j) ; \nu^{-1}) - \theta_j\\
        \prox_{\tilde \reg}( \theta_j +\frac{\tilde\beta}{\tilde\nu} (\tilde{w}_j +   \tilde \Xi_j) ; \tilde{\nu}^{-1}) - \theta_j\\
    \end{pmatrix}.
$$
Now we claim that the bound $\sum_{j \in [p]} \Xi_j^2 = \op(n)$ also holds in the conditional expectation $\bar\E$, i.e., $\bar\E[\sum_{j \in [p]} \Xi_j^2] = \op(n)$. 
To this end, it suffices to show $\bar\E[(p^{-1}\sum_{j \in [p]} \Xi_j^2)^2] \le C$ with high probability for a constant $C$. Using the upper estimate
\begin{align*}
0 \le \frac{1}{p}\sum_{j \in [p]} \Xi_j^2 &\le 3 \Bigl(\frac{\|\bG^\top\bpsi\|^2}{\beta^2p^2} +  \frac{\nu^2}{\beta^2} \|\bh\|^2 + \frac{1}{p} \sum_{j \in [p]} (\hat{w}_j)^2 \Bigr)
\end{align*}
and the moment bound form \Cref{lm:bound_h_psi}, we get
$$
    \bar\E\Bigl[\Bigl(\frac{1}p \sum_{j \in [p]} \Xi_j^2\Bigr)^2\Bigr] \le \C \Bigl(1 + \bar\E\Bigl[\Bigl(\frac{1}{p}\sum_{j \in [p]} (\hat{w}_j)^2\Bigr)^2\Bigr]\Bigr)
$$
with high probability. 
For the second term, expanding the square of the summation,
\begin{align*}
    \bar\E\Bigl[\Bigl(p^{-1}\sum_{j \in [p]} (\hat{w}_j)^2\Bigr)^2\Bigr] &= \frac{1}{p^2}\sum_{i, j} \bar\E [(\hat{w}_i)^2 (\hat{w}_j)^2] \le \frac{1}{p^2}\sum_{i, j} \sqrt{\bar\E [(\hat{w}_i)^4]} \sqrt{\bar\E[ (\hat{w}_j)^4]}=6.
\end{align*}
where we have used $\bar\E[ (\hat{w}_j)^4] = 6$ for all $j$, which follows from the fact that the marginal law of $\hat{w}_j$ is $\cN(0,1)$. 
This gives $\bar\E[(\frac{1}p \sum_{j \in [p]} \Xi_j^2)^2]\le C$ with high probability for a constant $C$, and hence we get the estimate $\bar\E[\frac{1}p \sum_{j \in [p]} \Xi_j^2] = \op(1)$. The same argument yields $\bar\E[\frac{1}p \sum_{j \in [p]} \tilde \Xi_j^2] = \op(1)$.

Combined with the proximal representation of $(\sqrt{p} h_j, \sqrt{p}\tilde h_j)$ using $(\Xi_j, \tilde\Xi_j)$, since $\prox_{f}(\cdot)$ is $1$-Lipschitz for any convex function $f$, the upper bounds of $\bar\E[\frac{1}p \sum_{j \in [p]} \Xi_j^2]$ and $\bar\E[\frac{1}p \sum_{j \in [p]} \tilde \Xi_j^2]$ yield the following simple proximal approximation of $(\sqrt{p} h_j, \sqrt{p}\tilde h_j)$:
$$
    \frac{1}{p} \sum_{j \in [p]} \bar\E \Bigl[\Bigl\|
    \begin{pmatrix}
    \sqrt{p} h_j\\
    \sqrt{p} \tilde{h}_j
    \end{pmatrix}-
    \begin{pmatrix}
        \prox_{\reg}(\theta_j + (\beta/\nu) \hat{w}_j; \nu^{-1}) - \theta_j\\
        \prox_{\tilde \reg}(\theta_j + (\tilde\beta/\tilde \nu) \tilde{w}_j; \tilde\nu^{-1}) - \theta_j
    \end{pmatrix}
    \Bigr\|^2 \Bigr] = \op(1).
$$
Noting $\bar\E[\|\bh\|^2]=\Op(1)$, this lets us approximate
$\bar\E[\bh^\top\tilde\bh]/\alpha\tilde\alpha$ by the inner product of proximal operators:
\begin{align*}
\frac{\bar\E[\bh^\top\tilde\bh]}{\alpha\tilde\alpha}
+\op(1)
&= \frac{1}{p}\sum_{j \in [p]} \frac{1}{\alpha\tilde\alpha} \bar\E\Bigl[\Bigl(\prox_{\reg}(\theta_j + (\beta/\nu) \hat{w}_j; \nu^{-1}) -\theta_j\Bigr) \Bigl(\prox_{\tilde \reg} (\theta_j + (\tilde\beta/\tilde\nu) \tilde{w}_j; \tilde \nu^{-1}) - \theta_j\Bigr)\Bigr]
\\&=
    \frac{1}{p}\sum_{j \in [p]} {F}_\reg(\hetaH; \theta_j),
\end{align*}
where for the second inequality, we used the fact that the marginal law of $(\hat w_j, \tilde w_j)$ given $(\btheta, \bz, I,\tilde I)$ is jointly Gaussian, with zero mean, unit variance, and correlation $\hetaH$, and 
where $F_\reg (\cdot; \theta_j):[-1,1]\to \R$ is the function defined by
\begin{align*}
   F_\reg(\eta; \theta_j) &= 
   \iint
 \frac{\bigl( \prox_{\reg}(\theta_j + \frac{\beta}{\nu} x; \frac{1}{\nu}) -\theta_j\bigr) \bigl(\prox_{\tilde\reg} (\theta_j + \frac{\tilde\beta}{\tilde\nu} (\eta x + \sqrt{1-\eta^2} y); \frac{1}{\tilde \nu}) - \theta_j\bigr)}{\alpha\tilde\alpha}
 \varphi(x) \varphi(y) \, \mathrm{d}x \, \mathrm{d}y
\end{align*}
with $\varphi$ being the standard normal probability function and $\iint
=\int_{-\infty}^\infty \int_{-\infty}^\infty$.
Notice that for each $\eta\in [-1,1]$, the sequence $(F_\reg (\eta; \theta_j))_{j=1}^p$ are i.i.d.\ random variables with mean $\E[F_\reg(\eta;\theta_j)] = F_\reg(\eta)$. Furthermore, by Jensen's inequality and the Cauchy--Schwarz inequality, the expectation of the absolute value is finite:
\begin{align*}
\E\Bigl[|F_\reg(\eta; \theta_j)|\Bigr] & \le \frac{1}{\alpha\tilde\alpha} \E\Bigl[|\bigl(\prox_\reg(\Theta + \tfrac{\beta}{\nu} H; \tfrac1\nu)-\Theta\bigr)\cdot \bigl(\prox_{\tilde \reg}(\Theta + \tfrac{\tilde \beta}{\tilde \nu} \tilde H; \tfrac1{\tilde{\nu}})-\Theta\bigr) |\Bigr]\\
&\le \frac{1}{\alpha\tilde \alpha} \E\Bigl[\bigl(\prox_\reg(\Theta + \tfrac{\beta}{\nu} H; \tfrac1\nu)-\Theta\bigr)^2\Bigr]^{1/2} \cdot \E\Bigl[\bigl(\prox_{\tilde \reg}(\Theta + \tfrac{\tilde \beta}{\tilde \nu} \tilde H; \tfrac1{\tilde{\nu}})-\Theta\bigr)^2\Bigr]^{1/2} \\
&= 1 \qquad (\text{by \eqref{eq:CGMT-1a} in  \Cref{sys:general_ensemble-M=1}}).
\end{align*}
Thus, the weak law of large numbers gives $p^{-1} \sum_{j\in[p]}  F_\reg (\eta; \theta_j)\pto F_\reg(\eta)$ for each $\eta\in [-1,1]$. Next, we claim that this convergence holds uniformly over $\eta\in [-1,1]$. 
\begin{lemma}\label{lm:uniform_convergence_monotone}
    Let $I$ be a bounded and closed interval of $\RR$ and 
   let $(f_n)_{n\ge 1} : I \to \RR$ be a sequence of non-decreasing functions such that $f_n(x) \pto f(x)$ pointwise for some function $f : I \to \RR$. 
   If $f$ is non-decreasing and continuous, then the uniform convergence $\sup_{x\in I} |f_n(x) - f(x)|\pto 0$ holds. 
\end{lemma}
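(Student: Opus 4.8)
The plan is to run the classical P\'olya-type argument: reduce uniform control on $I$ to control at finitely many points by exploiting the monotonicity of each $f_n$, and use the uniform continuity of the continuous limit $f$ on the compact interval $I$ to make the discretization error small. The one structural feature to keep in mind is that this reduction yields a \emph{deterministic} (sample-wise) inequality; convergence in probability then drops out because a maximum of finitely many quantities, each $\op(1)$, is itself $\op(1)$.

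In detail, I would first fix $\epsilon > 0$ and, using that $f$ is uniformly continuous on the compact $I = [a,b]$, choose a finite grid $a = x_0 < x_1 < \cdots < x_N = b$ with $f(x_{i+1}) - f(x_i) \le \epsilon$ for every $i$. Then, for $x \in [x_i, x_{i+1}]$, I would sandwich $f_n$ and $f$ using their monotonicity, $f_n(x_i) \le f_n(x) \le f_n(x_{i+1})$ and $f(x_i) \le f(x) \le f(x_{i+1})$, to obtain
$$ f_n(x) - f(x) \le \big(f(x_{i+1}) - f(x_i)\big) + |f_n(x_{i+1}) - f(x_{i+1})| \le \epsilon + \max_{0 \le j \le N} |f_n(x_j) - f(x_j)|, $$
and symmetrically the matching lower bound $f_n(x) - f(x) \ge -\epsilon - \max_{0 \le j \le N} |f_n(x_j) - f(x_j)|$, so that after taking $\sup_{x \in I}$ one gets the sample-wise estimate $\sup_{x \in I}|f_n(x) - f(x)| \le \epsilon + \max_{0 \le j \le N}|f_n(x_j) - f(x_j)|$.

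To conclude, I would invoke the hypothesis $f_n(x_j) \pto f(x_j)$ at each of the finitely many grid points, which gives $\max_{0 \le j \le N}|f_n(x_j) - f(x_j)| \pto 0$ and hence $\PP\big(\sup_{x \in I}|f_n(x) - f(x)| > 2\epsilon\big) \to 0$; letting $\epsilon \downarrow 0$ along a countable sequence then yields $\sup_{x \in I}|f_n(x) - f(x)| \pto 0$. There is no genuine obstacle here: the argument is routine, and the only two points deserving care are (i) choosing the grid to control the increments of the \emph{limit} $f$ rather than of $f_n$ (this is exactly where continuity, hence uniform continuity on the compact $I$, is used), and (ii) the elementary passage from the deterministic bound to the in-probability statement via the finite maximum.
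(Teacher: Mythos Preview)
Your proof is correct and follows essentially the same P\'olya/Glivenko--Cantelli argument as the paper: discretize $I$ using uniform continuity of $f$, sandwich $f_n(x)-f(x)$ via the monotonicity of $f_n$ (and of $f$), and reduce to pointwise control at the finitely many grid points. The only cosmetic difference is that the paper takes an equally spaced grid fine enough to force $|f(x_{i+1})-f(x_i)|\le \epsilon/2$, whereas you choose the grid directly so that $f(x_{i+1})-f(x_i)\le\epsilon$; the subsequent deterministic sandwich and passage to convergence in probability are identical in spirit.
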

Note that \Cref{lm:uniform_convergence_monotone} is a probabilistic analogue of a similar statement for deterministic functions: if a sequence of real-valued monotone functions (on $\RR$) converges pointwise to a continuous function on a compact set $I \subset \RR$, then the convergence is uniform on the set $I$.
The probabilistic version is known but in a more general setting, so to keep the treatment self-contained we give a basic proof below which is similar to proof of the Glivenko-Cantelli theorem (cf. \cite[Theorem 19.1]{van2000asymptotic}). 
\begin{proof}
    Let us write $I=[a, b]$.
    Since $f$ is continuous and $I$ is compact, $f$ is uniformly continuous on $I$.
    For any $\epsilon>0$, there exists some $\delta_\epsilon>0$ such that $|f(x)-f(y)|<\epsilon/2$ for all $x, y\in I$ such that $|x-y|\le \delta_\epsilon$. Now for sufficiently large integer $k=k_\epsilon\in\mathbb{N}$ such that $(b-a)/k < \delta_\epsilon$, let us take equally spaced grids $(x_i)_{i=0}^k$ over $[a, b]$ such that $a = x_0 < x_1 <\cdots < x_k = b$ and $(x_i-x_{i-1}) = (b-a)/k$ for all $i\in\{0, \dots k\}$. Let $\Omega_\epsilon = \cap_{i=0}^k \{|f_n(x_i)-f(x_i)|\le \epsilon/2\}$ be the event under which $f_n$ and $f$ are sufficiently close at the finite grids. Note that this event holds with probability converging to $1$, thanks to the pointwise convergence $f_n(x_i)\pto f(x_i)$ at finitely many $x_i$.
    Then, since $f_n$ is non-decreasing while $f$ does not move more than $\epsilon/2$ in $[x_{i-1}, x_{i}]$,
    for all $i\in \{0, 1, \dots, k\}$ and for all $x\in [x_{i-1}, x_i]$ we have
    \begin{align*}
    f_n(x_{i-1})  &\le f_n(x) \le f_n(x_{i}),
    \\
    - \epsilon/2 - f(x_{i-1}) &\le -f(x) \le - f(x_{i}) + \epsilon/2.
    \end{align*}
    In the event $\Omega_\epsilon$, summing the two lines it holds that 
    $-\epsilon   \le f_n(x) - f(x) \le \epsilon$ 
    for all $i\in \{0, 1, \dots, k\}$ and for all $x\in [x_{i-1}, x_i]$.
\end{proof}
Recall that we have shown in the proof of \Cref{th:existence-uniqueness-sys:general_ensemble-M=infty} that $F_\reg$ is differentiable with a non-negative derivative. By the same argument, the map $\eta \mapsto F_\reg(\eta; \theta_j)$ is differentiable with a non-negative derivative. Thus, applying \Cref{lm:uniform_convergence_monotone} with $f_n(\cdot) = p^{-1} \sum_{j \in [p]}  F_{\reg}(\cdot; \theta_j)$, $f(\cdot) = F_\reg(\cdot)$ and $I=[-1,1]$, we get the uniform convergence:
$$
    \sup_{\eta\in [-1,1]} \Big|\frac{1}{p} \sum_{j \in [p]} F_{\reg}(\eta; \theta_j)-F_\reg(\eta)\Big|\pto 0.
$$
Combined with $\frac{\bar\E[\bh^\top\tilde\bh]}{\alpha\tilde\alpha} =  \frac{1}{p}\sum_{j \in [p]} \hat{F}_\reg(\hetaH; \theta_j) + \op(1)$ and $\hetaH\in [-1,1]$, we are left with 
$$
    \frac{\bar\E[\bh^\top\tilde\bh]}{\alpha\tilde\alpha} = \frac{1}{p}\sum_{j \in [p]} {F}_\reg(\hetaH; \theta_j) + \op(1) = F_\reg(\hetaH) + \op(1). 
$$
Recall the definition $\hetaG = \Pi_{[-1,1]}(\frac{\bar\E[\bh^\top\tilde\bh]}{\alpha\tilde\alpha})$ where $\Pi_{[-1,1]}$ is the projection onto $[-1,1]$. By the continuity of the projection map and the bound $\sup_{\eta\in [-1,1]}|F_\reg(\eta)|\le 1$ (see \Cref{th:existence-uniqueness-sys:general_ensemble-M=infty}), the above display yields
$$
    \hetaG = \Pi_{[-1,1]} (F_\reg(\hetaH)) + \op(1) =  F_\reg(\hetaH) + \op(1). 
$$

Next, let us show $\hetaH = F_\loss(\hetaG) + \op(1)$ using the same argument. 
Using \Cref{lm:approx_multi_normal} with $\bF = [\bh/\alpha \mid \tilde\bh/\tilde\alpha] \in \R^{p\times 2}$ and $\bz = \bg_i$, there exists a random vector $\bu_i\in\R^2$ with conditional distribution
$$
    \bu_i \mid \bm\eps, \btheta, \bG_{-i}, I, \tilde I \deq \cN(\bm{0}_2, \bI_2)
$$
and
\begin{multline}
    \label{previous-ineq}
\bar \E\Bigl[
    \Bigl\|
    \begin{pmatrix}
        (\bg_i^\top\bh - \tr[\bA]\psi_i + \bh^\top \bA\bG^\top \bD \be_i)/\alpha\\
        (\bg_i^\top\tilde\bh  - \tr[\tilde\bA]\tilde\psi_i + \tilde\bh^\top \tilde \bA\bG^\top \tilde\bD \be_i)/\tilde\alpha
        \end{pmatrix} - \begin{pmatrix}
        \|\bh\|^2/\alpha^2 & \bh^\top\tilde{\bh}/\alpha\tilde\alpha\\
        \tilde{\bh}^\top\bh/\alpha\tilde\alpha & \|\tilde\bh\|^2/\tilde\alpha^2
    \end{pmatrix}^{1/2}\bu_i
    \Bigr\|^2
\Bigr] \\
\le \C \sum_{j=1}^p \bar\E \frac{1}{\alpha^2} \Bigl\|\frac{\partial \bh }{\partial g_{ij}}\Bigr\|^2 + \frac{1}{\tilde\alpha^2} \Bigl\|\frac{\partial \tilde \bh}{\partial g_{ij}}\Bigr\|^2.
\end{multline}
Using %
$\|\bA\|_{\oper}\le (p\mu)^{-1}$ in \eqref{bound-A}
\Cref{lm:bound_h_psi}, noting $\mu^{-2} = o(n)$, it follows that 
\begin{align*}
    \bar\E\bigl[\|\bh \bA\bG^\top\bD\|^2\bigr]& = \Op(n^{-1}\mu^{-2}) = \op(1)\\
    \sum_{i\in[n]}\sum_{j\in[p]} \bar\E\Bigl[ \|\frac{\partial \bh}{\partial g_{ij}}\|^2\Bigr] 
\le 2 \bar\E\Bigl[\|\bA\|_{\frob}^2 \|\bpsi\|^2 + \|\bA\bG^\top\bD\|_F^2 \|\bh\|^2\Bigr] &=  \Op(\mu^{-2}) = \op(n)
\end{align*}
so that summing over $i\in [n]$ the inequality \eqref{previous-ineq}, we get 
$$
    \sum_{i \in [n]} 
    \Bigl\|
    \begin{pmatrix}
        (\bg_i^\top\bh - \tr[\bA]\psi_i)/\alpha\\
        (\bg_i^\top\tilde\bh  - \tr[\tilde\bA]\tilde\psi_i)/\tilde\alpha
    \end{pmatrix} - \begin{pmatrix}
        \|\bh\|^2/\alpha^2 & \bh^\top\tilde{\bh}/\alpha\tilde\alpha\\
        \tilde{\bh}^\top\bh/\alpha\tilde\alpha & \|\tilde\bh\|^2/\tilde\alpha^2
    \end{pmatrix}^{1/2} \bu_i
    \Bigr\|^2
    = \op(n). 
$$
Appealing to the $(1/2)$-H\"{o}lder continuity for the matrix square root again, now using the two convergences $\|\bh\|^2\to \alpha^2, \|\tilde\bh\|^2\pto \tilde\alpha^2$ and the concentration $\hetaG = \bh^\top\tilde\bh/(\alpha\tilde\alpha) +  \op(1)$, we get 
\begin{align*}
    \Bigl\|\begin{pmatrix}
    \|\bh\|^2/\alpha^2 & \bh^\top\tilde{\bh}/\alpha\tilde\alpha\\
    \tilde{\bh}^\top\bh/\alpha\tilde\alpha & \|\tilde\bh\|^2/\tilde\alpha^2
\end{pmatrix}^{1/2} 
-\begin{pmatrix}
    1 & \hetaG\\
    \hetaG & 1
\end{pmatrix}^{1/2}
\Bigr\|_{\oper} = \op(1). 
\end{align*}
Combined with the convergence $\tr[\bA]\pto\kappa, \tr[\tilde\bA] \pto \tilde\kappa$, noting that $\sum_{i \in [n]} \|\bu_i\|^2$ and $\|\bpsi\|^2 + \|\tilde\bpsi\|^2$ are both $\Op(n)$, we are left with
\begin{align*}
\frac{1}{n} \sum_{i \in [n]} \Bigl\|
    \begin{pmatrix}
        \bg_i^\top\bh - \kappa \psi_i\\
        \bg_i^\top\tilde\bh - \tilde\kappa \tilde\psi_i
    \end{pmatrix} -  \begin{pmatrix}
        \alpha \hat u_i\\
        \tilde \alpha \tilde u_i
    \end{pmatrix}
    \Bigr\|^2 = \op(1) \quad \text{where} \quad \begin{pmatrix}
    \hat{u}_i\\
    \tilde{u}_i
\end{pmatrix} = \begin{pmatrix}
    1 & \hetaG\\
    \hetaG & 1
\end{pmatrix}^{1/2} \bu_i.
\end{align*}
By the same argument that we used to bound $\Xi_j$ and $\tilde\Xi_j$, using 
\Cref{lm:bound_h_psi} and the fact that the marginal law of $\hat u_i$ (and $\tilde u_i)$ is $\cN(0,1)$, 
we can show that the conditional expectation $\bar\E$ of the square of LHS is bounded from above by a constant $C$ with high probability. Thus, the above approximation also holds in the conditional expectation $\bar\E$:
$$
    \frac{1}{n} \sum_{i \in [n]} \bar\E \Bigl[\Bigl\|
    \begin{pmatrix}
        \bg_i^\top\bh - \kappa \psi_i\\
        \bg_i^\top\tilde\bh - \tilde\kappa \tilde\psi_i
    \end{pmatrix} -  \begin{pmatrix}
        \alpha \hat u_i\\
        \tilde \alpha \tilde u_i
    \end{pmatrix}
    \Bigr\|^2 \Bigr] = \op(1).
$$
Let us define $\Xi^{i} := \bg_i^\top\bh - \kappa \psi_i-\alpha\hat u_i$ and $\tilde \Xi^{i} := \bg_i^\top\tilde\bh-\tilde\kappa \tilde\psi_i - \tilde\alpha\tilde u_i$ so that the above display reads $\sum_{i \in [n]} \bar\E[(\Xi^{i})^2 + (\tilde\Xi^{i})^2] = o(n)$. Since $\psi_i = \loss'(z_i - \bg_i^\top\bh)$ and $\tilde\psi_i = \tilde \loss'(\tilde z_i -\bg_i^\top\tilde\bh)$ for all $i\in I\cap \tilde I$, the residuals can be written as 
$$
    z_i - \bg_i^\top\bh = \prox_{\loss}(z_i - \alpha \hat u_i -\Xi^{i}; \kappa), \quad z_i - \bg_i^\top\tilde\bh = \prox_{\tilde \loss}(z_i - \tilde \alpha\tilde u_i -\tilde \Xi^{i}; \tilde \kappa)
$$
for all $i\in I\cap \tilde I$. Since the map $x\mapsto \env_{f}'(x;\tau)=f'\circ \prox_{f} (x; \tau )$ is a composition of Lipschitz functions if $f$ is convex and differentiable with Lipschitz derivative, the moment bound $\sum_{i \in [n]} \bar\E[(\Xi^{i})^2 + (\tilde\Xi^{i})^2] = o(n)$ lets us approximate $\psi_i$ and $\tilde\psi_i$ by $\env_{\loss}'$ and $\env_{\tilde \loss}'$ as follows:
$$
    \frac{1}{n} \sum_{i\in I\cap \tilde I} \bar\E \Bigl\|
    \begin{pmatrix}
    \psi_i - \env_{\loss}'(\eps_i - \alpha \hat{u}_i;\kappa) \\
    \tilde\psi_i - \env_{\tilde \loss}'(\eps_i - \tilde \alpha \tilde{u}_i;\tilde \kappa)
    \end{pmatrix}
    \Bigr\|^2 = \op(1) \quad \text{with} \quad \begin{pmatrix}
        \hat u_i\\
        \tilde u_i
    \end{pmatrix} | \bz, \btheta, \bG_{-i}, I, \tilde I \deq \cN(0_2, \begin{pmatrix}
        1 & \hetaG\\
        \hetaG & 1
    \end{pmatrix})
$$
Noting $\bar\E[\|\bpsi\|^2] = \Op(n)$ by \Cref{lm:bound_h_psi}, this lets us approximate $\bar\E[\bpsi^\top\tilde\bpsi]/(p\beta\tilde\beta)$ by the inner product of $(\env_{\loss}', \env_{\tilde\loss}')$:
\begin{align*}
\frac{\bar\E[\bpsi^\top\tilde\bpsi]}{p\beta\tilde\beta} &= \frac{|I\cap\tilde I|}{p\beta\tilde\beta} \frac{1}{|I\cap\tilde I|} \bar\E[\bpsi^\top\tilde\bpsi]\\
&=\frac{c\tilde c\delta}{\beta\tilde\beta} \frac{1}{|I\cap\tilde I|} \sum_{i\in I\cap\tilde I} \bar\E[\env_{\loss}'(\eps_i - \alpha \hat{u}_i;\kappa) \cdot \env_{\tilde \loss}'(\eps_i - \tilde \alpha \tilde{u}_i;\tilde \kappa)] + \op(1).
\end{align*}
Since the marginal distribution of $(u_i, \tilde u_i)$ given $(\btheta, \bz, I, \tilde I)$ is centered normal with unit variance and correlation $\hetaG$, the above display reads
$$
    \frac{\bar\E[\bpsi^\top\tilde\bpsi]}{p\beta\tilde\beta} = \frac{1}{|I\cap \tilde I|}\sum_{i\in I\cap \tilde I}  F_\loss(\hetaG;\eps_i) + \op(1)
$$
where $F_\loss(\eta;\eps_i): [-1,1]\to\R$ is the function defined as:
$$
    {F}_\loss(\eta; \eps_i) = \frac{c\tilde c\delta}{\beta\tilde\beta} \int_{-\infty}^{\infty} \int_{-\infty}^{\infty} \varphi(x)\varphi(y) \env_{\loss}'(\eps_i + \alpha x ;\kappa) \env_{\tilde \loss}'(\eps_i + \tilde \alpha (\eta x + \sqrt{1-\eta^2} y);\tilde \kappa) \, \mathrm{d}x \, \mathrm{d}y.
$$
By the same argument for $F_\reg (\eta;\theta_j)$, 
the sequence $(F_\loss(\eta; z_i))_{i \in [n]}$ are i.i.d.\ random variables with mean $\E[F_\loss(\eta;z_i)]=F_\loss(\eta)$ and the expectation of the absolute value $|F_\loss(\eta; z_i)|$ is finite. 
Thus, by the weak law of large numbers, we have
$
\frac{1}{m} \sum_{i \in [m]} F_\loss(\eta; z_i) \pto \E[F_\loss(\eta; z_1)] = F_\loss(\eta)
$
for any deterministic integer $m=m_n\to+\infty$. 
Then, we have that 
for any $\epsilon>0$, 
denoting by $\sum_K$ the sum over all possible value $K$ taken by $I\cap \tilde I$,
\begin{align*}
    &\PP\Bigl(|\frac{1}{|I\cap \tilde I|} \sum_{i\in I\cap \tilde I} F_\loss(\eta; z_i)-F_\loss(\eta)| >\epsilon\Bigr) \\
    &=
    \sum_K\PP\Bigl(|\frac{1}{|K|} \sum_{i\in K} F_\loss(\eta; z_i)-F_\loss(\eta)| >\epsilon,
    ~
    I\cap \tilde I = K\Bigr)
    && (\text{by additivity of disjoint events}) \\
    &=\sum_K
    \PP\Bigl(|\frac{1}{|K|} \sum_{i\in K} F_\loss(\eta; z_i)-F_\loss(\eta)| >\epsilon\Bigr)
    \PP(I\cap \tilde I = K)
    && (\text{by independence})
    \\
    &=\sum_{m\ge 0}
    \PP\Bigl( | \frac1m \sum_{i \in [M]} F_\loss(\eta; z_i)-F_\loss(\eta)| >\epsilon\Bigr)
    \PP(|I\cap \tilde I| = m)
    && (\text{since $(z_i)_{i\in K} \deq (z_i)_{i\in[m]}$ for $m=|K|$}).
\end{align*}
We now split the sum over $m\ge 0$ into two as follows:
\begin{align*}
    &\PP\Bigl(|\frac{1}{|I\cap \tilde I|} \sum_{i\in I\cap \tilde I} F_\loss(\eta; z_i)-F_\loss(\eta)| >\epsilon\Bigr) 
    \\
    &=\Bigl(\sum_{m \le nc\tilde c/2} + \sum_{m > nc\tilde c/2}\Bigr) 
    \PP\Bigl( | \frac1m \sum_{i \in [M]} F_\loss(\eta; z_i)-F_\loss(\eta)| >\epsilon\Bigr)
    \PP\Bigl(|I\cap \tilde I| = m\Bigr)
    \\
    &\le
    \PP(|I\cap \tilde I| \le n c\tilde c/2)
    + \sup_{m> nc\tilde c/2}
    \PP\Bigl( | \frac1m \sum_{i \in [M]} F_\loss(\eta; z_i)-F_\loss(\eta)| >\epsilon\Bigr)
    \PP(|I\cap \tilde I| > nc\tilde c/2 )
\end{align*}
where $\PP(|I\cap \tilde I| > nc\tilde c/2 ) \le 1$ in the rightmost term.
The second term converges to 0 by the weak law of large numbers,
and the first by Chebyshev's inequality applied to the hypergeometric distribution.

This gives $\frac{1}{|I\cap \tilde I|} \sum_{i\in I\cap \tilde I} F_\loss(\eta; z_i)\pto F_\loss(\eta)$ pointwise for any $\eta\in[-1,1]$. 
By the same argument we used for $F_\reg$ and $F_\reg(\cdot;\theta_j)$, $F_\loss$ and $F_\loss(\cdot;z_i)$ are non-decreasing and continuous. Thus, we can apply \Cref{lm:uniform_convergence_monotone} with $f_n (\cdot) = \frac{1}{|I\cap \tilde I|} \sum_{i\in I\cap \tilde I} F_\loss(\cdot; z_i)$, $f(\cdot) = F_\loss(\cdot)$ and obtain the uniform convergence: 
$$
\sup_{\eta \in [-1,1]} \Big|\frac{1}{|I\cap \tilde I|} \sum_{i\in I\cap \tilde I} F_\loss(\eta; z_i) - F_\loss(\eta)\Big| = \op(1). 
$$
Combined with $\frac{\bar\E[\bpsi^\top\tilde\bpsi]}{p\beta\tilde\beta} = \frac{1}{|I\cap \tilde I|}\sum_{i\in I\cap \tilde I}  F_\loss(\hetaG;\eps_i)$ and $\hetaG\in [-1,1]$, we are left with
$$
{\bar\E[\bpsi^\top\tilde\bpsi]}/(p\beta\tilde\beta) = F_\loss(\hetaG) + \op(1). 
$$
Recalling $\hetaH = \Pi_{[-1,1]}({\bar\E[\bpsi^\top\tilde\bpsi]}/(p\beta\tilde\beta))$ where $\Pi_{[-1,1]}$ is the projection onto $[-1,1]$, by the continuity of the projection map and $|F_\loss (\eta)|\le \sqrt{c\tilde c}\le 1$ for all $\eta\in[-1,1]$ (see \Cref{th:existence-uniqueness-sys:general_ensemble-M=infty}), we finally obtain
$$
\hetaH = \Pi_{[-1,1]}(F_\loss(\hetaG)) + \op(1) = F_\loss(\hetaG) + \op(1). 
$$

In summary, we have shown that $\hetaG = F_\reg(\hetaH) + \op(1)$ and $\hetaH = F_\loss(\hetaG) + \op(1)$. 
By the continuity of $F_\reg$ and $F_\loss$, it holds that 
$$
\hetaG = F_\reg\circ F_\loss(\hetaG) + \op(1)
\quad \text{and} \quad 
\hetaH = F_\loss \circ F_\reg(\hetaH) + \op(1). 
$$
Let $T:[-1,1]\to\R$ be the map $T: \eta \mapsto \eta -F_\reg\circ F_\loss(\eta)$ so that the above result (for $\hetaG$) reads to $T(\hetaG) = \op(1)$.
Since $T$ is continuous in $[-1,1]$ and has a unique root $\etaG$ by \Cref{th:existence-uniqueness-sys:general_ensemble-M=infty},
for any $\epsilon >0$ let $C(\epsilon) = \min_{u\in[-1,1]\setminus[\etaGstar-\epsilon, \etaGstar+\epsilon]}T(u)$ and note $C(\epsilon)>0$ by compactness.
Hence
$$
\PP(|\hetaG - \etaGstar| \ge \epsilon) \le
\PP(T(\hetaG) \ge C(\epsilon)) \to 0
$$
due to $T(\hetaG) = \op(1)$.
By the same argument, combining $\hetaH = F_\loss \circ F_\reg(\hetaH) + \op(1)$ and the fact that the map $\eta\mapsto \eta -  F_\loss \circ F_\reg(\eta)$ is
continuous and has a unique root $\etaH$ by \Cref{th:existence-uniqueness-sys:general_ensemble-M=infty}, we obtain $\hetaH\pto\etaH$. 

Recalling $\hetaG = \bh^\top\tilde\bh/(\alpha\tilde\alpha) + \op(1)$ and $\hetaH = \bpsi^\top\tilde\bpsi/(p\beta\tilde\beta) + \op(1)$, this gives 
$\bh^\top\tilde\bh/(\alpha\tilde\alpha) \pto \etaG$ and $\bpsi^\top\tilde\bpsi/(p\beta\tilde\beta) \pto \etaH$, thereby completing the proof. 

Finally, let us show the proximal approximation of estimators $(\sqrt{p}h_j, \sqrt{p}\tilde h_j)$ and residuals $(z_i-\bg_i^\top\bh, z_i-\bg_i^\top\tilde\bh)$. 
Now that we have established $\bpsi^\top\tilde\bpsi/(p\beta\tilde\beta) \pto \etaH$ and  $\bh^\top\tilde\bh/(\alpha,\tilde\alpha)\to^p \etaG$, using the above argument again with $\hetaH$ replaced by $\etaH$ and $\hetaG$ by $\etaG$, we obtain the following result. 
{
\begin{corollary}\label{cor:conditional_prox_rep}
Denote by $\bar{\E}[\cdot|\bz, \btheta, I, \tilde I]$ be the conditional expectation. 
Then, we have the following (conditional) proximal representation of the error vectors and residuals:
\begin{align*}
\frac{1}{p} \sum_{j \in [p]} \bar\E \Bigl[
\Bigl\|
\begin{pmatrix}
\sqrt{p} h_j + \theta_j\\
\sqrt{p} \tilde{h}_j +\theta_j 
\end{pmatrix}-
\begin{pmatrix}
    \prox_{\reg}(\theta_j + (\beta/\nu) H_j; \nu^{-1}) \\
    \prox_{\tilde \reg}(\theta_j + (\tilde\beta/\tilde \nu) \tilde{H}_j; \tilde\nu^{-1})
\end{pmatrix}
\Bigr\|^2
\Bigr] = \op(1), \\
\frac{1}{|I\cap\tilde I|} \sum_{i\in I\cap \tilde I} \bar{\E}\Bigl[\Bigl\|
\begin{pmatrix}
z_i - \bg_i^\top\bh\\
z_i - \bg_i^\top \tilde\bh 
\end{pmatrix}-
\begin{pmatrix}
\prox_{\loss}(z_i + \alpha G_i;\kappa)\\
\prox_{\tilde\loss}(z_i + \alpha\tilde{G}_i;\tilde \kappa)
\end{pmatrix}\Bigr\|^2
\Bigr] = \op(1), 
\end{align*}
where $(H_j, \tilde{H}_j)_{j\in[p]}$ and $(G_{i}, \tilde{G}_i)_{i\in I\cap \tilde I}$ are jointly normals such that:
\begin{align*}
\forall j\in [p] \quad 
\begin{pmatrix}
       H_j\\
        \tilde H_j
    \end{pmatrix} | \bz, \btheta, I, \tilde I \deq \cN\biggl(\begin{bmatrix}
        0\\
        0
    \end{bmatrix}, \begin{bmatrix}
        1 & \etaH\\
        \etaH & 1
    \end{bmatrix}\biggr), \\
\forall i\in I\cap \tilde I, \quad 
\begin{pmatrix}
       G_i\\
        \tilde G_i
    \end{pmatrix} | \bz, \btheta, I, \tilde I \deq \cN \biggl(\begin{bmatrix}
        0\\
        0
    \end{bmatrix}, \begin{bmatrix}
        1 & \etaG\\
        \etaG & 1
    \end{bmatrix} \biggr).   
\end{align*}
\end{corollary}
}
Since $|X_n|=\op(1)$ is equivalent to $\E[1\wedge |X_n|]=o(1)$ for any random variable $X_n$, the conditional proximal representation of the error vectors $(\bh, \tilde\bh)$ in \Cref{cor:conditional_prox_rep} reads
$$
\E\Bigl[1 \wedge \frac{1}{p} \sum_{j \in [p]} \bar\E [\mathsf{A}_j]\Bigr]  = o(1) \ \text{where} \  \mathsf{A}_j := \Bigl\|
\begin{pmatrix}
\sqrt{p} h_j + \theta_j\\
\sqrt{p} \tilde{h}_j +\theta_j 
\end{pmatrix}-
\begin{pmatrix}
    \prox_{\reg}(\theta_j + (\beta/\nu) H_j; \nu^{-1}) \\
    \prox_{\tilde \reg}(\theta_j + (\tilde\beta/\tilde \nu) \tilde{H}_j; \tilde\nu^{-1})
\end{pmatrix}
\Bigr\|^2
$$
Here the expectation $\E$ is with respect to $(\bz, \btheta, I, \tilde I)$ (recall the definition $\bar\E[\cdot] = \E[\cdot|\bz,\btheta, I, \tilde I])$. 
Note that the integrand is bounded from below as 
\begin{align*}
1 \wedge \frac{1}{p} \sum_{j \in [p]} \bar\E [\mathsf{A}_j] \ge \frac{1}{p}\sum_{j\in[p]} \Bigl(1\wedge \bar\E[\mathsf{A}_j]\Bigr) \ge \frac{1}{p} \sum_{j\in [p]} \bar\E[1\wedge \mathsf{A}_j], 
\end{align*}
where the second inequality follows from 
the Jensen's inequality $\bar\E[f(X)] \le f(\bar\E[X])$ applied with the concave function $f(x)=1\wedge x$ and $X=\mathsf{A}_j$. Taking the expectation of the above display and using the tower property, we are left with
$$
\frac{1}{p} \sum_{j\in [p]} \E[1\wedge \mathsf{A}_j] = o(1). 
$$
Since the marginal distribution of the integrand $(1\wedge \mathsf{A}_j)$ is the same for all $j\in[p]$ by symmetry, the LHS equals to $\E[1\wedge \mathsf{A}_{j'}]$ for any $j' \in [p]$. This gives $\max_{j\in [p]} \E[1 \wedge \mathsf{A}_j]=o(1)$ and completes the proof of the desired joint approximation of estimators $(\sqrt{p}h_j, \sqrt{p}\tilde h_j)$. The joint approximation of residual follows from the same argument, so we omit the proof.

{
\subsection{Proof of \Cref{theorem:average_prox_rep}}\label{proof:theorem:average_prox_rep}
The proof strategy is the same as \Cref{sec:proof-thm:corr-sigerror-reserror}. We first claim the average $p^{-1}\sum_{j\in[p]} \Phi (\cdots)$ concentrates on its conditional expectation:
\begin{align}\label{eq:concentration_Phi}
    \frac{1}{p} \sum_{j\in[p]}  \Phi \begin{pmatrix}
\sqrt{p} h_j + \theta_j\\
\sqrt{p} \tilde{h}_j +\theta_j\\
\theta_j
\end{pmatrix} =\frac{1}{p} \sum_{j\in[p]} \bar{\E} \Phi \begin{pmatrix}
\sqrt{p} h_j + \theta_j\\
\sqrt{p} \tilde{h}_j +\theta_j\\
\theta_j
\end{pmatrix} + \op(1), 
\end{align}
where recall $\bar{\E}[\cdot] = \E[\cdot|\btheta, \bz, I, \tilde I]$. 
Note that the marginal distribution of $\btheta = (\theta_j)_{j\in[p]}$ is bounded in the second moment by the additional assumption we imposed for this theorem. Furthermore, \Cref{lm:bound_h_psi}-(2) implies that there exists a deterministic constant $C$ which depends on 
$$(\delta, \|\loss'\|_{\lip}, \|\tilde\loss'\|_{\lip}, \alpha, \tilde\alpha, \E[\loss'(Z)^2], \E[\tilde\loss'(Z)^2])$$ only such that $\|\bh\|^2 \vee \|\tilde\bh\|^2\le C$ with probability $1$. Then, combined with \Cref{cor:conditional_prox_rep}, using the pseudo-Lipschitz property of the test function $\Phi$, we have 
$$
 \frac{1}{p} \sum_{j\in[p]} \bar{\E} \Phi \begin{pmatrix}
\sqrt{p} h_j + \theta_j\\
\sqrt{p} \tilde{h}_j +\theta_j\\
\theta_j
\end{pmatrix} = \frac{1}{p}\sum_{j\in[p]} \bar{\E}\Phi \begin{pmatrix}
    \prox_{\reg}(\theta_j + (\beta/\nu) H_j; \nu^{-1}) \\
    \prox_{\tilde \reg}(\theta_j + (\tilde\beta/\tilde \nu) \tilde{H}_j; \tilde\nu^{-1})\\
    \theta_j
\end{pmatrix} + \op(1),
$$
By the law of large numbers, 
$$
\frac{1}{p}\sum_{j\in[p]} \bar{\E}\Phi \begin{pmatrix}
    \prox_{\reg}(\theta_j + (\beta/\nu) H_j; \nu^{-1}) \\
    \prox_{\tilde \reg}(\theta_j + (\tilde\beta/\tilde \nu) \tilde{H}_j; \tilde\nu^{-1})\\
    \theta_j
\end{pmatrix} \pto
\E\Phi \begin{pmatrix}
    \prox_{\reg}(\Theta+ (\beta/\nu) H; \nu^{-1}) \\
    \prox_{\tilde \reg}(\Theta+ (\tilde\beta/\tilde \nu) \tilde{H}; \tilde\nu^{-1})\\
    \Theta
\end{pmatrix},
$$
where the integrand on the RHS is bounded in $L_1$ by \Cref{asm:regularity-conditions}-(1), the pseudo-Lipschitz continuity of order $2$ of $\Phi$, and the additional assumption that $\E[\Theta^2]<+\infty$ where $\Theta\sim F_\theta$. 

Thus, it remains to show the claim \eqref{eq:concentration_Phi}. 
Recall that we have proved this result for the specific test function $\Phi(a, b, c) = (a-c)(b-c)$ in \Cref{lm:gaussian_Poincare_hpsi} by the Gaussian Poincar\'e inequality. Here, we use the same proof strategy. One minor thing we need to be careful of is that we do not assume the differentiability of the test function $\Phi$, so instead of calculating the derivative of $p^{-1}\sum_{j\in[p]} \Phi (\cdots)$, we prove its locally Lipschitz property. Specifically, we claim that for all $\bG, \bG'\in \R^{n\times p}$, letting $\bh=\bh(\bG)$ be the estimator fitted on the design matrix $\bG$, 
\begin{align}\label{eq:locally_Lipschitz_Phi}
    &\left| \frac{1}{p} \sum_{j\in[p]} \Phi \begin{pmatrix}
\sqrt{p} h_j(\bG) + \theta_j\\
\sqrt{p} \tilde{h}_j(\bG) +\theta_j\\
\theta_j
\end{pmatrix} -  \frac{1}{p} \sum_{j\in[p]} \Phi \begin{pmatrix}
\sqrt{p} h_j(\bG') + \theta_j\\
\sqrt{p} \tilde{h}_j(\bG') +\theta_j\\
\theta_j
\end{pmatrix}\right|^2 \nonumber\\
&\le C_* (n \mu_n^2)^{-1} \Bigl(1+ \frac{\|\btheta\|^2}{p} + \frac{\|\loss'(\bz)\|^2}{n} + \frac{\|\tilde{\loss}'(\bz)\|^2}{n} + \frac{\|\bG\|_{\oper}^2}{n}\Bigr) \|\bG-\bG'\|_{\oper}^2,
\end{align}
where $C_*$ is a deterministic constant. If this inequality holds, then the squared Frobenius norm of the weak derivative $\|\nabla_{\bG}\frac{1}{p} \sum_{j\in[p]} \Phi (\cdots)\|_F^2$ is bounded from above by 
$$
C_* (n \mu_n^2)^{-1} \Bigl(1+ \frac{\|\btheta\|^2}{p} + \frac{\|\loss'(\bz)\|^2}{n} + \frac{\|\tilde{\loss}'(\bz)\|^2}{n} + \frac{\|\bG\|_{\oper}^2}{n}\Bigr), 
$$
so that combined with the Gaussian Poincaré inequality, we have 
\begin{align*}
&\bar{\E} \left(\frac{1}{p} \sum_{j\in[p]}  \Phi \begin{pmatrix}
\sqrt{p} h_j + \theta_j\\
\sqrt{p} \tilde{h}_j +\theta_j\\
\theta_j
\end{pmatrix} -\frac{1}{p} \sum_{j\in[p]} \bar{\E} \Phi \begin{pmatrix}
\sqrt{p} h_j + \theta_j\\
\sqrt{p} \tilde{h}_j +\theta_j\\
\theta_j
\end{pmatrix} \right)^2\\
&\le C_* (n \mu_n^2)^{-1} \Bigl(1+ \frac{\|\btheta\|^2}{p} + \frac{\|\loss'(\bz)\|^2}{n} + \frac{\|\tilde{\loss}'(\bz)\|^2}{n} + \frac{\bar\E[\|\bG\|_{\oper}^2]}{n}\Bigr)\\
&= \Op(n \mu_n^2)^{-1}. 
\end{align*}
For the last equality, we have used $\|\btheta\|_2^2/p = \Op(1)$ and $\|f'(\bz)\|_2^2/n = \Op(1)$ by the law of large number with the assumptions $\E[\Theta^2]<+\infty$ and $\E[f'(Z)^2]<+\infty$ for $f\in \{\loss, \tilde\loss\}$. Finally, using $\mu_n^{-1} = o(n^{1/2})$, we complete the proof of \eqref{eq:concentration_Phi}. Thus, it suffices to show the locally Lipschitz property \eqref{eq:locally_Lipschitz_Phi}.

By the pseudo-Lipschitz continuity of order $2$ of the test function $\Phi$, there exists a constant $C$ (which depends on $\Phi$ only) such that 
\begin{align}
&\left| \frac{1}{p} \sum_{j\in[p]} \Phi \begin{pmatrix}
\sqrt{p} h_j(\bG) + \theta_j\\
\sqrt{p} \tilde{h}_j(\bG) +\theta_j\\
\theta_j
\end{pmatrix} -  \frac{1}{p} \sum_{j\in[p]} \Phi \begin{pmatrix}
\sqrt{p} h_j(\bG') + \theta_j\\
\sqrt{p} \tilde{h}_j(\bG') +\theta_j\\
\theta_j
\end{pmatrix}\right|\nonumber \\
&\le 
    \frac{C}{p} \sum_{j\in[p]} \Bigl(1+ |\theta_j|+ \sqrt{p}\bigl(|h_j(\bG)| + |h_j(\bG')| + |\tilde{h}_j(\bG)| +  |\tilde{h}_j(\bG')|\bigr)\Bigr)\nonumber \\
    &\quad \quad \quad \times \sqrt{p} \bigl(|h_j(\bG) - h_j(\bG')| + |\tilde h_j(\bG) - \tilde h_j(\bG')|\bigr)\nonumber \\
&= C \sum_{j\in[p]} v_j(\bG, \bG') \bigl(|h_j(\bG) - h_j(\bG')| + |\tilde h_j(\bG) - \tilde h_j(\bG')|\bigr)\nonumber\\
&\le C \|\bv(\bG, \bG')\| \bigl(\|\bh(\bG)-\bh(\bG')\| + \|\tilde \bh(\bG)-\tilde \bh(\bG')\|\bigr)\label{eq:psi_local_lip_bound}
\end{align}
where $\bv(\bG, \bG') = (v_j(\bG, \bG'))_{j\in[p]}\in\R^p$ is defined as 
$$
v_j(\bG, \bG') := p^{-1/2}(1+ |\theta_j|)+ |h_j(\bG)| + |h_j(\bG')| + |\tilde{h}_j(\bG)| +  |\tilde{h}_j(\bG')|.
$$
Let us bound $\|\bv(\bG, \bG')\|$.
Now, from \Cref{lm:bound_h_psi}-(2), we can take a deterministic constant $C_*$, which depends on $(\delta, \|\loss'\|_{\lip}, \|\tilde\loss'\|_{\lip}, \alpha, \tilde\alpha, \E[\loss'(Z)^2], \E[\tilde\loss'(Z)^2])$ only, such that
\begin{align}\label{eq:upper_bound_h_psi_in_G}
\|\bh(\bG)\|^2 \vee \|\tilde\bh(\bG)\|^2 \le C_* \text{ and } \begin{cases}
    \|\bpsi(\bG)\|^2 \le C_* (\|\loss'(\bz)\|^2 + \|\bG\|_{\oper}^2)\\
    \|\tilde \bpsi(\bG)\|^2 \le C_* (\|\tilde \loss'(\bz)\|^2 + \|\bG\|_{\oper}^2)
\end{cases}    
\end{align}
uniformly for all $\bG\in\R^{n\times p}$. Thus, the upper bound $\|\bh(\bG)\|^2 \vee \|\tilde\bh(\bG)\|^2 \le C_*$ yields 
$$
\|\bv(\bG, \bG')\|^2 \le C (1 + \|\btheta\|^2/p). 
$$
for a constant $C$ depending on $C_*$ only. 

Next, let us control $\|\bh(\bG)-\bh(\bG')\|$. By \cite[Proposition 4.4]{bellec2020out}, we know that $\bG\mapsto \bh(\bG)$ is locally Lipschitz as follows:
$$
\|\bh(\bG) - \bh(\bG')\|^2 \le (n\mu_n^2)^{-1} \|\bG-\bG'\|_{\oper}^2 (\frac{\|\bpsi (\bG)\|_2^2}{n} + \|\bh(\bG)\|_2^2). 
$$
Combined with \eqref{eq:upper_bound_h_psi_in_G}, we obtain 
$$
\|\bh(\bG) - \bh(\bG')\|^2 \le C_* (n\mu_n^2)^{-1}  (1+\frac{\|\loss'(\bz)\|^2}{n} + \frac{\|\bG\|_{\oper}^2}{n}) \|\bG-\bG'\|_{\oper}^2.
$$
Similarly, applying \cite[Proposition 4.4]{bellec2020out} for $\bG\mapsto \tilde \bh(\bG)$ and using \eqref{eq:upper_bound_h_psi_in_G}, we get the same upper bound for $\|\tilde\bh(\bG) - \tilde\bh(\bG')\|^2$ with $\loss'$ on the RHS replaced by $\tilde\loss'$. 

Finally, substituting these upper bounds of $\|\bv(\bG, \bG')\|^2$, $\|\bh(\bG) - \bh(\bG')\|^2$, and $\|\tilde\bh(\bG) - \tilde\bh(\bG')\|^2$ into \eqref{eq:psi_local_lip_bound}, we get the desired locally Lipschitz property \eqref{eq:locally_Lipschitz_Phi}. 

The convergence of $|I\cap \tilde I|^{-1} \sum_{i\in I\cap\tilde I}  \Phi(
y_i - \bx_i^\top\hat\btheta_I,
y_i - \bx_i^\top \hat\btheta_{\tilde I},
z_i)$ follows from the same argument,  so we omit the proof. 
}

\subsection{Proof of \Cref{thm:risk-estimator-general-subagging}}

We assume that $\reg$ and $\tilde\reg$ are $\mu$-strongly convex for a fixed $\mu>0$. 
Then \cite[Theorem 5.1]{bellec2022derivatives} implies that  
$$
\tr[\bA] \tr[\bV] - \df = \Op(\sqrt{n}). 
$$
On the other hand, by the same argument in the proof of \Cref{lm:convergence_trace} with the diminishing ridge penalty $\mu_n$ replaced by the strongly convexity parameter $\mu>0$, we have
$\tr[\bV]/p\pto \nu>0$ and $\tr[\bA]\pto \kappa$. Therefore, we get
$$
\tr[\bA] - \df/\tr[\bV] = \Op(n^{-1/2}) \quad \text{and} \quad \df/\tr[\bV] = \Op(1). 
$$
Note in passing that the same things hold for $\tdf$ and $\tr[\tilde\bV]$. By the Cauchy--Schwarz inequality, the error term due to the replacement of $(\df/\tr[\bV], \tdf/\tr[\tilde\bV])$ by $(\tr[\bA], \tr[\tilde\bA])$ is 
\begin{align*}
&\Bigl|\sum_{i\in [n]} \Bigl(r_i + \frac{\df}{\tr[\bV]} \ind_{\{i\in I\}} \psi_i \Bigr)^\top \Bigl(
    \tilde{r}_i + \frac{\tdf}{\tr[\tilde\bV]} \ind_{\{i\in \tilde I\}}  \tilde \psi_i
    \Bigr) - \sum_{i\in [n]} \Bigl(r_i + \tr[\bA] \ind_{\{i\in I\}}  \psi_i \Bigr)^\top \Bigl(
    \tilde{r}_i + \tr[\tilde \bA] \ind_{\{i\in \tilde I\}} \tilde\psi_i
    \Bigr)\Bigr| \\
    &\le \sqrt{\sum_{i\in [n]} \bigl(r_i + \frac{\df}{\tr[\bV]} \ind_{\{i\in I\}}  \psi_i\bigr)^2} \cdot \Bigl|\frac{\tdf}{\tr[\tilde \bV]}-\tr[\tilde\bA]\Bigr| \|\tilde \bpsi\|  + \sqrt{\sum_{i\in [n]} (\tilde{r}_i + \tr[\tilde{\bA}] \ind_{\{i\in \tilde I\}}  \tilde{\psi}_i)^2} \cdot \Bigl|\frac{\df}{\tr[\bV]}-\tr[\bA]\Bigr| \|\bpsi\| \\
    &= (\|\bz\| + \Op(\sqrt{n})) \cdot \Op(1)
\end{align*}
where the last equality follows from the following fact: $\|\br\|^2 \le 2 (\|\bz\|^2 + \|\bG\bh\|^2)$, $\|\bG\bh\|^2 = \Op(n)$, $\|\bpsi\|^2 = \Op(n)$, and $\df/\tr[\bV] = \Op(1)$. Therefore, it suffices to show
$$
n \bh^\top \tilde\bh + \|\bz\|^2 - \sum_{i\in[n]} (z_i -\bg_i^\top\bh + \tr[\bA] \ind_{\{i\in I\}} \psi_i) (z_i - \bg_i^\top \tilde \bh + \tr[\tilde \bA]\ind_{\{i\in \tilde I\}}\tilde\psi_i) = \Op(1) \|\bz\| + \Op(\sqrt{n}). 
$$
By simple algebra, the LHS can be decomposed into three terms $ \xi_1  + \tilde{\xi}_1 + \xi_2$ with:
\begin{align*}
\xi_1 &= \sum_{i\in [n]} z_i (\bg_i^\top\bh - \tr[\bA] \ind_{\{i\in I\}} \psi_i), \qquad \tilde{\xi}_1 = \sum_{i\in [n]} z_i (\bg_i^\top\tilde\bh - \tr[\tilde\bA] \ind_{\{i\in \tilde I\}} \tilde\psi_i)\\
\xi_2 &= n \bh^\top\tilde\bh - \sum_{i\in[n]} \bigl(\bg_i^\top\bh - \tr[\bA] \ind_{\{i\in I\}} \psi_i\bigr)\bigl(\bg_i^\top \tilde\bh - \tr[\tilde\bA] \ind_{\{i\in \tilde I\}} \tilde\psi_i\bigr)
\end{align*}
For $(\xi_1, \tilde \xi_1)$, the derivative formula \eqref{eq:derivative_h_partial_I} and 
the argument in the proof of \cite[Proposition 18]{bellec2022derivatives} yield
\begin{align*}
\bar\E\Bigl[
\frac{|\xi_1|}{
\{\|\bh\|^2 + p^{-1} \|\bpsi\|^2\}^{1/2} \cdot \|\bz\| }
\Bigr] + \bar\E\Bigl[
\frac{|\tilde {\xi}_1|}{
\{\|\tilde \bh\|^2 + p^{-1} \|\tilde \bpsi\|^2\}^{1/2} \cdot \|\bz\| }
\Bigr] \le C(\mu, \delta). 
\end{align*}
Since the denominators are $\Op(1) \|\bz\|$, we have $\xi_1 + \tilde\xi_1 = \Op(1) \|\bz\|$. 

Below we bound $\xi_2$ using the following moment inequality, which is a variant of \cite[Theorem 7.2]{bellec2020out}.
\begin{lemma}\label{lm:chi_square_general}
Let $\brho, \tilde\brho:\R^{K\times Q}\to\R^Q$ be locally Lipschitz functions and $\bzeta, \tilde\bzeta:\R^{K\times Q}\to\R^L$ be locally Lipschitz functions. If $(\bz_k)_{k\in [K]}$ are i.i.d.\ $\cN(\bm{0}_Q, \bm{I}_Q)$, we have 
\begin{align*}
    \E\Bigl|
    \frac{K \brho^\top \tilde\brho - \sum_{k \in [K]} (\bz_k^\top \brho  - \sum_{q \in Q} \frac{\partial \rho_q}{z_{kq}}) (\bz_k^\top \tilde \brho  - \sum_{q \in Q} \frac{\partial \tilde \rho_q}{z_{kq}})}{
    \{\|\brho\|^2 + \|\bm{\zeta}\|^2\}^{1/2} \{\|\tilde \brho\|^2 + \|\tilde{\bm{\zeta}}\|^2\}^{1/2} 
    }
    \Bigr| \le \C (\sqrt{K} (1+\sqrt{\E[\Xi+\tilde\Xi]}) + \E[\Xi+\tilde\Xi])
\end{align*}
    where $\Xi := \frac{1}{\|\brho\|^2 + \|\bm\zeta\|^2}
   \sum_{k\in [K]}\sum_{q\in [Q]} \Bigl(
    \bigl\|
    \frac{\partial \brho}{\partial g_{kq}}\bigr\|^2 + \bigl\|\frac{\partial\bm{\zeta}}{\partial g_{kq}}\bigr\|^2\Bigr)$. 
\end{lemma}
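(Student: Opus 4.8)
The plan is to adapt the proof of \cite[Theorem 7.2]{bellec2020out}: \Cref{lm:chi_square_general} is its extension to a Gaussian design with $K$ rows (rather than a single Gaussian vector), to a rectangular map $\brho:\R^{K\times Q}\to\R^{Q}$, and to a \emph{pair} of maps $\brho,\tilde\brho$ (the polarization of the underlying ``$\chi^2_K$'' statement, from which the lemma takes its name). First I would reduce, by a routine mollification argument, to the case where $\brho,\tilde\brho,\bzeta,\tilde\bzeta$ are smooth with bounded partial derivatives of all orders, so that the integration-by-parts steps below are licit and all expectations finite; the general locally Lipschitz case then follows by approximation, the asserted inequality being invariant under joint rescaling of the four maps and both sides being stable under convergence of the maps together with their first gradients. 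I would also note that $\bzeta,\tilde\bzeta$ enter only through the normalizing factors and through $\Xi,\tilde\Xi$; writing $\sigma^2:=\|\brho\|^2+\|\bzeta\|^2$ and $\tilde\sigma^2:=\|\tilde\brho\|^2+\|\tilde\bzeta\|^2$, they are simply carried along.

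The core is a second-order Gaussian integration-by-parts (``second-order Stein'') computation for the quadratic form $\sum_{k\in[K]}A_k\tilde A_k$, where $A_k:=\bz_k^\top\brho-\sum_{q\in[Q]}\partial_{g_{kq}}\rho_q$ (and similarly $\tilde A_k$). Integrating by parts in the entries $g_{kq}$ via $\E[g_{kq}F]=\E[\partial_{g_{kq}}F]$, the piece quadratic in $\bz_k$ produces the diagonal main term $K\,\brho^\top\tilde\brho$, while differentiating $\brho$ and $\tilde\brho$ generates contributions that cancel (up to sign) the two cross terms and the divergence$\times$divergence term, leaving a \emph{Jacobian remainder} $R_{\mathrm J}$ (schematically a sum of row-wise Jacobian traces) together with a \emph{fluctuation remainder} $R$ which is a mean-zero second-order chaos in the $g_{kq}$. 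The first is bounded pointwise by Cauchy--Schwarz, first within each trace and then over $k$:
\[
|R_{\mathrm J}|\le\Big(\sum_{k,q}\|\partial_{g_{kq}}\brho\|^2\Big)^{1/2}\Big(\sum_{k,q}\|\partial_{g_{kq}}\tilde\brho\|^2\Big)^{1/2}\le \sigma\tilde\sigma\sqrt{\Xi\tilde\Xi}\le \tfrac12\,\sigma\tilde\sigma\,(\Xi+\tilde\Xi),
\]
which after dividing by $\sigma\tilde\sigma$ and taking expectations yields the $\E[\Xi+\tilde\Xi]$ contribution. For the fluctuation remainder $R$ I would invoke the moment form of second-order Stein (the Gaussian--Poincar\'e/fourth-moment estimate underpinning \cite[Theorem 7.2]{bellec2020out}): its diagonal part is, up to the slowly-varying prefactors $\|\brho\|^2,\|\tilde\brho\|^2,|\brho^\top\tilde\brho|\le\sigma\tilde\sigma$, a centred $\chi^2_K$-type fluctuation contributing $O(\sqrt K)$ after normalization, and its off-diagonal part is governed by the row-wise gradients and contributes $O(\sqrt K\,\sqrt{\E[\Xi+\tilde\Xi]})$; combining with $\E|X|\le(\E X^2)^{1/2}$ gives the $\sqrt K\,(1+\sqrt{\E[\Xi+\tilde\Xi]})$ term, completing the bound.

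The step I expect to be the main obstacle is exactly this control of the fluctuation remainder $R$ at the sharp $\sqrt K$ rate, for two reasons. First, the normalization $\sigma\tilde\sigma$ is random and strongly coupled to $R$, so one cannot condition and invoke a fixed-variance bound; the Poincar\'e/fourth-moment estimate must be run with the normalization in place (or against a carefully chosen weight) so that the output is dimensionless in the stated form. Second, $\brho$ and $\tilde\brho$ depend on \emph{all} $K$ rows, so the summands $A_k\tilde A_k$ are not independent and the naive ``$\chi^2_K$'' heuristic is not literally valid: one must isolate the leading diagonal part of $R$ — which depends on the rows only through $\brho^\top\tilde\brho$, $\|\brho\|^2$, $\|\tilde\brho\|^2$ — and show that the inter-row dependence is confined to the Jacobian-driven off-diagonal part, so that it may be absorbed into the $\Xi$-terms. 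Making this separation quantitative with all normalizations tracked is the technical heart, and is the content \cite[Theorem 7.2]{bellec2020out} packages in the single-vector case. Once \Cref{lm:chi_square_general} is in hand, its application to $\xi_2$ (with $K=n$, $Q=p$, $\brho=\bh$, $\tilde\brho=\tilde\bh$, and a suitable choice of $\bzeta,\tilde\bzeta$), combined with $\tr[\bV]/p\pto\nu>0$, $\tr[\bA]\pto\kappa$, and the $\op$-bounds on $\|\bh^\top\bA\bG^\top\bD\|$, gives $\xi_2=\Op(\sqrt n)$ and, with the bound $\xi_1+\tilde\xi_1=\Op(1)\|\bz\|$, finishes the proof of \Cref{thm:risk-estimator-general-subagging}.
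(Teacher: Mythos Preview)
Your plan and the paper's proof diverge at exactly the point you flag as the main obstacle. You propose to run second-order Stein directly on the bilinear sum $\sum_k A_k\tilde A_k$, split off a Jacobian remainder, and then control a mean-zero chaos $R$ in $L^1$ with the random normalization $\sigma\tilde\sigma$ still in place. You correctly observe that one cannot simply condition and apply a fixed-variance bound, but you do not give a concrete mechanism for running the Poincar\'e/fourth-moment estimate ``with the normalization in place'' in the bilinear setting; you only point back to \cite[Theorem~7.2]{bellec2020out}, which is stated for the diagonal case $\brho=\tilde\brho$.

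The paper sidesteps this difficulty entirely by a polarization-plus-correction argument. Writing $\bm f=(\brho,\bzeta)$ and $\tilde{\bm f}=(\tilde\brho,\tilde\bzeta)$, it uses the identity
\[
\frac{\brho^\top\tilde\brho}{\|\bm f\|\,\|\tilde{\bm f}\|}
=\Bigl\|\tfrac12\Bigl(\tfrac{\brho}{\|\bm f\|}+\tfrac{\tilde\brho}{\|\tilde{\bm f}\|}\Bigr)\Bigr\|^2
-\Bigl\|\tfrac12\Bigl(\tfrac{\brho}{\|\bm f\|}-\tfrac{\tilde\brho}{\|\tilde{\bm f}\|}\Bigr)\Bigr\|^2
\]
and applies \cite[Theorem~7.2]{bellec2020out} as a black box to each of the two \emph{already normalized} vectors $\frac{\brho}{\|\bm f\|}\pm\frac{\tilde\brho}{\|\tilde{\bm f}\|}$, which have norm at most $2$. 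This immediately gives the correct bound, but for the quantities $b_k=\bz_k^\top(\brho/\|\bm f\|)-\sum_q\partial_{g_{kq}}(\rho_q/\|\bm f\|)$, i.e.\ with the normalization \emph{inside} the derivative. The second step is a purely deterministic inequality (their (A.11)) comparing $|K\bu^\top\tilde\bu-\ba^\top\tilde\ba|$ to $|K\bu^\top\tilde\bu-\bb^\top\tilde\bb|$, together with the elementary computation $\|\ba-\bb\|^2\le 2\Xi$ coming from differentiating $1/\|\bm f\|$. Your Jacobian-remainder bound $|R_{\mathrm J}|\le\frac12\sigma\tilde\sigma(\Xi+\tilde\Xi)$ reappears here, but as part of this correction step rather than inside a Stein calculation.

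In short: the paper never attempts the bilinear second-order Stein you sketch; instead it \emph{reduces} to two calls of the known diagonal result on pre-normalized inputs, and pays for the normalization mismatch with a deterministic inequality. Your plan is not wrong in spirit, but the step you identify as hardest---handling the random denominator in a bilinear chaos bound---is precisely the one the polarization trick makes unnecessary.
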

(The proof of \Cref{lm:chi_square_general} is given in \Cref{sec:proof-lm:chi_square_general}.)
By \Cref{lm:chi_square_general} with $(\brho, \tilde \brho)=(\bh, \tilde\bh)$, $(\bzeta, \tilde\bzeta) = (\bpsi/\sqrt{p}, \tilde\bpsi/\sqrt{p})$, and $K=[n]$, we get
\begin{align*}
&\bar\E\Bigl|
\frac{n \bh^\top\tilde\bh - \sum_{i \in n} (\bg_i^\top \bh - \sum_{j\in [p]} \frac{\partial h_j}{\partial g_{ij}} )(\bg_i^\top\tilde\bh - \sum_{j\in [p]} \frac{\partial \tilde h_j}{\partial g_{ij}})}{\{\|\bh\|^2 + p^{-1} \|\bpsi\|^2\}^{1/2} \{\|\tilde\bh\|^2 + p^{-1} \|\tilde \bpsi\|^2\}^{1/2} }
\Bigr| \le \C (\sqrt{n} (1+\sqrt{\bar\E[\Xi+\tilde\Xi]}) + \bar\E[\Xi+\tilde\Xi]). 
\end{align*}
where
\begin{align}\label{eq:def_Xi}
    \Xi = \frac{1}{\|\bh\|^2 + p^{-1} \|\bpsi\|^2}
   \sum_{i \in [n]}\sum_{j \in [p]} \Bigl(
    \bigl\|
    \frac{\partial \bh}{\partial g_{ij}}\bigr\|^2 + \frac{1}{p} \bigl\|\frac{\partial\bpsi}{\partial g_{ij}}\bigr\|^2\Bigr) 
\end{align}
Let us bound $\Xi$. By the derivative formula \eqref{eq:derivative_formula}, we have 
\begin{align*}
    \sum_{ij}\|\partial_{ij}\bh\|^2 
    \le 2 \|\bA\|_{\frob}^2 \|\bpsi\|^2 + 2 \|\bA\bG^\top\bD\|_F^2 \|\bh\|^2, \quad 
    \sum_{ij}\|\partial_{ij} \bpsi\|^2
    \le 2 \|\bD\bG\bA\|_{\frob}^2 \|\bpsi\|^2 + 2\|\bV\|_{\frob}^2\|\bh\|^2
\end{align*}
where $\bV = \bD - \bD\bG\bA \bG^\top \bD$ and $\bD=\diag\{\loss''(z_i-\bg_i^\top\bh)\}$. By $\|\bA\|_{\oper}\le (p\mu)^{-1}$ and $\|\bD\|_{\oper}\le \|\loss'\|_{\lip}$, $\Xi$ is bounded from above by  
\begin{align}\label{eq:Xi_bound}
    \Xi &\le 2p  \|\bA\|_F^2 + 2\|\bA\bG^\top\bD\|_F^2 + 2\|\bD\bG\bA\|_F^2 + 2p^{-1} \|\bV\|_{\frob}^2
    \le C(\delta, \|\loss'\|_{\lip}) \cdot \|\bA\|_{\oper}^2 \|\bG\|_{\oper}^4
\end{align}
By $\|\bA\|_{\oper}\le (p\mu)^{-1}$ in \eqref{bound-A} and \Cref{lm:bound_h_psi}, we obtain $\bar\E[\Xi] \le \C$. 
This gives
\begin{align*}
\bar\E\Bigl|
\frac{n \bh^\top\tilde\bh - \sum_{i \in n} (\bg_i^\top \bh - \sum_{j\in [p]} \frac{\partial h_j}{\partial g_{ij}} )(\bg_i^\top\tilde\bh - \sum_{j\in [p]} \frac{\partial \tilde h_j}{\partial g_{ij}})}{\{\|\bh\|^2 + p^{-1} \|\bpsi\|^2\}^{1/2} \{\|\tilde\bh\|^2 + p^{-1} \|\tilde \bpsi\|^2\}^{1/2} }
\Bigr| \le \C \sqrt{n}
\end{align*}
Since the denominator is $\Op(1)$, we have 
$$
n \bh^\top\tilde\bh - \sum_{i \in [n]} \Bigl(\bg_i^\top \bh - \sum_{j\in [p]} \frac{\partial h_j}{\partial g_{ij}} \Bigr)\Bigl(\bg_i^\top\tilde\bh - \sum_{j\in [p]} \frac{\partial \tilde h_j}{\partial g_{ij}}\Bigr) = \Op(\sqrt{n}). 
$$
By the derivative formula \eqref{eq:derivative_formula}, noting that $\frac{\partial h_j}{\partial g_{ij}} = 0$ for all $i\notin I$ and $j\in [p]$, it holds that 
\begin{align}\label{eq:derivative_h_partial_I}
\sum_{j\in [p]} \frac{\partial h_j}{\partial g_{ij}} 
 = \begin{dcases}
 0 & i \notin I\\
\tr[\bA] \psi_i - \bh^\top\bA\bG\bD \be_i & i \in I
\end{dcases}     
\end{align}
Combined with %
$\|\bh^\top\bA\bG\bD\|^2 = \Op(n^{-1})$, the Cauchy--Schwarz inequality leads to 
$$
\sum_{i \in [n]} \Bigl(\bg_i^\top \bh - \sum_{j\in [p]} \frac{\partial h_j}{\partial g_{ij}} \Bigr)\Bigl(\bg_i^\top\tilde\bh - \sum_{j\in [p]} \frac{\partial \tilde h_j}{\partial g_{ij}} \Bigr) = \sum_{i\in [n]} \Bigl(\bg_i^\top \bh - \tr[\bA] \ind_{\{i \in I \}} \psi_i\Bigr)\Bigl(\bg_i^\top\tilde\bh - \tr[\tilde\bA] \ind_{\{i \in \tilde I \}} \tilde\psi_i\Bigr) + \Op(1).
$$
This gives $\xi_2 = \Op(\sqrt{n}) + \Op(1) = \Op(\sqrt{n})$ and completes the proof. 

\subsubsection{Proof of \Cref{lm:chi_square_general}}
\label{sec:proof-lm:chi_square_general}
Let us denote $\bm{f} = (\brho^\top, \bzeta^\top)^\top \in \R^{Q+L}$ and $\tilde{\bm{f}} = (\tilde \brho^\top, \tilde \bzeta^\top)^\top \R^{Q\times L}$. Let us write the product as 
$$
\frac{\brho^\top\tilde\brho}{\|\bm{f}\| \|\tilde{\bm{f}}\|} = \Bigl\|\frac{1}{2}\Bigl(\frac{\brho}{\|\bm f\|} + \frac{\tilde\brho}{\|\tilde{\bm f}\|}\Bigr) \Bigr\|^2 - \Bigl\|\frac{1}{2} \Bigl(\frac{\brho}{\|\bm f\|} - \frac{\tilde\brho}{\|\tilde{\bm f}\|}\Bigr)\Bigr\|^2 
$$
Note that $\frac{\brho}{\|\bm f\|}$ and $\frac{\tilde\brho}{\|\tilde{\bm f}\|}$ are bounded by $1$ in the standard Euclid norm $\|\cdot\|$. Applying the $\chi$-square type moment inequality \cite[Theorem 7.2]{bellec2020out} to these terms respectively, 
we observe that the following two terms are bounded from above by $\sqrt{K}\{1+\E[\Xi + \tilde\Xi]\}^{1/2} + \E[\Xi+\tilde \Xi]$ up to some universal constant:
\begin{align*}
    & \E\Bigl|
    K \Bigl\|\Bigl(\frac{\brho}{\|\bm f\|} + \frac{\tilde\brho}{\|\tilde{\bm f}\|}\Bigr) \Bigr\|^2 - \sum_{k\in[K]} \Bigl(
    \bz_k^\top \Bigl(\frac{\brho}{\|\bm f\|} + \frac{\tilde\brho}{\|\tilde{\bm f}\|}\Bigr) - \sum_{q\in[Q]} \frac{\partial}{\partial z_{kq}} \Bigl(\frac{\rho_q}{\|\bm f\|} + \frac{\tilde{\rho}_q}{\|\tilde{\bm f}\|}\Bigr)
    \Bigr)^2 
    \Bigr| \\
    &\E\Bigl|
    K \Bigl\|\Bigl(\frac{\brho}{\|\bm f\|} - \frac{\tilde\brho}{\|\tilde{\bm f}\|}\Bigr) \Bigr\|^2 - \sum_{k\in[K]} \Bigl(
    \bz_k^\top \Bigl(\frac{\brho}{\|\bm f\|} - \frac{\tilde\brho}{\|\tilde{\bm f}\|}\Bigr) - \sum_{q\in[Q]} \frac{\partial}{\partial z_{kq}} \Bigl(\frac{\rho_q}{\|\bm f\|} - \frac{\tilde{\rho}_q}{\|\tilde{\bm f}\|}\Bigr)
    \Bigr)^2 
    \Bigr|
\end{align*}
Thus, by the triangle inequality, we are left with the bound of cross terms:
\begin{equation}\label{eq:cross_term_bound}
\E\Bigl|
K \frac{\brho^\top\tilde\brho}{\|\bm{f}\| \|\tilde{\bm f}\|} - \sum_{k\in [K]} b_k \tilde b_k 
\Bigr| \lesssim \sqrt{K} \{1+\E[\Xi + \tilde\Xi]\}^{1/2} + \E[\Xi+\tilde \Xi]    
\end{equation}
where $b_k = \bz_k^\top \brho/\|\bm{f}\| - \sum_{q\in [Q]} (\partial/\partial z_{kq}) (\rho_q/\|\bm{f}\|)$. 
Expanding the derivative of the second term, $b_k$ can be written as 
\begin{align*}
    b_k  = \underbrace{\frac{\bz_k^\top \brho-\sum_{q\in [Q]}\frac{\partial \rho_q}{\partial z_{kq}}}{\|\bm{f}\|}}_{=:a_k} - \sum_{q\in [Q]} \rho_q \frac{\partial}{\partial z_{kq}} \frac{1}{\|\bm{f}\|} = a_k + \sum_{q\in [Q]} \rho_q \frac{\bm{f}^\top}{\|\bm{f}\|^3} \frac{\partial \bm{f}}{\partial z_{kq}}
\end{align*}
Thus, by multiple applications of Cauchy--Schwarz inequality, using $\|\brho\|^2\le \|\bm f\|^2$, we find that the error $\|\bb-\ba\|^2 = \sum_{k\in [K]} (b_k-a_k)^2$ is bounded from above by $2\Xi$:
$$
\|\ba-\bb\|^2 = \sum_{k\in K} (\sum_{q\in [Q]} \rho_q \frac{\bm{f}^\top}{\|\bm{f}\|^3} \frac{\partial \bm{f}}{\partial z_{kq}})^2 \le \sum_{k} \|\brho\|^2 \sum_{q} \Bigl(\frac{\bm{f}^\top}{\|\bm{f}\|^3} \frac{\partial \bm{f}}{\partial z_{kq}}\Bigr)^2 \le \frac{1}{\|\bm f\|^2} \sum_{k,q} \Bigl\| \frac{\partial \bm{f}}{\partial z_{kq}} \Bigr\|^2 \le 2 \Xi.
$$
The same argument gives $\|\tilde \ba-\tilde \bb\|^2\le 2 \tilde\Xi$. 

Now we claim the following deterministic inequality for all $\bu, \tilde\bu, \ba, \tilde\ba, \bb, \tilde\bb\in\R^K$ with $\|\bu\|\vee \|\tilde\bu\| \le 1$:
\begin{align}
\bigl||K \bu^\top \tilde\bu - \ba^\top \tilde\ba| - |K\bu^\top\tilde\bu - \bb^\top \tilde\bb|\bigr| &\le (\|\ba-\bb\|^2 + \|\tilde\ba - \tilde\bb\|^2) + \sqrt{K} (\|\ba-\bb\| + \|\tilde\ba - \tilde\bb\|) \nonumber \\
&+ 2^{-1} (|K\|\bu\|^2 - \|\bb\|^2| + |K\|\tilde \bu\|^2-\|\tilde\bb\|^2|) \label{eq:deterministic_ineq}
\end{align}
We prove this inequality later. Applying this inequality with $\bu=\bm{\brho}/\|\bm{f}\|$ and $(\ba, \bb)$ defined above, using $\|\ba-\bb\|^2 \le 2\Xi$, we get 
\begin{align*}
|K \bu^\top \tilde\bu-\ba^\top\tilde\ba| &\le  
2(\Xi+\tilde \Xi) + \sqrt{2K} (\Xi^{1/2} + \tilde \Xi^{1/2})
\\
& + 2^{-1} (|K\|\bu\|^2 - \|\bb\|^2| + |K\|\tilde \bu\|^2-\|\tilde\bb\|^2|)\\
&+ |K\bu^\top\tilde\bu - \bb^\top \tilde\bb|
\end{align*}
Taking the expectation, using the moment bound \eqref{eq:cross_term_bound}, we are left with 
\begin{align*}
\E|K \bu^\top\tilde\bu-\ba^\top\tilde\ba| &\lesssim  \E[\Xi+\tilde\Xi] + \sqrt{K} \E[(\Xi^{1/2} 
+ \tilde{\Xi}^{1/2})]\\
&\quad + \{1 + \E[\Xi]\}^{1/2} + \E[\Xi] + \{1 + \E[\tilde \Xi]\}^{1/2} + \E[\tilde \Xi] \\
&\qquad + \{1 + \E[\Xi +\tilde\Xi]\}^{1/2} + \E[\Xi + \tilde\Xi] 
\end{align*}
Using Jensen's inequality $\E[X^{1/2}]\le \sqrt{\E[X]}$ for any non-negative random variable $X$ and 
$\sqrt{a}+\sqrt{b} \le \sqrt{2} \sqrt{a+b}$ for any non-negative scalars $a, b$, the RHS is bounded from above by
$\sqrt{K} \{1+\E[\Xi+\tilde\Xi]\}^{1/2} + \E[\Xi+\tilde\Xi]$ up to some universal constant. This finishes the proof. 

Below we prove the deterministic inequality \eqref{eq:deterministic_ineq}. By multiple applications of the triangle inequality and Cauchy--Schwarz inequality, 
\begin{align*}
    \bigl||K \bu^\top\tilde\bu - \ba^\top\tilde\ba| - |K \bu^\top\tilde\bu - \bb^\top\tilde\bb| \bigr| 
    &\le |\ba^\top\tilde\ba - \bb^\top\tilde\bb|\\
    &\le |(\ba-\bb)^\top(\tilde\ba-\tilde\bb) + (\ba-\bb)^\top\tilde\bb + \bb^\top(\tilde\ba-\tilde\bb)|\\
    &\le \|\ba-\bb\| \|\tilde\ba-\tilde\bb\| + \|\ba-\bb\| \|\tilde \bb\| + \|\tilde \ba-\tilde\bb\| \|\bb\|\\
    &\le \frac{\|\ba-\bb\|^2 +  \|\tilde\ba-\tilde\bb\|^2}{2} + \|\ba-\bb\| \|\tilde \bb\| + \|\tilde \ba-\tilde\bb\| \|\bb\|
\end{align*}
Using $\|\bb\| \le \sqrt{|\|\bb\|^2 - K\|\bu\|^2|} + \sqrt{K\|\bu\|^2}$ and $\|\bu\|\le 1$, $\|\ba-\bb\|\|\tilde\bb\|$ can be bounded from above as 
$$
\|\ba-\bb\|\|\tilde\bb\| \le \|\ba-\bb\| \sqrt{|\|\tilde\bb\|^2 - K\|\tilde \bu\|^2|} + \sqrt{K}\|\ba-\bb\| \le \frac{\|\ba-\bb\|^2}{2} +\frac{ |\|\tilde\bb\|^2 - K\|\tilde \bu\|^2|}{2} + \sqrt{K}\|\ba-\bb\|.
$$
The same argument gives $\|\tilde \ba-\tilde \bb\|\bb\| \le \frac{\|\tilde\ba-\tilde\bb\|^2}{2} + \frac{\|\bb\|^2 - K\|\bu\|^2|}{2} + \sqrt{K}\|\tilde\ba-\tilde\bb\|$. Putting them all together, we obtained the desired upper bound of $\bigl||K\bu^\top\tilde\bu - \ba^\top\tilde\ba| - |K\bu^\top\tilde\bu - \bb^\top\tilde\bb| \bigr|$.  

\subsection{Proof of trace convergence}
\label{subsec:proof_convergence_trace}
We assume without loss of generality that $I=[n]$. 
Throughout this section, we denote $\bh=\hat\bh_{\mu, K}$ and 
$\bpsi = \bpsi_{\mu, K}$ for simplicity. 
\begin{lemma}
    \label{lm:trace_representation}
    For any $\mu \in (0,1]$ with $\mu^{-1} = o(n^{1/4})$, 
    \begin{align*}
    \tr[\bV] &= \|\bh\|^{-2} \bigl(\|\bpsi\|^2\tr[\bA] - \bpsi^\top\bG\bh\bigr) + \Op(n^{1/2} \mu^{-1})\\
        \tr[\bA]^2 &= \frac{(\bpsi^\top \bG\bh)^2 + 
        \|\bh\|^2 (
       p\|\bpsi\|^2-\|\bG^\top\bpsi\|^2
        )}{\|\bpsi\|^4}+  \Op(n^{-1/2}\mu^{-2})
    \end{align*}
    and $\PP(\tr[\bA]^2 \le C) \to 1$ for a constant  $C>0$ that only depend on $(\alpha, \beta, \delta)$. 
\end{lemma}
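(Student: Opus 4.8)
The plan is to derive both identities from quantitative Gaussian integration by parts applied to a handful of scalar functionals of $\bG$ built out of $\bh\coloneqq\hat\bh_{\mu,K}$ and $\bpsi\coloneqq\bpsi_{\mu,K}$, using the exact derivative formulas of \Cref{lm:derivative}, the a priori bounds of \Cref{lm:bound_h_psi}, the $\chi^2$-type concentration of \Cref{lm:chi_square_general}, and the machinery behind \Cref{lm:approx_multi_normal} (and \cite{bellec2020out}). Throughout one works on the high-probability event of \Cref{lm:bound_h_psi}(1) where $\hat\bh_{\mu,K}=\hat\bh_\mu$, on which $\|\bh\|^2\pto\alpha^2>0$ and $\|\bpsi\|^2/p\pto\beta^2>0$ by \Cref{lm:ridge_smoothing}, and one records the bounds $\|\bA\|_{\oper}\le(p\mu)^{-1}$, $\tr[\bA]\ge 0$, $\|\bV\|_{\oper}\vee\|\bD\|_{\oper}\le\|\loss'\|_{\lip}$ (\Cref{lm:derivative}), $\|\bG\|_{\oper}=\Op(n^{1/2})$, together with the moment bounds of \Cref{lm:bound_h_psi}(2). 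All of the $\mu$-dependence in the error terms enters through $\|\bA\|_{\oper}\le(p\mu)^{-1}$.

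For the first identity, I would apply quantitative Gaussian integration by parts to the bilinear form $\bpsi^\top\bG\bh=\sum_{i,j}\psi_i g_{ij}h_j$: this shows that $\bpsi^\top\bG\bh$ agrees with its divergence $\sum_{i,j}\partial(\psi_i h_j)/\partial g_{ij}$ up to a conditional fluctuation of order $\Op(n^{1/2}\mu^{-1})$, the latter bound obtained by inserting the derivative formulas of \Cref{lm:derivative} and using $\|\bA\|_{\oper}\le(p\mu)^{-1}$, $\|\bG\|_{\oper}=\Op(n^{1/2})$ and the moment bounds. A direct computation of the divergence via \Cref{lm:derivative} gives $\sum_{i,j}\partial(\psi_i h_j)/\partial g_{ij}=\|\bpsi\|^2\tr[\bA]-\|\bh\|^2\tr[\bV]-2\,\bpsi^\top\bD\bG\bA\bh$, and the cross term is negligible since $|\bpsi^\top\bD\bG\bA\bh|\le\|\bpsi\|\,\|\bD\|_{\oper}\|\bG\|_{\oper}\|\bA\|_{\oper}\|\bh\|=\Op(\mu^{-1})$. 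Rearranging and dividing by $\|\bh\|^2$, which is bounded away from $0$ with probability tending to $1$, yields the first identity.

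For the second identity I would combine four relations: (i) the first identity; (ii) the same machinery applied to $\|\bG^\top\bpsi\|^2=\bpsi^\top\bG\bG^\top\bpsi$, whose divergence computed via \Cref{lm:derivative} yields $\|\bpsi\|^2\df=p\|\bpsi\|^2-\|\bG^\top\bpsi\|^2-\tr[\bV]\,\bpsi^\top\bG\bh+(\text{lower-order terms})$, where $\df\coloneqq\tr[\bA\bG^\top\bD\bG]$; (iii) the same machinery applied to $\|\bG\bh\|^2=\sum_i(\bg_i^\top\bh)^2$, giving $\|\bh\|^2\df=n\|\bh\|^2+\tr[\bA]\,\bpsi^\top\bG\bh-\|\bG\bh\|^2+(\text{lower-order terms})$; and (iv) \Cref{lm:chi_square_general} applied with $\brho=\tilde\brho=\bh$, $\bzeta=\tilde\bzeta=\bpsi/\sqrt p$, which reads, after substituting $\sum_q\partial h_q/\partial g_{iq}=\tr[\bA]\psi_i-(\bh^\top\bA\bG^\top\bD)_i$ and discarding the negligible $\bh^\top\bA\bG^\top\bD$ terms, $\|\bG\bh\|^2-2\tr[\bA]\,\bpsi^\top\bG\bh+\tr[\bA]^2\|\bpsi\|^2=n\|\bh\|^2+\Op(n^{1/2}\mu^{-1})$. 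Eliminating $\|\bG\bh\|^2$ between (iii) and (iv) gives $\|\bh\|^2\df=\tr[\bA]^2\|\bpsi\|^2-\tr[\bA]\,\bpsi^\top\bG\bh+(\text{error})$; substituting $\tr[\bV]$ from (i) into (ii) gives $\|\bh\|^2\|\bpsi\|^2\df=\|\bh\|^2(p\|\bpsi\|^2-\|\bG^\top\bpsi\|^2)-\|\bpsi\|^2\tr[\bA]\,\bpsi^\top\bG\bh+(\bpsi^\top\bG\bh)^2+(\text{error})$; multiplying the former by $\|\bpsi\|^2$ and equating with the latter, the terms $\|\bpsi\|^2\tr[\bA]\,\bpsi^\top\bG\bh$ cancel and one is left with $\tr[\bA]^2\|\bpsi\|^4=(\bpsi^\top\bG\bh)^2+\|\bh\|^2(p\|\bpsi\|^2-\|\bG^\top\bpsi\|^2)+(\text{error})$. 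Dividing by $\|\bpsi\|^4$, which is of order $n^2$ and bounded away from $0$ since $\|\bpsi\|^2/p\pto\beta^2>0$ and $p/n\to\delta$, turns the accumulated error into $\Op(n^{-1/2}\mu^{-2})$. Finally, $\PP(\tr[\bA]^2\le C)\to 1$ follows from this identity: by Cauchy--Schwarz $(\bpsi^\top\bG\bh)^2=(\bh^\top\bG^\top\bpsi)^2\le\|\bh\|^2\|\bG^\top\bpsi\|^2$, so $\tr[\bA]^2\|\bpsi\|^4\le\|\bh\|^2 p\|\bpsi\|^2+(\text{error})$ and hence $\tr[\bA]^2\le\|\bh\|^2 p/\|\bpsi\|^2+\op(1)\pto\alpha^2/\beta^2$, so $C$ may be taken to depend only on $(\alpha,\beta,\delta)$.

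The main obstacle is the error bookkeeping across these several Gaussian integration-by-parts steps: one must track the powers of $n$ and $\mu^{-1}$ carefully (for instance, by choosing the auxiliary function $\bzeta$ in \Cref{lm:chi_square_general} to have $\Op(1)$ rather than $\Op(n^{1/2})$ norm), so that the cumulative remainder is exactly $\Op(n^{1/2}\mu^{-1})$ in the first identity and $\Op(n^{-1/2}\mu^{-2})$ in the second; this is precisely what dictates the hypothesis $\mu^{-1}=o(n^{1/4})$. A second, more structural difficulty is that the auxiliary ``degrees-of-freedom'' scalar $\df=\tr[\bA\bG^\top\bD\bG]$ and the quantity $\|\bG\bh\|^2$ are not individually expressible through the observable norms $\|\bh\|,\|\bpsi\|,\bpsi^\top\bG\bh,\|\bG^\top\bpsi\|$; they cancel only after the four relations above are combined in the right order, so the first identity must be established before the second.
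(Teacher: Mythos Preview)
Your treatment of the first identity is essentially the paper's: apply \Cref{lm:sure} to the bilinear form $\bpsi^\top\bG\bh$, compute $\sum_{ij}\partial(\psi_i h_j)/\partial g_{ij}$ via \Cref{lm:derivative}, discard the $O_p(\mu^{-1})$ cross terms, and rearrange.

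For the second identity, however, the paper takes a noticeably shorter route that avoids your auxiliary quantities $\df=\tr[\bA\bG^\top\bD\bG]$ and $\|\bG\bh\|^2$ entirely. Instead of your four relations (i)--(iv), the paper simply squares the first identity (written as $\|\bpsi\|^2\tr[\bA]=\bpsi^\top\bG\bh+\|\bh\|^2\tr[\bV]+\Op(\sqrt n\,\mu^{-1})$), then uses the exact algebraic rewriting
\[
(\bpsi^\top\bG\bh+\|\bh\|^2\tr[\bV])^2
=(\bpsi^\top\bG\bh)^2+\|\bh\|^2\bigl(\|\bG^\top\bpsi+\tr[\bV]\bh\|^2-\|\bG^\top\bpsi\|^2\bigr),
\]
and finally applies \Cref{lm:chi_square_general} \emph{once}, with $\brho=\tilde\brho=\bpsi/\sqrt p$ (not with $\brho=\bh$ as in your (iv)), so that the divergence term is $\sum_i\partial\psi_i/\partial g_{ij}=-(\bpsi^\top\bD\bG\bA)_j-\tr[\bV]h_j$ and the conclusion reads $p\|\bpsi\|^2=\|\bG^\top\bpsi+\tr[\bV]\bh\|^2+\Op(n^{3/2}\mu^{-1})$. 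Substituting this into the previous display immediately yields $\|\bpsi\|^4\tr[\bA]^2=(\bpsi^\top\bG\bh)^2+\|\bh\|^2(p\|\bpsi\|^2-\|\bG^\top\bpsi\|^2)+\Op(n^{3/2}\mu^{-2})$. Your elimination via (ii)--(iv) can be made to work with careful normalizations, but it multiplies the number of places where the $\Op$ bookkeeping can go wrong; the paper's single squaring plus single $\chi^2$-concentration is both shorter and makes the $\mu^{-2}$ dependence transparent (it enters only through $|a^2-b^2|\le|a-b|\cdot|a+b|$ with $|a+b|\le\Op(n\mu^{-1})$ because $\tr[\bA]\le\mu^{-1}$ a priori).

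Your Cauchy--Schwarz argument for $\PP(\tr[\bA]^2\le C)\to1$, bounding the numerator by $\|\bh\|^2 p\|\bpsi\|^2$, is a clean explicit version of what the paper leaves implicit (``follows from the stochastic representation and the convergences $\|\bpsi\|^2/p\pto\beta^2$, $\|\bh\|^2\pto\alpha^2$, $\|\bG\|_{\oper}/\sqrt n\pto 1+\sqrt\delta$'').
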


\begin{proof}
First we show the stochastic representation of $\tr[\bV]$ by $(\bh, \bpsi, \tr[\bA])$. 
\begin{lemma}[\cite{bellec2022derivatives}]\label{lm:sure}
    Let $\brho:\R^{K\times Q}\to\R^Q$ and $\bzeta:\R^{K\times Q} \to \R^K$ be two locally Lipschitz functions with differentiable components. 
    If $\bZ\in \R^{K\times Q}$ has i.i.d.\ $\cN(0,1)$ entries, we have
    \begin{align*}
        \E\Bigl[
        \Bigl(\frac{\bzeta^\top \bZ \brho - \sum_{kq}\frac{\partial (\zeta_k \rho_q)}{\partial g_{kq}}}{\|\bzeta\|^2 + \|\brho\|^2}
        \Bigr)^2
        \Bigr] &\le \C( 1+ \E[\Xi])
    \end{align*}
    where $\Xi := \frac{1}{\|\brho\|^2 + \|\bm\zeta\|^2}
   \sum_{k\in [K]}\sum_{q\in [Q]} \Bigl(
    \bigl\|
    \frac{\partial \brho}{\partial g_{kq}}\bigr\|^2 + \bigl\|\frac{\partial\bm{\zeta}}{\partial g_{kq}}\bigr\|^2\Bigr)$. 
\end{lemma}
By \Cref{lm:sure} with $(\bzeta, \brho) = (p^{-1/2} \bpsi, \bh)$ and $\bZ=\bG$, we have 
$$
\E\Bigl[\Bigl(\frac{\frac{1}{\sqrt{p}} \bpsi^\top\bG \bh - \frac{1}{\sqrt{p}} \sum_{ij}\frac{\partial (\psi_i h_j)}{\partial g_{ij}}}{\|\bh\|^2 + p^{-1} \|\bpsi\|^2}
        \Bigr)^2\Bigr] \le \C(1+\E[\Xi])
$$
where $\Xi$ is the same estimate as in \eqref{eq:def_Xi}. Using the upper estimate $\Xi\le \C \|\bA\|_{\oper}^2 \|\bG\|_{\oper}^4$ in \eqref{eq:Xi_bound}, $\|\bA\|_{\oper}\le (p\mu)^{-1}$ and $\E[\|\bG\|^4] \le C(\delta) n^2$, we find that the RHS in the above display is bounded from away by $C(\delta) (1+\mu^{-2})$. Since the denominator $\|\bh\|^2 + p^{-1} \|\bpsi\|^2$ is $\Op(1)$, we get
$$
\bpsi^\top\bG\bh - \sum_{ij} \frac{\partial \psi_i h_j}{\partial g_{ij}} = \sqrt{p} \cdot \Op(1) \cdot \Op(\mu^{-1}) = \Op(\sqrt{n} \mu^{-1}). 
$$ 
For the sum of derivative $\sum_{ij} \frac{\partial (\psi_i h_j)}{\partial g_{ij}}$, using $\|\bA\|_{\oper}\le (p\mu)^{-1}$ and $\|\bG\|_{\oper}=\Op(\sqrt{n})$, we have 
\begin{align*}
\sum_{ij} \frac{\partial (\psi_i h_j)}{\partial g_{ij}} &= \|\bpsi\|^2\tr[\bA] - \bh^\top \bG^\top\bD \bpsi -\bpsi^\top\bD \bG \bA \bh - \|\bh\|^2 \tr[\bV]\\
&= \|\bpsi\|^2\tr[\bA] + \Op(n^{1/2}) + \Op(\mu^{-1}) - \|\bh\|^2 \tr[\bV] 
\end{align*}
so that we are left with 
$$
\bpsi^\top\bG\bh - \|\bpsi\|^2\tr[\bA] + \|\bh\|^2 \tr[\bV] =\Op(\sqrt{n} \mu^{-1}).
$$
Dividing by $\|\bh\|^2$, noting $\|\bh\|^{-1}=\Op(1)$ from $\|\bh\|^2 \pto \alpha^2>0$, we obtain the representation of $\tr[\bV]$. 

Next, we show the stochastic representation of $\tr[\bA^2]$ by $(\bG, \bh, \bpsi)$. By the stochastic representation of $\tr[\bV]$, 
\begin{align*}
\bigl|\|\bpsi\|^4\tr[\bA]^2 - (\bpsi^\top\bG\bh + \tr[\bV]\|\bh\|^2)^2\bigr|&\le \Op(\sqrt{n}\mu^{-1}) \bigl|\|\bpsi\|^2 \tr[\bA]  + \bpsi^\top\bG\bh + \tr[\bV]\|\bh\|^2\bigr|\\
&\le \Op(\sqrt{n}\mu^{-1}) \Op(n \mu^{-1} + n + n)\\
&= \Op(n^{3/2} \mu^{-2}).
\end{align*}
By \Cref{lm:chi_square_general} with $\brho=\tilde\brho = \bpsi/\sqrt{p}$ and $\bzeta=\tilde\bzeta=\bh$, we have 
\begin{align*}
\E\Bigl[
        \frac{
        \bigl|
        p \|\frac{\bpsi}{\sqrt{p}}\|^2 - \sum_{j \in [p]} (\frac{\bpsi^\top}{\sqrt{p}}\bG\be_j-  \frac{1}{\sqrt{p}} \sum_{i \in [n]}\frac{\partial\bpsi}{\partial g_{ij}})^2\bigr|
        }{\|\bh\|^2 + p^{-1} \|\bpsi\|^2}
        \Bigr] &\le \C (\sqrt{n} (1+\E[\Xi]^{1/2}) + \E[\Xi]).
\end{align*}
where $\Xi$ is the same estimate as in \eqref{eq:def_Xi}. Using $\E[\Xi]\le \C \mu^{-2}$ again, we get 
$$
\|\bpsi\|^2 - \frac{1}{p} \sum_{j \in [p]} (\bpsi^\top\bG\be_j-\sum_{i \in [n]}\frac{\partial\psi_i}{\partial g_{ij}})^2=\Op(1) \cdot \Op(\sqrt{n} (1+\mu^{-1}) + \mu^{-2}) 
= (n^{1/2}\mu^{-1})
$$

Using $\sum_{i \in [n]} \frac{\partial \psi_i}{\partial g_{ij}} = -\bpsi^\top \bD\bG\bA\be_j - \tr[\bV]h_j
$ by the derivative formula \eqref{eq:derivative_formula}, it holds that 
\begin{align*}
    & \Bigl\|\sum_{j \in [p]} (\bpsi^\top\bG\be_j-\sum_{i \in [n]}\frac{\partial\psi_i}{\partial g_{ij}})^2 -  \| \bG^\top \bpsi + \tr[\bV] \bh\|^2\Bigr\|\\
    &=\Bigl|\| \bG^\top \bpsi + \bA^\top\bG^\top \bD \bpsi + \tr[\bV] \bh\|^2 -  \| \bG^\top \bpsi + \tr[\bV] \bh\|^2\Bigr|\\
    &\le \|\bA^\top\bG^\top\bD\bpsi\| \Bigl(\| \bA^\top\bG^\top \bD \bpsi\| +  2 \| \bG^\top \bpsi + \tr[\bV] \bh\|\Bigr)\\
    &= \Op(\mu^{-1}) (\Op(\mu^{-1}) + \Op(n)) = \Op(\mu^{-1} n). 
\end{align*}
Combining the above displays, we are left with 
$$
\|\bpsi\|^2 - p^{-1} \|\bG^\top\bpsi + \tr[\bV]\bh\|^2 = \Op(n^{1/2}\mu^{-1}) + \Op(\mu^{-1} n^{-1}) = \Op(n^{1/2}\mu^{-1}). 
$$
Therefore, 
\begin{align*}
    \|\bpsi\|^4 \tr[\bA]^2 &= (\bpsi^\top\bG\bh + \tr[\bV]\|\bh\|^2)^2 + \Op(n^{3/2}\mu^{-2})\\
    &= (\bpsi^\top \bG\bh)^2 + 
    \|\bh\|^2 (
    2\tr[\bV] \bpsi^\top\bG\bh + \tr[\bV]^2 \|\bh\|^2) + \Op(n^{3/2}\mu^{-2}))\\
    &= (\bpsi^\top \bG\bh)^2 + 
    \|\bh\|^2 (
    \|\bG^\top\bpsi+\tr[\bV]\bh\|^2-\|\bG^\top\bpsi\|^2
    ) + \Op(n^{3/2}\mu^{-2}))\\
     &= (\bpsi^\top \bG\bh)^2 + 
    \|\bh\|^2 (
   p\|\bpsi\|^2-\|\bG^\top\bpsi\|^2
    ) + \Op(n^{3/2} \mu^{-1}) + \Op(n^{3/2}\mu^{-2}). 
 \end{align*}
Multiplying by $\|\bpsi\|^{-4}$, which is $\Op(n^2)$ since $\|\bpsi\|^2/p \pto\beta^2>0$, we obtain this representation of $\tr[\bA]^2$. 
The upper bound $\PP(\tr[\bA]^2\le \C)$ follows from the stochastic representation and the convergences:
$\|\bpsi\|^2/p \pto\beta^2>0$, $\|\bh\|^2 \pto \alpha^2>0$, $\|\bG\|_{\oper}/\sqrt{n} \pto 1+\sqrt{\delta}>0.$
\end{proof}

\begin{lemma}
    \label{lm:concentration_trace_A_RHS}
    Letting $\bar\E[\cdot]=\E[\cdot|\bz, \btheta]$ be the conditional expectation with respect to the design matrix $\bG$, for any $\mu \in (0,1]$ with $\mu^{-2} = o(n)$, we have 
    \small
    $$
    \frac{(\bpsi^\top \bG\bh)^2 + 
    \|\bh\|^2 (
   p\|\bpsi\|^2-\|\bG^\top\bpsi\|^2
    )}{\|\bpsi\|^4} = \frac{(\bar\E[\bpsi^\top \bG\bh])^2 + 
    \bar\E[\|\bh\|]^2 (
   p\bar\E[\|\bpsi\|^2-\|\bG^\top\bpsi\|^2]
    )}{\bar\E[\|\bpsi\|^2]^2}+ \Op(n^{-1/2} \mu^{-1}).
    $$
    \normalsize
\end{lemma}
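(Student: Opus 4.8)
This is a concentration estimate for a smooth functional of the Gaussian design $\bG$ around its conditional mean given $(\bz,\btheta)$, and the engine is the Gaussian Poincar\'e inequality applied conditionally on $(\bz,\btheta)$ (under which $\bG$ still has i.i.d.\ $\cN(0,1)$ entries), exactly as in the proof of \Cref{lm:gaussian_Poincare_hpsi}. Writing $\bar\E[\cdot]=\E[\cdot\mid\bz,\btheta]$, the plan is to apply Poincar\'e to each scalar statistic of $\bG$ entering the ratio, bounding its conditional variance by $\sum_{i,j}\bar\E[(\partial(\cdot)/\partial g_{ij})^2]$ with the derivative formulas \eqref{eq:derivative_formula}, the operator-norm bounds $\|\bA\|_{\oper}\le(p\mu)^{-1}$ (from \eqref{bound-A}) and $\|\bV\|_{\oper}\le\|\loss'\|_{\lip}$, and the moment bounds of \Cref{lm:bound_h_psi}, and then to propagate these into the ratio by elementary algebra, using $\|\bpsi\|^2/p\pto\beta^2>0$ so that $\|\bpsi\|^4$ and $\bar\E[\|\bpsi\|^2]^2$ are both of order $n^2$ with probability tending to one.

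\textbf{Reduction of the dangerous term and assembly.} For the ``low-order'' statistics $\bpsi^\top\bG\bh$, $\|\bpsi\|^2$, $\tr[\bV]$ (each of order $n$) and $\|\bh\|^2$ (of order $1$), the Poincar\'e bound gives conditional variances $\Op(n\mu^{-2})$, $\Op(n\mu^{-2})$, $\Op(n\mu^{-2})$ and $\Op(n^{-1}\mu^{-2})$ respectively, so each agrees with its $\bar\E$-mean up to $\Op(n^{1/2}\mu^{-1})$ (up to $\Op(n^{-1/2}\mu^{-1})$ for $\|\bh\|^2$); here it is essential to exploit the rank-one-in-$(\be_i,\be_j)$ structure of $\partial_{ij}\bpsi$ and $\partial_{ij}\bh$ in \eqref{eq:derivative_formula} rather than crude operator-norm bounds, otherwise one loses a factor of $n$. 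The quantity $p\|\bpsi\|^2-\|\bG^\top\bpsi\|^2$ is \emph{not} concentrated directly: its naive design-derivative involves $\|\bG\bG^\top\bpsi\|=\Op(n^{3/2})$, which yields only conditional variance $\Op(n^4\mu^{-2})$, too large by a factor of $n$. Instead I would re-use the identity $p\|\bpsi\|^2=\|\bG^\top\bpsi+\tr[\bV]\bh\|^2+\Op(n^{3/2}\mu^{-1})$ established inside the proof of \Cref{lm:trace_representation} (via the KKT condition $\bG^\top\bpsi\in\sqrt p\,\partial\reg(\sqrt p\,\bh+\btheta)$ and the self-normalized inequality of \Cref{lm:chi_square_general}) to rewrite the numerator of the ratio as
\[
(\bpsi^\top\bG\bh)^2+\|\bh\|^2\bigl(p\|\bpsi\|^2-\|\bG^\top\bpsi\|^2\bigr)=\bigl(\bpsi^\top\bG\bh+\tr[\bV]\,\|\bh\|^2\bigr)^2+\Op(n^{3/2}\mu^{-1}),
\]
and, taking $\bar\E$ of the same identity and expanding, to rewrite the right-hand numerator of the lemma as $\bigl(\bar\E[\bpsi^\top\bG\bh]+\bar\E[\tr[\bV]]\,\bar\E[\|\bh\|^2]\bigr)^2+\Op(n^{3/2}\mu^{-1})$, the covariance corrections that arise (e.g.\ $\mathrm{Cov}_{\bar\E}(\tr[\bV],\bpsi^\top\bG\bh)$) being $\Op(n\mu^{-2})$ by Cauchy--Schwarz, hence of smaller order. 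Since $\bpsi^\top\bG\bh+\tr[\bV]\|\bh\|^2$ is of order $n$ and agrees with its $\bar\E$-counterpart up to $\Op(n^{1/2}\mu^{-1})$, squaring matches up to $\Op(n^{3/2}\mu^{-1})$; dividing by $\|\bpsi\|^4$, replacing $\|\bpsi\|^4$ by $\bar\E[\|\bpsi\|^2]^2$ (relative error $\Op(n^{-1/2}\mu^{-1})$), and collecting the $\Op(n^{3/2}\mu^{-1})$ remainders, every error becomes $\Op(n^{-1/2}\mu^{-1})$ after division by the order-$n^2$ denominator. (Whether $\|\bh\|^2$ is replaced by $(\bar\E[\|\bh\|])^2$ or $\bar\E[\|\bh\|^2]$ is immaterial, since these differ by $\mathrm{Var}_{\bar\E}(\|\bh\|)=\Op(n^{-1}\mu^{-2})$, which is negligible relative to $n^{-1/2}\mu^{-1}$ when $\mu^{-2}=o(n)$.)

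\textbf{Main obstacle.} The crux is precisely the term $p\|\bpsi\|^2-\|\bG^\top\bpsi\|^2$: although it is of size $n^2$, its Gaussian derivative is too large by a factor of $n$, so it cannot be treated as an elementary statistic; the resolution is to first expose it, via the KKT stationarity condition together with the self-normalized inequality of \Cref{lm:chi_square_general} (both already exploited in \Cref{lm:trace_representation}), as a polynomial in statistics whose fluctuations genuinely are $\Op(n^{1/2}\mu^{-1})$. A secondary technical point is keeping all the Poincar\'e variance bounds rate-sharp by working with the structured derivative formulas \eqref{eq:derivative_formula} rather than operator-norm bounds, in particular for the control of $\tr[\bV]$; once these rate-sharp bounds are in hand, the remaining steps are routine bookkeeping with the moment estimates of \Cref{lm:bound_h_psi} and the lower bound $\|\bpsi\|^2/p\pto\beta^2>0$.
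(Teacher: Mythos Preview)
Your diagnosis of the ``main obstacle'' is mistaken, and the workaround you build around it introduces a genuine gap. The paper does \emph{not} detour through \Cref{lm:trace_representation}; it concentrates $\|\bG^\top\bpsi\|^2$ directly by Poincar\'e. Writing out
\[
\tfrac12\,\partial_{ij}\|\bG^\top\bpsi\|^2
=\bpsi^\top\bG\,\partial_{ij}(\bG^\top\bpsi)
=\bpsi^\top\bG(\bI_p-\bG^\top\bD\bG\bA)\be_j\,\psi_i-\bpsi^\top\bG\bG^\top\bV\be_i\,h_j
\]
using \eqref{eq:derivative_formula}, and summing the squares over $(i,j)$ with the same rank-one bookkeeping you already advocate, one gets $\|\bpsi\|^2\|(\bI_p-\bA^\top\bG^\top\bD\bG)\bG^\top\bpsi\|^2+\|\bh\|^2\|\bV\bG\bG^\top\bpsi\|^2$. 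Since $\|\bA^\top\bG^\top\bD\bG\|_{\oper}=\Op(\mu^{-1})$ and $\|\bV\|_{\oper}\le\|\loss'\|_{\lip}$, this is $\Op(n^3\mu^{-2})$, not $\Op(n^4\mu^{-2})$: the vector $\bG\bG^\top\bpsi$ never appears unmultiplied, it is always hit by $\bV$ or by $\bD\bG\bA$. So the paper simply establishes the four fluctuations $\|\bpsi\|^2-\bar\E[\|\bpsi\|^2]=\Op(n^{1/2}\mu^{-1})$, $\|\bh\|^2-\bar\E[\|\bh\|^2]=\Op(n^{-1/2}\mu^{-1})$, $\bpsi^\top\bG\bh-\bar\E[\bpsi^\top\bG\bh]=\Op(n^{1/2}\mu^{-1})$, $\|\bG^\top\bpsi\|^2-\bar\E[\|\bG^\top\bpsi\|^2]=\Op(n^{3/2}\mu^{-1})$, and then replaces numerator and denominator term by term.

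Your alternative route has a real hole: you assert a Poincar\'e variance $\Op(n\mu^{-2})$ for $\tr[\bV]$, but computing $\partial_{ij}\tr[\bV]$ means differentiating $\bV=\bD-\bD\bG\bA\bG^\top\bD$, hence differentiating $\bD=\diag[\loss''(z_i-\bg_i^\top\bh)]$ and the implicitly defined $\bA$. The derivative formulas \eqref{eq:derivative_formula} give only $\partial_{ij}\bh$ and $\partial_{ij}\bpsi$; under the paper's standing assumption that $\loss'$ is merely Lipschitz, $\loss''$ need not be continuous and $\tr[\bV]$ need not be differentiable in $\bG$ at all, so Poincar\'e does not apply. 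A second issue is your plan to ``take $\bar\E$ of the identity from \Cref{lm:trace_representation}'': that identity is an $\Op$ statement, not an almost-sure bound with controlled moments, so passing $\bar\E$ through it (and then matching with the right-hand side of the lemma) requires a uniform-integrability argument you have not supplied. Both complications disappear once you realize the direct Poincar\'e bound on $\|\bG^\top\bpsi\|^2$ already gives the needed $\Op(n^{3/2}\mu^{-1})$ fluctuation.
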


\begin{proof}
    By the Gaussian Poincar\'e inequality, we claim the following:
    \begin{align*}
           \bar\E[(\|\bpsi\|^2-\bar\E[\|\bpsi\|^2])^2] &\le  \C n \mu^{-2},\\
       \bar \E[(\|\bh\|^2-\bar\E[\|\bh\|^2])^2] &\le  \C n^{-1} \mu^{-2},\\
        \bar \E[( \bpsi^\top\bG\bh -\bar\E[\bpsi^\top\bG \bh])^2] &\le \C n \mu^{-2},\\
        \bar \E[(\|\bG^\top\bpsi\|^2-\bar\E[\|\bG^\top\bpsi\|^2])^2] &\le {\C n^3\mu^{-2}}.
    \end{align*}
    First and second moment inequalities immediately follow from \Cref{lm:gaussian_Poincare_hpsi} with $\tilde I = I = [n]$. 
    For the third moment inequality, the Gaussian Poincar\'e inequality gives the upper bound $ \E[(\bpsi^\top\bG\bh -\bar\E[\bpsi^\top\bG \bh])^2] \le \sum_{ij} \E \Bigl(\frac{\partial \bpsi^\top\bG\bh}{\partial g_{ij}}\Bigr)^2$, where 
    \begin{align*}
        \text{RHS}_{ij} \coloneq
        \frac{\partial \bpsi^\top\bG\bh}{\partial g_{ij}} &=  \Bigl(- \bD\bG\bA \be_j\psi_i - \bV \be_i h_j\Bigr)^\top \bG\bh + \psi_i h_j + \bpsi^\top \bG \bA \Bigl(\be_j\psi_i  - \bG^{\top}\bD\be_i h_j\Bigr)\\
         &= \be_j^\top\bA^\top \Bigl(-\bG^\top\bD\bG \bh + \bG^\top\bpsi\Bigr) \psi_i - \be_i^\top \Bigl(\bV^\top \bG \bh + \bD\bG \bA^\top\bG^\top\bpsi\Bigr)  h_j + \psi_i h_j
    \end{align*}
    By $\bV = \bD - \bD \bG\bA\bG^\top \bD $ and $\|\bA\|_{\oper}\le (p\mu)^{-1}$ in \eqref{bound-A}, we have $\sum_{i,j} \E[\text{RHS}_{ij}^2] \le \C n\mu^{-2}$. 
    Next,
    $$
    \E\Bigl[\Bigl(\|\bG^\top\bpsi\|^2 - \bar\E[\|\bG^\top\bpsi\|^2]\Bigr)^2\Bigr] \le \sum_{i,j}\E\Bigl(\frac{\partial \|\bG^\top\bpsi\|^2}{\partial g_{ij}}\Bigr)^2 = 4 \sum_{i,j}\E
   (\bpsi^\top\bG  
    \frac{\partial \bG^\top\bpsi}{\partial g_{ij}})^2 
    $$
    where 
    \begin{align*}
        \bpsi^\top\bG \frac{\partial\bG^\top\bpsi}{\partial g_{ij}}  &= \bpsi^\top\bG \be_j \psi_i   + \bpsi^\top\bG \bG^\top  (- \bD\bG\bA \be_j\psi_i - \bV \be_i h_j)\\
        &= \bpsi^\top\bG (\bI_p - \bG^\top\bD \bG\bA)\be_j \psi_i - \bpsi^\top\bG\bG^\top\bV \be_i h_j
    \end{align*}
    so $\sum_{ij} \E[\text{RHS}_{ij}^2] \le \C n^3 \mu^{-2}$. 
    Thus, we get 
    \begin{align*}
      \|\bpsi\|^2 &= \bar\E[\|\bpsi\|^2] + \Op(n^{1/2} \mu^{-1}) \\
       \|\bh\|^2&= \bar\E[\|\bh\|^2] + \Op(n^{-1/2} \mu^{-1}) \\
        \bpsi^\top\bG\bh &= \bar\E[\bpsi^\top\bG \bh] + \Op(n^{1/2} \mu^{-1}) \\
        \|\bG^\top\bpsi\|^2 &= \bar\E[\|\bG^\top\bpsi\|^2]+ \Op(n^{3/2} \mu^{-1}) 
    \end{align*}
    Since $\mu^{-2} = o(n)$, the concentration of $\|\bpsi\|^2$ on the conditional expectation $\E[\|\bpsi\|^2]$ and the convergence $\|\bpsi\|^2/p \pto \beta^2$ yield $\E[\|\bpsi\|^2]/p\pto\beta^2>0$. This implies that $1/\|\bpsi\|^2$ and $1/\E[\|\bpsi\|^2]$ are $\Op(n^{-1})$. Then, the error from replacing the denominator $\|\bpsi\|^4$ by $\bar\E[\|\bpsi\|^2]^2$ is estimated as 
    \begin{align*}
    & (\|\bpsi\|^{-4} - \bar\E[\|\bpsi\|^2]^2)\bigl( (\bpsi^\top \bG\bh)^2 + 
    \|\bh\|^2 (
   p\|\bpsi\|^2-\|\bG^\top\bpsi\|^2
    )\bigr) \\
    &= \frac{(\|\bpsi\|^2 - \bar\E[\|\bpsi\|^2]) (\|\bpsi\|^2 + \bar\E[\|\bpsi\|^2])}{\|\bpsi\|^4 \bar\E[\|\bpsi\|^2]^2} \cdot 
    \Op(n^2)\\
    &= \Op(n^{1/2}\mu^{-1}) \Op(n \cdot n^{-4}) \Op(n^2) = \Op(n^{-1/2}\mu^{-1}).  
    \end{align*}
    For the error from replacing the numerator with the conditional one, the error of replacing each term with the conditional one is given by
    \begin{align*}
        (\bpsi^\top \bG\bh)^2 &=  (\bpsi^\top \bG\bh)^2 + \Op(n^{3/2}\mu^{-1})\\
         \|\bh\|^2 \|\bpsi\|^2 &= \bar\E[\|\bh\|^2]\bar\E[\|\bpsi\|^2] + \Op(n^{1/2}\mu^{-1})\\
         \|\bh\|^2 \|\bG^\top\bpsi\|^2 &= \bar\E[\|\bh\|^2] \bar\E[\|\bG^\top\bpsi\|^2] + \Op(n^{3/2}\mu^{-1}) 
    \end{align*}
    Thus, noting that $\bar\E[\|\bpsi\|^2]^{-1} = \Op(n^{-1})$, we get 
    \begin{align*}
    & \frac{(\bpsi^\top \bG\bh)^2 + 
    \|\bh\|^2 (
   p\|\bpsi\|^2-\|\bG^\top\bpsi\|^2
    ) - (\bar\E[\bpsi^\top \bG\bh])^2 -
    \bar\E [\|\bh\|^2] (
   p\bar\E[\|\bpsi\|^2]-\bar\E[\|\bG^\top\bpsi\|^2]
    )}{\bar\E[\|\bpsi\|^2]^2} \\
    &= \Op(n^{-1/2}\mu^{-1}). 
    \end{align*}
    This finishes the proof. 
\end{proof}

\begin{lemma}
    \label{lm:concentrate_trace}
    Suppose $\mu\in (0,1]$ and $\mu^{-1} = O(n^{1/8})$. 
    Then, there exist a non-negative random variables $\hat{\kappa} \ge 0$, which is independent of $\bG$, such that 
    \begin{align*}
       \PP\Bigl(|\tr[\bA] -\hat\kappa|\le 2 n^{-1/16} \quad \text{and} \quad  |\hat\kappa|\le C \Bigr) &\to 1 
    \end{align*}
    where $C$ is a constant depending on $(\alpha, \beta, \delta)$ only. 
\end{lemma}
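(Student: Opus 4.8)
The plan is to obtain $\hat\kappa$ directly by chaining the two stochastic representations just established. By \Cref{lm:trace_representation} (whose hypothesis $\mu^{-1}=o(n^{1/4})$ is implied by $\mu^{-1}=O(n^{1/8})$),
\[
\tr[\bA]^2 = \frac{(\bpsi^\top \bG\bh)^2 + \|\bh\|^2\,(p\|\bpsi\|^2-\|\bG^\top\bpsi\|^2)}{\|\bpsi\|^4} + \Op(n^{-1/2}\mu^{-2}),
\]
and by \Cref{lm:concentration_trace_A_RHS} (whose hypothesis $\mu^{-2}=o(n)$ is also implied) the fraction on the right equals, up to an additive $\Op(n^{-1/2}\mu^{-1})$ error, the quantity
\[
\hat\kappa^2_0 := \frac{(\bar\E[\bpsi^\top \bG\bh])^2 + \bar\E[\|\bh\|^2]\,(p\,\bar\E[\|\bpsi\|^2] - \bar\E[\|\bG^\top\bpsi\|^2])}{\bar\E[\|\bpsi\|^2]^2},
\]
where $\bar\E[\cdot]=\E[\cdot\mid\bz,\btheta]$. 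After the reduction $I=[n]$ at the start of the section, every $\bar\E$-measurable quantity is $\sigma(\bz,\btheta)$-measurable and hence independent of $\bG$, so $\hat\kappa^2_0$ is independent of $\bG$. I would then simply \emph{define} $\hat\kappa := \sqrt{(\hat\kappa^2_0)_+}$, the square root of the positive part; this is non-negative, independent of $\bG$, and satisfies $\hat\kappa^2=(\hat\kappa^2_0)_+$.

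Combining the two displays gives $\tr[\bA]^2 = \hat\kappa^2_0 + \Op(n^{-1/2}\mu^{-2})$, and since $\tr[\bA]^2\ge 0$ the positive-part truncation costs only another term of the same order, so $\tr[\bA]^2 = \hat\kappa^2 + \Op(n^{-1/2}\mu^{-2})$. With $\mu^{-1}=O(n^{1/8})$ we have $n^{-1/2}\mu^{-2}=O(n^{-1/4})$. To pass from the square to $\tr[\bA]$ itself, without assuming any lower bound on $\tr[\bA]$, I would use the elementary inequality $|\sqrt a-\sqrt b|\le\sqrt{|a-b|}$ valid for all $a,b\ge0$ together with $\tr[\bA]=\sqrt{\tr[\bA]^2}$ (recall $\tr[\bA]\ge 0$ from \Cref{lm:derivative}); this yields $|\tr[\bA]-\hat\kappa|\le\sqrt{|\tr[\bA]^2-\hat\kappa^2|}=\Op(n^{-1/8})$. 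Since $n^{-1/8}/n^{-1/16}\to0$, the event $\{|\tr[\bA]-\hat\kappa|\le 2n^{-1/16}\}$ has probability tending to $1$. For the boundedness claim, \Cref{lm:trace_representation} already gives $\PP(\tr[\bA]^2\le C')\to1$ with $C'$ depending only on $(\alpha,\beta,\delta)$; combined with $\tr[\bA]^2=\hat\kappa^2+\op(1)$ this forces $\PP(\hat\kappa^2\le C'+1)\to1$, so one may take $C=\sqrt{C'+1}$, depending only on $(\alpha,\beta,\delta)$.

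There is no genuinely hard step remaining: all the analytic content --- the stochastic representations, the Gaussian--Poincar\'e concentration of the constituent norms and bilinear forms, and the a priori bounds $\|\bA\|_{\oper}\le(p\mu)^{-1}$, $\|\bh\|^2\pto\alpha^2$, $\|\bpsi\|^2/p\pto\beta^2$ --- is carried by \Cref{lm:trace_representation} and \Cref{lm:concentration_trace_A_RHS}. The only points needing care are (i) verifying that $\mu^{-1}=O(n^{1/8})$ is strong enough to make the combined error $\Op(n^{-1/2}\mu^{-2})$ negligible at the rate $n^{-1/16}$ after taking square roots, and (ii) defining $\hat\kappa$ via a truncated square root so that it is honestly non-negative and $\bG$-independent while the possible small negative excursions of $\hat\kappa^2_0$ (of order at most the error term) are absorbed harmlessly. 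The obstacle, if one can call it that, is bookkeeping of exponents rather than any new idea.
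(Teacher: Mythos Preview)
Your proposal is correct and follows essentially the same approach as the paper: combine \Cref{lm:trace_representation} and \Cref{lm:concentration_trace_A_RHS} to get $\tr[\bA]^2 = A_n + \Op(n^{-1/4})$ for a $\sigma(\bz,\btheta)$-measurable $A_n$, then pass to $\tr[\bA]$ via the $(1/2)$-H\"older continuity of the square root. The only cosmetic difference is that the paper defines $\hat\kappa := \sqrt{(A_n + 2n^{-1/8})_+}$ with a small deterministic shift (so that on the high-probability event $A_n + 2n^{-1/8}>0$ and the positive part is inactive), whereas you take $\hat\kappa := \sqrt{(A_n)_+}$ directly and absorb the possible negative excursion of $A_n$ via $\tr[\bA]^2\ge 0 \Rightarrow (A_n)_- \le |E|$; both routes give the same $\Op(n^{-1/8})$ bound on $|\tr[\bA]-\hat\kappa|$, which is $o(n^{-1/16})$.
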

\begin{proof}
    By \Cref{lm:trace_representation} and \Cref{lm:concentration_trace_A_RHS}, there exists a random variable $A_n$, which is independent of $\bG$, such that
    $$
    \tr[\bA]^2 = A_n + \Op(n^{-1/2}\mu^{-2}) + \Op(n^{-1/2} \mu^{-1}) = A_n + \Op(n^{-1/4}), \quad \text{and} \quad \PP(|A_n|\le C)\to 1
    $$
    Noting $\tr[\bA]^2 \ge 0$ and $\Op(n^{-1/4}) = \op(n^{-1/8})$, this implies that the event 
    $$
    \Omega := \{|\tr[\bA]^2- A_n| \le n^{-1/8}\} \cap \{-n^{-1/8} \le A_n\le C\}
    $$
    holds with high probability. Let us take $\hat\kappa\coloneq\sqrt{(A_n + 2 n^{-1/8})_+}$. Note in passing that under the event $\Omega$, we have $\hat\kappa = \sqrt{A_n + 2 n^{-1/8}}$ and $\hat\kappa \le \sqrt{C+1}$. 
    By the non-negativeness $\tr[\bA]\ge 0$ and the $(1/2)$-H\"{o}lder continuity of the square root $\R_{\ge 0} \ni x\mapsto \sqrt{x}$, under the event $\Omega$, it holds that 
    \begin{align*}
        |\tr[\bA]-\hat\kappa| = |\sqrt{\tr[\bA]^2} - \sqrt{A_n + 2n^{-1/8}}|
        \le |\tr[\bA]^2 - A_n - 2n^{-1/8}|^{1/2}
    \end{align*}
    and the RHS is less than $|n^{-1/8} + 2n^{-1/8}|^{1/2}\le 2n^{-1/16}$ by the triangle inequality. 
\end{proof}

\subsubsection{Proof of \Cref{lm:convergence_trace}}
Applying \Cref{lm:approx_multi_normal} with $M=1$ and $(\bz, \bF(\bz)) = (\bg_i, \bh)$ for each $i\in [n]$, using $\sum_{ij} \bar\E[\|\partial_{ij} \bh\|^2] = \Op(\mu^{-2})=\op(n)$ we get
$$
\sum_{i \in [n]} (\bg_i^\top\bh-\tr[\bA]\psi_i - \|\bh\| \hat{u}_i)^2 =\op(n), \quad \hat{u}_i|\btheta, \bz, \bG_{-i} \deq \cN(0,1)
$$ 
By \Cref{lm:concentrate_trace}, there exists a non-negative random variable $\hat\kappa$ independent of $\bG$ such that 
$$\tr[\bA]= \hat\kappa + \op(1), \quad \PP(\hat\kappa \le C)\to 1$$ 
for a positive constant $C$. Then, combined with $\|\bh\|=\alpha + \op(1)$, we get
$$
\frac{1}{2n}\sum_{i \in [n]} (\bg_i^\top\bh -\hat\kappa \psi_i - \alpha \hat{u}_i)^2 = \op(1) + (\hat\kappa -\tr[\bA])^2 \frac{\|\bpsi\|^2}{n} + (\|\bh\|-\alpha)^2  \frac{\sum_{i \in [n]} \hat\bu_i^2}{n}  = \op(1).
$$
Furthermore, using the independence of $(\hat\kappa, \bG)$ and the upper bound $\hat\kappa \le C$, by the same argument in the proof of \Cref{thm:corr-sigerror-reserror}, we can easily show that the conditional expectation $\bar\E[\cdot] = \E[\cdot|\bz, \btheta]$ of the square of LHS is bounded by $C$ with high probability for some constant $C$. This implies that the conditional expectation $\bar\E$ of LHS is also $o(1)$, i.e., 
$$
\frac{1}{n}\sum_{i \in [n]} \bar\E[(\bg_i^\top\bh -\hat\kappa \psi_i - \alpha \hat{u}_i)^2] = \op(1). 
$$
Let us define $\Xi_i \coloneq \bg_i^\top\bh -\hat\kappa \psi_i - \alpha \hat{u}_i$ so that the above display reads $n^{-1} \sum_i \bar\E [\Xi_i^2] = \op(1)$. Noting $\psi_i = \loss'(z_i-\bg_i^\top\bh)$ for all $i\in [n]$, the residual can be written as 
$$
\eps_i - \bg_i^\top\bh = \prox_{\loss}(\eps_i - \alpha \hat{u}_i -\Xi_i; \hat\kappa)
$$
for all $i\in [n]$. Since $\prox_{f}(\cdot)$ is $1$-Lipschitz for any convex function, we have
$$
\frac{1}{n} \sum_{i\in [n]} \bar\E\Bigl[
\Bigl(\eps_i - \bg_i^\top\bh - \prox_{\loss}(\eps_i - \alpha \hat{u}_i; \hat\kappa)\Bigr)^2 
\Bigr] \le \frac{1}{n} \sum_{i\in [n]} \bar\E[\Xi_i^2] = \op(1).
$$
Since $\loss'$ is Lipschitz, the above display lets us approximate $\psi_i=\loss'(z_i-\bg_i^\top\bh)$ by $\env_{\loss}'(\eps_i - \alpha \hat{u}_i; \hat\kappa)$:
$$
\frac{1}{n}\sum_{i\in [n]} \bar\E\Bigl[\Bigl(\psi_i- \env_{\loss}'(\eps_i - \alpha \hat{u}_i; \hat\kappa) \Bigr)^2 \Bigr] = \op(1). 
$$
Applying this approximation to the concentration $\beta^2 = \|\bpsi\|^2/p + \op(1) = \bar\E[\|\bpsi\|^2]/p + \op(1)$ by \Cref{lm:gaussian_Poincare_hpsi} with $I=\tilde I=[n]$, we get
\begin{align}
\beta^2 &= \frac{n}{p} \frac{1}{n}\sum_{i\in [n]} \bar\E\Bigl[\psi_i^2\Bigr] + \op(1) = \delta \frac{1}{n}\sum_{i\in [n]} \bar\E\Bigl[\env_{\loss}'(\eps_i-\alpha \hat{u}_i;\hat\kappa)^2\Bigr] + \op(1) \nonumber \\
&= \delta \frac{1}{n} \sum_{i\in [n]} \int_{-\infty}^\infty \varphi(x) \env_{\loss}'(z_i + \alpha x; \hat\kappa)^2 \, \mathrm{d}x 
+ \op(1). \label{eq:hat_kappa_system}
\end{align}
where $\varphi(x)$ is the pdf of $\cN(0,1)$. Let us define the functions $F, \hat{F}:[0, \infty)\to\R$ by 
$$
F(\tau) \coloneq \beta^2 - \delta \E[\env_{\loss}'(Z+\alpha G;\tau)^2], \quad \hat{F}(\tau) \coloneq \beta^2 - \delta \frac{1}{n}\sum_{i\in [n]} \int_{-\infty}^\infty \varphi(x) \env_{\loss}'(z_i + \alpha x; \tau)^2 \, \mathrm{d}x 
$$
so that $F(\kappa) = 0$ by \eqref{eq:CGMT-1b} in \Cref{sys:general_ensemble-M=1} (with $c=1$), while \eqref{eq:hat_kappa_system} reads $\hat{F}(\hat\kappa) = \op(1)$. 
Note in passing that the weak law of large numbers implies $\hat F(\tau) \pto F(\tau)$ pointwise, and $F$ and $\hat{F}$ are strictly increasing functions in $\tau$ since $-2^{-1} \env_{\loss}'(x;\tau)^2$ is the derivative of the convex function $\tau\mapsto \env_{\loss}(x;\tau)$, which is strictly convex under \Cref{asm:regularity-conditions}-(4) (see \cite[Lemma 4.4]{thrampoulidis2018precise}). 
Then, for any $\epsilon>0$, we have
$$
F(\kappa + \epsilon) > 0 = F(\kappa) > F(\kappa - \epsilon).
$$
By the pointwise convergence $\hat F(\tau)\pto F(\tau)$, it holds that 
$$
\hat F(\kappa+\epsilon) > 2^{-1} F(\kappa +\epsilon) > 0 > 2^{-1} F(\kappa-\epsilon) >  \hat F(\kappa-\epsilon).
$$
with high probability. 
Then, combined with  $\hat F(\hat \kappa) = \op(1)$, we have 
$$
\hat F(\kappa+\epsilon) > 2^{-1} F(\kappa +\epsilon) > \hat F(\hat\kappa)  > 2^{-1} F(\kappa-\epsilon) >  \hat F(\kappa-\epsilon).
$$
with high probability. 
Since $\hat F$ is non-decreasing with probability $1$, this gives $\PP(|\hat\kappa-\kappa|\le \epsilon) \to 1$. 
Since we took $\epsilon>0$ arbitrarily, we obtain $\hat\kappa\pto \kappa$. 

Going back to the place where we replaced $\tr[\bA]$ by $\hat\kappa$, now replacing $\tr[\bA]$ by $\kappa$ instead, we obtain
$$
\frac{1}{n}\sum_{i \in [n]} (\bg_i^\top\bh -\kappa \psi_i - \alpha \hat{u}_i)^2 = \op(1), 
$$
and this approximation also holds in $\bar\E$. Thus, we get the approximation of residual and $\psi_i$
$$
 \frac{1}{n} \sum_{i\in [n]} \bar \E\Bigl[\Bigl(\eps_i-\bg_i^\top\bh - \prox_{\loss}(\eps_i - \alpha \hat{u}_i; \kappa)\Bigr)^2\Bigr] = \op(1), \quad  \frac{1}{n} \sum_{i\in [n]} \bar \E\Bigl[\Bigl(\psi_i - \env_{\loss}'(z_i-\alpha \hat u_i; \kappa )\Bigr)^2\Bigr] = \op(1)
$$
Let us show $\tr[\bV]/p\pto \nu$. 
Using the concentration 
$\bpsi^\top\bG\bh - \tr[\bA] \|\bpsi\|^2 + \tr[\bV]\|\bh\|^2 = \op(n)$ from \Cref{lm:trace_representation} and $\|\bh\|^2\pto \alpha^2>0$, $\|\bpsi\|^2/p\pto \beta^2$,  $\tr[\bA]\pto \kappa$, we have   
\begin{align*}
p^{-1} \tr[\bV] = \alpha^{-2} \kappa \beta^2 - \alpha^{-2} \bpsi^\top\bG\bh/p + \op(1).     
\end{align*}
Recall that we have shown the concentration $\bpsi^\top\bG\bh= \bar\E[\bpsi^\top\bG\bh] + \op(n)$ in the proof of \Cref{lm:concentration_trace_A_RHS}. 
Applying the proximal approximation of the residual $\eps_i -\bg_i^\top\bh$ and $\psi_i$ to this, noting $\bar\E[\|\bpsi\|^2] = \Op(n)$ and $\bar\E[\|\bG\bh\|^2] = \Op(n)$, we are left with 
\begin{align*}
\frac{1}{n} \bpsi^\top\bG\bh &= \frac{1}{n} \sum_{i \in [n]} \bar\E[\psi_i \cdot \bg_i^\top\bh] + \op(1) \\
&= \frac1n\sum_{i \in [n]} \bar\E \Bigl[\env_{\loss}'(z_i-\alpha\hat u_i;\kappa) (\eps_i - \prox_{\loss}(\eps_i - \alpha \hat{u}_i; \kappa)) \Bigr] + \op(1) \\
&= \frac1n\sum_{i \in [n]} \int_{-\infty}^\infty \varphi(x) \env_{\loss}'(z_i + \alpha x; \kappa) (\eps_i - \prox_{\loss}(\eps_i + \alpha x; \kappa)) \, \mathrm{d}x  + \op(1) 
\end{align*}
so that the weak law of large numbers yields 
\begin{align*}
\frac{1}{n} \bpsi^\top\bG\bh &= \E[\env_{\loss'}(Z+\alpha G;\kappa) (Z - \prox_{\loss}(Z + \alpha G; \kappa))] + \op(1)\\
&= \E[\env_{\loss'}(Z+\alpha G;\kappa) (\alpha G + Z - \prox_{\loss}(Z+\alpha G; \kappa) - \alpha G) ] + \op(1)\\
&= \kappa \E[\env_{\loss}'(\alpha G + Z;\kappa)^2] - \alpha  \E[G \cdot \env_{\loss}'(\alpha G + Z;\kappa)]\\
&= \kappa \cdot \beta^2/\delta - \alpha \cdot \nu\alpha/\delta.
\end{align*}
where we have used \eqref{eq:CGMT-1b} and \eqref{eq:CGMT-1d} in \Cref{sys:general_ensemble-M=1} (with $c=1$) for the last equation. 
Combined with $p^{-1} \tr[\bV] = \alpha^{-2} \kappa \beta^2 - \alpha^{-2} \bpsi^\top\bG\bh/p + \op(1)$, this gives  $p^{-1}\tr[\bV]=\nu + \op(1)$ and finishes  the proof.

\subsection{Convergence of error vector norm squared under \Cref{asm:regularity-conditions}}
\label{subsec:cgmt_assumption}

In this section we verify that \Cref{assu:loss_penalty} and \Cref{asm:regularity-conditions}-(1)-(3) are sufficient for \cite[Theorem 4.1]{thrampoulidis2018precise} to hold. Comparing our assumptions and the condition assumed in the theorem, it suffices to show that the conditions (10) and (12) in \cite{thrampoulidis2018precise} can be omitted. 

Indeed, the authors used the condition (10) to show 
that any $\hat\bu \in \partial{\loss} (\bz - \bG \hat\bh^{B})/\sqrt{p}$ belongs to a compact set with high probability, where 
$\hat{\bh}^{B}$ is a ``bounded'' estimator. More precisely, $\hat{\bh}^{B}$ is a solution to the constrained optimization problem $\min_{\|\bh\| \le K_\alpha} \obj(\bh)$ for a positive constant $K_\alpha>0$ where 
$\obj(\bh) = \sum_{i\in[n]} \loss(z_i-\bg_i^\top\bh)+\sum_{j\in[p]} \reg(\sqrt{p} h_j + \theta_j)$ is the objective function for the original unconstrained M-estimator.  Since $\loss$ is differentiable with Lipschitz derivative $\loss'$ by \Cref{assu:loss_penalty}, using the same argument in \Cref{lm:bound_h_psi}, the norm of $\hat{\bu}$ is bounded as $\|\hat{\bu}\| \le (\|\loss'(\bz)\| + \|\loss'\|_{\lip} \|\bG\|_{\oper} K_\alpha)/\sqrt{p}$. 
Since
$\bG$ has i.i.d.\ $\cN(0, 1)$ entries while $\loss'(z_i)$ has a finite second moment by \Cref{asm:regularity-conditions}-(1), this gives $\|\hat\bu\|^2
\le C$ with probability approaching to $1$ for a positive constant $C$. 

Next, we claim that the condition (12) in \cite{thrampoulidis2018precise} is not necessary. Noting that the condition (12) is used for  \cite[Assumption 2-(b)]{thrampoulidis2018precise}, it suffices to show that the assumption 2-(b) is satisfied given our assumptions. Here we restate the assumption 2-(b) for convenience:
$$
\forall \tau>0, \quad \lim_{c\to+\infty} c^2/(2\tau) - \E\bigl[\env_{\reg}(cH+\Theta) - \reg(\Theta) \bigr] =+\infty. 
$$
By the condition $\PP(\Theta \ne 0)>0$ in \Cref{asm:regularity-conditions}-(3), either $\PP(\Theta>0)>0$ or $\PP(\Theta <0) >0$ holds. Let us consider the case $\PP(\Theta>0)>0$. Define a measurable function $u(H, \Theta)$ of $(H, \Theta)$ as follows:
$$
u(H, \Theta) := -\min(\Theta, 1) I\{\text{$\Theta >0$ and $H<0$}\}. 
$$
Note that $u(H, \Theta)$ is always bounded as $|u(H, \Theta)|\le 1$. By the definition of Moreau envelope, i.e.,  $\env_{\reg}(cH+\Theta) := \argmin_{p\in\R} (cH+\Theta-p)^2/(2\tau) + \reg(p)$, taking the point $p=\Theta + u(H, \Theta)$, we get the lower estimate as follows:
\begin{align*}
\frac{c^2}{2\tau} - \E\bigl[\env_{\reg}(cH+\Theta) - \reg(\Theta) \bigr] 
&\ge \frac{c^2}{2\tau} - \E\bigl[ \frac{(cH-u(H, \Theta))^2}{2\tau} + \reg(\Theta + u(H, \Theta)) - \reg(\Theta) \bigr]\\
&=
\frac{\E[u H]}{\tau}\cdot c
-\frac{\E[u^2]}{2\tau}- \E\Bigl[\reg(\Theta + u) - \reg(\Theta) \Bigr].
\end{align*}
Thus it suffices to show that $\E[uH]>0$ and $\E[\reg(\Theta + u) -\reg(\Theta)]$ is finite for the RHS to diverge as $c\to+\infty$. By the definition of $u=u(\Theta, H)$, the expectation $\E[uH]$ can be written as 
\begin{align*}
    \E[u H] = \E[ - H \min(\Theta,1) I \{\Theta>0, H< 0\}]
    =\E[ |H| \min(\Theta,1) I \{\Theta>0, H< 0\}].
\end{align*}
Here $|H| \min(\Theta, 1)$ is always strictly positive under the event $\{\Theta>0, H>0\}$, and  this event has positive probability $\PP(\Theta>0, H>0) = \PP(\Theta>0)\PP(H>0) = \PP(\Theta>0) \cdot 2^{-1}$ by the assumption $\PP(\Theta>0)>0$ and the independence of $\Theta$ and $H\sim \cN(0,1)$. This means that $\E[uH]$ is strictly positive. Regarding $\E[\reg(\Theta + u) -\reg(\Theta)]$, we have 
$$
\E[\reg(\Theta+u)-
\reg(\Theta)] 
=  \E\Bigl[\bigl\{\reg(\Theta - \min(\Theta,1)) - \reg(\Theta) \bigr\} I\{\Theta>0, H<0\}\Bigr].
$$
Note that $0 < \Theta-\min(\Theta, 1) < \Theta$ for all $\Theta>0$. Then, by the convexity of $\reg$ and the condition $\reg(0) = \min_x\reg(x)$ in \Cref{assu:loss_penalty}, under the event $I\{\Theta>0, H<0\}$ it holds that 
$$
0 > \reg(\Theta - \min(\Theta,1)) - \reg(\Theta) >  - d_{\Theta} \min(\Theta, 1) > - d_\Theta \quad \text{for all $d_\Theta\in \partial \reg(\Theta)$} 
$$ 
By \Cref{asm:regularity-conditions}-(1), $d_\Theta$ has a finite second moment for any choice of sub-derivative $d_\Theta$. Therefore, we have $0 > \E[\reg(\Theta + u)-\reg(\Theta)] > -\E[d_\Theta I\{\Theta>0, H<0\}] > -\infty$ so that $\E[\reg(\Theta+u)-\reg(\Theta)]$ is finite. 

In the other case $\PP(\Theta <0)>0$, we may take $u(H, \Theta) := \min(-\Theta,1) I\{\Theta<0, H>0\}$. Then the same argument leads to $\E[uH]>0$ and $|\E[\reg(\Theta + u) -\reg(\Theta)]| <+\infty$. 

\subsection{Convergence of loss gradient norm squared under \Cref{asm:regularity-conditions}}
\label{subsec:convergence_psi}

Let $\bpsi = \loss'(\bz-\bG\bh)$ and $\loss^*$ be the conjugate of $\loss$. By the same argument in \cite{thrampoulidis2018precise}, restricting the range of $\bpsi$ to a compact set so that the strong duality holds, we observe that $\bpsi$ is a solution to the following min-max problem with probability approaching to $1$:
\begin{align*}
    &\max_{\bpsi\in\R^n} \min_{\bh\in\R^p} \bpsi^\top(\bm\eps-\bG\bh) - \loss^*(\bpsi) + \reg(\sqrt{p}\bh + \btheta)= \max_{\bpsi}  \bpsi^\top\bm\eps - \loss^*(\bpsi) + \bpsi^\top\bG\btheta/\sqrt{p} - \reg^*(\bG^\top\bpsi/\sqrt{p})
\end{align*}
If we write $\hat\bu=\bpsi/\sqrt{p} \in\R^n $ then $\hat\bu$ is the M-estimator of the form:
\begin{align*}
\hat\bu \in \argmin_{\bu\in\R^n} \mathsf{F}(-\bG^\top\bu) + \mathsf{L}(\sqrt{n}\bu), \quad \text{where} \quad  \begin{split}
    &\mathsf{F}: \R^p \mapsto \R, \quad \bv \mapsto \reg^*(-\bv) + \btheta^\top\bv\\
    &\mathsf{L}:\R^n \mapsto \R, \quad \bu \mapsto \loss^*(\sqrt{\frac{p}{n}} \bw) - \sqrt{\frac{p}{n}} \bm\eps^\top \bw 
\end{split}    
\end{align*}
For any $\bh\sim \cN(\bm 0_p, \bI_p)$, $\bg\sim \cN(\bm 0_n, \bI_n)$, for all $c\in \R$ and $\tau>0$ it holds that
\begin{align*}
\frac{1}{p} \bigl(\env_{\mathsf{F}}(c\bh ; \tau) - \mathcal{F}(\bm 0) \bigr) &\pto \frac{c^2}{2\tau} - \E[\env_{\reg}(\frac{c}{\tau} H + \Theta_0; \frac{1}{\tau}) - \reg(0)] \\
\frac{1}{n} \bigl(\env_{\mathsf{L}}(c\bg; \tau) - \mathcal{L}(\bm 0)\bigr) &\pto \frac{c^2}{2\tau} - \E[\env_{\loss}(\sqrt{\delta}\frac{c}\tau G + Z; \frac{\delta}\tau) - \loss(0)]
\end{align*}
Thus, letting $F(c,\tau) := \E[\env_{\reg}(cH+\Theta; \tau)]$ and $L(c, \tau) := \E[\env_{\loss}(cG+\Theta; \tau)]$, \cite{thrampoulidis2018precise} implies that $\|\hat\bu\|^2\pto \tilde\alpha^2$ where $\tilde\alpha$ is the minimizer of the min-max optimization problem:
\begin{align*}
  \inf_{\tilde{\alpha}, \tilde{\tau}_g} \sup_{\tilde{\beta}, \tilde{\tau}_h} \frac{\tilde{\beta}\tilde{\tau}_g}{2} + \delta^{-1} \Bigl(
\frac{\tilde{\alpha}^2\tilde{\beta}}{2\tilde{\tau}_g} - F(\frac{\tilde{\alpha}\tilde{\beta}}{\tilde{\tau}_g}, \frac{\tilde{\beta}}{\tilde{\tau}_g})
\Bigr) - \frac{\tilde{\alpha}\tilde{\tau}_h}{2} - \frac{\tilde{\alpha}\tilde{\beta}^2}{2\tilde{\tau}_h} + \Bigl(\frac{\tilde\alpha \tilde\beta^2}{2\tilde\tau_h} -  L (\sqrt{\delta}\tilde{\beta}, \delta \frac{\tilde{\tau}_h}{\tilde{\alpha}} )\Bigr)
\end{align*}
Thus, by the change of variables $(\tilde\alpha, \tilde\beta, \tilde\tau_g, \tilde\tau_h) \mapsto (\beta, \alpha/\sqrt{\delta}, \tau_h/\sqrt{\delta}, \tau_g/\delta)$ and multiplying the potential by $-\delta$, we are left with $\|\hat\bu\|^2\pto \beta_{*}^2$ where $\beta_{*}$ is the minimizer of 
$$
\sup_{\beta\tau_h}\inf _{\alpha,\tau_g}-
\frac{\alpha \tau_h}{2} - 
\frac{\beta^2\alpha}{2{\tau}_g} + F(\frac{\beta\alpha}{\tau_h}, \frac{\alpha}{{\tau}_h}) + \frac{\beta\tau_g}{2}  +\delta  L (\alpha,  \frac{\tau_g}{\beta}),
$$
which is the same potential in \cite{thrampoulidis2018precise}.

\subsection{Explicit expressions for degrees of freedom}

See \Cref{tab:df-V-examples}.

\begin{table}[!ht]
    \centering
    \caption{
    \label{tab:df-V-examples}
        Explicit formulae for the degrees of freedom and residual degrees of freedom of the estimator $\hat{\btheta}_{I}$ defined using \eqref{eq:def-hbeta}.
        Here the loss functions $\loss(r)$ are: 1) squared loss: $r^2/2$, 2) Huber loss: $r^2/2$ for $|r| \le 1$ and $|r| - 1/2$ for $|r| > 1$;
        and the regularization functions $\reg(b)$ are: 1) ridge penalty: $\frac{\lambda_1}{2} b^2$, 2) lasso penalty: $\lambda_2 | b |$, and 3) elastic net penalty: $\frac{\lambda_1}{2} b^2 + \lambda_2 | b |$.
        And other quantities are: 1) $\hat{S}_{I} = \{j\in [p] \colon \hbeta_{I}(j) \ne 0\}$ is the set of active variables of $\hat{\btheta}_{I}$ and $\bX_{\hat{S}_{I}}$ is the submatrix of $\bX_{I}$ made of columns indexed in $\hat{S}_{I}$, 2) $\hat{T}_{I} = \{ i \in I \colon \loss''(y_i - \bx_i^\top \hat{\btheta}_{I}) > 0 \}$ is the set of detected inliers (active observations), and 3) the matrix $\bD_{I} = \diag(\loss''(\by_{I} - \bX_{I} \hat{\btheta}_{I}))$.
   }
    \small
    \begin{tabular}
    {c c c c}
         \toprule
         \textbf{Loss} & \textbf{Regularizer} & \textbf{Degrees of freedom} & \textbf{Residual degrees of freedom} \\ 
         ($\loss$) & ($\reg$) & ($\df_{I}$) & ($\tr[\bV_{I}]$) \\
         \midrule
         Square &
         Ridge & $\trace\big[\big(\bX_{I}^\top\bX_{I} + \lambda_1 \bI\big)^{-1}\bX_{I}^\top\bX_{I}\big]$ & $|I| - \df_{I}$ \\
         \addlinespace[0.25ex]
         Square &
         Lasso & $|\hat{S}_{I}|$ & $|I| - \df_{I}$ \\
         \addlinespace[0.25ex]
         Square &
         Elastic net &
         $\trace\big[\big(\bX_{\hat{S}_{I}}^\top\bX_{\hat{S}_{I}} + \lambda_1\bI\big)^{-1}\bX^\top_{\hat{S}_{I}}\bX_{\hat{S}_{I}}\big]$ & $|I| - \df_{I}$ \\
         \arrayrulecolor{black!25} \midrule \arrayrulecolor{black}
         Huber & Ridge & $\trace\big[\big(\bX_{I}^\top \bD_{I} \bX_{I} + \lambda_1 \bI\big)^{-1}\bX_{I}^\top \bD_{I} \bX_{I} \big]$ & $|\hat{T}_{I}| - \df_{I}$ \\
         \addlinespace[0.25ex]
         Huber & Lasso & $|\hat{S}_{I}|$ & $|\hat{T}_{I}| - \df_{I}$ \\
         \addlinespace[0.25ex]
         Huber & Elastic net & $\trace\big[\big(\bX_{\hat{S}_{I}}^\top \bD_{I} \bX_{\hat{S}_{I}} +\lambda_1\bI\big)^{-1}\bX^\top_{\hat{S}_{I}} \bD_{I}\bX_{\hat{S}_{I}}\big]$ & $|\hat{T}_{I}| - \df_{I}$ \\
         \arrayrulecolor{black!25} \midrule \arrayrulecolor{black}
         Convex & Convex & $\tr[(\partial/\partial \by_{I}) \bX_{I} \hat{\bbeta}_{I}]$ & $\tr[(\partial/\partial \by_I) \loss'(\by_I - \bX_I \hat{\btheta}_{I})]$ \\
        \bottomrule
    \end{tabular}    
\end{table}

\section{Proofs for results in Section~\ref{sec:specific-examples}}
\label{sec:proofs-sec:specific_examples}

\subsection{Proof of \Cref{prop:monotonicity-ensemble-size}}
Since the risk limit $\cR_M$ is given by $\cR_M = M^{-1} \alpha^2 + (1-M^{-1}) \alpha^2 \etaG$ and $\etaG$ is non-negative in the homogeneous case (see the discussion in \Cref{sec:correlation_signs}), 
it suffices to show $\etaG < 1$. 
We proceed by contradiction. Let $\etaH$ be the associated scalar such that $\etaG = F_\reg(\etaH)$ and $\etaH = F_\loss(\etaG)$. Note that $|\etaH |= |F_\loss(\etaG)|\le c < 1$ by $c < 1$. 
If $|\etaG|=1$ then the equality case $F_\reg(\etaH) = 1$ for the Cauchy--Schwarz inequality implies that with probability $1$, 
$$
\frac{1}{\nu} \cdot \env_{
\reg}' \Bigl(\frac{\beta}{\nu} H + \Theta ; \frac{1}{\nu}\Bigr) - \frac{\beta}{\nu} H
        = \frac{1}{\nu} \cdot \env_{ 
\reg}' \Bigl(\frac{\beta}{\nu}\tilde H + \Theta ; \frac{1}{\nu}\Bigr) - \frac{\beta}{\nu}\tilde H
$$
with $\tilde H := \etaH H + \sqrt{1-\etaH^2} {H}_0$ for any independent standard normals $(H, H_0)$. 
Multiplying the above display by $H_0$ and taking the expectation, the LHS becomes $0$ by the independence of $(H, H_0)$ and $\E[H_0] =0$. 
As for the RHS, using \eqref{eq:CGMT-1c} in \Cref{sys:general_ensemble-M=1} and Stein's lemma, we have
\begin{align*}
\E\Bigl[\Bigl(\frac{1}{\nu} \cdot \env_{
\reg}' \Bigl(\frac{\beta}{\nu}\tilde H + \Theta ; \frac{1}{ \nu}\Bigr) - \frac{\beta}{\nu}\tilde H\Bigr) H_0\Bigr]= \sqrt{1-\etaH^2} (- \kappa  \beta). 
\end{align*}
Thus, we get $0=\sqrt{1-\etaH^2} (- \kappa \beta)$, 
which is a contradiction since $\kappa\beta>0$ and $|\etaH|<1$.

\subsection{Proof of \Cref{prop:monotonicity-risk}}\label{proof:monotonicity-risk}
Let us fix $\psi = (c\delta)^{-1}$ and take the derivative of $\cR_M = M^{-1}\alpha^2 + (1-M^{-1}) \alpha^2\etaG$ with respect to $\phi=\delta^{-1}$. Note $c=\psi/\phi < 1$. With this parameterization, $(\alpha, \beta, \kappa, \nu)$ are all fixed since they depend on $\psi$ only. Below, we derive the partial derivative of $\etaG$ with respect to $\phi$. Using $\psi$ and $\phi$, we observe that $\etaG$ is the unique solution to the fixed point equation
$$
\etaG = F_\reg\circ F_\loss(\etaG; \phi).  
$$
Here, $F_\reg$ does not depend on $\phi$ but $F_\loss$ depends on $\phi$ as:
$$
F_\loss (\etaG; \phi) =  \frac{\phi}{\psi^2 \beta^2}\cdot \E [
          \env_{\loss}'(\alpha G + Z; \kappa) \cdot \env_{\tilde \loss}'(\alpha \tilde G + Z; \kappa)],
$$
Since the map $\etaG\mapsto F_\reg\circ F_\loss(\etaG; \phi)$ is differentiable and $c$-Lipschitz with $c = \psi/\phi <1$ (see \Cref{th:existence-uniqueness-sys:general_ensemble-M=infty}), 
the implicit function theorem implies that $\etaG$ is differentiable with respect to $\phi$ and the derivative satisfies:
$$
\frac{\partial\etaG }{\partial \phi} = (F_\reg\circ F_\loss)' (\etaG) \frac{\partial \etaG}{\partial \phi} + F_\reg'(\etaH) \frac{F_\loss(\etaG)}{\phi}.
$$
Rearranging the above display, we get
$$
\frac{\partial \etaG}{\partial \phi} =\frac{F_\reg'(\etaH)}{\phi} \frac{F_\loss(\etaG)}{1-(F_\reg\circ F_\loss)'(\etaG)} = \frac{F_\reg'(\etaH)}{\phi} \frac{\etaH}{1-(F_\reg\circ F_\loss)'(\etaG)} \ge 0.
$$
where the last inequality follows from the fact that  $F_\reg$ is non-decreasing (\Cref{th:existence-uniqueness-sys:general_ensemble-M=infty}) and $\etaH$ is non-negative (see the discussion on homogeneous cases in \Cref{sec:correlation_signs}). Combined with 
$\partial_{{\phi}} \cR_M = (1-M^{-1}) \alpha^2 \frac{\partial \etaG}{\partial \phi}$, we observe that $\cR_M$ is non-decreasing in $\phi$. Therefore,  
for any two $\phi_1 \le  \phi_2$, we have that 
$$
\cR_{M}(\phi_2, \psi) \ge \cR_{M} (\phi_1, \psi) \ge \inf_{\psi >  \phi_2} \cR_{M}(\phi_1, \psi) \ge \inf_{\psi > \phi_1} \cR_{M}(\phi_1, \psi) \quad \text{for all $\psi\ge \phi_2$}. 
$$
This gives $\inf_{\psi > \phi_2} \cR_{M}(\phi_2, \psi) \ge \inf_{\psi >  \phi_1} \cR_{M}(\phi_1, \psi)$ for any $\phi_1 \le  \phi_2$, which means the map $\phi \mapsto \inf_{\psi >  \phi} \cR_{M}(\phi, \psi)$ is non-decreasing. {Reverting to the original parametrization $(\phi, \psi)\mapsto (\delta, c) = (\phi^{-1}, \psi/\phi)$, we conclude that $\delta \mapsto \inf_{c\in(0,1)}\mathcal{R}_M(\delta, c)$ is non-increasing.
}

\subsection{Derivation of \Cref{sys:ensembles-penalized-least-squares}}

We will first reformulate \Cref{sys:general_ensemble-M=1,sys:general_ensemble-M=infty} in a slightly different set of parameters.
Since the purpose of this reparameterization is to match with existing work, we will also consider regularizer $\reg$ with an explicit regularization level $\lambda$.
The mapping with respect to the parameters in \Cref{sys:general_ensemble-M=1,sys:general_ensemble-M=infty} is as follows: 
$a = \frac{\lambda}{\beta}$, $\tau = \sqrt{c\delta}\frac{\beta}{\nu}$.
Under this parameterization, note that $\frac{\beta}{\nu} = \frac{\tau}{\sqrt{c \delta}}$ and $\tfrac{\lambda}{\nu} = \frac{a \tau}{\sqrt{c \delta}}$. 

\begin{remark}
    [Scaling differences in design]
    It is worth remarking that in the literature on risk characterization of regularized M-estimator under proportional asymptotics, the scaling of $\lambda$ can be slightly different, up to a factor of $\sqrt{c \delta}$.
    One of the reasons for the differences is how the design matrix $\bX$ is scaled.
    We assume that the entries of $\bX \in \RR^{n \times p}$ each have variance $1/p$ and thus each row has a unit average norm squared.
    It is also common to assume that the entries of $\bX$ each have variance $1/n$ and thus each column has a unit average norm squared.
    This brings in a factor of $\sqrt{\delta}$.
    For the subsampled design $\bX_{I} \in \RR^{k \times p}$, we get an additional factor of $\sqrt{c}$.
    Consequently, some expressions may appear different up to this scaling.
\end{remark}

\begin{proof}[Derivation of \Cref{sys:ensembles-penalized-least-squares}]

The derivation is straightforward. 
We will use some simple relationships between proximal operators and Moreau envelopes.
Recall from \eqref{eq:prox_subdiff_relation} that
$
    \env_{f}'( x ; \tau) = \frac{1}{\tau}(x - \prox_{f} ( x ; \tau))
$
so that the derivative identity $
    \env_{f}''( x ; \tau) = \frac{1}{\tau}(1 - \prox_{f}' ( x ; \tau))
$ holds for almost every $x$ by the non-expansiveness of the proximal operator. 
\paragraph*{(1) Case of $m=\ell$ }
The original system of equations is given by \Cref{sys:general_ensemble-M=1}.
For regularizers of the form $\lambda \reg$, from \eqref{eq:CGMT-1a}, we have
\begin{align}
    \alpha^2 &= \EE \big[
      \big(
        \tfrac{\lambda}{\nu} \cdot \env_{\reg}' \big(\tfrac{\beta}{\nu} H + \Theta ; \tfrac{\lambda}{\nu}\big) - \tfrac{\beta}{\nu} H
      \big)^2
    \big] \notag \\
    &= 
    \EE \big[
      \big(
        \prox_{\reg}\big(\tfrac{\beta}{\nu} H + \Theta ; \tfrac{\lambda}{\nu}\big) - \Theta
      \big)^2
    \big] \notag \\
    &=\mathbb{E}\big[\big(\prox_{\reg}\big(\tfrac{\tau}{\sqrt{c\delta}} H + \Theta ; \tfrac{a\tau}{\sqrt{c\delta}}\big) - \Theta\big)^2\big] \label{eq:alpha-squared-3}
\end{align}
where the variables $\tau$ and $a$ are defined as:
\begin{align}
    \tau &= \sqrt{c\delta}\tfrac{\beta}{\nu} \xlongequal{\eqref{eq:CGMT-1b},\eqref{eq:CGMT-1d}} \frac{\alpha\sqrt{\E[\env_{\loss}'(\alpha G + Z; \kappa)^2] }}{\E[\env'_{\loss}(\alpha G + Z; \kappa)\cdot G]  }, \label{eq:tau}\\
    a &= \tfrac{\lambda}{\beta} \xlongequal{\eqref{eq:CGMT-1b}} \frac{\lambda}{\sqrt{c\delta} \sqrt{\E[\env_{\loss}'(\alpha G + Z; \kappa)^2]} }. \label{eq:a}
\end{align}
For squared loss $\loss(x)=x^2/2$, since $\env'_{\loss}(x;\tau) = x / (1 + \tau)$ from \Cref{tab:prox_and_derivatives_ridge_lasso}, squaring the final expression in \eqref{eq:tau} yields
\begin{align}
    \tau^2 &= \alpha^2+\sigma^2.\label{eq:l2-a}
\end{align}
Thus, $\tau^2$ and $\alpha^2$ are the limiting total and excess risks, respectively.
Combining \eqref{eq:alpha-squared-3} and \eqref{eq:l2-a} gives the first desired equation \eqref{eq:amp-bridge-tau}.

Similarly, \eqref{eq:CGMT-1c} after diving by $\tau$ yields
\begin{align*}
      \tfrac{\kappa\beta}{\tau} &= \tfrac{\beta}{\nu \tau} -  \tfrac{\lambda}{\nu \tau} \E\big[
      \env_{\reg}' \big(\tfrac{\tau}{\sqrt{c\delta}}H + \Theta ; \tfrac{a\tau}{\sqrt{c\delta}}\big) \cdot H
    \big].
\end{align*}
Multiplying both sides by $\sqrt{c \delta} a \tau$ and noting that $a \beta = \lambda$, $\tfrac{\beta}{\nu} = \tfrac{\tau}{\sqrt{c \delta}}$, $\tfrac{\lambda}{\nu} = \tfrac{a \tau}{\sqrt{c \delta}}$ then gives
\begin{align}
    \sqrt{c\delta}\kappa\lambda &=  a \tau - \tfrac{(a \tau)^2}{\tau}\E\big[
      \env_{\reg}' \big(\tfrac{\tau}{\sqrt{c\delta}} H + \Theta ; \tfrac{a\tau}{\sqrt{c\delta}}\big) \cdot H
    \big]. \label{eq:kappa-lam}
\end{align}
From Stein's lemma, we have
\begin{align}
    \EE[\env_{\reg}'(\tau H +\Theta;\kappa) \cdot H] = \tau\EE[\env_{\reg}''(\tau H +\Theta;\kappa)], \label{eq:env-prime}
\end{align}
and so \eqref{eq:kappa-lam} reduces to
\begin{align}
    \sqrt{c\delta}\kappa\lambda  &\xlongequal{\eqref{eq:env-prime}} a\tau -   \tfrac{(a\tau)^2}{\sqrt{c\delta}} \E\big[\env_{\reg}'' \big(\tfrac{\tau}{\sqrt{c\delta}} H + \Theta ; \tfrac{a\tau}{\sqrt{c\delta}}\big) 
    \big] \notag\\
    &= 
    a\tau -   a\tau \E\big[
      1 - \prox_{\reg}' \big(\tfrac{\tau}{\sqrt{c\delta}} H + \Theta ; \tfrac{a\tau}{\sqrt{c\delta}}\big)
    \big]  \notag\\
    &=  a\tau \E\big[
      \prox_{\reg}' \big(\tfrac{\tau}{\sqrt{c\delta}} H + \Theta ; \tfrac{a\tau}{\sqrt{c\delta}}\big) \big]. \label{eq:kappa-lam-2}
\end{align}
Using similar manipulations as above, \eqref{eq:tau} reduces to
\begin{align}
    \tau & = \frac{\kappa\sqrt{\E[\env_{\loss}'(\alpha G + Z; \kappa)^2] }}{\E[1 - \prox'_{\loss}(\alpha G + Z; \kappa)]}.\label{eq:tau-2}
\end{align}
Combining \eqref{eq:kappa-lam-2} and \eqref{eq:tau-2}, we get
\begin{align}
    0 &= \kappa\lambda \Big( 1 - \tfrac{a\tau}{\sqrt{c\delta}\kappa \lambda} \E\big[
      \prox_{\reg}' \big(\tfrac{\tau}{\sqrt{c\delta}} H + \Theta ; \tfrac{a\tau}{\sqrt{c\delta}}\big) \big]\Big). \label{eq:kappa-lambda-zero}
\end{align}
From \eqref{eq:a}, under squared $\loss$, we also have
\begin{align}
    1+\kappa &= \tfrac{\sqrt{c\delta}a\tau}{\lambda}. \label{eq:l2-d}
\end{align}
Combining \eqref{eq:kappa-lambda-zero} with \eqref{eq:l2-d} then leads to the second desired equation \eqref{eq:amp-bridge-a}.

\paragraph*{(2) Case of $m\neq\ell$ }

Step 2. $(\etaG, \etaH)$ is the solution to the following 2-scalar fix-point equations:
  \begin{align*}
    \etaG &= \frac{\E \big[
      \big(
        \tfrac{\lambda}{\nu} \cdot \env_{\reg}' \big(\tfrac{\beta}{\nu} H + \Theta ; \tfrac{\lambda}{\nu}\big) - \tfrac{\beta}{\nu} H
      \big)
      \cdot 
      \big(
        \tfrac{\lambda}{\nu} \cdot \env_{\reg}' \big(\tfrac{\beta}{\nu} \tilde H + \Theta ; \tfrac{\lambda}{\nu}\big) - \tfrac{\beta}{\nu} \tilde H
      \big)
    \big]}{\E \big[
      \big(
        \tfrac{\lambda}{\nu} \cdot \env_{\reg}' \big(\tfrac{\beta}{\nu} H + \Theta ; \tfrac{\lambda}{\nu}\big) - \tfrac{\beta}{\nu} H
      \big)^2
    \big]}\\
    \etaH &= c\cdot \frac{\E [
      \env_{\loss}'(\alpha G + Z; \kappa) \cdot \env_{\loss}'(\alpha \tilde G + Z; \kappa)
    ]}{\E[
      \env_{\loss}'(\alpha G + Z; \kappa)^2
    ]},
  \end{align*}
  where 
  \small
  $
  \begin{pmatrix}
    G\\
    \tilde{G}
  \end{pmatrix} \sim N\Bigl(\begin{pmatrix}
    0 \\
    0
  \end{pmatrix}, \begin{pmatrix}
    1 & \etaG\\
    \etaG & 1
  \end{pmatrix}\Bigr)$ and $\begin{pmatrix}
    H\\
    \tilde{H}
  \end{pmatrix} \sim N\Bigl(\begin{pmatrix}
    0 \\
    0
  \end{pmatrix}, \begin{pmatrix}
    1 & \etaH\\
    \etaH & 1
  \end{pmatrix}\Bigr).
  $
  \normalsize
Similarly, by variable substitution, we can rewrite the equations as:
\begin{subequations}    
\label{eq:reform-eq-2}
\begin{align}
      \etaG\alpha^2 &= \mathbb{E}\big[\big(\prox_{\reg}\big(\tfrac{\tau}{\sqrt{c\delta}} H + \Theta ; \tfrac{a\tau}{\sqrt{c\delta}}\big) - \Theta\big)
      \big(\prox_{\reg}\big(\tfrac{\tau}{\sqrt{c\delta}} \tilde{H} + \Theta ; \tfrac{a\tau}{\sqrt{c\delta}}\big) - \Theta\big)\big]\\
    \etaH &= c\cdot \frac{\E [
      \env_{\loss}'(\alpha G + Z; \kappa) \cdot \env_{\loss}'(\alpha \tilde G + Z; \kappa)
    ]}{\E[
      \env_{\loss}'(\alpha G + Z; \kappa)^2
    ]}.  
\end{align}
\end{subequations}
Since $\env_{\loss}'(x;\tau) = x / (1 + \tau)$ for squared loss, we obtain the third desired equation \eqref{eq:amp-bridge-M=infty-a}.
\end{proof}

\subsection{Proximal operators and Moreau envelopes}

See \Cref{tab:prox_and_derivatives_ridge_lasso}.

\begin{table}[!ht]
    \caption{Proximal operators, Moreau envelopes, and their derivatives for ridge (first row) and lasso (second row) regularizers and Huber (third row) loss considered in \Cref{sec:ensembles-bridge-estimators,sec:general-train-loss}.}
    \centering
    \scriptsize
    \begin{tabular}{ccccc}
        \toprule
         $f(x)$ & $\prox_f(x;\tau)$ & $\prox_f'(x;\tau)$ & $\env_f(x;\tau)$ & $\env_f'(x;\tau)$ 
         \\
         \midrule
         $\frac{1}{2}x^2 $ & $\frac{x}{1+\tau}$ & $\frac{1}{1+\tau}$ & $\frac{1}{2} \frac{x^2}{1+\tau}$ & $\frac{x}{1+\tau}$ 
         \\
        \addlinespace[1ex]
        \arrayrulecolor{black!25} \midrule \arrayrulecolor{black}
         $|x|$ & $(|x|-\tau)_+ \sign(x)$ & $\ind\{|x|\geq \tau\}$ &
         $\begin{cases}
             \frac{1}{2 \tau} x^2 &  |x| < \tau \\
             |x| - \frac{1}{2} \tau &  |x| \ge \tau
         \end{cases}$
         & $\min\Big\{\frac{|x|}{\tau},1\Big\}\sign(x)$ 
         \\
         \addlinespace[1ex]
         \arrayrulecolor{black!25} \midrule \arrayrulecolor{black}
         $
         \begin{cases}
             \frac{x^2}{2} &  |x| \le 1 \\
             |x| - \frac{1}{2} &  |x| > 1
         \end{cases}
         $
         & 
         $
         \begin{cases}
         \frac{x}{1+\tau} &  |x| \le 1 + \tau \\
         x - \tau \sign(x) &  |x| > 1 + \tau
         \end{cases}
         $ 
         &
         $
         \begin{cases}
          \frac{1}{1 + \tau} &  |x| \le 1 + \tau \\
          1 &  |x| > 1 + \tau
         \end{cases}
         $
         &
         $
         \begin{cases}
         \frac{1}{2} \frac{x^2}{1+\tau} &  |x| \le 1 + \tau \\
         |x| - \tau - \frac{1}{2} &  |x| > 1 + \tau
         \end{cases}
         $
         & $\left(\frac{|x|}{1+\tau}-1\right)_+ \sign(x)$ \\
         \arrayrulecolor{black}
         \bottomrule
    \end{tabular}
    \label{tab:prox_and_derivatives_ridge_lasso}
\end{table}

\section{Proofs for results in \Cref{sec:ensembles-interpolators}}
\label{sec:proofs-sec:general-properties}

\subsection[]{Proof of \eqref{eq:tau_lower_bound}}\label{sec:proof_tau_lowerbound}
Applying Stein's lemma to \eqref{eq:interpolators-a} and Cauchy--Schwarz inequality, we have
\begin{align*}
1 &= \frac{1}{c\delta} \E\Bigl[\prox_{\reg}'\bigl(\Theta + \tfrac{\tau}{\sqrt{c\delta}} H; \tfrac{a\tau}{\sqrt{c\delta}}\bigr)\Bigr]\\
&= \frac{1}{\sqrt{c\delta}}\frac{1}{\tau} \E\Bigl[H \cdot \Bigl(\prox_{\reg}\bigl(\Theta + \tfrac{\tau}{\sqrt{c\delta}} H; \tfrac{a\tau}{\sqrt{c\delta}}\bigr) - \Theta\Bigr) \Bigr]\\
&\le \frac{1}{\sqrt{c\delta}\tau} \E\Bigl[\Bigl(\prox_{\reg}\bigl(\Theta + \tfrac{\tau}{\sqrt{c\delta}} H; \tfrac{a\tau}{\sqrt{c\delta}}\bigr) - \Theta\Bigr)^2\Bigr]^{1/2}\\
&= \frac{1}{\sqrt{c\delta}\tau} \sqrt{\tau^2-\sigma^2} && (\text{by \eqref{eq:interpolators-tau}).} 
\end{align*}
Taking the square of both sides and rearranging the resulting $\tau^2$ term, we get the lower estimate $\tau^2 \ge \sigma^2/(1-c\delta)$ as desired. 

\subsection[Derivation of \Cref{sys:interpolators} as a limit of \Cref{sys:ensembles-penalized-least-squares} with vanishing regularization]{Derivation of \Cref{sys:interpolators} as a limit of \Cref{sys:ensembles-penalized-least-squares} as $\lambda \to 0^{+}$}\label{subsec:derivation_system_interpolator}

Fix $\reg(x) = |x|^q$ for $q\in \{1, 2\}$. 
Let $(a_\lambda, \tau_\lambda, \xi_\lambda)$ be the solution to \Cref{sys:ensembles-penalized-least-squares} for any $\lambda>0$ and let $(a_{*}, \tau_{*}, \xi_*)$ be the solution to \Cref{sys:interpolators}. 
Note that previous papers showed that $(a_\lambda, \tau_\lambda)$ converges to the solution $(a_*, \tau_*)$ as $\lambda \to (0)^+$; $q=1$ case is given by the proof of Lemma A.1 in \cite{li2021minimum} while the $q=2$ case immediately follows from the explicit formulae of $(a_\lambda, \tau_\lambda)$ and $(a_*, \tau_*)$. 
Below we claim the continuity of $\xi$, i.e., $\xi_\lambda \to \xi_*$.
Denoting $\eta_\lambda = c \xi_\lambda^2/\tau_\lambda^2$ and $\eta_* = c \xi_*^2/\tau_*^2$, what we want to show is $\eta_\lambda \to \eta_*$. 

Observe that the systems for $\xi_\lambda$ and $\xi_*$ read to $\eta_\lambda = F(\eta_\lambda; \tau_\lambda, a_\lambda)$ and $\eta_* = F(\eta_*; \tau_*, a_*)$ where 
\begin{align*}
F(\eta; \tau, a) := \frac{c}{\tau^2} \Bigl\{\E\Bigl[
\Bigl(\prox_\reg\bigl(\Theta + \tfrac{\tau}{\sqrt{c\delta}} H; \tfrac{a\tau}{\sqrt{c\delta}}\bigr) - \Theta\Bigr) \cdot \Bigl(\prox_\reg\bigl(\Theta + \tfrac{\tau}{\sqrt{c\delta}} \tilde H; \tfrac{a\tau}{\sqrt{c\delta}}\bigr) - \Theta\Bigr) 
\Bigr] + \sigma^2\Bigr\}
\end{align*}
with $(H, \tilde H)$ being the mean zero jointly normals such that $\E[H^2]=\E[\tilde H^2] =1$ and $\E[H\tilde H] = \eta$. 
Note that the map $\eta\mapsto F(\eta; \tau_\lambda , a_\lambda)$ is $c$-Lipschitz over $[-1,1]$ by the same argument in \Cref{subsec:proof-exisntence-uniqueness}, while the map $(\tau, a) \mapsto F(\eta; \tau, a)$ is continuous over $(0, \infty)^2$ for any $\eta\in [-1,1]$ by the moment assumption in \Cref{asm:regularity-conditions}-1 and the dominated convergence theorem. 
Then, $|\eta_\lambda - \eta_*|$ is bounded from above as 
\begin{align*}
|\eta_\lambda - \eta_*| &= |F(\eta_\lambda; \tau_\lambda, a_\lambda) - F(\eta_*, \tau_*, a_*)|\\
&\le |F(\eta_\lambda; \tau_\lambda, a_\lambda) - F(\eta_*, \tau_\lambda, a_\lambda)| + |F(\eta_*, \tau_\lambda, a_\lambda) -F(\eta_*, \tau_*, a_*)|\\
&\le c |\eta_\lambda-\eta_*| + |F(\eta_*, \tau_\lambda, a_\lambda) -F(\eta_*, \tau_*, a_*)|
\end{align*}
so that $|\eta_\lambda - \eta_*|\le (1-c)^{-1} |F(\eta_*, \tau_\lambda, a_\lambda) -F(\eta_*, \tau_*, a_*)|$ holds. 
This upper bound converges to $0$ as $\lambda\to 0$ by the continuity of $F(\eta_*, \tau, a)$ in $(\tau, a)$ and the convergence $(\tau_\lambda, a_\lambda) \to (\tau_*, a_*)$.

\subsection[Proof of continuity of full-ensemble risk limit at interpolation]
{Additional details for \Cref{rem:lassoless-fullensemble-continuity-psi=1}}
\label{sec:proof-continuity-Rinfty}

Expanding the product term in the fixed point equation \eqref{eq:full-ensemble-risk-ellq-interpolators} by 
\[
\prox\big(\Theta+ \tfrac{\tau}{\sqrt{c\delta}} H; \tfrac{a\tau}{\sqrt{c\delta}}\big) - \Theta = \tfrac{\tau}{\sqrt{c\delta}} H - \tfrac{a\tau}{\sqrt{c\delta}} \env_{\reg}'\big(\Theta + \tfrac{\tau}{\sqrt{c\delta}} H; \tfrac{a\tau}{\sqrt{c\delta}}\big),
\] we have 
\begin{align*}
    \xi^2 - \sigma^2
    &= \frac{\tau^2}{c\delta} \E[H\tilde H] - 2 \frac{a\tau^2}{c\delta} \E\Big[\tilde{H} \env_{\reg}'\big(\Theta + \tfrac{\tau}{\sqrt{c\delta}}H; \tfrac{a\tau}{\sqrt{c\delta}}\big)\Big] \\
    &+ \frac{(a \tau)^2}{c\delta} \E\Bigl[\env_{\reg}'\big(\Theta + \tfrac{\tau}{\sqrt{c\delta}} H ; \tfrac{a\tau}{\sqrt{c\delta}}\big) \cdot \env_{\reg}'\big(\Theta + \tfrac{\tau}{\sqrt{c\delta}} \tilde{H} ; \tfrac{a\tau}{\sqrt{c\delta}}\big)\Bigr],
\end{align*}
where $\frac{\tau^2}{c\delta} \E[H\tilde H] = \frac{\tau^2}{c\delta} \etaH = \frac{\xi^2}{\delta}$ by the definition $\etaH = c\xi^2/\tau^2$. 
For the second term, realizing $\tilde H = \etaH H + \sqrt{1-\etaH^2} \bar H$ for a standard normal $\bar{H}\sim \cN(0,1)$ independent of $(H, \tilde H, \Theta)$, noting that $\env_{\reg}'(\Theta + \tfrac{\tau}{\sqrt{c\delta}} H)$ is bounded in second moment by \Cref{asm:regularity-conditions}-(1), we have 
\begin{align*}
    \E\Big[\tilde{H} \cdot \env_{\reg}'\big(\Theta + \tfrac{\tau}{\sqrt{c\delta}}H; \tfrac{a\tau}{\sqrt{c\delta}}\big)\Big] 
    &= \etaH\E\Big[H \cdot \env_{\reg}'\big(\Theta + \tfrac{\tau}{\sqrt{c\delta}} H; \tfrac{a\tau}{\sqrt{c\delta}}\big)\Big]\\
    &=\etaH \tfrac{\tau}{\sqrt{c\delta}} \E\Big[\env_{\reg}''\big(\Theta + \tfrac{\tau}{\sqrt{c\delta}} H; \tfrac{a\tau}{\sqrt{c\delta}}\big)\Big] && (\text{by Stein's lemma}) \\
    &= \etaH \tfrac{\tau}{\sqrt{c\delta}} \frac{\sqrt{c\delta}}{a\tau} \E\Big[1-\prox_{\reg}'\big(\Theta + \tfrac{\tau}{\sqrt{c\delta}} H; \tfrac{a\tau}{\sqrt{c\delta}}\big)\Big]\\
    &= \frac{\etaH}{a} (1- c\delta) && (\text{by \eqref{eq:interpolators-a}}).
\end{align*}
Combined with $\etaH = c\xi^2/\tau^2$, we have 
\[
    \frac{a\tau^2}{c\delta} \E\Big[\tilde{H} \env_{\reg}'\big(\Theta + \tfrac{\tau}{\sqrt{c\delta}}H\big)\Big]= \frac{\xi^2}{\delta} (1-c\delta).
\]
Therefore, we get 
$$
\xi^2 \Bigl(1-\frac{1}{\delta} + 2 \frac{(1-c\delta)}{\delta} \Bigr) 
= \sigma ^2 + \frac{(a \tau)^2}{c\delta} \E\Bigl[\env_{\reg}'\big(\Theta + \tfrac{\tau}{\sqrt{c\delta}} H ; \tfrac{a\tau}{\sqrt{c\delta}}\big) \cdot \env_{\reg}'\big(\Theta + \tfrac{\tau}{\sqrt{c\delta}} \tilde{H} ; \tfrac{a\tau}{\sqrt{c\delta}}\big)\Bigr]. 
$$
This means that $\xi^2\to \frac{\delta}{\delta-1}\sigma^2$ holds if and only if the rightmost term converges to $0$ as $c \to (\delta^{-1})^{-}$. This is true if $\reg$ is Lipschitz and $\lim_{c\to (\delta^{-1})^{-}} a \tau = 0$, as $|\env_{\reg}'(x;\tau)|\le \|\reg\|_{\lip}$ for all $x\in\R$ and $\tau>0$.
However, we are not able to provably establish that $a\tau \to 0$ as $c\to\delta^{-1}$.
For the lasso regularizer, we observe in \Cref{fig:atau-limit-to-zero} that $a\tau\to0$ appears to hold as $c\to\delta^{-1}$.

\begin{figure}[!t]
    \centering
  \includegraphics[width=0.9\textwidth]{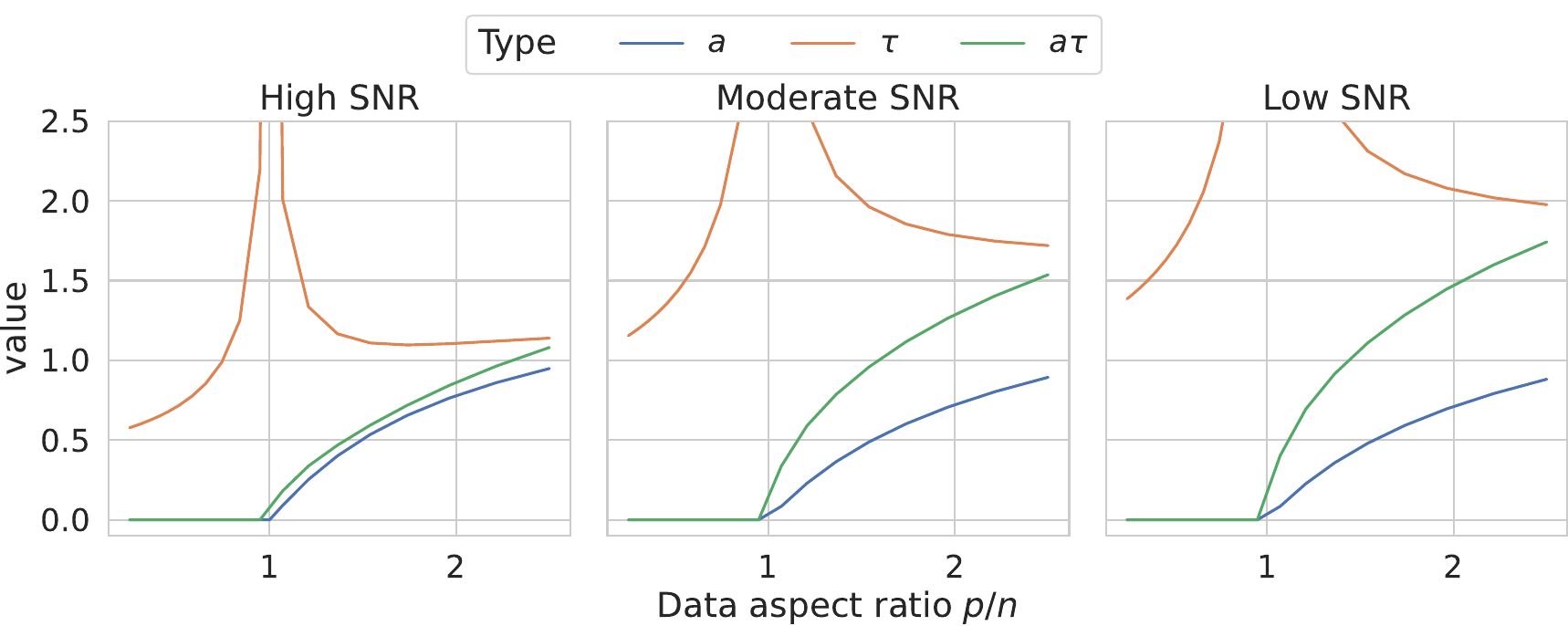}
  \caption{
    Fixed-point quantities for lassoless at different data aspect ratios $p/n$.
    The data model is given by \eqref{model:sparse} with signal strength $\rho=0.5$ and sparsity levels $s=0.2$ at different noise levels $\sigma$.
    \emph{Left}: High SNR $\sigma=0.5$.
    \emph{Middle}: Moderate SNR $\sigma=1$.
    \emph{Right}: Low SNR $\sigma=1.2$.
  }
  \label{fig:atau-limit-to-zero}
\end{figure}

\section{Proof for results in  \Cref{sec:discussion}}
\label{sec:appendix:anisotropic_deterministic}

\subsection{Proof of \Cref{th:contraction_Sigma}}\label{proof:contraction_Sigma}

Here the proof is similar to \Cref{subsec:proof-exisntence-uniqueness}. Our goal is to show
\begin{enumerate}[leftmargin=7mm]
    \item
    $|F_{\loss}(\etaG)| \le \sqrt{c\tilde c}$ and $|F_{\vreg}(\etaH)| \le 1$ for all $\etaG\in [-1,1]$ and $\etaH\in [-1,1]$. 
    \item
    $F_\loss$ and $F_{\vreg}$ are non-decreasing, differentiable, and the compositions $F_\loss \circ F_{\vreg}$ and $F_{\vreg}\circ F_\loss$ are $\min\{c, \tilde c\}$-Lipschitz.
    \item \Cref{sys:general_ensemble-M=infty_sigma} admits a unique solution $(\etaGstar,\etaHstar)
    \in [-1, 1] \times [-\sqrt{c\tilde c},\sqrt{c\tilde c}]$.
\end{enumerate}
The first claim immediately follows from the definition of $(F_\loss, F_{\vreg})$,
the Cauchy--Schwarz inequality, and \eqref{eq:anisotropic_1}-\eqref{eq:anisotropic_2}. The third claim follows from the second claim by the argument in \Cref{subsec:proof-exisntence-uniqueness} using Brouwer's fixed-point theorem. Thus, it suffices to show the second claim. 

For $F_\loss$, we have already shown in \Cref{subsec:proof-exisntence-uniqueness} that $F_\loss$ is differentiable with 
$$
0 \le F_\loss'(\etaG) \le \frac{\alpha\tilde \alpha}{\beta\tilde\beta} \Bigl(\frac{c \tilde\nu}{\kappa} \wedge  \frac{\tilde c \nu}{\tilde \kappa}\Bigr). 
$$
For the derivative of $F_\vreg$, we will use the following lemma that generalizes \Cref{lm:varphi_derivative}.
\begin{lemma}\label{lemma:phi_t}
Given two Lipschitz functions $\bm{f}, \tilde{\bm f}:\R^p\to \R^p$, define $\phi:[-1,1]\to\R$ as 
    $$
    \phi(t) = \E[\bm{f}(\bh)^\top \tilde{\bm{f}}(t \bh + \sqrt{1-t^2}  \bh')]
    $$
    where the expectation is taken with respect to independent standard normal vectors $\bh, \ \bh' \sim \cN(\bm{0}_p, \bI_p)$. Then the map $\phi$ is differentiable with its derivative given by 
    $$
    \phi'(t) = \E\Bigl[\tr\Bigl(\frac{\partial \bm f}{\partial \bx}(\bh) ^\top \frac{\partial \tilde{\bm f}}{\partial \bx}(t\bh + \sqrt{1-t^2}\bh')\Bigr)\Bigr]
    $$
    where $\partial\bm{f}/(\partial \bx)$ and $\partial\tilde{\bm{f}}/(\partial \bx)$ are the weak derivatives of $\bm{f}$ and $\tilde{\bm{f}}$. 
\end{lemma}
We omit the proof of this lemma, as it follows directly from the proof of \Cref{lm:varphi_derivative}.
Now we apply this lemma with \Cref{lemma:generalized_prox} below. Here we recall the generalized Moreau envelope:
$$
\forall \bv\in \R^p, \ \forall\bm\Lambda\succ \bm{0}_{p\times p} \quad 
\env_{\vreg}(\bv; \bm\Lambda) := \min_{\bx\in \R^p} \frac{1}{2} (\bv-\bx)^\top\bm\Lambda^{-1} (\bv-\bx) + \vreg(\bx) 
$$
and denote the unique minimizer by $\bm\prox_{\vreg}(\bv; \bm\Lambda)$. Recall $\nabla \env_{\vreg}(\bv; \bm\Lambda) = \bm\Lambda^{-1} (\bv-\bm\prox_\vreg(\bv; \bm\Lambda))$, where $\nabla$ is the gradient with respect to $\bv$. 
\begin{lemma}\label{lemma:generalized_prox}
Fix $\bm\Lambda\succ \bm{0}_{p\times p}$. Then, for any $\bx, \by\in \R^p$, we have 
\begin{align}\label{eq:lipschitz_continuity_moreau_envelope}
            \|\bm\Lambda^{1/2}(\nabla\env_{\vreg}(\bx;\bm\Lambda) - \nabla \env_{\vreg}(\by;\bm\Lambda))\|_2 \le \|\bm\Lambda^{-1/2}(\bx-\by)\|_2. 
\end{align}
    Therefore, $\bx\mapsto \nabla \env_{\vreg}(\bx;\bm\Lambda)$ is $\lambda_{\min}(\bm{\Lambda})^{-1}$-Lipschitz. Furthermore, there exists a weak derivative $\frac{\partial}{\partial \bm{x}} \nabla \env_{\vreg}(\bx; \bm\Lambda)$, denoted by $\nabla^2 \env_{\vreg}(\bx; \bm\Lambda)$, which is positive semi-definite and satisfies
    $$
    \bm{0}_{p\times p} \preceq \bm{\Lambda}^{1/2} \nabla^2 \env_{\vreg}(\bx; \bm\Lambda) \bm{\Lambda}^{1/2} \preceq \bm{I}_{p}.
    $$
\end{lemma}
\begin{proof}
Note that \eqref{eq:lipschitz_continuity_moreau_envelope} follows easily from the firm nonexpansiveness of the proximal operator $\bm\prox_{\vreg}(\cdot; \bm\Lambda)$ with respect to the inner product $\langle\bv,\bu\rangle_{\bm\Lambda^{-1/2}} := \bv^\top \bm\Lambda^{-1}\bu$. For completeness, however, we prove \eqref{eq:lipschitz_continuity_moreau_envelope} directly.

By the KKT conditions, $\bm\prox_{\vreg}(\bx; \bm\Lambda)$ is also a minimizer of the following convex function $f_{\bx}:\R^p\mapsto \R$:
    $$
    f_{\bx}(\bm{p}) := \frac{1}{2}(\bx-\bp)^\top\bm\Lambda^{-1}(\bx-\bp) - \frac{1}{2}(\bm\prox_{\vreg}(\bx; \bm\Lambda)-\bp)^\top\bm\Lambda^{-1}(\bm\prox_{\vreg}(\bx; \bm\Lambda)-\bp) + \vreg(\bp),
    $$
    so that $f_{\bx}(\bm\prox_\vreg(\bx; \bm\Lambda)) \le  f_{\bx}(\bm\prox_{\vreg}(\by; \bm\Lambda))$. By symmetry, we also have $f_{\by}(\bm\prox_\vreg(\by; \bm\Lambda)) \le  f_{\by}(\bm\prox_{\vreg}(\bx; \bm\Lambda))$. Putting them together, we get
    $$
    f_{\bx}(\bm\prox_\vreg(\bx; \bm\Lambda)) + f_{\by}(\bm\prox_\vreg(\by; \bm\Lambda)) \le f_{\bx}(\bm\prox_{\vreg}(\by; \bm\Lambda)) + f_{\by}(\bm\prox_{\vreg}(\bx; \bm\Lambda)). 
    $$
    Substituting the definitions of $f_{\bx}$ and $f_{\by}$ into the above display and rearranging it with $\nabla \env_{\vreg}(\bv; \bm\Lambda) = \bm\Lambda^{-1} (\bv-\bm\prox_\vreg(\bv; \bm\Lambda))$, we obtain
    \begin{align*}
       \|\bm\Lambda^{1/2}(\nabla\env_{\vreg}(\bx;\bm\Lambda) - \nabla \env_{\vreg}(\by;\bm\Lambda))\|_2^2 
        &\le    (\nabla \env_{\vreg}(\bx; \bm\Lambda)-\nabla \env_{\vreg}(\by; \bm\Lambda))^\top (\bx-\by).
    \end{align*}
    Applying the Cauchy--Schwarz inequality to the RHS, we obtain \eqref{eq:lipschitz_continuity_moreau_envelope}.  

    By Rademacher's theorem, $\bx\mapsto \nabla \env_{\vreg}(\bx; \bm\Lambda)$ is differentiable almost everywhere, and for any differentiable points $\bx$, the Jacobian is $\partial \nabla \env_{\vreg}(\bx; \bm\Lambda) = \nabla^2 \env_{\vreg}(\bx; \bm\Lambda)$, which is positive semi-definite since the mapping $\bx\mapsto\env_{\vreg}(\bx;\bm\Lambda)$ is convex. Let us fix $\bx$ at which $\nabla\env$ is differentiable. For any $\bu\in \R^p$ and any $t>0$, using the established Lipschitz continuity \eqref{eq:lipschitz_continuity_moreau_envelope}, we have  
    $$
         \|\bm\Lambda^{1/2}(\nabla\env_{\vreg}(\bx+t\bm\Lambda^{1/2}\bu;\bm\Lambda) - \nabla \env_{\vreg}(\bx;\bm\Lambda))\|_2 \le \|\bm\Lambda^{-1/2} \cdot t \bm \Lambda^{1/2}\bu\|_2 = t \|\bu\|_2. 
    $$
    Dividing both sides by $t$,  taking the limit $t\to 0$, and using the chain rule, we have 
    $$
    \|\bm\Lambda^{1/2}\nabla^2 \env_{\vreg}(\bx; \bm\Lambda) \bm\Lambda^{1/2} \bu\|_2 \le  \|\bu\|_2
    $$
    for all $\bu\in\R^p$. 
    This means $\bm\Lambda^{1/2} \nabla^2 \env_{\vreg}(\bx; \bm\Lambda)\bm\Lambda^{1/2}\preceq \bm{I}_p$, completing the proof. 
\end{proof}

Let us apply \Cref{lemma:phi_t} and \Cref{lemma:generalized_prox} to $F_{\vreg}$. Note that $F_{\vreg}$ can be written as 
$$
F_\vreg(\etaH) = \frac{1}{\alpha\tilde\alpha} \frac{1}{p}\E[\bm{f}(\bm h)^\top \tilde{\bm{f}}(\tilde\bh)], \quad \tilde\bh = \etaH \bh + \sqrt{1-\etaH^2}\bh', 
$$
where $\bm{f}, \tilde{\bm{f}}:\R^p\to\R^p$ are defined as 
\begin{align*}
    \bm{f}(\bx) &:= \tfrac{\beta}{\nu} \bx -  \nu^{-1} \bm\Sigma^{-1/2} \nabla \env_{\vreg} (\btheta+ \tfrac{\beta}{\nu} \bm{\Sigma}^{-1/2} \bx; (\nu \bm\Sigma)^{-1} ) \\
\tilde{\bm{f}}(\bx) &:= \tfrac{\tilde \beta}{\tilde \nu} \bx- \tilde\nu^{-1} \bm\Sigma^{-1/2} \nabla \env_{\tilde \vreg} (\btheta+ \tfrac{\tilde \beta}{\tilde \nu} \bm{\Sigma}^{-1/2} \bx; (\tilde \nu \bm\Sigma)^{-1}). 
\end{align*}
By  \Cref{lemma:generalized_prox}, $\bm{f}$ and $\tilde{\bm{f}}$ are Lipschitz and their weak derivatives satisfy 
\begin{align*}
    \frac{\partial \bm{f}}{\partial \bx} &= \frac{\beta}{\nu}\Bigl(\bI_p-(\nu\bm\Sigma)^{-1/2} \nabla^2 \env_{\vreg} (\btheta+ \tfrac{\beta}{\nu} \bm{\Sigma}^{-1/2} \bx; (\nu \bm\Sigma)^{-1}) (\nu \bm{\Sigma})^{-1/2} \Bigr), \quad  \bm{0}_{p\times p} \preceq \frac{\partial \bm{f}}{\partial \bx} \preceq \frac{\beta}{\nu} \bm{I}_p\\
     \frac{\partial \tilde{\bm{f}}}{\partial \bx} &= \frac{\tilde \beta}{\tilde \nu}\Bigl(\bI_p-(\tilde \nu\bm\Sigma)^{-1/2} \nabla^2 \env_{\tilde \vreg} (\btheta+ \tfrac{\tilde \beta}{\tilde \nu} \bm{\Sigma}^{-1/2} \bx; (\tilde \nu \bm\Sigma)^{-1}) (\tilde \nu \bm{\Sigma})^{-1/2} \Bigr), \quad  \bm{0}_{p\times p} \preceq \frac{\partial \tilde{\bm{f}}}{\partial \bx} \preceq \frac{\tilde \beta}{\tilde \nu} \bm{I}_p.
\end{align*}
Therefore, using \Cref{lemma:phi_t}, noting that $\frac{\partial {\bm{f}}}{\partial \bx}$ and $\frac{\partial \tilde{\bm{f}}}{\partial \bx}$ are now symmetric, 
we know that $F_{\vreg}$ is differentiable with its derivative given by 
$$
F_{\vreg}'(\etaH) = \frac{1}{\alpha\tilde \alpha p} \E\Bigl[\tr\bigl\{
\frac{\partial \bm{f}}{\partial \bx}(\bh)  \frac{\partial \tilde{\bm{f}}}{\partial \bx}(\tilde \bh)
\bigr\}\Bigr].
$$
Now we use the following fact: $0 \le \tr[\bA\bB]\le \tr[\bA]$ for any positive semi-definite matrix $\bA, \bB$ with $\bm{0}_{p\times p} \preceq \bA, \bB\preceq \bI_{p}$. This inequality immediately follows from the identity $\tr[\bA\bB]=\tr[\bA \bB^{1/2} \bB^{1/2}] = \tr[\bB^{1/2} \bA \bB^{1/2}]$ and $\bm{0}_{p\times p}\preceq \bB^{1/2}\bA\bB^{1/2} \preceq \bA$. Combine this with $ \bm{0}_{p\times p} \preceq \frac{\partial \tilde{\bm{f}}}{\partial \bx} \preceq \frac{\tilde \beta}{\tilde \nu} \bm{I}_p$, $ \bm{0}_{p\times p} \preceq \frac{\partial {\bm{f}}}{\partial \bx} \preceq \frac{ \beta}{ \nu} \bm{I}_p$, we have 
\begin{align*}
     0 \le F_{\vreg}'(\etaH) \le \frac{1}{p\alpha\tilde\alpha} \biggl(\frac{\tilde\beta}{\tilde\nu} \E\Bigl[
     \tr\bigl\{
    \frac{\partial \bm{f}}{\partial \bx}(\bh)
     \bigr\}
     \Bigr] \wedge 
     \frac{\beta}{\nu} \E\Bigl[\tr\bigl\{\frac{\partial \tilde{\bm{f}}}{\partial \bx} (\tilde\bh)\bigr\}\Bigr]
     \biggr).
\end{align*}
Using Stein's lemma and the fact that $(\alpha,\beta, \nu, \kappa)$ satisfies \eqref{eq:anisotropic_3}, recalling the definition of $\bm{f}(\bx)$, we have 
\begin{align*}
    \E\Bigl[
     \tr\bigl\{
    \frac{\partial \bm{f}}{\partial \bx}(\bh)
     \bigr\}
     \Bigr]  = \E[\bh^\top \bm{f}(\bh)] = \E\biggl[\bh^\top \Bigl\{
         \frac{\beta}{\nu}\bh - \frac{\bm{\Sigma}^{-1/2}}{\nu} \nabla \env_{\vreg}\Bigl(\btheta + \frac{\beta}{\nu}\bm{\Sigma}^{-1/2}\bh; \frac{\bm{\Sigma}^{-1}}{\nu}
    \Bigr)
     \Bigr\}\biggr] = p \kappa \beta. 
\end{align*}
By the same argument, we have $\E\Bigl[\tr\bigl\{\frac{\partial \tilde{\bm{f}}}{\partial \bx} (\tilde\bh)\bigr\}\Bigr] = p \tilde\kappa\tilde\beta$. Substituting these equations to the previous display, we get $F_\vreg'(\etaH) \le \frac{\beta\tilde\beta}{\alpha\tilde\alpha} (\frac{\kappa}{ \tilde\nu} \wedge  \frac{\tilde \kappa}{\nu})$. 
 
Putting the above display together, 
$$
0 \le F_\loss'(\etaG) \le \frac{\alpha\tilde \alpha}{\beta\tilde\beta} \Bigl(\frac{c \tilde\nu}{\kappa} \wedge  \frac{\tilde c \nu}{\tilde \kappa}\Bigr), \quad 
0\le  F_\vreg'(\etaH) \le \frac{\beta\tilde\beta}{\alpha\tilde\alpha} \Bigl(\frac{\kappa}{ \tilde\nu} \wedge  \frac{\tilde \kappa}{\nu}\Bigr).
$$
Therefore, by the chain rule, we know that $\etaH \mapsto F_\loss \circ F_{\vreg}(\etaH)$ and $\etaG \mapsto F_{\vreg} \circ F_\loss(\etaG)$ are both $(c\wedge \tilde c)$-Lipschitz. This completes the proof.

\subsection{Heuristic proof of \Cref{conjecture}}\label{proof:conjecture} 
We outline a heuristic proof strategy for \Cref{conjecture}, leaving a complete rigorous proof to future work.

Consider the change of variables $\bb \mapsto \bh = p^{-1/2}\bm{\Sigma}^{1/2}(\bb-\btheta)$, and define $\bG=\sqrt{p}\,\bX\bm{\Sigma}^{-1/2}\in\R^{n\times p}$, whose entries are i.i.d.\ $\cN(0,1)$ (note that this $\bh$ is different from the Gaussian vectors $\bh, \tilde\bh$ appearing in \Cref{sys:general_ensemble-M=infty_sigma}). Then the regularized M-estimator $\hat\btheta$ and its residual vector can be expressed as 
\[
\hat\btheta=\sqrt{p}\,\bm{\Sigma}^{-1/2}\hat\bh+\btheta, 
\quad 
\by-\bX\hat\btheta = \bm\eps-\bG\hat\bh,
\]
where $\hat\bh$ solves the M-estimation problem with a non-separable regularizer $\mathsf{R}(\cdot)$:
\[
\hat\bh \in \argmin_{\bh\in\R^p} 
\sum_{i\in I} \loss(\eps_i-\bg_i^\top\bh) 
+ \mathsf{R}(\bh), 
\quad 
\mathsf{R}(\bh) :=  \vreg(\sqrt{p}\, \bm{\Sigma}^{-1/2}\bh+\btheta).
\]
Assume for simplicity that $\vreg$ is $\tau$-strongly convex (otherwise apply the smoothing argument in \Cref{sec:ridge_smoothing}), so that $\mathsf{R}(\cdot)$ is $(\tau p)/\|\bm\Sigma\|_{\oper}$-strongly convex. 
By \Cref{lm:derivative}, the mappings $\bG\mapsto \hat\bh$ and $\bG\mapsto \bpsi:=\sum_{i\in I}\loss'(\eps_i-\bg_i^\top\hat\bh)\be_i$ are differentiable, with derivatives
\[
\forall i\in[n],\, j\in[p], \quad 
\frac{\partial \hat\bh}{\partial g_{ij}} 
= \bA\Bigl(\be_j\be_i^\top\bpsi - \bG^\top\bD\be_i\be_j^\top\hat\bh\Bigr), 
\quad 
\frac{\partial \bpsi}{\partial g_{ij}} 
= -\bD\bG\bA\be_j\be_i^\top\bpsi - \bV\be_i\be_j^\top\hat\bh.
\]

Following the argument of \Cref{sec:proof-thm:corr-sigerror-reserror}, the Gaussian Poincar\'e inequality yields
\[
\frac{\hat\bh^\top\tilde\bh}{\|\hat\bh\|\|\tilde\bh\|} = \hetaG+\op(1),
\quad
\frac{\hat\bpsi^\top\tilde\bpsi}{\|\bpsi\|\|\tilde\bpsi\|} = \hetaH+\op(1),
\]
for random variables $\hetaG,\hetaH\in[-1,1]$ independent of $\bG$. 

Using the contraction property in \Cref{th:contraction_Sigma}, it remains to show
\[
\hetaH = F_\loss(\hetaG)+\op(1),
\quad
\hetaG = F_\vreg(\hetaH)+\op(1).
\]

The first relation $\hetaH = F_\loss(\hetaG)+\op(1)$ follows from the same reasoning as in \Cref{sec:proof-thm:corr-sigerror-reserror}, as $F_\loss$ is not altered from the original isotropic and random signal setting.   

The main challenge lies in establishing the second one, $\hetaG = F_{\vreg}(\hetaH)+\op(1)$.  
By carefully tracing the argument in \Cref{sec:proof-thm:corr-sigerror-reserror}, applying \Cref{lm:approx_multi_normal} with  $\bF = \begin{bmatrix}\frac{\bpsi}{\sqrt{p}\beta} & \frac{\tilde\bpsi}{\sqrt{p}\tilde\beta}\end{bmatrix} \in \R^{n\times 2}$ and $\bz=\bG \be_j \in \R^{n}$, combined with the concentration results in \eqref{eq:solution_concentrate}, one can construct a random vector $\bw_j\in\R^2$ for each $j\in[p]$  such that
\begin{align}\label{eq:bw_j_marginal_normal}
\bw_j \mid (\btheta, \bz, \bG^{-j}, I, \tilde I) \deq \cN(\bm{0}_2,\bI_2), 
\text{ here } \bG^{-j} \text{ is } \bG \text{ with } j\text{th column removed}     
\end{align}
and obtain
\[
\hetaG = \mathcal{F}_\vreg(\hetaH)+\op(1),
\]
where the map $\mathcal{F}_\vreg:[-1,1]\to\R$ is given by
\footnotesize
\[
\mathcal{F}_\vreg(\eta) 
:= \frac{1}{p\alpha\tilde\alpha}
\E\!\Biggl[\Bigl(\tfrac{\bm{\Sigma}^{-1/2}}{\nu}\nabla \env_{\vreg}\bigl(\btheta+\tfrac{\beta}{\nu}\bm{\Sigma}^{-1/2}\bw_\eta;\tfrac{\bm{\Sigma}^{-1}}{\nu}\bigr) - \tfrac{\beta}{\nu}\bw_\eta\Bigr)^\top
\Bigl(\tfrac{\bm{\Sigma}^{-1/2}}{\tilde\nu}\nabla \env_{\tilde\vreg}\bigl(\btheta+\tfrac{\tilde\beta}{\tilde\nu}\bm{\Sigma}^{-1/2}\tilde\bw_\eta;\tfrac{\bm{\Sigma}^{-1}}{\tilde\nu}\bigr) - \tfrac{\tilde\beta}{\tilde\nu}\tilde\bw_\eta\Bigr)\Biggr],
\]
\normalsize
with $\bw_\eta=(w_{\eta,j})_{j\in[p]}$ and $\tilde\bw_\eta=(\tilde w_{\eta,j})_{j\in[p]}$ defined by
\[
\forall j\in[p], \quad 
\begin{pmatrix}
w_{\eta,j}\\ \tilde w_{\eta,j}
\end{pmatrix}
= 
\begin{pmatrix}
1 & \eta\\ \eta & 1
\end{pmatrix}^{1/2}
\bw_j.
\]
Therefore, it suffices to prove $\mathcal{F}_\vreg=F_{\vreg}$, but at this point, this cannot be concluded directly. Indeed, by \eqref{eq:bw_j_marginal_normal}, we have $(w_{\eta,j},\tilde w_{\eta,j}) \sim \cN\left(\bm{0}_2, \begin{pmatrix}1 & \eta\\ \eta & 1\end{pmatrix}\right)$ marginally, but the vectors $(w_{\eta,j},\tilde w_{\eta,j})_{j\in[p]}$ are not necessarily independent across $j$ {since \eqref{eq:bw_j_marginal_normal} is not strong enough to claim the independence of $(\bw_j)_{j\in[p]}$}. Now recalling the definition of $F_{\vreg}$ in \Cref{sys:general_ensemble-M=infty_sigma}, the bivariate Gaussian vectors $(\bh, \tilde{\bh})=(h_j, \tilde h_j)_{j\in[p]}$ in $F_{\vreg}$ are explicitly assumed to have the \textbf{i.i.d.} marginals $\cN\left(\bm{0}_2, \begin{pmatrix}1 & \eta\\ \eta & 1\end{pmatrix}\right)$. This distinction prevents us from directly concluding $\mathcal{F}_\vreg=F_{\vreg}$. 

Nonetheless, we can conclude that $\mathcal{F}_\vreg = F_{\vreg}$ when $\bSigma$ is diagonal (not necessarily the identity) and $(\vreg, \tilde\vreg)$ are separable such that  $\vreg(\bx) = \sum_{j\in[p]} \reg_j(x_j)$, $\tilde\vreg(\bx) = \sum_{j\in[p]}\tilde\reg_j(x_j)$. 
Indeed, since the Moreau envelope can be expressed as a separable form in this case, we can write $\mathcal{F}_\vreg$ and $F_{\vreg}$ as 
\footnotesize
\begin{align*}
\mathcal{F}_\vreg(\eta)
&= \frac{1}{p\alpha\tilde\alpha}\sum_{j=1}^p
\E\Bigl[\bigl(\tfrac{\Sigma_{jj}^{-1/2}}{\nu}\env_{\reg_j}'(\theta_j+\tfrac{\beta}{\nu}\Sigma_{jj}^{-1/2}w_{\eta,j};\tfrac{\Sigma_{jj}^{-1}}{\nu}) - \tfrac{\beta}{\nu}w_{\eta,j}\bigr)
\bigl(\tfrac{\Sigma_{jj}^{-1/2}}{\tilde\nu}\env_{\tilde{\reg}_j}'(\theta_j+\tfrac{\tilde\beta}{\tilde\nu}\Sigma_{jj}^{-1/2}\tilde w_{\eta,j};\tfrac{\Sigma_{jj}^{-1}}{\tilde\nu}) - \tfrac{\tilde\beta}{\tilde\nu}\tilde w_{\eta,j}\bigr)\Bigr]\\
{F}_\vreg(\eta)
&= \frac{1}{p\alpha\tilde\alpha}\sum_{j=1}^p
\E\Bigl[\bigl(\tfrac{\Sigma_{jj}^{-1/2}}{\nu}\env_{\reg_j}'(\theta_j+\tfrac{\beta}{\nu}\Sigma_{jj}^{-1/2}h_{j};\tfrac{\Sigma_{jj}^{-1}}{\nu}) - \tfrac{\beta}{\nu}h_{j}\bigr)
\bigl(\tfrac{\Sigma_{jj}^{-1/2}}{\tilde\nu}\env_{\tilde{\reg}_j}'(\theta_j+\tfrac{\tilde\beta}{\tilde\nu}\Sigma_{jj}^{-1/2}\tilde h_{j};\tfrac{\Sigma_{jj}^{-1}}{\tilde\nu}) - \tfrac{\tilde\beta}{\tilde\nu}\tilde h_{j}\bigr)\Bigr],
\end{align*}
\normalsize
for all $\eta\in [-1,1]$, which implies $\mathcal{F}_\vreg = F_\vreg$ thanks to $(w_{\eta, j}, \tilde{w}_{\eta, j})=^d (h_{j}, \tilde{h}_{j})$ marginally for $j\in[p]$.
In this case, the independence of $(w_{\eta, j}, \tilde{w}_{\eta, j})$ across $j$ is not required since the expectation $\E$ is taken inside $\textstyle \sum_j$. 

In summary, the key technical obstacle is to establish independence of $(\bw_j)_{j\in[p]}$ in \eqref{eq:bw_j_marginal_normal}, which lies beyond the reach of \Cref{lm:approx_multi_normal}. Overcoming this requires new tools, and we leave this gap for future research.

\clearpage
\section{Additional numerical simulations}\label{sec:additional_numerical_illustrations}

\subsection[Minimum ell1-norm interpolator]{Minimum $\ell_1$-norm interpolator}

\begin{figure}[!ht]
  \includegraphics[width=0.9\textwidth]{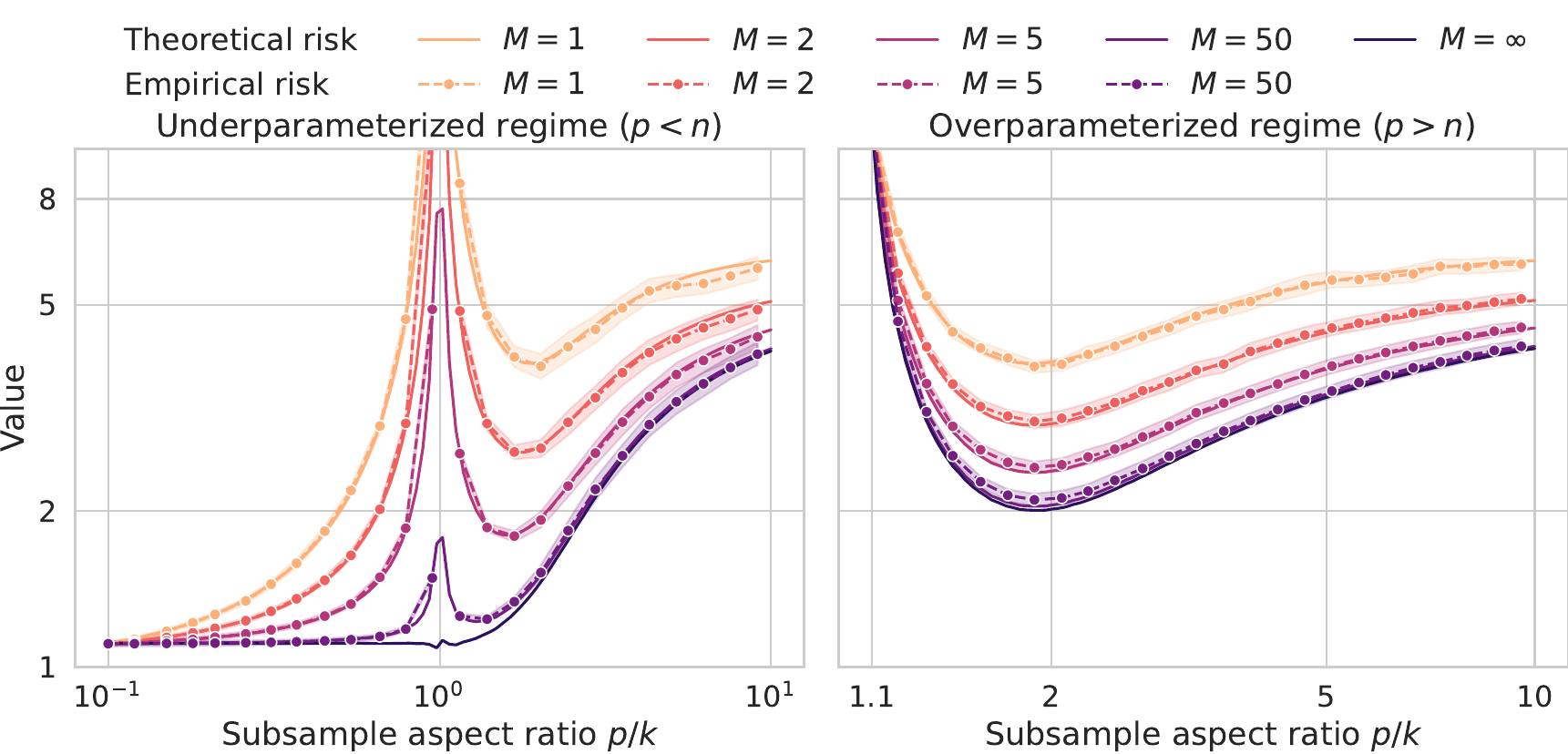}
  \caption{
    The prediction risk for lasso ensemble at different subsample aspect ratios $p/k$ with regularization parameter $\lambda=0.001$ and varying ensemble size $M$.
    The solid lines represent the theoretical risks, the dashed lines represent the empirical risks averaged over $50$ simulations, and the shaded regions represent the empirical standard errors.
    The data model is given by \eqref{model:sparse} with signal strength $\rho=2$, noise level $\sigma=1$, and support proportion $s=0.1$.
    \emph{Left}: underparameterized regime when $p/n=0.1$ and $n = 2000$.
    \emph{Right}: overparameterized regime when $p/n=1.1$ and $n=500$.
  }
  \label{fig:effect-of-M}
\end{figure}

\begin{figure}[!ht]
    \centering
    \includegraphics[width=0.95\textwidth]{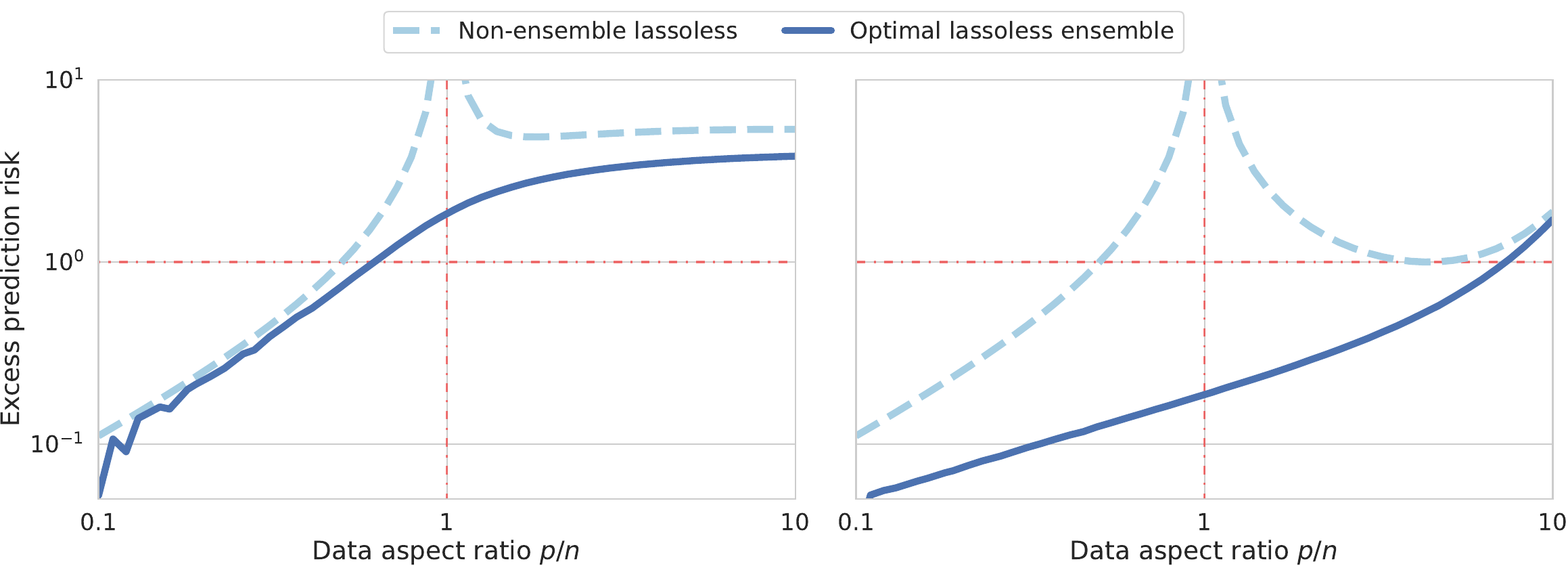} 
    \caption{
        \textbf{Optimal subsample risk of the lassoless ensemble is monotonic in the data aspect ratio.}
        Excess risk of the lasso and optimal lasso ensemble, at different data aspect ratios $p/n$ ranging from $0.1$ to $10$. 
        The data model is given by \eqref{model:sparse} with signal strength $\rho=2$, noise level $\sigma=1$, data aspect ratio $p/n=0.1$, feature size $p=500$, and varying support proportion $s$.
        \emph{Left}: dense regime with $s=0.9$.
        \emph{Right}: sparse regime with $s=0.01$. 
    }
    \label{fig:optimum_phis_overparameterized-2-optrisk}
\end{figure}

\clearpage
\subsection{Lasso under varying covariate distributions}
\begin{figure}[!ht]
    \centering
  \includegraphics[width=0.75\textwidth]{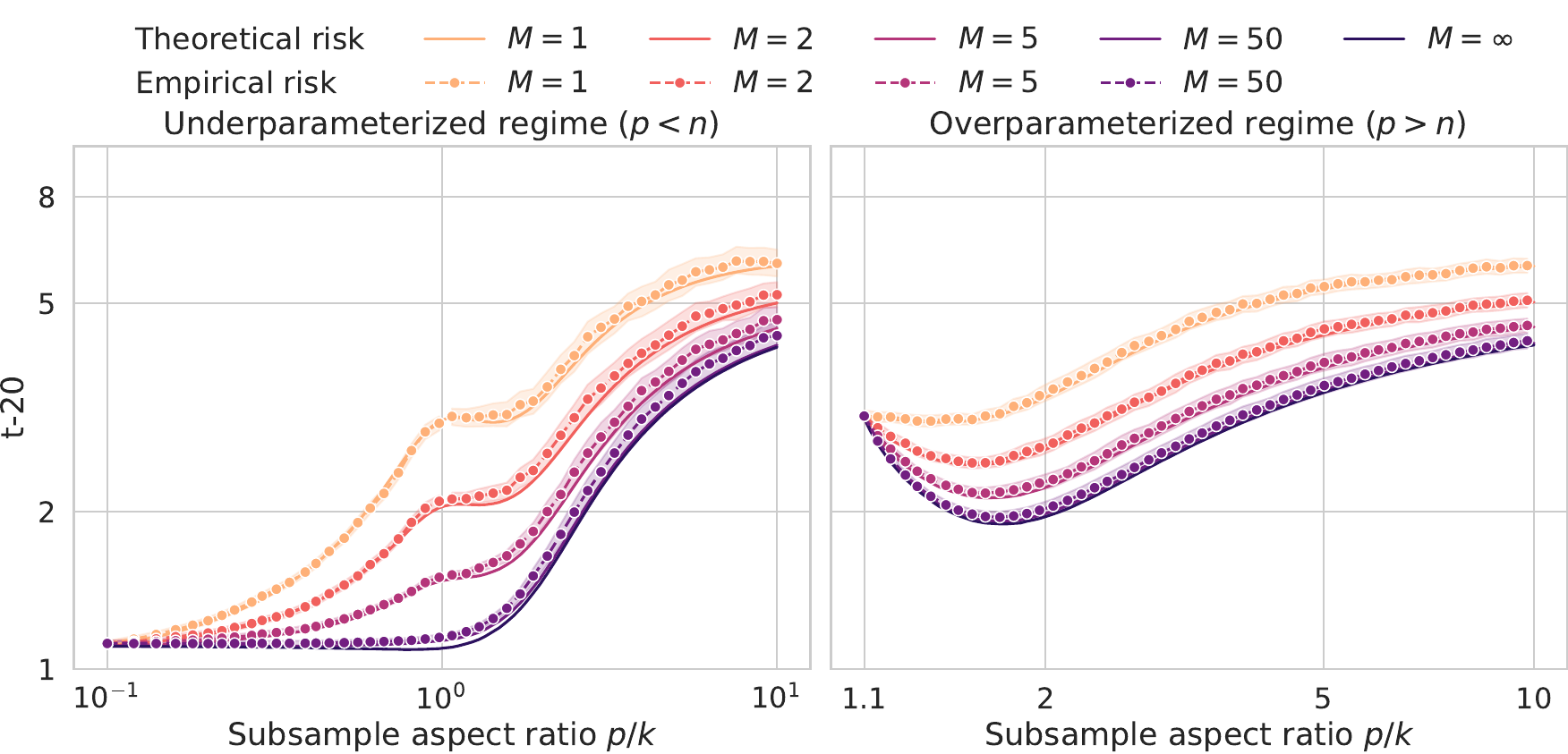}
  \includegraphics[width=0.75\textwidth]{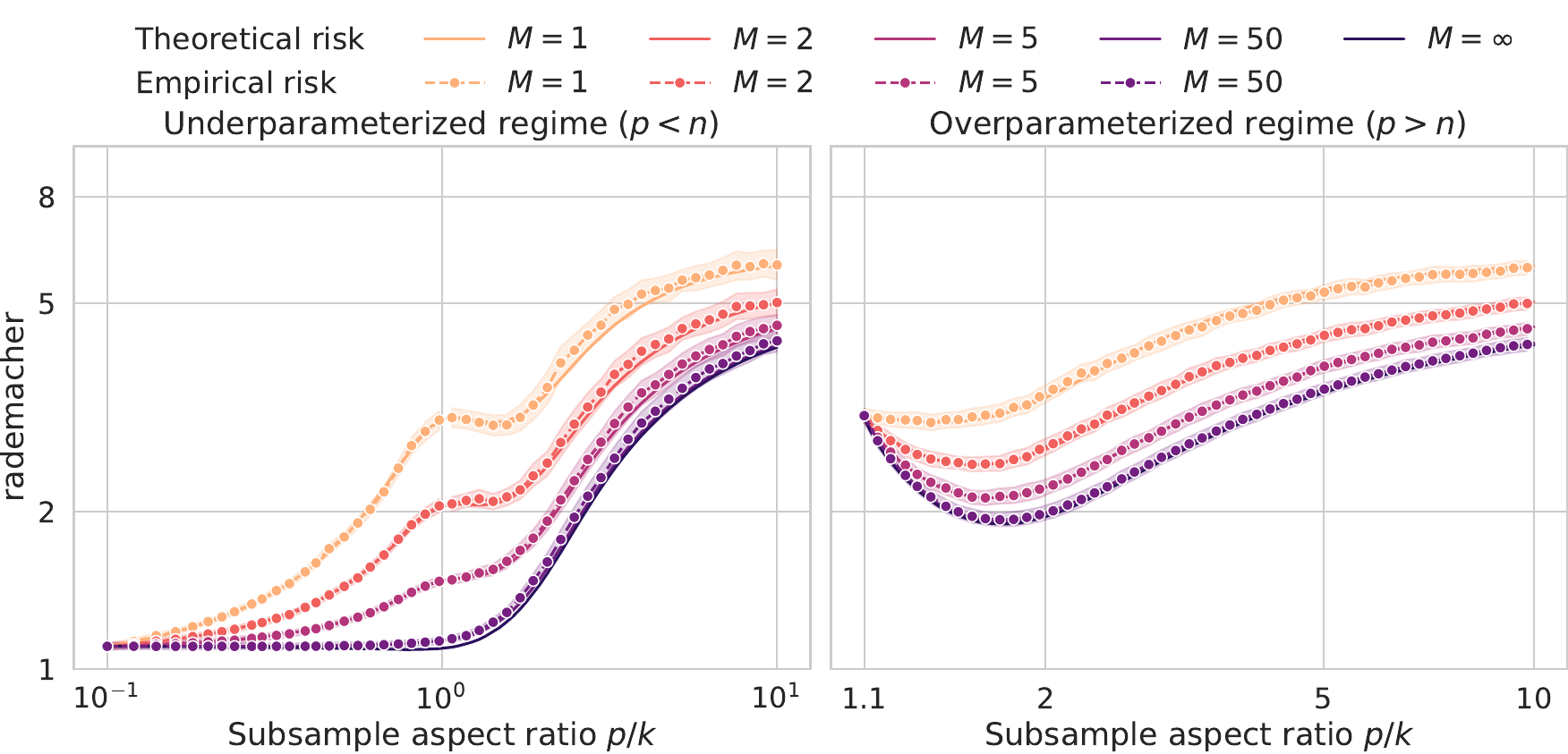}
  \includegraphics[width=0.75\textwidth]{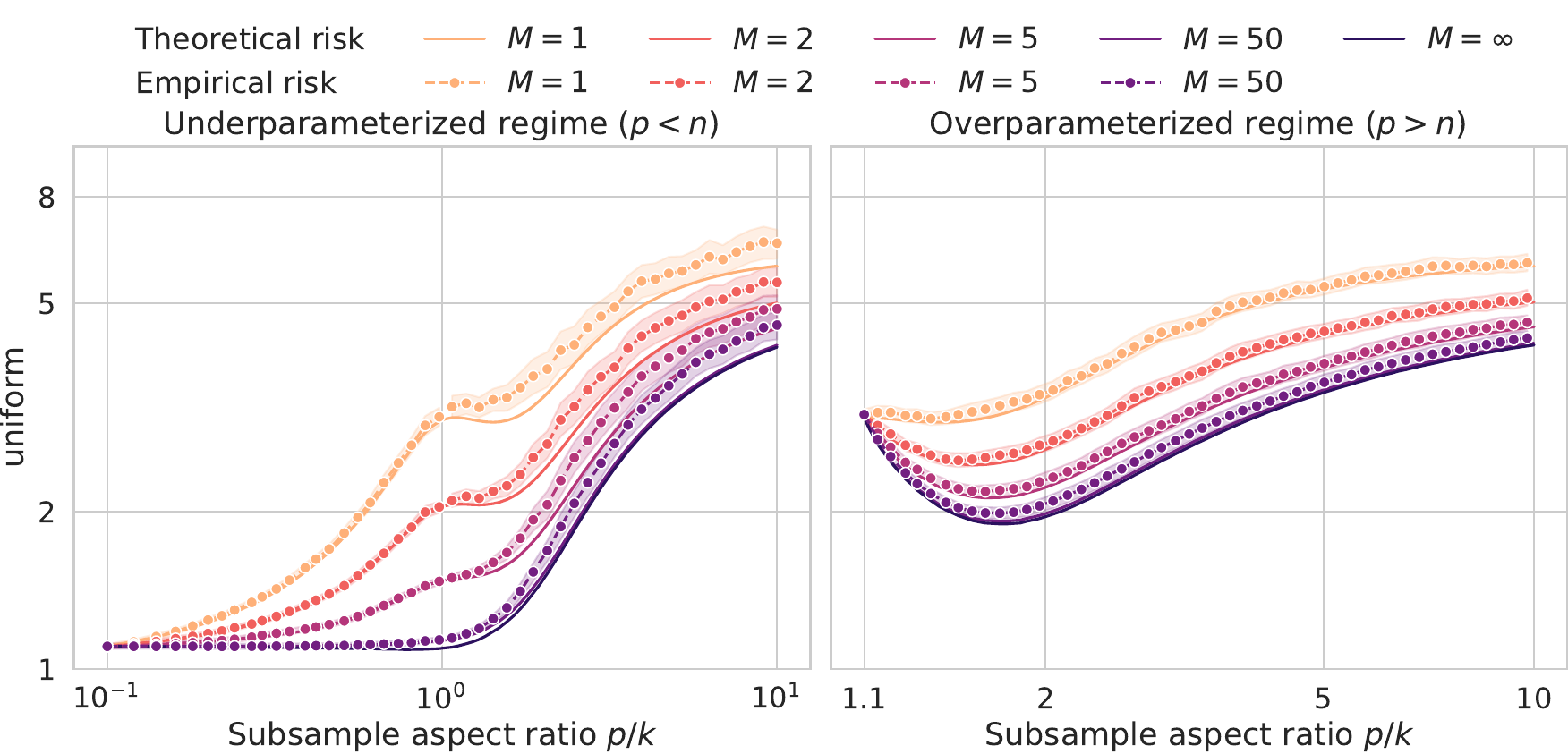}
  \caption{
    The prediction risk for lasso ensemble at different subsample aspect ratios $p/k$ with regularization parameter $\lambda=0.1$ and varying ensemble size $M$.
    The covariates $X_{ij}$'s are i.i.d. and follow (a) $t_{20}$ (b) Rademacher distribution, and (c) uniform distribution in $[-1/1]$, with a proper normalization such that the mean is zero and variance is one.
    The solid lines represent the theoretical risks, the dashed lines represent the empirical risks under Gaussian covariate averaged over $50$ simulations, and the shaded regions represent the empirical standard errors.
    The data model is given by \eqref{model:sparse} with covariate distribution replaced by the corresponding distribution, and with signal strength $\rho=2$, noise level $\sigma=1$, and support proportion $s=0.1$.
    \emph{Left}: underparameterized regime when $p/n=0.1$ and $n = 2000$.
    \emph{Right}: overparameterized regime when $p/n=1.1$ and $n=500$.
    }
  \label{fig:effect-of-M-cov}
\end{figure}

\clearpage

\subsection{Optimal subagging versus optimal regularization}\label{sec:optimal-subagging-versus-optimal-regularization}

\subsubsection{Squared loss, lasso regularizer}

See \Cref{fig:advantage_subsampling_overparameterized}.

\begin{figure}[!ht]
    \centering
    \includegraphics[width=0.95\textwidth]{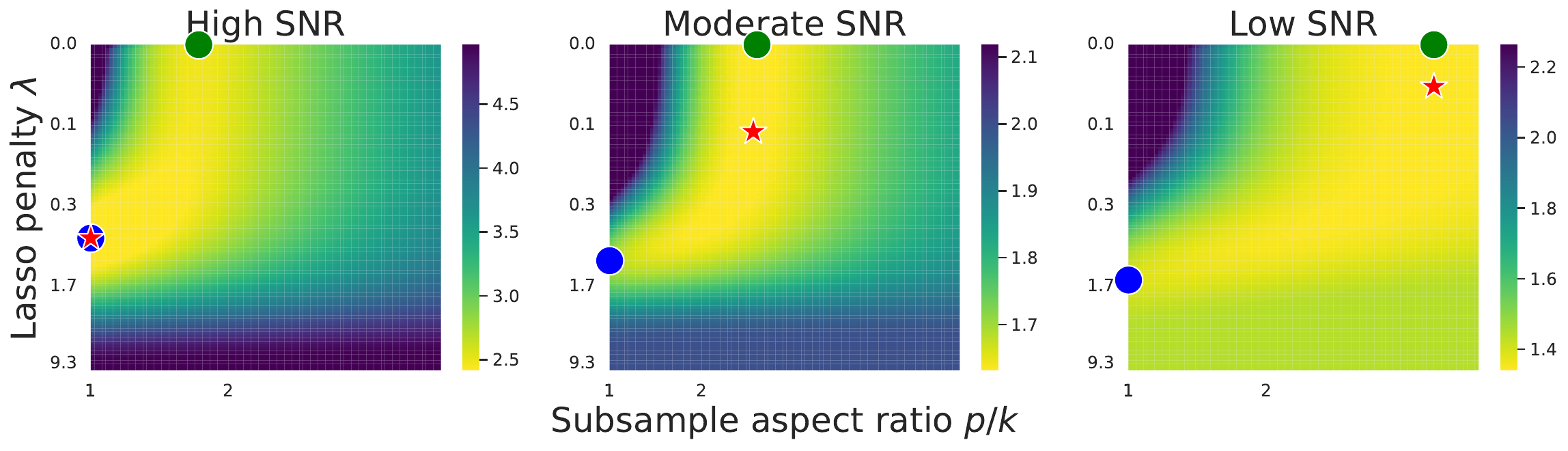}
    \caption{
        Heatmaps of theoretical prediction risk in $\lambda$ and {subsample aspect ratio $p/k = (p/n) \cdot c^{-1}$} of full lasso ensemble in the overparameterized regime ($p/n=1.1$) and sparse regime ($s=0.2$).
        The data model is given by \eqref{model:sparse} with support proportion $s=0.2$ and noise level $\sigma=1$ at different signal levels $\rho$.
        Blue dots ($\color{pyblue}\mysolidcircle$): the optimal lasso. 
        Green dots ($\color{pygreen}\mysolidcircle$): the optimally subsampled lassoless regression. 
        Red stars ($\color{pyred}\star$): the optimal lasso ensemble.
        \emph{Left}: high SNR $\rho=2$. The optimal lasso ($\color{pyblue}\mysolidcircle$) is better than the optimally subsampled lassoless ($\color{pygreen}\mysolidcircle$).
        \emph{Middle}: moderate SNR $\rho=1$. The optimal lasso ensemble ($\color{pyred}\star$) is better than both the optimal lasso ($\color{pyblue}\mysolidcircle$) and the optimally subsampled lassoless($\color{pygreen}\mysolidcircle$).
        \emph{Right}: low SNR $\rho=0.67$. The optimally subsampled lassoless ($\color{pygreen}\mysolidcircle$) is better than the optimal lasso ($\color{pyblue}\mysolidcircle$).
    }
    \label{fig:advantage_subsampling_overparameterized}
\end{figure}

\subsubsection{Huber loss, unregularized}

See \Cref{fig:huber}.

\begin{figure}[!ht]
    \centering
    \includegraphics[width=0.95\textwidth]{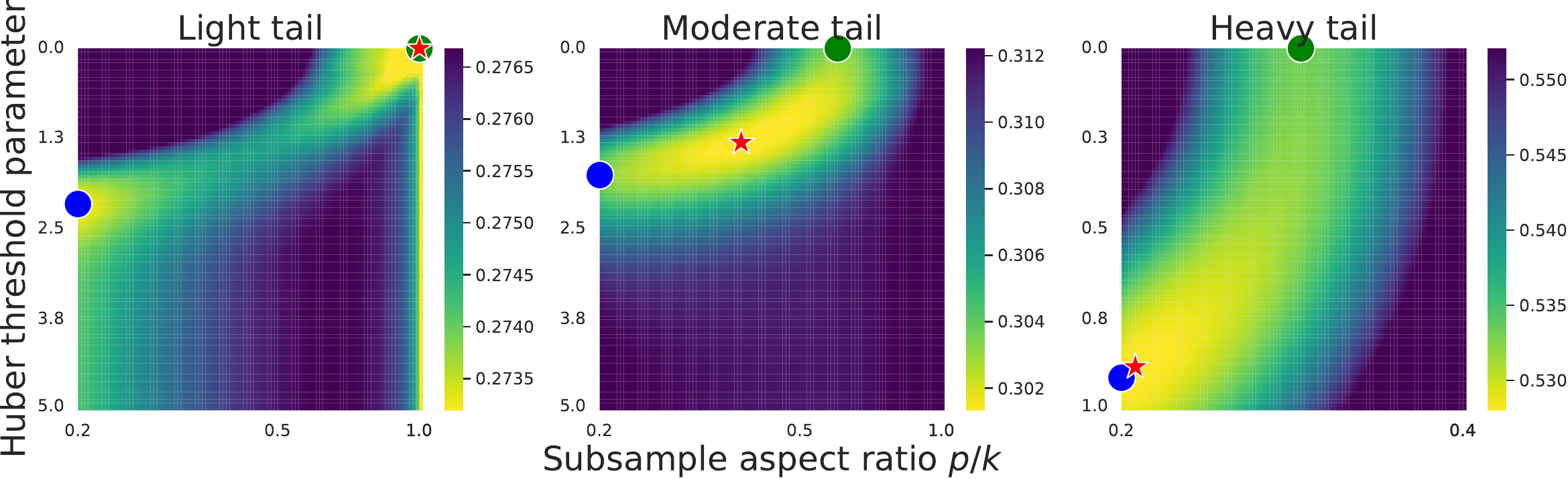}
    \caption{
        Heatmaps of theoretical prediction risk in Huber threshold parameter and subsample aspect ratio $p/k \in [p/n, 1)$ of full Huber ensemble in the underparameterized regime ($p/n=0.2$). Blue dots ($\color{pyblue}\mysolidcircle$): the optimal Huber regression. 
        Green dots ($\color{pygreen}\mysolidcircle$): the optimally subsampled LAD regression (Huber regression with threshold parameter $\to 0^+$). 
        Red stars ($\color{pyred}\star$): the optimal Huber ensemble.
        \emph{Left}: noise follows Student's $t$ distribution $t_{20}$. The optimally subsampled LAD ($\color{pygreen}\mysolidcircle$) is better than the optimal Huber ($\color{pyblue}\mysolidcircle$).
        \emph{Middle}: noise follows Student's $t$ distribution $t_{10}$ The optimal subsample Huber ($\color{pyred}\star$) is better than the  both optimal subsample LAD ($\color{pygreen}\mysolidcircle$) and the optimal Huber ($\color{pyblue}\mysolidcircle$).
        \emph{Right}: noise follows Student's $t$ distribution $t_{2}$. The optimal Huber ($\color{pyblue}\mysolidcircle$) is better than the optimally subsampled LAD ($\color{pygreen}\mysolidcircle$).
    }
    \label{fig:huber}
\end{figure}

\clearpage

\subsubsection[ell1 regularized Huber regression: varying threshold parameter]{$\ell_1$-regularized Huber regression: varying threshold parameter}

See \Cref{fig:huber-l1-rho}.

\begin{figure}[!ht]
    \centering
    \includegraphics[width=0.95\textwidth]{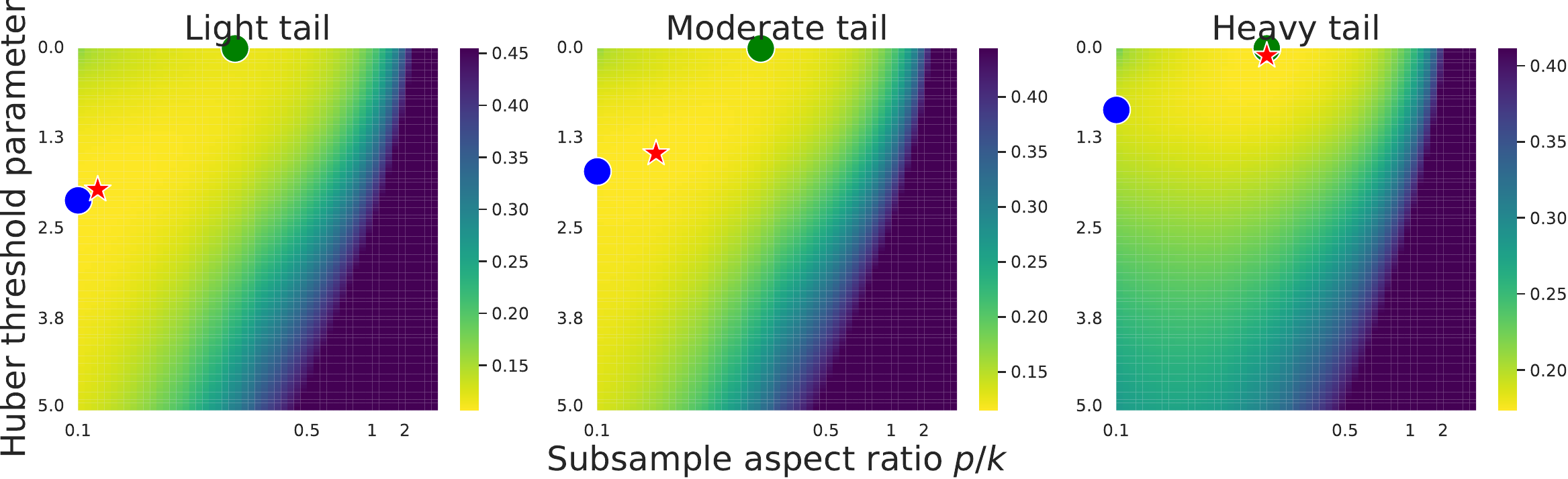}
    \caption{
        Heatmaps of theoretical prediction risk in Huber threshold parameter and {subsample aspect ratio  $p/k = (p/n) \cdot c^{-1}$} of full $\ell_1$-regularized Huber ensemble with lasso regularization level $0.5$ in the underparameterized regime ($p/n=0.1$).  Blue dots ($\color{pyblue}\mysolidcircle$): the optimal $\ell_1$-regularized Huber regression. 
        Green dots ($\color{pygreen}\mysolidcircle$): the optimally subsampled $\ell_1$-regularized LAD regression ($\ell_1$-regularized Huber regression with huber threshold parameter $\to 0^+$). 
        Red stars ($\color{pyred}\star$): the optimal $\ell_1$-regularized Huber ensemble.
        \emph{Left}: noise follows Student's $t$ distribution $t_{20}$. The optimal $\ell_1$-regularized Huber ($\color{pyblue}\mysolidcircle$) is better than the optimally subsampled $\ell_1$-regularized LAD ($\color{pygreen}\mysolidcircle$).
        \emph{Middle}: noise follows Student's $t$ distribution $t_{10}$. The optimal $\ell_1$-regularized Huber ensemble ($\color{pyred}\star$) attains nearly the same risk as both the optimal $\ell_1$-regularized Huber ($\color{pyblue}\mysolidcircle$)  and the optimally subsampled $\ell_1$-regularized LAD ($\color{pygreen}\mysolidcircle$).
        \emph{Right}: noise follows Student's $t$ distribution $t_{2}$. The optimally subsampled $\ell_1$-regularized LAD ($\color{pygreen}\mysolidcircle$) is better than the optimal $\ell_1$-regularized Huber ($\color{pyblue}\mysolidcircle$). 
    }
    \label{fig:huber-l1-rho}
\end{figure}

\subsubsection[ell1 regularized Huber regression: varying regularization level]{$\ell_1$-regularized Huber regression: varying regularization level}

See \Cref{fig:huber-l1-lam}.

\begin{figure}[!ht]
    \centering
    \includegraphics[width=0.95\textwidth]{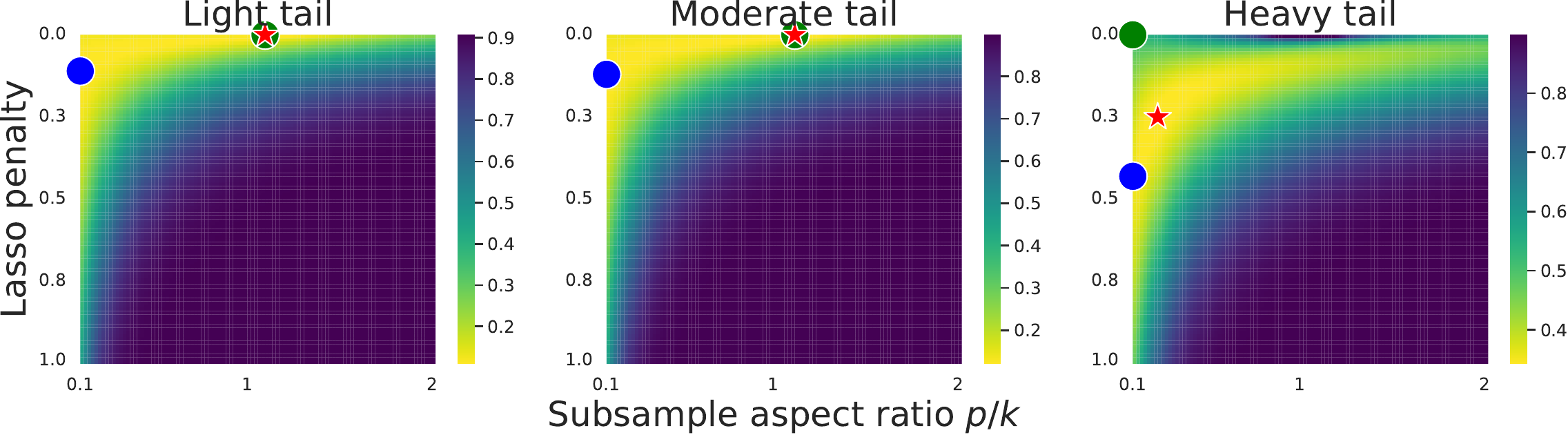}
    \caption{
        Heatmaps of theoretical prediction risk in lasso regularization level and {subsample aspect ratio $p/k = (p/n) \cdot c^{-1}$} of full $\ell_1$-regularized Huber ensemble with Huber threshold parameter $10$ in the underparameterized regime ($p/n=0.1$).  Blue dots ($\color{pyblue}\mysolidcircle$): the optimal $\ell_1$-regularized Huber regression. 
        Green dots ($\color{pygreen}\mysolidcircle$): the optimally subsampled lassoless Huber regression ($\ell_1$-norm interpolator when $p/k \ge 1$ and unregularized Huber regression when $p/k < 1$). 
        Red stars ($\color{pyred}\star$): the optimal $\ell_1$-regularized Huber ensemble.
        \emph{Left}: noise follows Student's $t$ distribution $t_{20}$. The optimally subsampled lassoless Huber ($\color{pygreen}\mysolidcircle$) is better than the optimal $\ell_1$-regularized Huber ($\color{pyblue}\mysolidcircle$).
        \emph{Middle}: noise follows Student's $t$ distribution $t_{10}$. The optimally subsampled lassoless Huber ($\color{pygreen}\mysolidcircle$) is better than the optimal $\ell_1$-regularized Huber ($\color{pyblue}\mysolidcircle$).
        \emph{Right}: noise follows Student's $t$ distribution $t_{2}$
    The optimal $\ell_1$-regularized Huber ensemble ($\color{red}\star$) is better than both the optimally subsampled lassoless Huber ($\color{pygreen}\mysolidcircle$)  and the optimal $\ell_1$-regularized Huber ($\color{pyblue}\mysolidcircle$)
    }   
    \label{fig:huber-l1-lam}
\end{figure}

\clearpage

\subsection{Details of numerical experiments}
\label{sec:experiments-details}

\subsubsection{Data model for lasso experiments}
\label{model:linaer-sparsity}

We consider a linear model whose signal variables are generated from two-point distribution:
\begin{align}
    y = \bx^{\top}\btheta  + \eps,\qquad  \bx\sim\cN(\zero, p^{-1}\bI_p), \quad \eps\sim\cN(0,\sigma^2),\quad  \btheta =  \overset{i.i.d.}{\sim} s \cP_{\rho/\sqrt{s}}+ (1-s)\cP_0, \label{model:sparse}
\end{align}
where $\cP_c$ denotes the Dirac measure at point $c \in \RR$, and $\rho > 0$ is some given quantity that determines the signal magnitude.
Here, $s\in(0,1)$ is the support proportion.
The signal-to-noise-ratio (SNR) under the above model obeys
\[
    \SNR = \frac{\EE[(\bx^{\top}\btheta)^2]}{\sigma^2} = \frac{s\cdot (\rho^2/s)}{\sigma^2} = \frac{\rho^2}{\sigma^2}.
\]

\subsubsection[Data model for ell1 regularized Huber experiments]{Data model for $\ell_1$-regularized Huber experiments}\label{subsec:Huber-ex}

We consider the $\ell_1$-regularized Huber regression
$$
\hat\btheta_I \in \argmin_{\btheta\in\R^p} \sum_{i\in I} 
\mathsf{Huber}(y_i - \bx_i^\top\btheta; \rho) + \lambda \|\btheta\|_1
$$
where $\lambda\ge 0$ is a regularization parameter and $\text{Huber}(\cdot;\mu)$ is the Huber loss with threshold parameter $\mu>0$, which is defined as follows \cite{huber1964robust}:
\begin{equation}
    \label{eq:huber_loss}
    \text{For all $x\in\R$}, \qquad \mathsf{Huber}(x; \mu) 
    := \env_{|\cdot|}(x;\mu) = 
    \begin{cases}
         \frac{1}{2\mu} x^2 & \text{ if } |x| \le \mu \\
         |x| - \frac{1}{2} \mu & \text{ if } |x| > \mu.
    \end{cases}
\end{equation}
Observe that $\mathsf{Huber}(\cdot;\mu)$ behaves like the squared loss for large $\mu$ and like the absolute loss for small $\mu$. 
More precisely, for all $x\in\R$ it holds that 
\begin{align*}
    \lim_{\mu\to 0^+} \mathsf{Huber}(x;\mu) = |x| \quad \text{and} \quad \lim_{\mu\to+\infty} \mu \cdot \mathsf{Huber}(x;\mu) = x^2/2. 
\end{align*}
We consider the linear model $y=\bx^\top\btheta + z$ where the marginal distribution of the signal is set to the mixture $\epsilon \cN(0,1) + (1-\epsilon) \cP_0$ (recall that $\cP_0$ denotes the Dirac measure at $0$) with support proportion $\epsilon=0.1$, while the noise $z$ follows Student's t-distribution $t_{d}$ for some degree of freedom $d\ge 2$, which will be specified for each numerical simulation.
\end{document}